\providecommand{\U}[1]{\protect\rule{.1in}{.1in}}
\providecommand{\U}[1]{\protect\rule{.1in}{.1in}}
\providecommand{\U}[1]{\protect\rule{.1in}{.1in}}
\numberwithin{equation}{section}
\newtheorem{theorem}{Theorem}[section]
\newtheorem{corollary}[theorem]{Corollary}
\newtheorem{definition}[theorem]{Definition}
\newtheorem{lemma}[theorem]{Lemma}
\newtheorem{condition}[theorem]{Condition}
\newtheorem{remark}[theorem]{Remark}
\newenvironment{proof}[1][Proof]{\noindent\textbf{#1.} }{\ \rule{0.5em}{0.5em}}
\def\p2{\mathcal A_{\Phi,2\pi}(B)}
\def\0p2{\mathcal A_{\Phi,2\pi}(0)}
\def\sp2{\mathcal A_{\Phi,2\pi,\hbox{\rm SR}}(B)}
\def\beq{\begin{equation}}
\def\ene{\end{equation}}
\def\qed{\ifhmode\unskip\nobreak\fi\ifmmode\ifinner
\else\hskip5pt\fi\fi\hbox{\hskip5pt\vrule width4pt height6pt
depth1.5pt\hskip1pt}}
\def\+out{x^{\rm out}}
\begin{document}

\title{On travelling waves of the non linear Schr\"odinger equation escaping a potential well}
\author{ Ivan Naumkin\thanks{Electronic Mail: ivan.naumkin@unice.fr}\\Laboratoire J.A. Dieudonn\'e, Universit\'e de la C\^ote d'Azur, France \\and \\ Pierre Rapha\"el\thanks{Electronic Mail: praphael@unice.fr}\\Laboratoire J.A. Dieudonn\'e, Universit\'e de la C\^ote d'Azur, France.}
\date{}
\maketitle
\begin{abstract}
In this paper we consider the NLS equation with focusing nonlinearities in the presence of a potential. We investigate the compact soliton motions that correspond to a free soliton escaping the well created by the potential. We exhibit the dynamical system driving the exiting trajectory and construct associated nonlinear dynamics for untrapped motions. We show that the nature of the potential/soliton is fundamental, and two regimes may exist: one where the tail of the potential is fat and dictates the motion, one where the tail is weak and the soliton self interacts with the potential defects, hence leading to different motions.
\end{abstract}

\section{Introduction}



\subsection{Setting of the problem}


We consider in this paper the focusing nonlinear Schr\"{o}dinger equation in the presence of a potential
\begin{equation}
i\partial_{t}u+\Delta u+\mathcal{V}\left(  x\right)  u+\left\vert u\right\vert
^{p-1}u=0,\text{ \ }t\in\mathbb{R}\text{, }x\in\mathbb{R}^{d}, \label{NLS}%
\end{equation}
in the $L^{2}$ sub-critical range $p<1+\frac{4}{d}$. This equation appears in a variety of physical models like the Ginzburg-Landau theory of superconductivity
\cite{deGennes}, the one-dimensional self-modulation of a
monochromatic wave \cite{Taniuti}, stationary two-dimensional self-focusing of
a plane wave \cite{Bespalov}, propagation of a heat pulse in a solid Langmuir
waves in plasmas \cite{Shimizu} and the self-trapping phenomena of nonlinear
optics \cite{Karpman}, see
\cite{Berge, Scott, Eboli, Combes, Benney, Sulem} for more complete references.\\

The complete qualitative description of solutions to \eqref{NLS} is far from being complete. In the case of trivial external potential $\mathcal{V}=0$, various classes of solutions are known. First low
energy scattering solutions which asymptotically in time $t\pm \infty$ behave as linear or modified linear waves are investigated in \cite{Strauss2, Strauss, Barab,
GinibreV1, GinibreV2, CazenaveW1, Ozawa1, HayashiNau1, GinibreOV, NakanishiO,
CN, CN1}. The ground state nonlinear soliton is a time periodic solution $$u(t,x)=Q(x)e^{it}$$ where $Q$ is the ground state solution to
\begin{equation}
\Delta Q-Q+Q^{p}=0. \label{eqsoliton}%
\end{equation}
Multisoliton like solutions which asymptotically behave like decoupled non trivial trains of solitons were first explicitly computed in the completely integrable case $d=1$, $p=3$,  and then systematically constructed as {\em compact} solutions of the flow in \cite{Merle, Martel1,MartelMerle}. As discovered in \cite{Krieger} for Hartree type nonlinearities and systematically computed in \cite{MaRa,Nguyen}, the interaction between two solitary waves can be computed through a two body problem dynamical system which describes their exchange of energy: untrapped hyperbolic like motion leading to asymptotically free non interacting bubbles, untrapped parabolic like regimes which is a threshold dynamic were solitary remain logarithmically close, and a priori trapped  elliptic like motion. Note that the resonant parabolic motion can lead to spectacular exchanges of energy which may dramatically modify the size of the solitary waves and lead to finite or infinite time blow up mechanisms \cite{MaRa}.\\

A fundamental open problem is to understand the possibility of {\em trapped} multitudes. While these are formally predicted by the two body problem, they are sometimes believed to exist, sometimes not, the instability mechanism being  a subtle radiation phenomenon. We refer for example to \cite{Si} for a beautiful introduction to these problems for the Gross Pitaevski model with non vanishing density at $+\infty$.


\subsection{Potential/soliton interaction}


We propose in this paper to start the investigation of such mechanisms for \eqref{NLS} with non trivial potential $\mathcal V$. Let us first recall that in $L^{2}$ sub-critical case, (\ref{NLS}) is
globally well-posed in $H^{1}$ for a wide class of potentials $\mathcal{V}$
(see, for example, Corollary 6.1.2 of \cite{Cazenave}), but little is known on the long-time
behaviour of solutions. The existence of standing wave
solutions to (\ref{NLS}) was investigated in \cite{Floer}. The existence of
low energy scattering for (\ref{NLS}) in dimension $d=1$ was studied in
\cite{Weder2000, Cuccagna, Ivan, Ivan1}. The dynamics of solitons for NLS
equation with certain external potentials were studied in \cite{Gang}. In
particular, it was shown that the dynamical law of motion of the soliton is
close to Newton's equation with some dissipation due to radiation of the
energy. Long-time behaviour of solution for the perturbed NLS equation was
also studied in \cite{Soffer1, Soffer2, Soffer3, Soffer4, Tai, Tai1, Tai2}. In
\cite{Frolich} it was shown the existence of solutions for (\ref{NLS}) that
behaves as solitary waves for the free NLS\ equation for large but finite times.\\

Our aim in this paper is to start the investigation of {\em compact} soliton motions corresponding to a free soliton ($V=0)$ escaping the well created by $V$. Our first task is to exhibit the dynamical system driving the exiting trajectory and construct associated nonlinear dynamics for untrapped motions. Interestingly enough, we shall see that the nature of the potential/soliton is fundamental, and two regimes may exist: one where the tail of the potential is fat and dictates the motion, one where the tail is weak and the soliton self interacts with the potential defects, hence leading to different motions.


\subsection{Assumptions on the potential}


 The sharp structure of the potential is essential for the qualitative description of solutions. We exhibit below a large class of potentials for which we can construct compact non trapped solutions. \

\begin{condition}
\label{ConidtionPotential}We assume the following on $\mathcal V$:\\
\noindent{\em 1. Regularity and decay}: $\mathcal{V}\in C^{\infty}\left(
\mathbb{R}^{d}\right)  $ is a real-valued, symmetric function that satisfies
the decay estimate
\[
\left\vert \mathcal{V}^{\left(  k\right)  }\left(  \left\vert x\right\vert
\right)  \right\vert \leq C\left(  1+\left\vert x\right\vert \right)  ^{-\rho
},\text{ }x\in\mathbb{R}^{d},\text{\ }\rho>0,
\]
for all derivatives of $\mathcal{V}^{\left(  k\right)  },$ $k\geq0.$\\
\noindent{\em 2. Structure and monotonicity of the tail}: there is $r_{0}\geq0$ such that the following is true for all $r\geq
r_{0}.$ The potential and its derivatives $\mathcal{V}^{\left(  k\right)
}\left(  r\right)  ,$ $k\geq0,$ are monotone. For any $\lambda>0,$
$\mathcal{V}$ has one of the forms
\[
\mathcal{V}\left(  r\right)  =\mathcal{V}_{+}\left(  r\right)  \text{ or
}\mathcal{V}\left(  r\right)  =\mathcal{V}_{-}\left(  r\right)  ,
\]
where
\[
\mathcal{V}_{\pm}\left(  r\right)  =\kappa e^{-2\frac{r}{\lambda}-\left(
d-1\right)  \ln\frac{r}{\lambda}\pm\mathbf{H}\left(  r\right)  },
\]
and $\mathbf{H}\geq0$ is such that $\mathbf{H},\mathbf{H}^{\prime}$ are
monotone and $\mathbf{H}^{\prime\prime}\left(  r\right)  $ is
bounded.\ Moreover, the estimate
\[
\left\vert \mathcal{V}^{\left(  k\right)  }\left(  r\right)  \right\vert \leq
C_{k}e^{-2\frac{r}{\lambda}-\left(  d-1\right)  \ln\frac{r}{\lambda}},\text{
}k\geq0,
\]
with $C_{k}>0$ is satisfied. In addition, in the case $\mathcal{V}%
=\mathcal{V}_{+},$ $\mathbf{H}$ is either $\mathbf{H}\left(  r\right)
=o\left(  r\right)  $ or $0<cr\leq\mathbf{H}\left(  r\right)  \leq2r+o\left(
r\right)  .$ Furthermore, in the case when $\mathbf{H}\left(  r\right)
=2\frac{r}{\lambda}+\left(  d-1\right)  \ln r+o\left(  r\right)  ,$ suppose
that the potential $\mathcal{V=}$ $e^{-\mathbf{h}\left(  r\right)  }$ with
$\mathbf{h}\left(  r\right)  \geq0$ satisfies $\left\vert \mathcal{V}\left(
r\right)  \right\vert ^{N_{0}}\leq C\left\vert \mathcal{V}^{\prime\prime
}\left(  r\right)  \right\vert $ for some $N_{0}>0.$ The function $\mathbf{h}$
is such that $\mathbf{h}\left(  r\right)  =o\left(  r\right)  ,$
$\mathbf{h}^{\left(  k\right)  }\left(  r\right)  $ are monotone for all
$k\geq0$ and
\[
\left\vert \mathbf{h}^{\left(  k\right)  }\left(  \frac{r}{2}\right)
\right\vert \leq C_{k}\left\vert \mathbf{h}^{\left(  k\right)  }\left(
r\right)  \right\vert ,\text{ }k\geq0,
\]
and%
\[
\left\vert \mathbf{h}^{\left(  k\right)  }\left(  r\right)  \right\vert
\leq\frac{C}{r}\left\vert \mathbf{h}^{\left(  k-1\right)  }\left(  r\right)
\right\vert ,\text{ }k\geq1,
\]
are valid.
\end{condition}

In order to introduce the leading order dynamical system describing the motion of the center of the solitary wave escaping the well, we need to compare the tail of $\mathcal V$ with the one of the solitary wave. Recall that there is a unique positive, radial
symmetric solution $Q\left(  x\right)  $ to
$$
-\Delta Q-Q+Q^{p}=0.
$$
(see Chapter 8 of \cite{Cazenave}). Moreover, $Q\left(  x\right)  =q\left(
\left\vert x\right\vert \right)  $ where $q$ satisfies for some $\mathcal{A}%
>0$ and all $\left\vert x\right\vert \geq1$
\begin{equation}
\left\vert q\left(  \left\vert x\right\vert \right)  -\mathcal{A}\left\vert
x\right\vert ^{-\frac{d-1}{2}}e^{-\left\vert x\right\vert }\right\vert
+\left\vert q^{\prime}\left(  \left\vert x\right\vert \right)  +\mathcal{A}%
\left\vert x\right\vert ^{-\frac{d-1}{2}}e^{-\left\vert x\right\vert
}\right\vert \leq C\left\vert x\right\vert ^{-1-\frac{d-1}{2}}e^{-\left\vert
x\right\vert } \label{ap2}%
\end{equation}
and, moreover%
\begin{equation}
\left\vert \nabla^{k}Q\left(  x\right)  \right\vert \leq C\left(  1+\left\vert
x\right\vert \right)  ^{-\frac{d-1}{2}}e^{-\left\vert x\right\vert },\text{
}k\geq0. \label{ap29}%
\end{equation}

\begin{definition}
\label{Def1}Let $\lambda>0.$ Let $$\upsilon\left(  d\right)  ={\textstyle\int_{0}^{\infty}}
e^{-2\eta^{2}}\eta^{d-2}d\eta$$ and $\mathbf{C}_{\pm}\left(  \xi\right)
=\xi^{\frac{d-1}{2}}%
{\textstyle\int_{1}^{\xi-1}}
\left(  r\left(  \xi-r\right)  \right)  ^{-\frac{d-1}{2}}e^{\pm\mathbf{H}%
\left(  r\right)  }dr.$
We define the function $\mathbf{U}_{\lambda
,\mathcal{V}}\left(  \xi\right)  ,$ $\xi\geq0$ as follows. If $\mathcal{V}%
\left(  r\right)  =\mathcal{V}_{+}\left(  r\right)  $ we put%
\[
\mathbf{U}_{\lambda,\mathcal{V}}\left(  \xi\right)  =\left\{
\begin{array}
[c]{c}%
\dfrac{\kappa}{2}\mathcal{A}^{2}\upsilon\left(  d\right)  \mathbf{C}%
_{+}\left(  \xi\right)  e^{-2\xi}\xi^{-\left(  d-1\right)  },\text{ if
}\mathbf{H}\left(  r\right)  =o\left(  r\right)  ,\\
\mathcal{KV}\left(  \lambda\xi\right)  ,\text{ if }0<c\lambda^{-1}%
r\leq\mathbf{H}\left(  r\right)  \leq2\lambda^{-1}r,
\end{array}
\right.
\]
where $\mathcal{K=}%
{\displaystyle\int}
e^{\left(  2-a\right)  \frac{\chi\cdot z}{\left\vert \chi\right\vert }}%
Q^{2}\left(  z\right)  dz,$ $a=\lim_{r\rightarrow\infty}\mathbf{H}^{\prime
}\left(  r\right)  .$ If $\mathcal{V}\left(  r\right)  =\mathcal{V}_{-}\left(
r\right)  ,$ we set%
\[
\mathbf{U}_{\lambda,\mathcal{V}}\left(  \xi\right)  =\left\{
\begin{array}
[c]{c}%
\dfrac{1}{2}\left(
{\displaystyle\int}
\left(  \mathcal{A}^{2}\mathcal{V}\left(  \left\vert z\right\vert \right)
+\kappa\mathcal{\tilde{K}}Q^{2}\left(  z\right)  \right)  e^{2\frac{\chi\cdot
z}{\left\vert \chi\right\vert }}dz\right)  e^{-2\xi}\xi^{-\left(  d-1\right)
},\text{ if }d\geq4,\\
\dfrac{1}{2}\left(
{\displaystyle\int}
V\left(  \left\vert z\right\vert \right)  e^{2\frac{\chi\cdot z}{\left\vert
\chi\right\vert }}dz\right)  e^{-2\xi}\xi^{-\left(  d-1\right)  },\text{ if
}d=2,3,\text{ }r^{-\frac{d-1}{2}}e^{-\mathbf{H}\left(  r\right)  }\in
L^{1}\left(  [1,\infty)\right)  ,\\
\dfrac{\kappa}{2}\mathcal{A}^{2}\upsilon\left(  d\right)  \mathbf{C}%
_{-}\left(  \xi\right)  e^{-2\xi}\xi^{-\left(  d-1\right)  },\text{ if
}d=2,3,\text{ }r^{-\frac{d-1}{2}}e^{-\mathbf{H}\left(  r\right)  }\notin
L^{1}\left(  [1,\infty)\right)  ,
\end{array}
\right.
\]
with $\mathcal{\tilde{K}}=\lim_{r\rightarrow\infty}e^{-\mathbf{H}\left(
r\right)  }.$
\end{definition}

\begin{condition}
\label{C1}Let $\lambda>0.$ Suppose that there are constants $R_{1}\left(
\lambda\right)  ,R_{2}\left(  \lambda\right)  \in\mathbb{R}$ such that
\begin{equation}
\mathbf{U}_{\lambda,\mathcal{V}^{\prime}}^{\prime}\left(  r\right)
\mathcal{Y}^{2}\left(  r,\lambda\right)  =R_{1}\left(  \lambda\right)
+o\left(  1\right)  \text{ and }\frac{\mathbf{U}_{\lambda,\mathcal{V}}%
^{\prime}\left(  r\right)  }{r}\mathcal{Y}^{2}\left(  r,\lambda\right)
=R_{2}\left(  \lambda\right)  +o\left(  1\right)  , \label{condpotin}%
\end{equation}
as $r\rightarrow\infty,$ where we denote $\mathcal{Y}\left(  r,\lambda\right)
=\int_{r_{0}}^{r}\mathbf{U}_{\lambda,\mathcal{V}}^{-1/2}\left(  \frac{\tau
}{\lambda}\right)  d\tau.$
\end{condition}

The main assumption on the potential is the first item in Condition
\ref{ConidtionPotential}. The rest of the assumptions are made in order to
determine the asymptotics in Section \ref{AppendixA}, and to obtain the
approximate dynamics in Section \ \ref{Sec1} (see Lemma \ref{approxsyst}) and
the a priori estimates of Section \ref{Secproof}. In order to simplify the
reading, one can follow the proofs by taking in account the examples $V\left(
r\right)  =Ce^{-c\sqrt{1+r^{2}}}$ or $V\left(  r\right)  =C\left(
1+r^{2}\right)  ^{-\rho/2},$ $\rho>0,$ with $C,c>0,$ for all $r>0,$ which
clearly satisfy the above assumptions.

\subsection{Statement of the result}

For $\lambda^{\infty}\in\mathbb{R}^{+}$ let us consider the problem of the motion in the central field
\begin{equation}
\left\{
\begin{array}
[c]{c}%
\dot{\chi}^{\infty}=2\beta^{\infty},\\
\dot{\beta}^{\infty}=\dfrac{1}{2\left\Vert Q\right\Vert _{L^{2}}^{2}}%
\nabla\left(  \mathbf{U}_{\lambda^{\infty},\mathcal{V}}\left(  \frac
{\left\vert \chi^{\infty}\right\vert }{\lambda^{\infty}}\right)  \right)  .
\end{array}
\right.  \label{s1}%
\end{equation}

\begin{remark}
We observe that the force in (\ref{s1}) is given in terms of $\mathbf{U}%
_{\lambda^{\infty},\mathcal{V}}$ and not of the potential $\mathcal{V}$
directly. This is due to the fact that for potentials decaying faster than
$Q^{2},$ the soliton dictates the behaviour of the speed parameter
$\beta^{\infty}$ (see (\ref{ap17}) and Lemma \ref{L2 1}). In the case when the
potential decays slower than $Q^{2},$ that is $0<c\left(  \lambda^{\infty
}\right)  ^{-1}r\leq\mathbf{H}\left(  r\right)  \leq2\left(  \lambda^{\infty
}\right)  ^{-1}r,$ we have $\mathbf{U}_{\lambda^{\infty},\mathcal{V}}^{\prime
}\left(  \frac{\left\vert \chi^{\infty}\right\vert }{\lambda^{\infty}}\right)
=\mathcal{KV}^{\prime}\left(  \left\vert \chi^{\infty}\right\vert \right)  $
and system (\ref{s1}) is reduced to%
\[
\left\{
\begin{array}
[c]{c}%
\dot{\chi}^{\infty}=2\beta^{\infty},\\
\dot{\beta}^{\infty}=\dfrac{\mathcal{K}}{2\left\Vert Q\right\Vert _{L^{2}}%
^{2}}\nabla\mathcal{V}\left(  \left\vert \chi^{\infty}\right\vert \right)  .
\end{array}
\right.
\]
Therefore, in this case the force is given in terms of $\mathcal{V}$ itself.
\end{remark}

The energy of the system (\ref{s1}) is given by
\[
E_{0}=\frac{\left\vert \dot{\chi}^{\infty}\right\vert ^{2}}{2}-\frac
{\mathbf{U}_{\lambda^{\infty},\mathcal{V}}\left(  \frac{r^{\infty}}%
{\lambda^{\infty}}\right)  }{\left\Vert Q\right\Vert _{L^{2}}^{2}}%
\]
where $r^{\infty}=\left\vert \chi^{\infty}\right\vert .$ The asymptotic
behaviour for large $t$ of the unbounded solutions $\chi^{\infty}$ to
(\ref{s1}) depends on the regime:\\

\noindent\underline{$E_{0}>0$}: hyperbolic motion with untrapped trajectory
\[
r^{\infty}=K_{\operatorname*{hyp}}t+o\left(  t\right)  \text{ and }\left\vert
\beta^{\infty}\right\vert =K_{\operatorname*{hyp}}^{\prime}+o\left(  1\right)
.
\]
for some $K_{\operatorname*{hyp}},K_{\operatorname*{hyp}}^{\prime}>0.$\\
\noindent\underline{$E_{0}=0$}: parabolic motion with untrapped trajectory
\[
\frac{\left\Vert Q\right\Vert _{L^{2}}}{\sqrt{2}}\int_{r_{0}}^{\left\vert
\chi^{\infty}\right\vert }\tfrac{dr}{\sqrt{\mathbf{U}_{\lambda^{\infty
},\mathcal{V}}\left(  \frac{r}{\lambda^{\infty}}\right)  -\left(  \frac{\mu
}{r}\right)  ^{2}}}=t+t_{0},\text{ }\left\vert \beta^{\infty}\right\vert
=\tfrac{\sqrt{\mathbf{U}_{\lambda^{\infty},\mathcal{V}}\left(  \frac
{r^{\infty}}{\lambda^{\infty}}\right)  -\left(  \frac{\mu}{r}\right)  ^{2}}%
}{\sqrt{2}\left\Vert Q\right\Vert _{L^{2}}},
\]
where $\mu\geq0$ is the angular momentum.\\
\noindent\underline{$E_{0}<0$}: all the solutions to (\ref{s1}) remain bounded in space, this is the trapped regime.\\

Our main result is that both untrapped hyperbolic and parabolic regimes of the two body problem \eqref{s1} can be reproduced as the leading order dynamics of the soliton centers for {\em compact} solutions to the full problem \eqref{NLS}.

\begin{theorem}
\label{T1}Let the potential $\mathcal{V}$ satisfy Conditions
\ref{ConidtionPotential} and \ref{C1}. Suppose that $\lambda^{\infty}%
\in\mathbb{R}^{+}$ is such that $\left(  \lambda^{\infty}\right)  ^{2}%
\sup_{r\in\mathbb{R}}\mathcal{V}\left(  r\right)  <1.\ $ Let $\Xi^{\infty
}\left(  t\right)  =\left(  \chi^{\infty}\left(  t\right)  ,\beta^{\infty
}\left(  t\right)  \right)  $ be a solution to (\ref{s1}).

(i) Positive energy. If $E_{0}>0,$ then there is a solution $u\in H^{1}$ for
the perturbed NLS equation (\ref{NLS}) and $\gamma\left(  t\right)
\in\mathbb{R}$, such that%
\[
\lim_{t\rightarrow+\infty}\left\Vert u\left(  t,x\right)  -\left(
\lambda^{\infty}\right)  ^{-\frac{2}{p-1}}Q\left(  \frac{x-\chi^{\infty
}\left(  t\right)  }{\lambda^{\infty}}\right)  e^{-i\gamma\left(  t\right)
}e^{i\beta^{\infty}\left(  t\right)  \cdot x}\right\Vert _{H^{1}}=0.
\]

(ii) Zero energy. Suppose in addition that $\mathcal{V}\left(  r\right)
\geq0,$ for all $r\geq r_{0},$ with some $r_{0}>0.$ If $E_{0}=0,$ then there
is a solution $u\in H^{1}$ for the perturbed NLS equation (\ref{NLS}) and
$\chi\left(  t\right)  \in\mathbb{R}^{d},$ $\gamma\left(  t\right)
\in\mathbb{R}$, such that
\[
\lim_{t\rightarrow+\infty}\left\Vert u\left(  t,x\right)  -\left(
\lambda^{\infty}\right)  ^{-\frac{2}{p-1}}Q\left(  \frac{x-\chi\left(
t\right)  }{\lambda^{\infty}}\right)  e^{-i\gamma\left(  t\right)  }%
e^{i\beta^{\infty}\left(  t\right)  \cdot x}\right\Vert _{H^{1}}=0
\]
and%
\[
\lim_{t\rightarrow+\infty}\left\vert \frac{\left\vert \chi\left(  t\right)
\right\vert }{\left\vert \chi^{\infty}\left(  t\right)  \right\vert
}-1\right\vert =0.
\]

\end{theorem}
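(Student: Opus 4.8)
The plan is to construct the solution $u$ by a bootstrap/modulation argument around the approximate soliton profile, following the now-classical scheme of backward-in-time construction of compact solutions (cf. \cite{Merle, Martel1, MaRa, Nguyen}). First I would set up the ansatz $u(t,x) = (\lambda^\infty)^{-2/(p-1)} Q\big(\tfrac{x-\chi(t)}{\lambda^\infty}\big) e^{-i\gamma(t)} e^{i\beta(t)\cdot x} + \varepsilon(t,x)$, where $(\chi,\beta,\gamma)$ are modulation parameters that will be shown to track the solution $\Xi^\infty = (\chi^\infty,\beta^\infty)$ of the dynamical system \eqref{s1}. Plugging this into \eqref{NLS}, the linearized operator $L$ around $Q$ governs the evolution of $\varepsilon$, and the forcing terms split into (a) the modulation-equation contributions, tuned by imposing orthogonality conditions on $\varepsilon$ that remove the kernel and generalized kernel of $L$ (the $Q$-scaling, translation, and Galilean directions), and (b) the genuine interaction term coming from the potential $\mathcal V(x) u$ evaluated on the soliton located near $\chi(t)$. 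The heart of the matter, carried out in Section \ref{Sec1} (Lemma \ref{approxsyst}) and the appendix, is that this interaction term, after integrating against the soliton, produces exactly $\tfrac{1}{2\|Q\|_{L^2}^2}\nabla\big(\mathbf U_{\lambda^\infty,\mathcal V}(|\chi|/\lambda^\infty)\big)$ up to errors that are integrable in time — this is precisely where Condition \ref{ConidtionPotential} and the asymptotic analysis of $\mathbf U_{\lambda,\mathcal V}$ (comparing the tail of $\mathcal V$ with that of $Q^2$) enter, and it is the reason the effective force is $\mathbf U$ rather than $\mathcal V$ itself.

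The construction itself proceeds backward in time. I would fix a large time $T_n \to +\infty$, solve \eqref{NLS} backward from $t = T_n$ with data equal to the exact modulated soliton at the reference trajectory (so $\varepsilon(T_n) = 0$ and the parameters match $\Xi^\infty$ at $T_n$), obtaining solutions $u_n$ on $[T_0, T_n]$. The core step is a uniform-in-$n$ bootstrap: one postulates, on a maximal interval $[T_*(n), T_n]$, bounds of the form $\|\varepsilon(t)\|_{H^1} \le \delta(t)$ and $|\chi(t) - \chi^\infty(t)| + |\beta(t) - \beta^\infty(t)| \lesssim \text{(decaying)}$, where $\delta(t)$ is chosen to beat the integrated size of the error terms — crucially, one needs the modulation parameters to stay in the regime $(\lambda^\infty)^2 \sup \mathcal V < 1$ so that the linearized energy remains coercive on the orthogonal complement. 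Using the coercivity of $L$ under the orthogonality conditions together with a suitable modified energy/virial functional adapted to the potential, one controls $\|\varepsilon\|_{H^1}$; feeding this back into the modulation ODEs and comparing with \eqref{s1} via a Gronwall argument on $[T_*(n),T_n]$ closes the bootstrap strictly, forcing $T_*(n) = T_0$ uniformly. Passing to a weak $H^1$-limit $u_n \rightharpoonup u$ and upgrading to strong convergence on the profile (using that $\varepsilon_n(t) \to 0$ as $t\to\infty$ uniformly) yields the desired solution $u$ with the stated asymptotics; for (i) the trajectory $\chi$ equals $\chi^\infty$ to leading order because the hyperbolic regime has $|\beta^\infty| \to K'_{\operatorname{hyp}} > 0$, so errors are integrable and can be absorbed into $\gamma$, while for (ii) the weaker asymptotic $|\chi(t)|/|\chi^\infty(t)| \to 1$ reflects that in the parabolic regime $|\beta^\infty|\to 0$ and the error bounds only control the ratio, not the difference, of the radial positions.

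The main obstacle — and the place where the nonnegativity hypothesis $\mathcal V \ge 0$ and the zero-energy assumption in part (ii) become essential — is the parabolic regime. There the soliton escapes the well only logarithmically-slowly in the sense that $\dot\chi^\infty \to 0$, so the naive interaction error $\int^\infty |\text{error}(t)|\,dt$ does not obviously converge; one must exploit the fine structure of $\mathbf U_{\lambda^\infty,\mathcal V}$ and the ODE identity $\tfrac{\|Q\|_{L^2}}{\sqrt 2}\int_{r_0}^{|\chi^\infty|} \tfrac{dr}{\sqrt{\mathbf U - (\mu/r)^2}} = t + t_0$ to get sharp pointwise lower bounds on $|\beta^\infty(t)|$ and hence on the rate at which $r^\infty(t)\to\infty$, which in turn makes the potential contribution integrable. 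This is where Condition \ref{C1} (the existence of the limits $R_1(\lambda), R_2(\lambda)$ governing $\mathbf U'_{\lambda,\mathcal V'}\mathcal Y^2$ and $\tfrac{\mathbf U'_{\lambda,\mathcal V}}{r}\mathcal Y^2$) does its work: it guarantees that the ratio of the true modulation-equation forcing to the model forcing in \eqref{s1} converges, so the a priori estimates of Section \ref{Secproof} can be propagated along the whole trajectory. Controlling the loss of derivatives in the potential term (the worst case being $\mathbf H(r) = 2r/\lambda + (d-1)\ln r + o(r)$, where $\mathcal V$ decays like $Q^2$ and one needs $|\mathcal V|^{N_0}\lesssim|\mathcal V''|$ to close a weighted estimate) is the remaining technical difficulty, handled by the monotonicity and doubling properties of $\mathbf h$ assumed in Condition \ref{ConidtionPotential}.
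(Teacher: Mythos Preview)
Your overall architecture --- backward construction from $T_n\to\infty$, modulation with orthogonality conditions, bootstrap on $(\varepsilon,\Xi-\Xi^\infty)$, then compactness --- is exactly the scheme the paper follows, and your discussion of why Condition~\ref{C1} and the parabolic case are the delicate points is accurate. But there is a genuine gap in the ansatz you propose.

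You take the approximate profile to be the bare soliton $Q$ (with $\lambda$ frozen at $\lambda^\infty$). The paper observes explicitly that this is not enough: with $W=Q$ the error $\mathcal{\tilde E}_{\mathrm{apr}}(Q)$ is only $O(\Theta(|\chi|/\lambda))$, which in the parabolic regime (where $|\chi^\infty|\sim\ln t$ for fast-decaying $V$, or grows only polynomially for $V=V^{(2)}$) is not small enough to be integrated twice in time and hence cannot close the bootstrap on the modulation parameters. The paper's remedy is a \emph{refined} approximate solution $W=Q+\sum_{j}\boldsymbol T^{(j)}$, built by iteratively inverting the perturbed linearized operator $\mathcal L_V=\mathcal L-\lambda^2 V(|y+\chi/\lambda|)$ (Lemmas~\ref{Lemmaapp} and~\ref{Lemmaapp1}); each correction $\boldsymbol T^{(j)}$ knocks the error down by an extra factor of $(|\beta|+\Theta)^{p_1-1}$ (fast case) or $\mathbf Z$ (slow case), and one takes $N$ large enough to beat the required decay. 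Moreover, two \emph{different} iterative constructions are needed: Lemma~\ref{Lemmaapp} for exponentially decaying potentials, and a separate scheme (Lemma~\ref{Lemmaapp1}) for $V=V^{(2)}$, because --- as Remark~\ref{Rem1} spells out --- the first construction gives an error $\sim V'\dot r^\infty$ which for $V(r)\sim r^{-\rho}$, $\rho\le 2$, is not even twice-integrable. Your proposal does not contain this refinement, and without it the step ``feeding this back into the modulation ODEs and comparing with \eqref{s1} via a Gronwall argument closes the bootstrap strictly'' would fail in the zero-energy case.

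A secondary point: the paper modulates the scaling $\lambda(t)$ as well (with its own approximate law $\dot\lambda=\mathbf M^{(N)}$), rather than fixing $\lambda=\lambda^\infty$; this extra degree of freedom is what allows the four orthogonality conditions \eqref{ap208} needed for coercivity of the linearized energy (Lemma~\ref{Lemmaenergy}), and $\lambda(t)\to\lambda^\infty$ is recovered a posteriori from the bootstrap.
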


\noindent{\em Comments on the result}.\\

\noindent{\em 1. Positivity condition on $\mathcal{V}$} In the case $E_0=0$, the positivity condition in $\mathcal V$ is made in order to ensure the existence of unbounded solutions for(\ref{s1}). Since by Condition \ref{ConidtionPotential} $\mathcal{V}$ is
monotone, there are no such solutions if this positivity assumption is not
satisfied on $\mathcal{V}$. Let us stress that as in \cite{Krieger}, the parabolic regime is particularly difficult to close due to uncertainties on the trajectory of the centers and degeneracies in the control of the infinite dimensional part of the solution.\\

\noindent{\em 2. Soliton/potential regimes}. Let us stress onto the fact that Theorem \ref{T1} covers both cases which are very different: one where the potential is "fat" at $\infty$ and where the potential tail drives the untrapped dynamics of the centers, one where the potential tail is "weak" and the soliton dynamics is driven by self interaction with the potential. The existence of such dynamics driven by a suitable leading order like two body problems was first predicted in \cite{Frolich}, but for suitable transient times, while Theorem \ref{T1} ensures the existence of such {\em global in time} compact flows.\\

\noindent{\em 3. Overview of the proof}. We adapt the method developed in
\cite{Krieger}. We modulate
the solitary wave by letting act the symmetries of the free NLS\ equation (see
(\ref{modulation})). Then, we translate (\ref{NLS}) into the stationary
equation
\begin{equation}
\triangle W-W+\left\vert W\right\vert ^{p-1}W=F, \label{*}%
\end{equation}
where $F=F\left(  \chi,\beta,\lambda\,\right)  $, with $\chi$-the translation
parameter, $\beta-$the Galilean drift and $\lambda-$the scaling parameter (see
(\ref{E2})). By adjusting the modulation parameters $\chi,\beta,\lambda$, we
construct approximate solutions to (\ref{*}) in such way that the error is
uniformly bounded by a small enough constant. This is achieved in Lemmas
\ref{Lemmaapp} and \ref{Lemmaapp1}. We separate the potentials in "fast" and
"slow" decaying: $V\left(  r\right)  =O\left(  e^{-cr}\right)  ,$ $c>0,$ or
$V\left(  r\right)  =O\left(  e^{-h\left(  r\right)  }\right)  ,$ $h\left(
r\right)  =o\left(  r\right)  \geq0,$ as $r\rightarrow\infty,$
respectively.\ The both cases are delicate. On the one hand, this is due to
the possible slow decay of the potential. On the other hand, if the potential
decays very fast, the solitary waves dominates and it becomes complicated to
control the error and to extract the leading order terms in the expansion for
the speed parameter $\beta$. The construction of these approximate solutions
for (\ref{*}) yields modulation equations for $\chi,\beta,\lambda\,.$ Then, by
energy estimates applied in a neighborhood of the solitary wave, we obtain a
priori bounds for the modulation parameters $\chi,\beta,\lambda$ and the error
$\varepsilon\left(  t,x\right)  $ (see Lemma \ref{L6}). Theorem \ref{T1} then
follows from a compactness argument in Section \ref{Sec2} and the asymptotic
behaviour of the approximate modulation parameters (see Lemma \ref{approxsyst}).\\

There are two main open problems after this work. First to address the question of {\em stability} of the corresponding compact dynamics. This has been proved for two bubbles KdV like flows using remarkable dine monotonicity properties \cite{Martel1}, but it is still an open problem for two bubbles in the Sch\"odinger case, and the case of the potential interaction seems a nice intermediate problem to investigate. The second main open problem  is to address the case of {\em trapped dynamics} where bubbles are predicted to stay close to one another. Here new radiation mechanisms are expected which is a fundamental open problem in the field, see again\cite{Si} for beautiful related problems.\\

The rest of the paper is organized as follows. In Section 2, we construct
approximate free solitary wave solutions for the perturbed NLS equation which
we use to obtain approximate equations for the modulation parameters. Section
3 is devoted to the proof of Theorem \ref{T1}. This proof depends on Lemma
\ref{L6}, which is stated in Section~3, but whose proof is deferred until
Section 4. The asymptotics for modulation equation for the speed parameter are
obtained in Section 5. In Section 6 we establish an invertibility result for
the perturbation $\mathcal{L}_{V}$ of the linearized operator for the equation
(\ref{eqsoliton}) around $Q.\ $Finally, in Section 7 we prove a lemma that is
used in the construction procedure of Section 3.

\subsection*{Notations} We denote by $L^{p}\left(  {\mathbb{R}}^{d}\right)  $, for $1\leq
p\leq\infty$, the usual (complex valued) Lebesgue spaces. $H^{s}({\mathbb{R}%
}^{d})$, $s\in{\mathbb{R}}$, is the usual (complex valued) Sobolev space. (See
e.g.~\cite{AdamsF} for the definitions and properties of these spaces.) For
any $f,g\in L^{2}$ we define the scalar product by
\[
\left(  f,g\right)  :=\operatorname{Re}\int_{{\mathbb{R}}^{d}}f\left(
x\right)  \overline{g\left(  x\right)  }dx.
\]
We adapt the Japanese brackets notation
\[
\left\langle x\right\rangle =\left(  1+x^{2}\right)  ^{1/2}.
\]
Finally, the same letter $C$ may denote different positive constants which
particular value is irrelevant.

\subsection*{Acknowledgements}  Both authors are supported by the ERC-2014-CoG 646650 SingWave.

\section{Approximate solutions.}

\subsection{First order approximation.}

\bigskip We recall that initial value problem for the free NLS\ equation%
\begin{equation}
\left\{
\begin{array}
[c]{c}%
i\partial_{t}u+\Delta u+\left\vert u\right\vert ^{p-1}u=0,\\
u\left(  0,x\right)  =u_{0}\left(  x\right)  .
\end{array}
\right.  \label{NLSfree}%
\end{equation}
admits the following symmetries. Let $\Xi=\left(  \chi,\beta,\lambda\right)
\in\mathbb{R}^{d}\times\mathbb{R}^{d}\times\mathbb{R}^{+}$ be a vector of
parameters and $\gamma\in\mathbb{R}$. Then, if $u_{0}\left(  x\right)
\rightarrow\lambda^{\frac{2}{p-1}}u_{0}\left(  \lambda\left(  x+\chi\right)
\right)  e^{-i\gamma}e^{i\beta\cdot\left(  x+\chi\right)  },$ the solution to
(\ref{NLSfree}) is transformed as%
\begin{equation}
u\left(  t,x\right)  \rightarrow\lambda^{-\frac{2}{p-1}}u_{0}\left(
\lambda^{-2}t,\lambda^{-1}\left(  x+\chi-\beta t\right)  \right)  e^{-i\gamma
}e^{i\tfrac{\beta}{2}\cdot\left(  x+\chi-\tfrac{\beta}{2}t\right)  }
\label{sym}%
\end{equation}

\bigskip Let $\Xi\left(  t\right)  $ encode the vector of parameters
$\Xi\left(  t\right)  =\left(  \chi\left(  t\right)  ,\beta\left(  t\right)
,\lambda\left(  t\right)  \right)  .$ We translate the solution $u$ by using a
combination of the symmetries for the free NLS\ equation. We let
\begin{equation}
u\left(  t,x\right)  =\lambda^{-\frac{2}{p-1}}\left(  t\right)  v\left(
t,\tfrac{x-\chi\left(  t\right)  }{\lambda\left(  t\right)  }\right)
e^{-i\gamma\left(  t\right)  }e^{i\beta\left(  t\right)  \cdot x},\text{
\ }v=v\left(  t,y\right)  ,\text{ }y=\tfrac{x-\chi\left(  t\right)  }%
{\lambda\left(  t\right)  }. \label{modulation}%
\end{equation}
It is convenient for us to rescale the potential $\mathcal{V}$. We define%
\begin{equation}
V\left(  x\right)  =V_{\lambda}\left(  x\right)  =\mathcal{V}\left(  \lambda
x\right)  . \label{vcall}%
\end{equation}
Also, we let%
\[
\Lambda=\frac{2}{p-1}+y\cdot\nabla.
\]
Then%
\begin{equation}
i\partial_{t}u+\Delta u+\left\vert u\right\vert ^{p-1}u+\mathcal{V}\left(
x\right)  u=\frac{1}{\lambda^{\frac{2p}{p-1}}}\mathcal{E}\left(  v\right)
\left(  t,\tfrac{x-\chi}{\lambda}\right)  e^{-i\gamma\left(  t\right)
}e^{i\beta\left(  t\right)  \cdot x}, \label{calc1}%
\end{equation}
where%
\[
\left.
\begin{array}
[c]{c}%
\mathcal{E}\left(  v\right)  =i\lambda^{2}\partial_{t}v+\Delta v-v+\left\vert
v\right\vert ^{p-1}v-i\lambda\dot{\lambda}\Lambda v-i\lambda\left(  \dot{\chi
}-2\beta\right)  \cdot\nabla v-\lambda^{3}\left(  \dot{\beta}\cdot y\right)
v\\
+\lambda^{2}\left(  \dot{\gamma}+\frac{1}{\lambda^{2}}-\left\vert
\beta\right\vert ^{2}-\dot{\beta}\cdot\chi\right)  v+\lambda^{2}V\left(
\left\vert y+\frac{\chi}{\lambda}\right\vert \right)  v.
\end{array}
\right.
\]
We aim to show that there is a vector $\Xi\left(  t\right)  $ such that
$v=Q+\varepsilon$ with $\varepsilon\left(  t,x\right)  \in C\left(  \left(
0,\infty\right)  ;\mathbf{H}^{1}\right)  $ solves the equation
\begin{equation}
\mathcal{E}\left(  v\right)  =\mathcal{E}\left(  Q+\varepsilon\right)  =0,
\label{E1}%
\end{equation}
in such way that
\[
\left\Vert \varepsilon\left(  t,x\right)  \right\Vert _{\mathbf{H}^{1}%
}\rightarrow0\text{ and\ }\left\vert \chi\left(  t\right)  \right\vert
\rightarrow\infty,
\]
as $t\rightarrow\infty.$ We also want to precise the asymptotic behaviour of
$\chi\left(  t\right)  .$

As a first step, we aim to construct approximate (in a suitable way) solution
to the equation
\begin{equation}
\mathcal{E}\left(  W\right)  =0. \label{appeq}%
\end{equation}
Let us denote the approximate modulation equation for the speed parameter
$\dot{\beta}$ by $B.$ We decompose $\mathcal{E}\left(  v\right)  $ as
\begin{equation}
\left.  \mathcal{E}\left(  v\right)  =\mathcal{\tilde{E}}_{\operatorname*{apr}%
}\left(  v\right)  +\tilde{R}\left(  v\right)  ,\right.  \label{E3}%
\end{equation}
with%
\begin{equation}
\mathcal{\tilde{E}}_{\operatorname*{apr}}\left(  v\right)  =\Delta
v-v+\left\vert v\right\vert ^{p-1}v+\lambda^{2}V\left(  \left\vert
y+\frac{\chi}{\lambda}\right\vert \right)  v-\lambda^{3}\left(  B\cdot
y\right)  v, \label{E4}%
\end{equation}
and%
\begin{equation}
\left.  \tilde{R}\left(  v\right)  =-i\lambda\left(  \dot{\chi}-2\beta\right)
\cdot\nabla v-i\lambda\dot{\lambda}\Lambda v+\lambda^{2}\left(  \dot{\gamma
}+\frac{1}{\lambda^{2}}-\left\vert \beta\right\vert ^{2}-\dot{\beta}\cdot
\chi\right)  v-\lambda^{3}\left(  \left(  \dot{\beta}-B\right)  \cdot
y\right)  v.\right.  \label{ap20}%
\end{equation}
If we modulate the parameters $\chi,$ $\beta,$ $\gamma$ in such a way that $R$
is small, the main part in (\ref{E3}) comes from $\mathcal{E}%
_{\operatorname*{apr}}\left(  v\right)  .$ As $Q$ solves (\ref{eqsoliton}),
introducing $v=Q+\varepsilon$ into (\ref{E4}) we have%
\begin{equation}
\mathcal{\tilde{E}}_{\operatorname*{apr}}\left(  v\right)  =\lambda
^{2}V\left(  \left\vert y+\frac{\chi}{\lambda}\right\vert \right)
Q-\lambda^{3}\left(  B\cdot y\right)  Q+r\left(  \varepsilon\right)  ,
\label{E5}%
\end{equation}
where
\[
r\left(  \varepsilon\right)  =\Delta\varepsilon-\varepsilon+V\left(
\left\vert y+\frac{\chi}{\lambda}\right\vert \right)  \varepsilon-\left(
B\cdot y\right)  \varepsilon+\left\vert Q+\varepsilon\right\vert ^{p-1}\left(
Q+\varepsilon\right)  -\left\vert Q\right\vert ^{p-1}Q.
\]
The stability problem for one travelling wave solution suggests
(\cite{Weisntein},\cite{Martel}) to adjust the modulation parameters in such
way that $\left(  \mathcal{\tilde{E}}_{\operatorname*{apr}}\left(  v\right)
,\nabla Q\right)  \sim0.$ Taking the scalar product of (\ref{E5}) with $\nabla
Q$ we get%
\[
\left(  \mathcal{\tilde{E}}_{\operatorname*{apr}}\left(  v\right)  ,\nabla
Q\right)  =\left(  \lambda^{2}V\left(  \left\vert y+\frac{\chi}{\lambda
}\right\vert \right)  Q-\lambda^{3}\left(  B\cdot y\right)  Q,\nabla Q\right)
+\left(  r\left(  \varepsilon\right)  ,\nabla Q\right)  .
\]
Then%
\[
\left\vert \left(  \lambda^{2}V\left(  \left\vert y+\frac{\chi}{\lambda
}\right\vert \right)  Q-\lambda^{3}\left(  B\cdot y\right)  Q,\nabla Q\right)
\right\vert \leq C\left(  \left\vert \left(  \mathcal{\tilde{E}}%
_{\operatorname*{apr}}\left(  v\right)  ,\nabla Q\right)  \right\vert
+\left\vert \left(  r\left(  \varepsilon\right)  ,\nabla Q\right)  \right\vert
\right)  .
\]
Therefore, we put%
\[
\left(  \lambda^{2}V\left(  \left\vert y+\frac{\chi}{\lambda}\right\vert
\right)  Q-\lambda^{3}\left(  B\cdot y\right)  Q,\nabla Q\right)  =0.
\]
Noting that~$-\left(  \left(  B\cdot y\right)  Q,\nabla Q\right)  =B\left\Vert
Q\right\Vert _{L^{2}}^{2}$ we arrive to%
\begin{equation}
B=B\left(  \chi,\lambda\right)  =-\frac{1}{2\lambda\left\Vert Q\right\Vert
_{L^{2}}^{2}}\int V\left(  \left\vert y+\frac{\chi}{\lambda}\right\vert
\right)  \nabla Q^{2}\left(  y\right)  dy. \label{ap17}%
\end{equation}
We want $R$ to be small as $t\rightarrow\infty$, approximately (\ref{ap20})
and (\ref{ap17}) yield the system
\begin{equation}
\left\{
\begin{array}
[c]{c}%
\dot{\chi}=2\beta,\\
\dot{\beta}=B\left(  \chi,\lambda\right)  ,
\end{array}
\right.  \label{syst}%
\end{equation}
and
\begin{equation}
\dot{\gamma}=-\frac{1}{\lambda^{2}}+\left\vert \beta\right\vert ^{2}%
+\dot{\beta}\cdot\chi. \label{gamma}%
\end{equation}

In order to understand the behaviour as $t\rightarrow\infty$ of the solutions
to (\ref{syst}), we need to study the asymptotics of the integral
\[
\mathcal{J}\left(  \chi\right)  =\mathcal{J}_{V}\left(  \chi\right)  =\int
V\left(  \left\vert y+\tilde{\chi}\right\vert \right)  \nabla Q^{2}\left(
y\right)  dy.
\]
This means that we need to compare the decay of the potential with the soliton
$Q.$ We consider a bounded function $V\in C^{\infty}$ satisfying
\begin{equation}
\left\vert \frac{d^{k}}{dr^{k}}V\left(  r\right)  \right\vert \leq C\left(
1+r\right)  ^{-\rho},\text{ }\rho>0, \label{p1}%
\end{equation}
for all $k\geq0.$ Suppose that $V^{\left(  k\right)  }\left(  r\right)  ,$
$k\geq0,$ are monotone for all $r\geq r_{0}>0.$ We ask $V$ to have one of the
forms
\begin{equation}
V\left(  r\right)  =V_{+}\left(  r\right)  \text{ or }V\left(  r\right)
=V_{-}\left(  r\right)  , \label{potential}%
\end{equation}
where $V_{\pm}\left(  r\right)  =\kappa e^{-2r-\left(  d-1\right)  \ln r\pm
H\left(  r\right)  },$ and $H\geq0$ is such that $H,H^{\prime}$ are monotone
and $H^{\prime\prime}\left(  r\right)  $ is bounded, for all $r\geq r_{0}%
.$\ Moreover, we assume some control on the derivatives
\[
\left\vert \left(  \frac{d}{dr}\right)  ^{k}V_{-}\left(  r\right)  \right\vert
\leq C_{k}e^{-2r-\left(  d-1\right)  \ln r},\text{ }k\geq0,
\]
with $C_{k}>0.$ In addition, in the case $V=V_{+},$ $H$ is either $H\left(
r\right)  =o\left(  r\right)  $ or $0<cr\leq H\left(  r\right)  \leq
2r+o\left(  r\right)  .$

\bigskip We also need to estimate $V\left(  \left\vert y+\tilde{\chi
}\right\vert \right)  Q\left(  y\right)  ,$ as $\left\vert \tilde{\chi
}\right\vert $ $\rightarrow\infty.$ Therefore, we require some information
about the behavior of the potential compared to the soliton $Q.$ We suppose
either one of the following asymptotics. Let $V_{\pm}^{\left(  1\right)
}=\kappa_{1}e^{-r-\frac{1}{2}\left(  d-1\right)  \ln r\pm H_{1}\left(
r\right)  }$, for $r\geq r_{0},$ with $H_{1}\geq0,$ $H_{1},$ $H_{1}^{\prime}$
are monotone, for all $r\geq r_{0},$ and
\begin{equation}
\left\vert \left(  \frac{d}{dr}\right)  ^{k}V_{-}^{\left(  1\right)  }\left(
r\right)  \right\vert \leq C_{k}e^{-r-\frac{1}{2}\left(  d-1\right)  \ln
r},\text{ }k\geq0, \label{ap256}%
\end{equation}
with $C_{k}>0.$ Moreover, in the case $V_{+}^{\left(  1\right)  }$ we suppose
that $H_{1}^{\left(  j\right)  },$ $j\geq1,$ are bounded. Then, we consider
potentials $V$ of the form (\ref{potential}) that can be represented as
\begin{equation}
V=V_{+}^{\left(  1\right)  }\text{ or }V=V_{-}^{\left(  1\right)  }.
\label{potential1}%
\end{equation}

Finally, in the case when $H_{1}\left(  r\right)  =r+\frac{1}{2}\left(
d-1\right)  \ln r+o\left(  r\right)  ,$ we suppose that the potential $V$ is
given by
\[
V\left(  r\right)  =V_{+}\left(  r\right)  =V_{+}^{\left(  1\right)  }\left(
r\right)  =V^{\left(  2\right)  }\left(  r\right)  ,
\]
where
\begin{equation}
V^{\left(  2\right)  }\left(  r\right)  :=e^{-h_{1}\left(  r\right)  },
\label{v2}%
\end{equation}
with $h_{1}\left(  r\right)  \geq0$ is such that $h_{1}\left(  r\right)
=o\left(  r\right)  ,$ $h_{1}^{\left(  k\right)  }$ are monotone,\ $k\geq0$
and
\begin{equation}
\left\vert h_{1}^{\left(  k\right)  }\left(  \frac{r}{2}\right)  \right\vert
\leq C_{k}\left\vert h_{1}^{\left(  k\right)  }\left(  r\right)  \right\vert
,\text{ }k\geq0, \label{prop}%
\end{equation}
for all $r\geq r_{0}.$ We also suppose that
\[
\left\vert h_{1}^{\left(  k\right)  }\left(  r\right)  \right\vert \leq
Cr^{-1}h_{1}^{\left(  k-1\right)  }\left(  r\right)  ,\text{ }k\geq1,
\]
and that for some $N_{0}>0,$
\begin{equation}
\left\vert V\left(  r\right)  \right\vert ^{N_{0}}\leq C\left\vert
V^{\prime\prime}\left(  r\right)  \right\vert \label{cond5}%
\end{equation}

\begin{remark}
We observe that since $V$ and $\mathcal{V}$ are related by (\ref{vcall}), the
above assumptions on the potential are satisfied if Condition
\ref{ConidtionPotential} holds.
\end{remark}

We define%
\begin{equation}
\upsilon\left(  d\right)  =%
{\displaystyle\int_{0}^{\infty}}
e^{-2\eta^{2}}\eta^{d-2}d\eta, \label{ap142}%
\end{equation}
and%
\begin{equation}
C_{\pm}\left(  \xi\right)  =\xi^{\frac{d-1}{2}}%
{\displaystyle\int_{1}^{\xi-1}}
\left(  r\left(  \xi-r\right)  \right)  ^{-\frac{d-1}{2}}e^{\pm H\left(
r\right)  }dr. \label{ap167}%
\end{equation}
If $K^{\prime}=\lim_{r\rightarrow\infty}H^{\prime}\left(  r\right)  $ exists,
we set
\begin{equation}
\mathcal{I}=%
{\displaystyle\int}
e^{\left(  2-K^{\prime}\right)  \frac{\chi\cdot z}{\left\vert \chi\right\vert
}}Q^{2}\left(  z\right)  dz. \label{ap184}%
\end{equation}
If $V\left(  r\right)  =V_{+}\left(  r\right)  $ we define
\begin{equation}
U_{V}\left(  \xi\right)  =\left\{
\begin{array}
[c]{c}%
\dfrac{\kappa}{2}\mathcal{A}^{2}\upsilon\left(  d\right)  C_{+}\left(
\xi\right)  e^{-2\xi}\xi^{-\left(  d-1\right)  },\text{ if }H\left(  r\right)
=o\left(  r\right)  ,\\
\mathcal{I}V\left(  \xi\right)  ,\text{ if }0<cr\leq H\left(  r\right)
\leq2r.
\end{array}
\right.  \label{ap190}%
\end{equation}
If $V\left(  r\right)  =V_{-}\left(  r\right)  $ we set%
\begin{equation}
U_{V}\left(  \xi\right)  =\left\{
\begin{array}
[c]{c}%
\dfrac{1}{2}\left(
{\displaystyle\int}
\left(  \mathcal{A}^{2}V\left(  \left\vert z\right\vert \right)  +K\kappa
Q^{2}\left(  z\right)  \right)  e^{2\frac{\chi\cdot z}{\left\vert
\chi\right\vert }}dz\right)  e^{-2\xi}\xi^{-\left(  d-1\right)  },\text{ if
}d\geq4,\\
\dfrac{1}{2}\left(
{\displaystyle\int}
V\left(  \left\vert z\right\vert \right)  e^{2\frac{\chi\cdot z}{\left\vert
\chi\right\vert }}dz\right)  e^{-2\xi}\xi^{-\left(  d-1\right)  },\text{ if
}d=2,3,\text{ }r^{-\frac{d-1}{2}}e^{-H\left(  r\right)  }\in L^{1}\left(
[1,\infty)\right)  ,\\
\dfrac{\kappa}{2}\mathcal{A}^{2}\upsilon\left(  d\right)  C_{-}\left(
\xi\right)  e^{-2\xi}\xi^{-\left(  d-1\right)  },\text{ if }d=2,3,\text{
}r^{-\frac{d-1}{2}}e^{-H\left(  r\right)  }\notin L^{1}\left(  [1,\infty
)\right)  ,
\end{array}
\right.  \label{ap191}%
\end{equation}
where we denote by $K=\lim_{r\rightarrow\infty}e^{-H\left(  r\right)  }.$

\begin{lemma}
\label{L2 1}Let $V\in C^{\infty}$ have the form (\ref{potential}) where
$H,H^{\prime}$ are monotone and $H^{\prime\prime}\left(  r\right)  $ is
bounded. If $V\left(  r\right)  =V_{+}\left(  r\right)  \ $let $H\left(
r\right)  <2r.$ Then, the asymptotics%
\begin{equation}
\mathcal{J}\left(  \chi\right)  =-\frac{\tilde{\chi}}{\left\vert \tilde{\chi
}\right\vert }\left(  1+o\left(  1\right)  \right)  U_{V}^{\prime}\left(
\left\vert \tilde{\chi}\right\vert \right)  , \label{l2}%
\end{equation}
as $\left\vert \tilde{\chi}\right\vert \rightarrow\infty$ is true. If
$V=V^{\left(  2\right)  },$ then%
\begin{equation}
\mathcal{J}\left(  \chi\right)  =-\frac{\chi}{\left\vert \chi\right\vert
}\left(  \left(  \int Q^{2}\left(  z\right)  dz\right)  V^{\prime}\left(
\left\vert \chi\right\vert \right)  +\mathbf{r}\left(  \left\vert
\chi\right\vert \right)  +e^{-\frac{\left\vert \chi\right\vert }{2}}\right)  ,
\label{l2bis}%
\end{equation}
where
\[
\mathbf{r}\left(  \left\vert \chi\right\vert \right)  =O\left(  \left(
\left\vert h^{\prime}\left(  \left\vert \chi\right\vert \right)  \right\vert
\left(  \left\vert h^{\prime}\left(  \left\vert \chi\right\vert \right)
\right\vert ^{2}+\tfrac{\left\vert h^{\prime}\left(  \left\vert \chi
\right\vert \right)  \right\vert }{\left\vert \chi\right\vert }+\left\vert
h^{\prime\prime}\left(  \left\vert \chi\right\vert \right)  \right\vert
+\left\vert \chi\right\vert ^{-2}\right)  +\tfrac{\left\vert h^{\prime\prime
}\left(  \left\vert \chi\right\vert \right)  \right\vert }{\left\vert
\chi\right\vert }+\left\vert h^{\prime\prime\prime}\left(  \left\vert
\chi\right\vert \right)  \right\vert \right)  V\left(  \left\vert
\chi\right\vert \right)  \right)  .
\]

\end{lemma}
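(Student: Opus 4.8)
The plan is to evaluate $\mathcal{J}(\chi)=\int V(|y+\tilde\chi|)\nabla Q^2(y)\,dy$ by the Laplace/saddle-point method in the variable $y$, exploiting the fact that both $V$ and $Q^2$ decay exponentially, so that the integrand concentrates on the segment joining $0$ and $-\tilde\chi$. First I would set $\tilde\chi=|\tilde\chi|\,\mathbf{e}$, write $\nabla Q^2(y)=-2q(|y|)q'(|y|)^{-1}\cdots$ — more precisely use the asymptotics \eqref{ap2}, \eqref{ap29} so that $Q^2(y)\sim \mathcal A^2|y|^{-(d-1)}e^{-2|y|}$ and $\nabla Q^2(y)\sim -2\mathcal A^2\frac{y}{|y|}|y|^{-(d-1)}e^{-2|y|}$ — and insert the assumed form $V(r)=\kappa e^{-2r-(d-1)\ln r\pm H(r)}$. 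The total exponent along a ray is governed by $-2|y|-2|y+\tilde\chi|$, which on the segment $y=-s\mathbf{e}$, $0\le s\le|\tilde\chi|$, is the constant $-2|\tilde\chi|$; transverse to the segment it is strictly concave, giving the Gaussian factor whose integral produces $\upsilon(d)=\int_0^\infty e^{-2\eta^2}\eta^{d-2}\,d\eta$ after passing to polar-type coordinates around each point of the segment. Integrating the remaining one-dimensional density $\left(r(\xi-r)\right)^{-(d-1)/2}e^{\pm H(r)}$ over $r\in[1,\xi-1]$, $\xi=|\tilde\chi|$, yields exactly $C_\pm(\xi)$ from \eqref{ap167}, and assembling the constants gives $\mathcal{J}(\chi)=-\tfrac{\tilde\chi}{|\tilde\chi|}(1+o(1))\,\tfrac{d}{d\xi}\!\left[\tfrac{\kappa}{2}\mathcal A^2\upsilon(d)C_\pm(\xi)e^{-2\xi}\xi^{-(d-1)}\right]$, i.e.\ \eqref{l2} in the case $H(r)=o(r)$.

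The case $0<cr\le H(r)\le 2r$ is different because the extra factor $e^{+H(r)}$ in $V$ destroys the balance: the integrand no longer concentrates on the whole segment but near the endpoint $y=-\tilde\chi$ (equivalently near the soliton), since there $|y+\tilde\chi|$ is small while $e^{+H}$ is largest where $r=|y|$ is largest. Here I would instead change variables $y\mapsto z=y+\tilde\chi$, Taylor-expand $V(|z-\tilde\chi|)=V(|\tilde\chi|)e^{(2-H'(\infty))\frac{\tilde\chi\cdot z}{|\tilde\chi|}+o(|z|)}$ using $|z-\tilde\chi|=|\tilde\chi|-\frac{\tilde\chi\cdot z}{|\tilde\chi|}+O(|z|^2/|\tilde\chi|)$ together with monotonicity of $H'$ and boundedness of $H''$, and recognize the resulting $z$-integral against $Q^2(z)$ as $\mathcal I=\int e^{(2-K')\frac{\chi\cdot z}{|\chi|}}Q^2(z)\,dz$ (note $a=K'=\lim H'$). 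The derivative $U_V'$ then picks up $\mathcal I\,V'(\xi)$, matching the second line of \eqref{ap190}. For the $V=V_-$ subcases one runs the same two analyses: when the soliton tail dominates ($r^{-(d-1)/2}e^{-H}\notin L^1$, or $d=2,3$) one gets the $C_-$ formula; when $V$ itself is integrable against the relevant weight ($d\ge4$, or $d=2,3$ with the $L^1$ condition) the concentration is again near $z=0$ and one obtains the $\int(\mathcal A^2 V+K\kappa Q^2)e^{2\chi\cdot z/|\chi|}\,dz$ expression, with the split between the $V$ and $Q^2$ contributions reflecting which of $V(|y|)$, $V(|y+\tilde\chi|)$ carries the mass.

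For the last assertion \eqref{l2bis}, $V=V^{(2)}=e^{-h_1(r)}$ with $h_1(r)=o(r)$: now $V$ decays \emph{subexponentially} relative to $Q^2$, so the integral $\int V(|y+\tilde\chi|)\nabla Q^2(y)\,dy$ is completely dominated by the soliton and concentrates at $y$ of order $1$, i.e.\ the argument $|y+\tilde\chi|\approx|\tilde\chi|$ to leading order. I would Taylor-expand $V(|y+\tilde\chi|)$ around $|\tilde\chi|$ to second order, $V(|y+\tilde\chi|)=V(\xi)+V'(\xi)\big(\tfrac{\tilde\chi\cdot y}{\xi}+O(|y|^2/\xi)\big)+\tfrac12V''(\xi)(\cdots)^2+\cdots$, integrate term by term against $\nabla Q^2$ (using $\int\nabla Q^2=0$ and $\int y_i\partial_j Q^2 = -\delta_{ij}\int Q^2$ by symmetry), and control the remainder using the chain-rule bounds on derivatives of $V=e^{-h_1}$: each $\partial^k V$ is a polynomial in $h_1',h_1'',\dots$ times $V$, and the hypotheses $|h_1^{(k)}(r)|\le Cr^{-1}h_1^{(k-1)}(r)$ and $|h_1^{(k)}(r/2)|\le C_k|h_1^{(k)}(r)|$ let one bound everything by the displayed $\mathbf r(|\chi|)$; the $e^{-|\chi|/2}$ term collects the exponentially small tail where $|y|\gtrsim|\chi|/2$, handled crudely via \eqref{ap29}. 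The main obstacle throughout is the first, delicate matching in the borderline regime $H(r)=o(r)$ and the transition to $0<cr\le H(r)$: one must justify uniformly (in the relevant range of $H$) that the transverse Gaussian integral really decouples from the longitudinal $r$-integral and that the $o(1)$ error in \eqref{l2} is genuinely negligible compared to $U_V'$, which requires the monotonicity of $H,H'$ and the boundedness of $H''$ in an essential way; the $V^{(2)}$ remainder estimate, though tedious, is comparatively routine bookkeeping once the expansion is set up.
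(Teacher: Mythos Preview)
Your overall strategy coincides with the paper's (the proof there simply defers to Lemmas~\ref{L3} and~\ref{L4} in Section~\ref{AppendixA}): split according to where the integrand concentrates, pass to polar coordinates transverse to the segment $[0,-\tilde\chi]$ to extract the Gaussian factor $\upsilon(d)$ and the longitudinal integral $C_\pm$, and in the endpoint-concentration cases Taylor-expand the slowly varying factor. The $V^{(2)}$ treatment and the $V_-$ dichotomy are likewise the same.

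There is however a real slip in your handling of the regime $0<cr\le H(r)<2r$. Here $V(r)\sim e^{-(2-c')r}$ decays \emph{slower} than $Q^2(y)\sim e^{-2|y|}$, so along the segment $y=-s\mathbf e$, $0\le s\le|\tilde\chi|$, the product behaves like $e^{-2|\tilde\chi|+H(|\tilde\chi|-s)}$; since $H$ is increasing this is maximal at $s=0$, i.e.\ at $y=0$, which is where the soliton sits --- not at $y=-\tilde\chi$ as you claim. Consequently no change of variables is needed: one stays in the $y$-frame and expands $V(|y+\tilde\chi|)\approx V(|\tilde\chi|)\,e^{-(2-H'(|\tilde\chi|))\,\tilde\chi\cdot y/|\tilde\chi|}$ for $|y|=O(1)$, integrates against $\nabla Q^2(y)$, and an integration by parts yields $(2-H'(|\tilde\chi|))\,\mathcal I\,V(|\tilde\chi|)\,\tfrac{\tilde\chi}{|\tilde\chi|}$, which is $-\tfrac{\tilde\chi}{|\tilde\chi|}(1+o(1))U_V'$ since $U_V=\mathcal I\,V$ here. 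Your substitution $z=y+\tilde\chi$ would instead put the soliton at $z=\tilde\chi$ and $V$ at $z=0$, so the expression ``$V(|z-\tilde\chi|)$ against $Q^2(z)$'' does not arise from that change; the Taylor formula you wrote down is the correct one, but it lives in the frame where the soliton stays at the origin. Once this is straightened out the argument goes through exactly as in the paper.
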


\begin{proof}
See Lemmas \ref{L3} and \ref{L4} in Section \ref{AppendixA}.
\end{proof}

In the next lemma we compare the potential with the solitary wave $Q.$ We
denote
\begin{equation}
\Theta\left(  \left\vert \chi\right\vert \right)  =\Theta_{V}\left(
\left\vert \chi\right\vert \right)  =\left\{
\begin{array}
[c]{c}%
e^{-\left\vert \chi\right\vert }\left(  1+\left\vert \chi\right\vert \right)
^{-\frac{\left(  d-1\right)  }{2}},\text{ if }V\left(  r\right)
=V_{-}^{\left(  1\right)  }\\
\left\vert V\left(  \left\vert \chi\right\vert \right)  \right\vert ,\text{
\ if }V\left(  r\right)  =V_{+}^{\left(  1\right)  }.
\end{array}
\right.  \label{theta}%
\end{equation}
Observe that%
\begin{equation}
\left\vert U_{V}\left(  \left\vert \chi\right\vert \right)  \right\vert \leq
C\Theta\left(  \left\vert \chi\right\vert \right)  . \label{ap246}%
\end{equation}
We have the following result.

\begin{lemma}
\label{L2}Let $V\in C^{\infty}$ be the form (\ref{potential}) and in addition
may be represented as (\ref{potential1}). Then, there is $C\left(  V\right)
>0$ such that for any $\left\vert \tilde{\chi}\right\vert \geq C\left(
V\right)  $ the following hold. The estimate
\begin{equation}
\left\Vert \left(  \left(  \frac{d}{dr}\right)  ^{k}V\right)  \left(
\left\vert y+\tilde{\chi}\right\vert \right)  \tilde{Q}\left(  y\right)
\right\Vert _{L^{\infty}}\leq C_{k}\Theta\left(  \left\vert \tilde{\chi
}\right\vert \right)  ,\text{ }k\geq0,\text{ }C_{k}>0, \label{estp}%
\end{equation}
is valid for any $\tilde{Q}$ satisfying (\ref{ap29}). Moreover, in the case
when $V\left(  r\right)  =V^{\left(  2\right)  }\left(  r\right)  $ we have%
\begin{equation}
\left\Vert \left(  \left(  \frac{d}{dr}\right)  ^{k}V\right)  \left(
\left\vert y+\tilde{\chi}\right\vert \right)  e^{-\delta\left\vert
y\right\vert }\right\Vert _{L^{\infty}}\leq C_{k}\left(  \left\vert \left(
\left(  \frac{d}{dr}\right)  ^{k}V\right)  \left(  \left\vert \tilde{\chi
}\right\vert \right)  \right\vert +e^{-\frac{\delta\left\vert \tilde{\chi
}\right\vert }{2}}\right)  ,\text{ }\delta>0, \label{estp1}%
\end{equation}
for any $k\geq0.$
\end{lemma}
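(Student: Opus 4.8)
The strategy is to localize the product $V(|y+\tilde\chi|)\tilde Q(y)$ into two spatial regions relative to the large shift $\tilde\chi$, and to exploit the exponential (or polynomial) decay of $\tilde Q$ together with the monotonicity of $V$ and its derivatives coming from \eqref{potential}. Fix $\tilde\chi$ with $|\tilde\chi|$ large, and split $\mathbb R^d = \Omega_{\rm in}\cup\Omega_{\rm out}$ where $\Omega_{\rm in}=\{|y|\le |\tilde\chi|/2\}$ and $\Omega_{\rm out}=\{|y|> |\tilde\chi|/2\}$. On $\Omega_{\rm in}$ we have $|y+\tilde\chi|\ge |\tilde\chi|-|y|\ge |\tilde\chi|/2$, so by the monotonicity of $V^{(k)}$ on $[r_0,\infty)$ (valid once $|\tilde\chi|/2\ge r_0$, i.e. $|\tilde\chi|\ge C(V)$) we can bound $|(\tfrac{d}{dr})^kV(|y+\tilde\chi|)|$ by its value at the nearest point, which in turn is controlled — using the explicit form $V_\pm=\kappa e^{-2r-(d-1)\ln r\pm H(r)}$ and the derivative bounds assumed in \eqref{potential} and, for the upper/lower envelope, \eqref{potential1} — by $C_k\,\Theta(|\tilde\chi|)$ up to a loss of a factor comparable to $e^{-c(|\tilde\chi|-|y|)}$ times $e^{\pm(H(\cdot)-H(|\tilde\chi|))}$. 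Since on $\Omega_{\rm in}$ the extra factor $\tilde Q(y)$ is merely bounded, and the shift of $H$ is absorbed because $H'$ is bounded when $V=V_+^{(1)}$ (so $|H(|y+\tilde\chi|)-H(|\tilde\chi|)|\le \|H'\|_\infty|y|$, beaten by a genuine $e^{-c|y|}$ gain unless $H$ itself is $o(r)$, in which case $\Theta=|V|$ and there is nothing to lose), the $\Omega_{\rm in}$ contribution is $\le C_k\Theta(|\tilde\chi|)$.

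On $\Omega_{\rm out}$ we instead throw the decay onto $\tilde Q$: by \eqref{ap29}, $|\tilde Q(y)|\le C(1+|y|)^{-(d-1)/2}e^{-|y|}\le C e^{-|\tilde\chi|/2}$ when $|y|>|\tilde\chi|/2$, while $(\tfrac{d}{dr})^kV$ is globally bounded by \eqref{p1} (its derivatives are all bounded). Hence the $\Omega_{\rm out}$ piece is $\le C_k e^{-|\tilde\chi|/2}$. Now in the case $V=V_-^{(1)}$ one has $\Theta(|\tilde\chi|)=e^{-|\tilde\chi|}(1+|\tilde\chi|)^{-(d-1)/2}$, which is much smaller than $e^{-|\tilde\chi|/2}$ — but this is fine because $V$ itself decays like $e^{-2|\tilde\chi|}$, so on $\Omega_{\rm out}$ we do \emph{not} use the crude bound on $V$; instead, since $|y+\tilde\chi|\le |y|+|\tilde\chi|\le 3|y|$, monotonicity of $V$ gives $|(\tfrac{d}{dr})^kV(|y+\tilde\chi|)|\le C_k e^{-2|y+\tilde\chi|-(d-1)\ln|y+\tilde\chi|}$ and pairing this against the Gaussian-type decay of $\tilde Q$ and integrating — or rather, estimating the $L^\infty$ norm pointwise — one recovers the sharp $\Theta(|\tilde\chi|)=e^{-|\tilde\chi|}(1+|\tilde\chi|)^{-(d-1)/2}$ rate, because the product $e^{-|y|}\cdot e^{-2|y+\tilde\chi|}$ is maximized near $y$ on the segment toward $-\tilde\chi$ at a value of order $e^{-|\tilde\chi|}$. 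The case $V=V_+^{(1)}$ with $\Theta=|V(|\tilde\chi|)|$ is handled symmetrically, the worst point again being controlled by monotonicity and the envelope condition $H_1^{(j)}$ bounded. Combining the two regions yields \eqref{estp}.

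For \eqref{estp1} with $V=V^{(2)}=e^{-h_1(r)}$, $h_1(r)=o(r)$, the same decomposition works but now the relevant scale is $\delta|\tilde\chi|/2$ coming from $e^{-\delta|y|}$. On $\Omega_{\rm in}$, monotonicity of $V^{(2)}$ and of $h_1^{(k)}$ together with the doubling-type inequality \eqref{prop}, $|h_1^{(k)}(r/2)|\le C_k|h_1^{(k)}(r)|$, let us bound $|(\tfrac{d}{dr})^kV(|y+\tilde\chi|)|$ at radii $\ge|\tilde\chi|/2$ by $C_k|(\tfrac{d}{dr})^kV(|\tilde\chi|)|$ — this is exactly the role of \eqref{prop} and of $|h_1^{(k)}(r)|\le Cr^{-1}h_1^{(k-1)}(r)$, which propagate the bound on the function to its derivatives without losing the subexponential profile; multiplying by the bounded factor $e^{-\delta|y|}$ keeps this. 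On $\Omega_{\rm out}$, $e^{-\delta|y|}\le e^{-\delta|\tilde\chi|/2}$ and $(\tfrac{d}{dr})^kV$ is bounded, giving the second term $e^{-\delta|\tilde\chi|/2}$. I expect the main obstacle to be the bookkeeping on $\Omega_{\rm in}$: one must verify that the shift in $H$ (resp. in $h_1$) between $|y+\tilde\chi|$ and $|\tilde\chi|$ is genuinely absorbed — for $H$ because either $H$ is $o(r)$ (so $\Theta=|V|$ records it exactly) or $H'$ is bounded and a clean exponential $e^{-c|y|}$ from the principal part $e^{-2r}$ dominates the linear-in-$|y|$ error in the exponent; for $h_1$ because the hypotheses \eqref{prop} and the $r^{-1}$-recursion are precisely calibrated so that $h_1(|y+\tilde\chi|)\le h_1(3|\tilde\chi|)\le C h_1(|\tilde\chi|)$ on the part of $\Omega_{\rm in}$ near the origin while the $e^{-\delta|y|}$ factor handles the rest. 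Making this uniform in $k$ (with constants $C_k$ growing with $k$) and checking that the threshold $C(V)$ can be taken to be $\max(2r_0, \text{const})$ are the only slightly delicate points; everything else is the two-region split described above.
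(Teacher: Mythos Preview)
Your decomposition for \eqref{estp1} matches the paper's and is correct. The gap is in your treatment of \eqref{estp} on the inner region $\Omega_{\rm in}=\{|y|\le|\tilde\chi|/2\}$: you assert that there ``the extra factor $\tilde Q(y)$ is merely bounded'' and then claim a gain $e^{-c|y|}$ from the potential alone absorbs the shift in $H_1$. This is not true in general. Take $V=V_+^{(1)}$ with $H_1'\to 1$; then on the part of $\Omega_{\rm in}$ where $|y+\tilde\chi|<|\tilde\chi|$ one has
\[
\frac{|V(|y+\tilde\chi|)|}{|V(|\tilde\chi|)|}\;\sim\;e^{\,|\tilde\chi|-|y+\tilde\chi|}\,e^{\,H_1(|y+\tilde\chi|)-H_1(|\tilde\chi|)}\;\ge\;c\,e^{(1-\|H_1'\|_\infty)\,(|\tilde\chi|-|y+\tilde\chi|)},
\]
which for $\|H_1'\|_\infty$ close to $1$ can be as large as $e^{(1-o(1))|y|}$ on $\Omega_{\rm in}$; there is no intrinsic $e^{-c|y|}$ gain coming from $V$ itself. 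Without invoking the exponential decay of $\tilde Q$ here you cannot bound the product by $\Theta(|\tilde\chi|)=|V(|\tilde\chi|)|$.

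The paper's proof uses the decay of $\tilde Q$ \emph{everywhere}. For $V=V_-^{(1)}$ there is no region split at all: one writes directly
\[
|V^{(k)}(|y+\tilde\chi|)\tilde Q(y)|\le C_k\,e^{-|\tilde\chi|}(1+|y+\tilde\chi|)^{-\frac{d-1}{2}}(1+|y|)^{-\frac{d-1}{2}}e^{-(|y+\tilde\chi|-|\tilde\chi|+|y|)},
\]
and the triangle inequality $|y+\tilde\chi|+|y|\ge|\tilde\chi|$ plus $(1+|y+\tilde\chi|)(1+|y|)\ge c(1+|\tilde\chi|)$ give $\Theta(|\tilde\chi|)$ immediately. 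For $V=V_+^{(1)}$ the paper splits according to $|y+\tilde\chi|\lessgtr|\tilde\chi|$ (not $|y|\lessgtr|\tilde\chi|/2$): when $|y+\tilde\chi|\le|\tilde\chi|$, monotonicity of $H_1$ pulls $e^{H_1}$ out at $|\tilde\chi|$ and the remaining product $e^{-|y+\tilde\chi|}\tilde Q(y)$ is $\le Ce^{-|\tilde\chi|}$ by the $V_-^{(1)}$ computation; when $|y+\tilde\chi|\ge|\tilde\chi|$, one uses $H_1'\le 1+o(1)$ to get $H_1(|y+\tilde\chi|)-H_1(|\tilde\chi|)-(|y+\tilde\chi|-|\tilde\chi|)\le o(1)|y|$, and this $o(1)|y|$ is then absorbed by the full $e^{-|y|}$ decay of $\tilde Q$, not by the potential. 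In short: the missing idea in your sketch is that $\tilde Q$'s exponential decay is doing essential work on $\Omega_{\rm in}$ as well, and the correct dichotomy (for $V_+^{(1)}$) is the sign of $|y+\tilde\chi|-|\tilde\chi|$.
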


\begin{proof}
Let us prove (\ref{estp}). If $V\left(  r\right)  =V_{-}^{\left(  1\right)
},$ using (\ref{ap29}) and (\ref{ap256})\ we have%
\begin{equation}
\left\vert \left(  \left(  \frac{d}{dr}\right)  ^{k}V\right)  \left(
\left\vert y+\tilde{\chi}\right\vert \right)  \tilde{Q}\left(  y\right)
\right\vert \leq C_{k}e^{-\left\vert \tilde{\chi}\right\vert }\left(
1+\left\vert y+\tilde{\chi}\right\vert \right)  ^{-\frac{1}{2}\left(
d-1\right)  }\left(  1+\left\vert y\right\vert \right)  ^{-\frac{1}{2}\left(
d-1\right)  }e^{-\left(  \left\vert y+\tilde{\chi}\right\vert -\left\vert
\tilde{\chi}\right\vert +\left\vert y\right\vert \right)  } \label{ap180}%
\end{equation}
and then%
\begin{equation}
\left\vert \left(  \left(  \frac{d}{dr}\right)  ^{k}V\right)  \left(
\left\vert y+\tilde{\chi}\right\vert \right)  \tilde{Q}\left(  y\right)
\right\vert \leq C_{k}e^{-\left\vert \tilde{\chi}\right\vert }\left(
1+\left\vert \tilde{\chi}\right\vert \right)  ^{-\frac{1}{2}\left(
d-1\right)  },\text{ }C_{k}>0\text{.} \label{ap259}%
\end{equation}
Suppose now that $V\left(  r\right)  =V_{+}^{\left(  1\right)  }.$ First, as
all the derivatives $H_{1}^{\left(  j\right)  },$ $j\geq1,$ are bounded we
have%
\begin{equation}
\left\vert \left(  \left(  \frac{d}{dr}\right)  ^{k}V\right)  \left(
\left\vert y+\tilde{\chi}\right\vert \right)  \tilde{Q}\left(  y\right)
\right\vert \leq C_{k}\left\vert V\left(  \left\vert y+\tilde{\chi}\right\vert
\right)  \tilde{Q}\left(  y\right)  \right\vert . \label{ap257}%
\end{equation}
If $\left\vert y+\tilde{\chi}\right\vert \leq\left\vert \tilde{\chi
}\right\vert ,$ since $H_{1}$ is monotone similarly to (\ref{ap180}) we get
\begin{equation}
\left\vert V\left(  \left\vert y+\tilde{\chi}\right\vert \right)  \tilde
{Q}\left(  y\right)  \right\vert \leq C\left\vert \left(  1+\left\vert
y+\tilde{\chi}\right\vert \right)  ^{-\frac{1}{2}\left(  d-1\right)
}e^{-\left\vert y+\tilde{\chi}\right\vert }\tilde{Q}\left(  y\right)
\right\vert e^{H_{1}\left(  \left\vert \tilde{\chi}\right\vert \right)  }\leq
C\left\vert V\left(  \left\vert \tilde{\chi}\right\vert \right)  \right\vert .
\label{ap258}%
\end{equation}
If $\left\vert y+\tilde{\chi}\right\vert \geq\left\vert \tilde{\chi
}\right\vert ,$ we have $\left\vert y\right\vert ^{2}+2\tilde{\chi}\cdot
y\geq0.$ Then
\[
-\left(  \left\vert y+\tilde{\chi}\right\vert -\left\vert \tilde{\chi
}\right\vert \right)  +\left(  H_{1}\left(  \left\vert y+\tilde{\chi
}\right\vert \right)  -H_{1}\left(  \left\vert \tilde{\chi}\right\vert
\right)  \right)  =-\left(  1-H^{\prime}\left(  \xi\right)  \right)
\frac{\left\vert y\right\vert ^{2}+2\tilde{\chi}\cdot y}{\left\vert
y+\tilde{\chi}\right\vert +\left\vert \tilde{\chi}\right\vert },
\]
for some $\xi\in\left[  \left\vert \tilde{\chi}\right\vert ,\left\vert
y+\tilde{\chi}\right\vert \right]  .$ As $V$ is bounded, $H_{1}\geq0$ and
$H_{1}^{\prime}$ is monotone, in particular $H_{1}^{\prime}\leq1+o\left(
1\right)  ,$ as $r\rightarrow\infty.$ Then $-\left(  1-H_{1}^{\prime}\left(
\xi\right)  \right)  \frac{\left\vert y\right\vert ^{2}+2\tilde{\chi}\cdot
y}{\left\vert y+\tilde{\chi}\right\vert +\left\vert \tilde{\chi}\right\vert
}\leq o\left(  1\right)  \left\vert y\right\vert ,$ as $\left\vert \tilde
{\chi}\right\vert \rightarrow\infty,$ and hence by (\ref{ap180})%
\begin{equation}
\left\vert V\left(  \left\vert y+\tilde{\chi}\right\vert \right)  \tilde
{Q}\left(  y\right)  \right\vert \leq C\left\vert V\left(  \left\vert
\tilde{\chi}\right\vert \right)  \right\vert . \label{ap4}%
\end{equation}
Therefore, using (\ref{ap258}) and (\ref{ap4}) in (\ref{ap257}) we deduce%
\begin{equation}
\left\vert \left(  \left(  \frac{d}{dr}\right)  ^{k}V\right)  \left(
\left\vert y+\tilde{\chi}\right\vert \right)  \tilde{Q}\left(  y\right)
\right\vert \leq C_{k}\left\vert V\left(  \left\vert \tilde{\chi}\right\vert
\right)  \right\vert . \label{ap260}%
\end{equation}
Relation (\ref{estp}) follows from (\ref{ap259}) and (\ref{ap260})$.$

We now prove (\ref{estp1})$.$ For $\left\vert y\right\vert \geq\frac
{\left\vert \tilde{\chi}\right\vert }{2},$ since $h_{1}\left(  r\right)
=o\left(  r\right)  $ we estimate%
\begin{equation}
\left\vert V^{\prime}\left(  \left\vert y+\tilde{\chi}\right\vert \right)
e^{-\delta\left\vert y\right\vert }\right\vert \leq C\left\Vert V^{\prime
}\right\Vert _{L^{\infty}}e^{-\frac{\delta\left\vert \tilde{\chi}\right\vert
}{2}}. \label{ap252}%
\end{equation}
Using that $h_{1}^{\prime\prime}$ is bounded we get\qquad%
\[
\left\vert h_{1}\left(  \left\vert \tilde{\chi}+y\right\vert \right)
-h_{1}\left(  \left\vert \tilde{\chi}\right\vert \right)  -h_{1}^{\prime
}\left(  \left\vert \tilde{\chi}\right\vert \right)  \left(  \frac{\tilde
{\chi}\cdot y}{\left\vert \tilde{\chi}\right\vert }\right)  \right\vert \leq
C\frac{1+\left\vert y\right\vert ^{4}}{\left\vert \tilde{\chi}\right\vert }%
\]
for $\left\vert y\right\vert \leq\frac{\left\vert \tilde{\chi}\right\vert }%
{2}.$ This implies
\begin{equation}
\left\vert V\left(  \left\vert \tilde{\chi}+y\right\vert \right)  -V\left(
\left\vert \tilde{\chi}\right\vert \right)  e^{-h_{1}^{\prime}\left(
\left\vert \tilde{\chi}\right\vert \right)  \frac{\tilde{\chi}\cdot
y}{\left\vert \tilde{\chi}\right\vert }}\right\vert \leq\left\vert V\left(
\left\vert \tilde{\chi}\right\vert \right)  \right\vert e^{-h_{1}^{\prime
}\left(  \left\vert \tilde{\chi}\right\vert \right)  \frac{\tilde{\chi}\cdot
y}{\left\vert \tilde{\chi}\right\vert }}\frac{1+\left\vert z\right\vert ^{4}%
}{\left\vert \tilde{\chi}\right\vert },\text{ as }\left\vert \tilde{\chi
}\right\vert \rightarrow\infty. \label{ap248}%
\end{equation}
Since $h_{1}\left(  r\right)  =o\left(  r\right)  $ and $h_{1}^{\prime}$ is
monotone $\left\vert h_{1}^{\prime}\left(  r\right)  \right\vert \leq
\frac{\delta}{2},$ for all $r\ $sufficiently large$.$ As for $\left\vert
y\right\vert \leq\frac{\left\vert \tilde{\chi}\right\vert }{2},$ $\left\vert
\tilde{\chi}+y\right\vert \geq\frac{\left\vert \tilde{\chi}\right\vert }{2},$
using that $h_{1}^{\prime}$ is monotone we get
\begin{equation}
\left\vert V\left(  \left\vert \tilde{\chi}+y\right\vert \right)
e^{-\delta\left\vert y\right\vert }\right\vert \leq C\left\vert V\left(
\left\vert \tilde{\chi}\right\vert \right)  e^{-\frac{\delta\left\vert
y\right\vert }{4}}\right\vert . \label{ap261}%
\end{equation}
Then, as for $\left\vert y\right\vert \leq\frac{\left\vert \tilde{\chi
}\right\vert }{2},$ $\left\vert \tilde{\chi}+y\right\vert \geq\frac{\left\vert
\tilde{\chi}\right\vert }{2},$ using that $h_{1}^{\left(  k\right)  }$ are
monotone and relations (\ref{ap261}), (\ref{prop}) we deduce%
\[
\left\vert \left(  \left(  \frac{d}{dr}\right)  ^{k}V\right)  \left(
\left\vert y+\tilde{\chi}\right\vert \right)  e^{-\delta\left\vert
y\right\vert }\right\vert \leq C_{k}\left\vert \left(  \left(  \frac{d}%
{dr}\right)  ^{k}V\right)  \left(  \frac{\left\vert \tilde{\chi}\right\vert
}{2}\right)  e^{h_{1}\left(  \frac{\left\vert \tilde{\chi}\right\vert }%
{2}\right)  }V\left(  \left\vert \tilde{\chi}\right\vert \right)  \right\vert
\leq C_{k}\left\vert \left(  \left(  \frac{d}{dr}\right)  ^{k}V\right)
\left(  \left\vert \tilde{\chi}\right\vert \right)  \right\vert .
\]
Combining the last estimate with (\ref{ap252}), we get (\ref{estp1}).
\end{proof}

\subsection{Refined approximation.}

Observe that $\left\vert \mathcal{\tilde{E}}_{\operatorname*{apr}}\left(
Q\right)  \right\vert \leq C\Theta\left(  \left\vert \chi\right\vert \right)
.$ This bound on the error is not good enough to close the estimates in
Section \ref{Secproof}. We can improve the estimate on $\mathcal{\tilde{E}%
}_{\operatorname*{apr}}$ if we consider a refined approximate solution $W=Q+T$
and adjust the function $T$ and the modulation parameters $\Xi=\left(
\chi,\beta,\lambda\right)  \in\mathbb{R}^{d}\times\mathbb{R}^{d}%
\times\mathbb{R}$ and $\gamma\in\mathbb{R}$ in a suitable way. Indeed,
motivated by \cite{Krieger}, we search for a stationary function $W=W\left(
y,\Xi\left(  t\right)  \right)  .$ Since $W$ is stationary, the dependence of
$W$ on time is through the modulation vector $\Xi\left(  t\right)  .$ This
yields the relation
\[
\partial_{t}W=\dot{\chi}\cdot\nabla_{\chi}W+\dot{\beta}\cdot\nabla_{\beta
}W+\frac{\partial W}{\partial\lambda}\dot{\lambda}.
\]
Using the last identity we get%
\begin{equation}
\left.
\begin{array}
[c]{c}%
\mathcal{E}\left(  W\right)  =\Delta W-W+\left\vert W\right\vert
^{p-1}W+i\lambda^{2}\left(  \dot{\chi}\cdot\nabla_{\chi}W+\dot{\beta}%
\cdot\nabla_{\beta}W+\frac{\partial W}{\partial\lambda}\dot{\lambda}\right)
-i\lambda\dot{\lambda}\Lambda W-i\lambda\left(  \dot{\chi}-2\beta\right)
\cdot\nabla W\\
-\lambda^{3}\left(  \dot{\beta}\cdot y\right)  W+\lambda^{2}\left(
\dot{\gamma}+\frac{1}{\lambda^{2}}-\left\vert \beta\right\vert ^{2}-\dot
{\beta}\cdot\chi\right)  W+\lambda^{2}V\left(  \left\vert y+\frac{\chi
}{\lambda}\right\vert \right)  W.
\end{array}
\right.  \label{E2}%
\end{equation}
Let $M=M\left(  \Xi\left(  t\right)  \right)  $ and $B=B\left(  \Xi\left(
t\right)  \right)  $ be modulation equations (to be defined) for the scaling
and speed parameters $\dot{\lambda}$ and $\dot{\beta}.$ Then, equation
(\ref{E2}) takes the form%
\begin{equation}
\left.  \mathcal{E}\left(  W\right)  =\mathcal{E}_{\operatorname*{apr}}\left(
W\right)  +R\left(  W\right)  \right.  \label{E6}%
\end{equation}
where%
\begin{equation}
\left.
\begin{array}
[c]{c}%
\mathcal{E}_{\operatorname*{apr}}\left(  W\right)  =\Delta W-W+\left\vert
W\right\vert ^{p-1}W+\lambda^{2}V\left(  \left\vert y+\frac{\chi}{\lambda
}\right\vert \right)  W\\
+i\lambda^{2}\left(  2\beta\cdot\nabla_{\chi}W+B\cdot\nabla_{\beta}%
W+\frac{\partial W}{\partial\lambda}\dot{\lambda}\right)  -i\lambda M\Lambda
W-\lambda^{3}\left(  B\cdot y\right)  W
\end{array}
\right.  \label{E8}%
\end{equation}
and%
\begin{equation}
\left.
\begin{array}
[c]{c}%
R\left(  W\right)  =i\lambda^{2}\left(  \left(  \dot{\chi}-2\beta\right)
\cdot\nabla_{\chi}W+\left(  \dot{\beta}-B\right)  \cdot\nabla_{\beta}W+\left(
\dot{\lambda}-M\right)  \frac{\partial W}{\partial\lambda}\right)
-i\lambda\left(  \dot{\lambda}-M\right)  \Lambda W-i\lambda\left(  \dot{\chi
}-2\beta\right)  \cdot\nabla W\\
-\lambda^{3}\left(  \left(  \dot{\beta}-B\right)  \cdot y\right)
W+\lambda^{2}\left(  \dot{\gamma}+\frac{1}{\lambda^{2}}-\left\vert
\beta\right\vert ^{2}-\dot{\beta}\cdot\chi\right)  W.
\end{array}
\right.  \label{E9}%
\end{equation}
Observe that $\mathcal{E}_{\operatorname*{apr}}\left(  Q\right)
=\mathcal{\tilde{E}}_{\operatorname*{apr}}\left(  Q\right)  -i\lambda M\Lambda
Q.$ Introducing $W=Q+T,$ into (\ref{E8}) we get%
\begin{equation}
\left.
\begin{array}
[c]{c}%
\mathcal{E}_{\operatorname*{apr}}\left(  Q+T\right)  =\Delta T-T+\frac{p+1}%
{2}Q^{p-1}T+\frac{p-1}{2}Q^{p-1}\overline{T}\\
+\lambda^{2}V\left(  \left\vert y+\frac{\chi}{\lambda}\right\vert \right)
Q-i\lambda M\Lambda Q-\lambda^{3}\left(  B\cdot y\right)  Q\\
+\lambda^{2}V\left(  \left\vert y+\frac{\chi}{\lambda}\right\vert \right)
T-i\lambda M\Lambda T-\lambda^{3}\left(  B\cdot y\right)  T\\
+i\lambda^{2}\left(  2\beta\cdot\nabla_{\chi}T+B\cdot\nabla_{\beta}%
T+M\frac{\partial T}{\partial\lambda}\right)  +\mathcal{N}\left(  T\right)  ,
\end{array}
\right.  \label{E7}%
\end{equation}
with%
\[
\mathcal{N}\left(  T\right)  =\left\vert Q+T\right\vert ^{p-1}\left(
Q+T\right)  -\left(  Q^{p}+\frac{p+1}{2}Q^{p-1}T+\frac{p-1}{2}Q^{p-1}%
\overline{T}\right)  .
\]
Let
\[
L_{+}u=-\Delta u+u-pQ^{p-1}u
\]
and%
\[
L_{-}u=-\Delta u+u-Q^{p-1}u.
\]
Decomposing $T=$ $T_{1}+iT_{2},$ with $T_{1},T_{2}$ real we get%
\begin{equation}
\left.
\begin{array}
[c]{c}%
\mathcal{E}_{\operatorname*{apr}}\left(  Q+T\right)  =-L_{+}T_{1}+\lambda
^{2}V\left(  \left\vert y+\frac{\chi}{\lambda}\right\vert \right)
T_{1}+\lambda^{2}V\left(  \left\vert y+\frac{\chi}{\lambda}\right\vert
\right)  Q-\lambda^{3}\left(  B\cdot y\right)  Q\\
-i\left(  L_{-}T_{2}\right)  +i\lambda^{2}V\left(  \left\vert y+\frac{\chi
}{\lambda}\right\vert \right)  T_{2}-i\lambda M\Lambda Q+2i\lambda^{2}%
\beta\cdot\nabla_{\chi}T_{1}+R_{1},
\end{array}
\right.  \label{eq3}%
\end{equation}
where%
\begin{equation}
R_{1}=-i\lambda M\Lambda T-\lambda^{3}\left(  B\cdot y\right)  T+i\lambda
^{2}\left(  2i\beta\cdot\nabla_{\chi}T_{2}+B\cdot\nabla_{\beta}T+M\frac
{\partial T}{\partial\lambda}\right)  +\mathcal{N}\left(  T\right)  .
\label{eq4}%
\end{equation}
As a first step, we want to adjust the approximate modulation parameters $B$
and $M$ in such way that we can solve the equations%
\begin{equation}
\left(  L_{+}-\lambda^{2}V\left(  \left\vert y+\frac{\chi}{\lambda}\right\vert
\right)  \right)  T_{1}=-\lambda^{3}\left(  B\cdot y\right)  Q+\lambda
^{2}V\left(  \left\vert y+\frac{\chi}{\lambda}\right\vert \right)  Q,
\label{eq5}%
\end{equation}%
\begin{equation}
\left(  L_{-}-\lambda^{2}V\left(  \left\vert y+\frac{\chi}{\lambda}\right\vert
\right)  \right)  T_{2}=-\lambda M\Lambda Q+2\lambda^{2}\beta\cdot\nabla
_{\chi}T_{1} \label{eq6}%
\end{equation}
and $R_{1}$ has a better decay than $\mathcal{E}_{\operatorname*{apr}}\left(
Q\right)  ,$ as $\left\vert \chi\right\vert \rightarrow\infty.$ In this way,
we get a first order approximation. Then, from (\ref{E7}), we recursively
construct higher order approximations. This will be done separately for fast
and slow decaying potentials in Lemmas \ref{Lemmaapp} and \ref{Lemmaapp1}
below, respectively. Before we present this construction, we prepare two
results that are involved in the construction. We denote by $\mathcal{L}%
:H^{1}\rightarrow H^{-1}$ the linearized operator for the equation
(\ref{eqsoliton}) around $Q:$%
\begin{equation}
\mathcal{L}f:=-\Delta f+f-\frac{p+1}{2}Q^{p-1}f-\frac{p-1}{2}Q^{p-1}%
{\overline{f}},\text{ \ \ }f\in H^{1}. \label{L}%
\end{equation}
Representing $f\in H^{1}$ as $f=h+ig,$ with real $h\ $and $g$, we have%
\[
\mathcal{L}f=L_{+}h+iL_{-}g
\]
Then
\begin{equation}
\left(  \mathcal{L}f,f\right)  =\left(  L_{+}h,h\right)  +\left(
L_{-}g,g\right)  , \label{tw78}%
\end{equation}
for real functions $h,g\in H^{1}.$ Let us denote the perturbed operator
\begin{equation}
\mathcal{L}_{V}=\mathcal{L}-\lambda^{2}V\left(  \left\vert y+\tilde{\chi
}\right\vert \right)  . \label{Lv}%
\end{equation}
In order to solve (\ref{eq5}) and (\ref{eq6}), we need the following
invertibility result for the operators $\mathcal{L}_{V}$.

\begin{lemma}
\label{Linv}Suppose that $1<p<1+\frac{4}{d}$ and $V\in C^{\infty},$
$\left\vert V^{\left(  k\right)  }\left(  x\right)  \right\vert \rightarrow0,$
as $\left\vert x\right\vert \rightarrow\infty,$ for any $k\geq0.$ Let
$\lambda\geq\lambda_{0}>0\ $be such that $\lambda^{2}\sup_{r\in\mathbb{R}%
}V\left(  r\right)  <1.$ Then, there is $C\left(  V\right)  >0$ such that for
any $\left\vert \chi\right\vert \geq C\left(  V\right)  $
\begin{equation}
\left\Vert \mathcal{L}_{V}f\right\Vert _{H^{-1}}^{2}\geq c\left\Vert
f\right\Vert _{H^{1}}^{2}-\frac{1}{c}\left(  \left\vert \left(  f,iQ\right)
\right\vert ^{2}+\left\vert \left(  f,\nabla Q\right)  \right\vert
^{2}\right)  , \label{ineqpert}%
\end{equation}
with some $c>0$ and%
\begin{equation}
\left(  \mathcal{L}_{V}f,f\right)  \geq c\left\Vert f\right\Vert _{H^{1}}%
^{2}-\frac{1}{c}\left(  \left(  f,Q\right)  ^{2}+\left\vert \left(
f,xQ\right)  \right\vert ^{2}+\left(  f,i\Lambda Q\right)  ^{2}\right)  .
\label{ineqpert1}%
\end{equation}
Moreover, for any $\left\vert \chi\right\vert \geq C\left(  V\right)  $ and
$F\in H^{1}\cap C^{\infty}\ $such that $\left(  F,\nabla Q\right)  =0$ the
equation $L_{+}u-\lambda^{2}V\left(  y+\tilde{\chi}\right)  u=F,$ $\left(
u,\nabla Q\right)  =0,$ has a real-valued solution $u\in H^{1}\cap C^{\infty}%
$.\ If $\left(  F,Q\right)  =0,$ there is a solution $w\in H^{1}\cap
C^{\infty}$ to $L_{-}w-\lambda^{2}V\left(  y+\tilde{\chi}\right)  w=F$,
$\left(  w,Q\right)  =0.$ Furthermore, if $\left\vert F\left(  x\right)
\right\vert \leq Ce^{-\eta\left\vert x\right\vert },$ for some $0<\eta<1,$
then
\begin{equation}
\left\vert u\left(  x\right)  \right\vert +\left\vert w\left(  x\right)
\right\vert \leq Ce^{-\eta\left\vert x\right\vert }. \label{exp}%
\end{equation}

\end{lemma}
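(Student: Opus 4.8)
The plan is to start from the well-known coercivity properties of the unperturbed linearized operator $\mathcal L$ around the ground state $Q$, and to treat $\lambda^2 V(|y+\tilde\chi|)$ as a small perturbation that is uniformly small in operator norm once $|\chi|$ (equivalently $|\tilde\chi|$) is large. First I would recall the classical spectral picture: $L_+$ has a one-dimensional kernel spanned by $\nabla Q$ and $L_-$ has kernel spanned by $Q$, and off these kernels both operators are coercive, so that $(\mathcal Lf,f)\ge c\|f\|_{H^1}^2 - \frac1c\big((f,Q)^2+|(f,xQ)|^2+(f,i\Lambda Q)^2\big)$ and similarly $\|\mathcal Lf\|_{H^{-1}}^2\ge c\|f\|_{H^1}^2-\frac1c\big(|(f,iQ)|^2+|(f,\nabla Q)|^2\big)$ (see \cite{Weisntein}, \cite{Martel}). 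Next, using the decay hypothesis $|V^{(k)}(x)|\to0$ as $|x|\to\infty$ and the fact that for $|x|\le |\tilde\chi|/2$ one has $|y+\tilde\chi|\ge|\tilde\chi|/2$, I would show that for any $\delta>0$, $\|V(|y+\tilde\chi|)f\|_{H^{-1}}\le \epsilon(|\chi|)\|f\|_{H^1}$ and $|(\lambda^2 V(|y+\tilde\chi|)f,f)|\le \lambda^2\epsilon(|\chi|)\|f\|_{L^2}^2$ with $\epsilon(|\chi|)\to0$ as $|\chi|\to\infty$; here one splits the integral into $\{|y|\le|\tilde\chi|/2\}$, where $V$ is pointwise small, and $\{|y|>|\tilde\chi|/2\}$, which is handled by the $H^1$ localization of $f$ against the smooth bounded weight $V$ — more carefully, one uses that $V$ is bounded and $C^\infty$ so that multiplication by $V$ is bounded $H^1\to H^1$ and $H^{-1}\to H^{-1}$, and the smallness comes from the region where $V$ decays. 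Absorbing this small term into the coercivity constant $c$ gives \eqref{ineqpert} and \eqref{ineqpert1} for $|\chi|\ge C(V)$; the condition $\lambda^2\sup V<1$ guarantees that the zeroth-order contribution $-\lambda^2 V$, even where not small, keeps the quadratic form $(\mathcal L_V f,f)$ bounded below by a positive multiple of $\|f\|_{H^1}^2$ modulo the finite-dimensional obstructions, since $(-\Delta f+f-\lambda^2Vf,f)\ge(1-\lambda^2\sup V)\|f\|_{H^1}^2$ minus the lower-order pieces from $Q^{p-1}$.

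For the solvability statements I would fix the closed subspace $\{u:(u,\nabla Q)=0\}$ (resp. $\{w:(w,Q)=0\}$), on which \eqref{ineqpert} shows $\mathcal L_V$ (and likewise $L_+-\lambda^2V$ restricted to the orthogonal complement of its near-kernel, $L_--\lambda^2V$) is bounded below and hence, by Lax–Milgram together with the Fredholm alternative — the perturbation $\lambda^2V$ being relatively compact since it decays — an isomorphism onto the corresponding annihilator. One has to check that the right-hand side $F$, assumed orthogonal to $\nabla Q$ (resp. $Q$), lies in the correct range; this is exactly the content of the coercivity estimate once one knows the cokernel of the perturbed operator is still spanned by $\nabla Q$ (resp. $Q$) for $|\chi|$ large, which follows from a continuity/degree argument or from the quantitative gap in \eqref{ineqpert}. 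Smoothness $u,w\in C^\infty$ is then elliptic regularity, bootstrapping from $u\in H^1$ using $L_+u=F+\lambda^2 Vu+pQ^{p-1}u\in H^1$, etc.

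For the exponential decay \eqref{exp}: assuming $|F(x)|\le Ce^{-\eta|x|}$ with $0<\eta<1$, I would use a standard comparison/weighted-energy argument. Write the equation as $(-\Delta+1)u = F+\lambda^2 V u + pQ^{p-1}u$; the potentials $\lambda^2 V$ and $pQ^{p-1}$ tend to $0$ at infinity (the latter exponentially), so for $|x|\ge R$ large the operator behaves like $-\Delta+1-o(1)$, whose Green's function decays like $e^{-(1-o(1))|x|}$. Convolving, or equivalently running a Gronwall-type argument on $\int_{|x|=r}|u|^2$, one propagates the decay $e^{-\eta|x|}$ of the source (with $\eta<1$ ensuring $\eta$ is below the threshold rate $1$ of $-\Delta+1$) to the solution, giving $|u(x)|\le Ce^{-\eta|x|}$; the same for $w$. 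A clean way: multiply the equation by $e^{2\eta|x|}\bar u$, integrate, use $\eta<1$ and the smallness of $\lambda^2 V+pQ^{p-1}$ for $|x|\ge R$ to absorb the bad terms, and control the compact region $|x|\le R$ by $\|u\|_{L^2}$; this yields $e^{\eta|\cdot|}u\in L^2$, and then elliptic regularity upgrades this to the pointwise bound.

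The main obstacle I anticipate is \emph{not} the coercivity per se but making the perturbative step \emph{uniform in $\lambda$} (only $\lambda\ge\lambda_0$ is assumed, so $\lambda$ can be large) while simultaneously keeping $\lambda^2\sup V<1$; one must track how the threshold $C(V)$ for $|\chi|$ depends on $\lambda$, and check that the $H^{-1}$ estimate \eqref{ineqpert}, which loses no $\lambda$-powers the way the quadratic-form estimate might, is genuinely available — i.e. that $\|\lambda^2 V(|y+\tilde\chi|)f\|_{H^{-1}}$ is controlled with a constant that stays bounded as $|\chi|\to\infty$. The second delicate point is the stability of the cokernel (that $\nabla Q$, $Q$ remain, up to $o(1)$, the only obstructions to solvability for the \emph{perturbed} operators on the full space), which requires either a quantitative nondegeneracy argument from \eqref{ineqpert} or a compactness argument ruling out spurious small eigenvalues crossing zero as $\tilde\chi$ varies.
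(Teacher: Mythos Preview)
Your perturbative step has a genuine gap. You claim that
\[
\|\lambda^{2}V(|y+\tilde\chi|)\,f\|_{H^{-1}}\le \epsilon(|\chi|)\,\|f\|_{H^{1}},\qquad \epsilon(|\chi|)\to 0,
\]
by splitting into $\{|y|\le|\tilde\chi|/2\}$ (where $V$ is pointwise small) and $\{|y|>|\tilde\chi|/2\}$. But in the second region $V(|y+\tilde\chi|)$ is \emph{not} small: for $y$ near $-\tilde\chi$ it equals $V(0)$. Since $f\in H^{1}$ is arbitrary, it can concentrate there (take $f=\phi(\cdot+\tilde\chi)$ for a fixed bump $\phi$), and then $\|Vf\|_{H^{-1}}$ stays bounded away from zero as $|\chi|\to\infty$. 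So the multiplication operator by $\lambda^{2}V(|y+\tilde\chi|)$ is \emph{not} a small perturbation of $\mathcal L$ in operator norm, and you cannot absorb it into the unperturbed coercivity constant $c$, which may well be much smaller than $\lambda^{2}\|V\|_{L^{\infty}}$.

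The paper's fix is to localize $f$ rather than $V$: write $f=f_{1}+f_{2}$ with $f_{2}=\rho(\cdot/\sigma)f$ supported near the origin and $f_{1}=(1-\rho(\cdot/\sigma))f$ supported far away. On $f_{2}$ the potential $V(|y+\tilde\chi|)$ \emph{is} small (since $|y+\tilde\chi|\ge|\tilde\chi|-\sigma$), so the unperturbed estimate for $\mathcal L$ applies. On $f_{1}$ the nonlinear weight $Q^{p-1}$ is exponentially small, so $\mathcal L_{V}f_{1}\approx(-\Delta+1-\lambda^{2}V)f_{1}$, and here the hypothesis $\lambda^{2}\sup V<1$ gives coercivity directly: $((-\Delta+1-\lambda^{2}V)f_{1},f_{1})\ge c_{1}\|f_{1}\|_{H^{1}}^{2}$. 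The work then goes into controlling the cross terms $(\mathcal L_{V}f_{1},\mathcal L_{V}f_{2})_{H^{-1}}$ and the commutators $[\mathcal L_{V},\rho(\cdot/\sigma)]$, which produce errors of size $O(\sigma^{-1})+O(q^{p-1}(\sigma/4))+O(\sup_{|y|\le\sigma}|V(|y+\tilde\chi|)|)$, all small for $\sigma$ and $|\chi|$ large. Your last paragraph correctly anticipates that uniformity in $\lambda$ is delicate, but the resolution is this spatial splitting of $f$, not a direct smallness bound on $V$. Your treatment of solvability (Fredholm) and exponential decay is in line with the paper and is fine.
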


\begin{proof}
See Section \ref{Invertibility}.
\end{proof}

We also present a lemma that follows from the properties of the Bessel
potential $\left(  1-\Delta\right)  ^{-1}.$

\begin{lemma}
\label{L8}Suppose that $T$ solves
\begin{equation}
\left(  L_{+}-\lambda^{2}V\left(  \left\vert \cdot+\frac{\chi}{\lambda
}\right\vert \right)  \right)  T=f,\text{ }\left(  T,\nabla Q\right)  =0,
\label{eq18}%
\end{equation}
with $f$ satisfying
\begin{equation}
\left\vert e^{-\delta\left\vert z\right\vert }f\left(  z\right)  \right\vert
\leq A_{0}\left(  \left\vert \chi\right\vert \right)  , \label{ap290}%
\end{equation}
for some $A_{0}\left(  \left\vert \chi\right\vert \right)  >0$ and
$0<\delta<1.$ Then, the estimate
\begin{equation}
\left\Vert e^{-\delta\left\vert \cdot\right\vert }T\left(  \cdot\right)
\right\Vert _{L^{\infty}}\leq C\left(  \delta,\delta^{\prime}\right)  \left(
A_{0}\left(  \left\vert \chi\right\vert \right)  +\left\Vert T\right\Vert
_{L^{\infty}}\Theta\left(  \left\vert \tilde{\chi}\right\vert \right)
^{\delta^{\prime}}\right)  , \label{ap291}%
\end{equation}
with some $C\left(  \delta,\delta^{\prime}\right)  >0$ and $\delta^{\prime
}<\delta$ holds.
\end{lemma}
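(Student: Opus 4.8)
The plan is to treat \eqref{eq18} as a perturbed elliptic equation for $T$ and run an exponential-weight bootstrap, using that $(1-\Delta)^{-1}$ maps exponentially-localized right-hand sides to exponentially-localized solutions (with a loss in the rate). First I would rewrite \eqref{eq18} in the form $(1-\Delta)T = pQ^{p-1}T + \lambda^{2}V(|\cdot + \tfrac{\chi}{\lambda}|)T + f$ so that
\[
T = (1-\Delta)^{-1}\!\left( pQ^{p-1}T + \lambda^{2}V\big(|\cdot + \tfrac{\chi}{\lambda}|\big)T + f\right).
\]
The key analytic input is the standard pointwise bound on the Bessel kernel $G_{1}$ of $(1-\Delta)^{-1}$: $0\le G_{1}(z)\le Ce^{-|z|}(1+|z|)^{-(d-1)/2}$ away from the origin and an integrable singularity at $z=0$, which yields, for any $0<\delta'<\delta<1$,
\[
\left\| e^{-\delta'|\cdot|}\,(1-\Delta)^{-1}g\right\|_{L^{\infty}} \le C(\delta,\delta')\,\big\| e^{-\delta|\cdot|} g\big\|_{L^{\infty}},
\]
because the convolution $\int G_{1}(z-w)e^{-\delta|w|}dw$ is bounded by $Ce^{-\delta'|z|}$ (one splits into $|w|\le |z|/2$ and $|w|\ge |z|/2$; on the first region $G_{1}(z-w)$ supplies $e^{-|z-w|}\le e^{-|z|/2}$, on the second $e^{-\delta|w|}\le e^{-\delta|z|/2}$, and in both the remaining integral converges).

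Next I would bound the three source terms in the weighted $L^\infty$ norm. For $f$, hypothesis \eqref{ap290} gives exactly $\|e^{-\delta|\cdot|}f\|_{L^\infty}\le A_0(|\chi|)$. For the term $pQ^{p-1}T$, since $Q$ decays exponentially \eqref{ap29}, for any $\delta_0<1$ one has $Q^{p-1}(z)\le Ce^{-(p-1)|z|}$, so $\|e^{-\delta|\cdot|} pQ^{p-1}T\|_{L^\infty}\le C\|e^{-\delta|\cdot|}Q^{p-1}\|_{L^\infty}\,\|T\|_{L^\infty}$; and more usefully, pulling out only part of the decay, $pQ^{p-1}(z)e^{-\delta|z|}$ can be absorbed so that after applying the resolvent estimate this term contributes $C\Theta(|\tilde\chi|)^{\delta'}\|T\|_{L^\infty}$ only if $Q^{p-1}$ were comparable to $\Theta$ — instead the clean way is: $\|e^{-\delta'|\cdot|}(1-\Delta)^{-1}(pQ^{p-1}T)\|_{L^\infty}\le C\|e^{-\delta|\cdot|}Q^{p-1}\|_{L^\infty}\|T\|_{L^\infty}$, a fixed small constant times $\|T\|_{L^\infty}$ that, however, is \emph{not} small. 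The correct resolution, as in \cite{Krieger}, is to note that $L_+ - \lambda^2 V$ is, by Lemma \ref{Linv}, invertible on the orthogonal complement of $\nabla Q$ with exponentially-decaying solution operator \eqref{exp}; so one should instead write $T = (L_+-\lambda^2V)^{-1}f$ directly and use \eqref{exp} with rate $\delta$ to get $\|e^{-\delta|\cdot|}T\|_{L^\infty}\le C A_0(|\chi|)$, plus the correction from the fact that \eqref{exp} is only stated for right-hand sides orthogonal to $\nabla Q$. That is, decompose $f = f^{\perp} + c\,\nabla Q$ with $c = (f,\nabla Q)/\|\nabla Q\|_{L^2}^2$; the orthogonal piece is handled by \eqref{exp}, and $c\nabla Q$ contributes the $\Theta(|\tilde\chi|)^{\delta'}$-type term after bounding $|c|\le C\|e^{-\delta|\cdot|}f\|_{L^\infty}\cdot(\text{constant})$ — here one uses that $(T,\nabla Q)=0$ to re-express $c$ in terms of $V$-dependent quantities that are $O(\Theta(|\tilde\chi|))$, picking up the loss $\delta'<\delta$ when converting $\|T\|_{L^\infty}$ back to a weighted norm.

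Concretely, the cleanest implementation I would write: take the weighted $L^\infty$ norm of the Duhamel-type identity, apply the resolvent estimate to each term; the $f$-term gives $CA_0(|\chi|)$; the $V$-term gives, using \eqref{estp1} of Lemma \ref{L2} (or \eqref{estp}) to handle $V(|\cdot+\tilde\chi|)$ against the localization, a bound $C\Theta(|\tilde\chi|)\|T\|_{L^\infty}$ hence after absorbing the exponential weight $C\Theta(|\tilde\chi|)^{\delta'}\|T\|_{L^\infty}$; and the $Q^{p-1}T$-term is absorbed into the left-hand side when $|\chi|\ge C(V)$ is large, since although $\|e^{-\delta|\cdot|}Q^{p-1}\|_{L^\infty}$ is an order-one constant, one instead iterates: the resolvent-plus-$Q^{p-1}$ operator, restricted by the orthogonality $(T,\nabla Q)=0$, is a contraction by the coercivity \eqref{ineqpert} of Lemma \ref{Linv}, which is the substitute for smallness. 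Collecting terms yields
\[
\left\| e^{-\delta|\cdot|}T\right\|_{L^\infty}\le C(\delta,\delta')\left(A_0(|\chi|) + \|T\|_{L^\infty}\,\Theta(|\tilde\chi|)^{\delta'}\right),
\]
which is \eqref{ap291}. The main obstacle is precisely this absorption of the $pQ^{p-1}T$ term: it is not small, so one cannot bootstrap naively, and the argument must route through the invertibility/coercivity statement of Lemma \ref{Linv} (and the exponential-decay conclusion \eqref{exp} for the inverse), using the constraint $(T,\nabla Q)=0$ to stay on the subspace where that operator is boundedly invertible with exponentially-localized inverse; the loss $\delta'<\delta$ is the unavoidable price of the convolution estimate for $G_1$ when one wants to keep a uniform constant.
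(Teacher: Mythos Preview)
Your overall framework---rewrite \eqref{eq18} as $T=(1-\Delta)^{-1}(pQ^{p-1}T+\lambda^{2}VT+f)$, apply a weighted convolution bound for the Bessel kernel, and treat the three source terms separately---matches the paper exactly, and your treatment of the $f$ and $V$ pieces is in the right spirit. The gap is in the $pQ^{p-1}T$ term, which you correctly flag as the obstacle but do not actually resolve.

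Your three proposed fixes all fail. First, appealing to \eqref{exp} of Lemma~\ref{Linv} is off target: that estimate concerns \emph{decaying} right-hand sides $|F|\le Ce^{-\eta|x|}$, while here the hypothesis \eqref{ap290} allows $f$ to \emph{grow} like $e^{\delta|z|}$, so \eqref{exp} gives nothing. Second, the decomposition of $f$ along $\nabla Q$ does not produce the $\Theta^{\delta'}$ term and has no bearing on the $Q^{p-1}T$ difficulty. Third, the claim that ``the resolvent-plus-$Q^{p-1}$ operator is a contraction by \eqref{ineqpert}'' is not justified: \eqref{ineqpert} is an $H^{1}$--$H^{-1}$ coercivity statement and does not translate into a contraction in the weighted $L^{\infty}$ norm you are working in. You also misattribute the loss $\delta'<\delta$: the Bessel convolution with weight $e^{\delta|\cdot|}$ for $\delta<1$ incurs \emph{no} loss (since $\int e^{-(1-\delta)|w|}dw<\infty$); the loss arises only in the $V$ term, when one writes $e^{-\delta|z|}|V(z+\tilde\chi)|\le \|VQ\|_{L^\infty}^{\delta'}\cdot(\text{something in }L^{2})$ via \eqref{estp}, which requires $\delta-\delta'>0$ for integrability.

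The paper's actual mechanism for the $Q^{p-1}T$ term is a concrete two-scale argument you are missing. One introduces a cutoff $\psi(\cdot/a)$ at a large radius $a$ (chosen $a\gtrsim A_{0}^{-2}$) and splits $I_{1}$ accordingly. On the far region $|z|\ge a$, the factor $Q^{p-1}(z)$ supplies smallness of order $a^{-1}$, hence $\lesssim A_{0}^{2}$. On the near region $|z|\le 2a$, one does \emph{not} stay in weighted $L^{\infty}$ but instead bounds $\|\psi T\|_{H^{1}}$ using the $H^{1}$ coercivity \eqref{ineqpert}: writing $\mathcal{L}_{V}(\psi T)=\psi f+[\mathcal{L}_{V},\psi]T$, the commutator is $O(a^{-1})$ in operator norm and $(\psi T,\nabla Q)=-((1-\psi)T,\nabla Q)$ is exponentially small in $a$, so $\|\psi T\|_{H^{1}}\lesssim A_{0}$. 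This first yields an $L^{2}$ bound \eqref{ap330}; the $L^{\infty}$ bound \eqref{ap291} then follows by a standard bootstrap through $L^{2p/(2-p)}$ using $G_{\delta}\in L^{p}$ for $1<p<\tfrac{d}{d-2}$ and Young's inequality. The passage through $H^{1}$ (rather than any weighted $L^{\infty}$ contraction) is the idea you need.
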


\begin{proof}
See Appendix.
\end{proof}

We are in position to prove an approximation result for (\ref{appeq}). For a
vector of multi-indices $K=\left(  k,k_{1},k_{2},k_{3}\right)  ,$ let us
denote
\[
\mathcal{D}=\partial_{\chi}^{k}\partial_{\lambda}^{k_{1}}\partial_{y}^{k_{2}%
}\partial_{\beta}^{k_{3}}\text{, \ }\mathcal{D}_{1}=\partial_{\chi}%
^{k}\partial_{\lambda}^{k_{1}}\partial_{\beta}^{k_{3}},\text{ \ }%
\mathcal{D}_{2}=\partial_{\chi}^{k}\partial_{\lambda}^{k_{1}}\partial
_{y}^{k_{2}}.
\]
Also let%
\[
\mathbf{Y}=\inf_{0<\upsilon<1}\inf_{0<\delta<1}\left\{  C\left(
\upsilon,\delta\right)  \Theta^{\left(  1-\delta\right)  }\left(
\tfrac{\left\vert \chi\right\vert }{\lambda}\right)  e^{-\upsilon
\delta\left\vert y\right\vert }\right\}  ,\text{ }0<C\left(  \upsilon
,\delta\right)  <\infty.
\]
where $0<C\left(  \upsilon,\delta\right)  <\infty$ is such that $C\left(
\upsilon,\delta\right)  \leq C_{\upsilon_{0},\delta_{0}},$ for all $\left\vert
\upsilon\right\vert \leq\left\vert \upsilon_{0}\right\vert ,$ $\left\vert
\delta\right\vert \leq\left\vert \delta_{0}\right\vert $ and $C\left(
\upsilon,\delta\right)  \rightarrow\infty,$ as $\upsilon,\delta\rightarrow1.$
Set
\begin{equation}
p_{1}=\min\{p,2\}. \label{p}%
\end{equation}
We have the following.

\begin{lemma}
\label{Lemmaapp}\bigskip Let $\Xi=\left(  \chi,\beta,\lambda\right)
\in\mathbb{R}^{d}\times\mathbb{R}^{d}\times\mathbb{R}^{+}$ be a vector of
parameters with $\lambda\geq\lambda_{0}>0\ $such that $\lambda^{2}\sup
_{r\in\mathbb{R}}V\left(  r\right)  <1.$ There is $C\left(  V\right)  >0$ such
that for any $\left\vert \chi\right\vert \geq C\left(  V\right)  $ the
following holds. For any $n\geq1,$ there are $\boldsymbol{T}^{\left(
j\right)  },B_{j},M_{j}\in L^{\infty}\left(  \mathbb{R}^{d}\right)  ,$
$j=1,...,n,$ such that%
\begin{equation}
\left.
\begin{array}
[c]{c}%
\left\vert \mathcal{D}\boldsymbol{T}^{\left(  j\right)  }\right\vert \leq
C_{K}\left\langle \lambda\right\rangle ^{i_{K}}\left\langle \lambda
^{-1}\right\rangle ^{l_{K}}\left\langle \left\vert \beta\right\vert
\right\rangle ^{m_{K}}\mathbf{Y}\left(  \left\vert \beta\right\vert
+\Theta\left(  \tfrac{\left\vert \chi\right\vert }{\lambda}\right)  \right)
^{\left(  p_{1}-1\right)  \left(  j-1\right)  },\\
\left\vert \mathcal{D}_{1}B_{j}\right\vert +\left\vert \mathcal{D}_{1}%
M_{j}\right\vert \leq C_{K}\left\langle \lambda\right\rangle ^{i_{K}%
}\left\langle \lambda^{-1}\right\rangle ^{l_{K}}\left\langle \left\vert
\beta\right\vert \right\rangle ^{m_{K}}\Theta\left(  \tfrac{\left\vert
\chi\right\vert }{\lambda}\right)  \left(  \left\vert \beta\right\vert
+\Theta\left(  \tfrac{\left\vert \chi\right\vert }{\lambda}\right)  \right)
^{\left(  p_{1}-1\right)  \left(  j-1\right)  },
\end{array}
\right.  \label{ap14}%
\end{equation}
with $\left\vert K\right\vert \leq2$ and $\left\vert k\right\vert
+k_{1}+\left\vert k_{3}\right\vert \leq1,$ and some $i_{K},l_{K},m_{K}\geq0$
and $C_{K},\delta_{K}>0$. In addition, $B_{j}$ and $M_{j},$ $j=1,...,n,$
satisfy for all $\left\vert k\right\vert +k_{1}\leq1$
\begin{equation}
\left.
\begin{array}
[c]{c}%
\left\vert \partial_{\chi}^{k}\partial_{\lambda}^{k_{1}}B_{1}\right\vert
+\left\vert \partial_{\chi}^{k}\partial_{\lambda}^{k_{1}}M_{1}\right\vert \leq
C_{K}\left\langle \lambda\right\rangle ^{i_{K}}\left\langle \lambda
^{-1}\right\rangle ^{l_{K}}\left(  \left\vert \beta\right\vert \Theta
+\left\vert U_{V}\left(  \left\vert \tfrac{\chi}{\lambda}\right\vert \right)
\right\vert \right)  ,\\
\left\vert \partial_{\beta}M_{1}\right\vert \leq C_{K}\left\langle
\lambda\right\rangle ^{i_{K}}\left\langle \lambda^{-1}\right\rangle ^{l_{K}%
}\left\langle \left\vert \beta\right\vert \right\rangle ^{m_{K}}\left(
\left\vert U_{V}\left(  \left\vert \tfrac{\chi}{\lambda}\right\vert \right)
\right\vert ^{3/4}+\Theta^{3/2}\left(  \left\vert \tfrac{\chi}{\lambda
}\right\vert \right)  \right)  ,\\
\left\vert \partial_{\chi}^{k}\partial_{\lambda}^{k_{1}}B_{j}\right\vert
+\left\vert \partial_{\chi}^{k}\partial_{\lambda}^{k_{1}}M_{j}\right\vert \leq
C_{K}\left\langle \lambda\right\rangle ^{i_{K}}\left\langle \lambda
^{-1}\right\rangle ^{l_{K}}\left(  \left(  \left(  \left\vert \beta\right\vert
+\Theta\right)  \Theta\right)  ^{p_{1}}+\left\vert U_{V}\left(  \left\vert
\tfrac{\chi}{\lambda}\right\vert \right)  \right\vert ^{p_{1}}\right)  ,\text{
}j\geq2.
\end{array}
\right.  \label{BM}%
\end{equation}
For the approximation $Q+T,$ the error $\mathcal{E}_{\operatorname*{apr}%
}\left(  Q+T\right)  $ satisfies the estimate
\begin{equation}
\left.  \left\vert \mathcal{E}_{\operatorname*{apr}}\left(  Q+T\right)
\right\vert \leq C\left\langle \lambda\right\rangle ^{i}\left\langle
\lambda^{-1}\right\rangle ^{l}\left\langle \left\vert \beta\right\vert
\right\rangle ^{m}\mathbf{Y}\left(  \left(  \left\vert \beta\right\vert
+\Theta\left(  \tfrac{\left\vert \chi\right\vert }{\lambda}\right)  \right)
^{A\left(  n\right)  }+\left\vert U_{V}\left(  \left\vert \tfrac{\chi}%
{\lambda}\right\vert \right)  \right\vert \right)  ,\right.  \label{eps}%
\end{equation}
for some $i,l,m\geq0$ and $C>0,$ where $A\left(  n\right)  =\min\{n\left(
p_{1}-1\right)  ,2\}.$
\end{lemma}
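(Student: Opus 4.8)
\textbf{Proof proposal for Lemma \ref{Lemmaapp}.}

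The plan is to construct the corrections $\boldsymbol{T}^{(j)}$, $B_j$, $M_j$ by induction on $j$, peeling off successive powers of the small quantity $\left(\left\vert\beta\right\vert+\Theta(\tfrac{\left\vert\chi\right\vert}{\lambda})\right)$, exactly as suggested by the decomposition \eqref{E7}. For the base case $j=1$, I would define $B_1$ by projecting the inhomogeneous term $\lambda^2 V Q-\lambda^3(B\cdot y)Q$ in \eqref{eq5} against $\nabla Q$, i.e. by the analogue of the Fredholm condition $\left(\lambda^2 V(\left\vert y+\tfrac{\chi}{\lambda}\right\vert)Q-\lambda^3(B_1\cdot y)Q,\nabla Q\right)=0$, which gives $B_1$ in terms of $\mathcal{J}_V$ and hence, via Lemma \ref{L2 1}, in terms of $U_V'$ — this produces the first bound in \eqref{BM}. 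Then Lemma \ref{Linv} solves \eqref{eq5} for $\boldsymbol{T}_1^{(1)}$ orthogonal to $\nabla Q$, and the exponential decay \eqref{exp} combined with the pointwise control \eqref{estp} of $V(\left\vert y+\tilde\chi\right\vert)Q$ gives the weighted $L^\infty$ bound via Lemma \ref{L8}. Similarly $M_1$ is fixed by the orthogonality condition making the right side of \eqref{eq6} perpendicular to $Q$ (using $(\Lambda Q,Q)\neq0$), and $\boldsymbol{T}_2^{(1)}$ is produced by the $L_-$ part of Lemma \ref{Linv}. The $\partial_\beta M_1$ estimate comes from differentiating the defining integral and using the improved Cauchy–Schwarz-type bounds $\left\vert U_V\right\vert^{3/4}$ rather than $\left\vert U_V\right\vert$.

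For the inductive step, assume $\boldsymbol{T}^{(1)},\dots,\boldsymbol{T}^{(j-1)}$ and the corresponding $B_i,M_i$ have been constructed so that $T^{<j}=\sum_{i<j}\boldsymbol{T}^{(i)}$ makes $\mathcal{E}_{\operatorname*{apr}}(Q+T^{<j})$ of size $\mathbf{Y}\left(\left\vert\beta\right\vert+\Theta\right)^{(p_1-1)(j-1)}$ (plus the irreducible $\left\vert U_V\right\vert$ term). Feeding $W=Q+T^{<j}+\boldsymbol{T}^{(j)}$ into \eqref{E7}, the new contributions at order $j$ come from three sources: the linear zeroth-order potential and transport terms acting on $\boldsymbol{T}^{(j-1)}$, the modulation-correction terms $i\lambda M\Lambda T$, $\lambda^3(B\cdot y)T$, $2i\lambda^2\beta\cdot\nabla_\chi T$, and crucially the nonlinearity $\mathcal{N}(T^{<j})$, whose leading term is of size $\left(\left\vert T^{<j}\right\vert\right)^{p_1}\sim(\left\vert\beta\right\vert+\Theta)^{p_1 (j-1)}\ge(\left\vert\beta\right\vert+\Theta)^{(p_1-1)j}$ in the relevant range (this is where $p_1=\min\{p,2\}$ enters — for $p<2$ the nonlinearity is only Hölder and one must track $\left\vert Q+T\right\vert^{p-1}$ carefully near zeros of $Q$, while for $p\ge 2$ one gets the quadratic Taylor remainder). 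Collecting these, I would again impose the Fredholm conditions against $\nabla Q$ and $Q$ to read off $B_j$ and $M_j$, obtaining the third line of \eqref{BM} with the factor $\left((\left\vert\beta\right\vert+\Theta)\Theta\right)^{p_1}+\left\vert U_V\right\vert^{p_1}$, and then invert $L_\pm-\lambda^2 V$ via Lemma \ref{Linv} to get $\boldsymbol{T}^{(j)}$ with the asserted $\mathbf{Y}(\left\vert\beta\right\vert+\Theta)^{(p_1-1)(j-1)}$ bound, propagating the weighted-$L^\infty$ estimate through Lemma \ref{L8}. The derivative bounds \eqref{ap14} in $\chi,\lambda,\beta,y$ follow by differentiating the fixed-point relations — one needs that $\partial_\chi,\partial_\lambda$ hitting $V(\left\vert y+\tfrac{\chi}{\lambda}\right\vert)$ reproduces a factor of comparable size (guaranteed by the monotonicity/derivative hypotheses in Condition \ref{ConidtionPotential} and by \eqref{estp}), and that the inverse operators from Lemma \ref{Linv} commute suitably with these derivatives up to controlled commutators; the powers $i_K,l_K,m_K$ of $\left\langle\lambda\right\rangle$ etc.\ are bookkeeping for the explicit $\lambda$-weights in \eqref{E8}.

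Finally, the error estimate \eqref{eps} is obtained by stopping the induction at step $n$: the residual $\mathcal{E}_{\operatorname*{apr}}(Q+T)$ with $T=\sum_{j=1}^n\boldsymbol{T}^{(j)}$ consists of the uncancelled terms, which are either of order $(\left\vert\beta\right\vert+\Theta)^{(p_1-1)n}$ from the nonlinearity iterated $n$ times — but one never gains beyond quadratic order in a single pass, so the genuine gain saturates, giving the exponent $A(n)=\min\{n(p_1-1),2\}$ — or the structurally irreducible term $\lambda^2 V(\left\vert y+\tfrac{\chi}{\lambda}\right\vert)Q$ projected onto the range, whose size is exactly $\left\vert U_V(\left\vert\tfrac{\chi}{\lambda}\right\vert)\right\vert$ by Lemma \ref{L2 1}. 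I expect the main obstacle to be the sub-quadratic case $p<2$: there the nonlinearity $\mathcal{N}(T)$ is merely $C^{p_1}$ and one cannot naively Taylor-expand; controlling $\left\vert\left\vert Q+T\right\vert^{p-1}(Q+T)-Q^p-\tfrac{p+1}{2}Q^{p-1}T-\tfrac{p-1}{2}Q^{p-1}\overline T\right\vert\lesssim \left\vert T\right\vert^{p}$ uniformly, in the weighted $L^\infty$ norm with exponential weights, and simultaneously tracking its derivatives in the modulation parameters, is the delicate point — this is precisely why the statement is phrased with $p_1$ and the weight function $\mathbf{Y}$ allowing arbitrarily small loss $\delta$ in the exponential rate. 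A secondary subtlety is ensuring the orthogonality conditions $(\boldsymbol{T}^{(j)},\nabla Q)=(\boldsymbol{T}^{(j)},Q)=0$ are preserved under all the parameter differentiations so that Lemma \ref{Linv} remains applicable at each stage.
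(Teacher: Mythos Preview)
Your overall architecture --- induction on $j$, fixing $B_j,M_j$ by the Fredholm conditions against $\nabla Q$ and $Q$, inverting $L_\pm-\lambda^2 V$ via Lemma~\ref{Linv}, and propagating weighted $L^\infty$ bounds through Lemma~\ref{L8} --- matches the paper exactly. The treatment of $j=1$, including the appeal to Lemma~\ref{L2 1} for $B_1$ and the use of Lemma~\ref{L8} with $\delta>\delta'=\tfrac12$ for the refined $\partial_\beta M_1$ bound, is correct.

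There is, however, a genuine error in your inductive mechanism. You write that the forcing at step $j$ contains ``$\mathcal N(T^{<j})$, whose leading term is of size $(|T^{<j}|)^{p_1}\sim(|\beta|+\Theta)^{p_1(j-1)}$''. This is false: $T^{<j}$ is dominated by $\boldsymbol T^{(1)}$, which has size $\mathbf Y$ \emph{independently of $j$}, so $|\mathcal N(T^{<j})|\le C\mathbf Y^{p_1}$ does not shrink with $j$. If you put the full $\mathcal N(T^{<j})$ (or the full residual, which contains the modulation terms acting on the full $T$) as the right-hand side, the iteration stalls and \eqref{ap14} cannot hold for large $j$. The paper's device is to place \emph{only} the telescoped increment $\mathcal N(\Upsilon^{(j-1)})-\mathcal N(\Upsilon^{(j-2)})$ on the right-hand side of the equation for $\boldsymbol T^{(j)}$ (see \eqref{eq11}--\eqref{eq12}); the pointwise H\"older bound $|\mathcal N(a+b)-\mathcal N(a)|\le C|b|\bigl(|a|^{p_1-1}+|b|^{p_1-1}\bigr)$ (relation \eqref{nonlinear1}) then gives size $|\boldsymbol T^{(j-1)}|\cdot\mathbf Y^{p_1-1}$, producing the clean gain of $(|\beta|+\Theta)^{p_1-1}$ per step.

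Relatedly, your explanation of the saturation $A(n)=\min\{n(p_1-1),2\}$ is off. It is not that ``one never gains beyond quadratic order in a single pass'' of the nonlinearity; rather, the transport and modulation terms in $R_1$ (see \eqref{eq4}) --- namely $-i\lambda M\Lambda T$, $-\lambda^3(B\cdot y)T$, $2i\lambda^2\beta\cdot\nabla_\chi T_2$, $B\cdot\nabla_\beta T$, $M\partial_\lambda T$ --- are deliberately \emph{not} iterated away in this lemma (contrast with Lemma~\ref{Lemmaapp1}, whose proof opens by noting exactly this). They sit in the residual at fixed size $\mathbf Y\bigl((|\beta|+\Theta)^2+|U_V|\bigr)$, coming from \eqref{BM} and \eqref{ap14} for $j=1$; this is what caps \eqref{eps} at exponent $2$, as is visible in the explicit residual formula \eqref{er1}.
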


\begin{proof}
First, we solve equation (\ref{eq5}) given by
\begin{equation}
\left(  L_{+}-\lambda^{2}V\left(  \left\vert y+\frac{\chi}{\lambda}\right\vert
\right)  \right)  T_{1}=f_{1} \label{eq7}%
\end{equation}
with%
\begin{equation}
f_{1}=-\lambda^{3}\left(  B\cdot y\right)  Q+\lambda^{2}V\left(  \left\vert
y+\frac{\chi}{\lambda}\right\vert \right)  Q. \label{f1}%
\end{equation}
Taking $B_{1}=B\left(  \chi,\lambda\right)  ,$ with $B\left(  \chi
,\lambda\right)  $ defined by (\ref{ap17}) we assure that the right-hand side
of (\ref{eq7}) is orthogonal to $\nabla Q.$ Noting that
\begin{equation}
\left.
\begin{array}
[c]{c}%
\partial_{\chi}^{k}V\left(  \left\vert y+\frac{\chi}{\lambda}\right\vert
\right)  =\lambda^{-\left\vert k\right\vert }\partial_{y}^{k}V\left(
\left\vert y+\frac{\chi}{\lambda}\right\vert \right) \\
\\
\partial_{\lambda}V\left(  \left\vert y+\frac{\chi}{\lambda}\right\vert
\right)  =\partial_{\lambda}\mathcal{V}\left(  \left\vert \lambda
y+\chi\right\vert \right)  =\left(  \lambda y\cdot\nabla_{y}\mathcal{V}%
\right)  \left(  \left\vert \lambda y+\chi\right\vert \right)  =y\cdot
\nabla_{y}V\left(  \left\vert y+\frac{\chi}{\lambda}\right\vert \right)
\end{array}
\right.  \label{ap265}%
\end{equation}
(recall that $\mathcal{V}$ is defined by (\ref{vcall})) we have
\begin{equation}
\left.  \left\vert \partial_{\chi}^{k}\partial_{\lambda}^{k_{1}}%
{\displaystyle\int}
V\left(  \left\vert y+\frac{\chi}{\lambda}\right\vert \right)  \nabla
Q^{2}\left(  y\right)  dy\right\vert \leq C_{k,k_{1}}\lambda^{-\left\vert
k\right\vert }\left\vert
{\displaystyle\int}
V\left(  \left\vert y+\frac{\chi}{\lambda}\right\vert \right)  \left(
\nabla_{y}\cdot y\right)  ^{k_{1}}\partial_{y}^{k}\left(  \nabla Q^{2}\left(
y\right)  \right)  dy\right\vert .\right.  \label{ap263}%
\end{equation}
Then, using (\ref{l2}) we estimate%
\[
\left\vert \partial_{\chi}^{k}\partial_{\lambda}^{k_{1}}\int V\left(
\left\vert y+\frac{\chi}{\lambda}\right\vert \right)  \nabla Q^{2}\left(
y\right)  dy\right\vert \leq C_{k,k_{1}}\lambda^{-\left\vert k\right\vert
}\left\vert U_{V}^{\prime}\left(  \left\vert \frac{\chi}{\lambda}\right\vert
\right)  \right\vert .
\]
Thus, by (\ref{estp}) we have
\begin{equation}
\left\vert \partial_{\chi}^{k}\partial_{\lambda}^{k_{1}}B_{1}\right\vert \leq
C_{k,k_{1}}\left\langle \lambda^{-1}\right\rangle ^{l_{k,k_{1}}}\left\vert
U_{V}^{\prime}\left(  \left\vert \frac{\chi}{\lambda}\right\vert \right)
\right\vert , \label{b1}%
\end{equation}
which imply (\ref{ap14}), (\ref{BM}) for $B_{1}.$ Also, by (\ref{estp}) we
get
\begin{align*}
\left\vert \mathcal{D}_{2}\left(  V\left(  \left\vert y+\frac{\chi}{\lambda
}\right\vert \right)  \right)  Q\left(  y\right)  \right\vert  &  \leq
C_{1,K}\left\langle \lambda\right\rangle ^{i_{1,K}}\left\langle \lambda
^{-1}\right\rangle ^{l_{1,K}}\inf_{0<\delta<1}\left\{  \left\vert V\left(
\left\vert y+\frac{\chi}{\lambda}\right\vert \right)  Q\left(  y\right)
\right\vert ^{\left(  1-\delta\right)  }Q^{\delta}\left(  y\right)  \right\}
\\
&  \leq C_{1,K}\left\langle \lambda\right\rangle ^{i_{1,K}}\left\langle
\lambda^{-1}\right\rangle ^{l_{1,K}}\mathbf{Y.}%
\end{align*}
Hence, from (\ref{f1}) we see that
\begin{equation}
\left.  \left\vert \mathcal{D}_{2}f_{1}\right\vert \leq C_{K}\left\langle
\lambda\right\rangle ^{i_{K}}\left\langle \lambda^{-1}\right\rangle ^{l_{K}%
}\mathbf{Y}.\right.  \label{ap234}%
\end{equation}
for all $\left\vert k\right\vert +k_{1}+\left\vert k_{2}\right\vert \geq0.$ By
Lemma \ref{Linv}, there exists a solution $T_{1}\in H^{1}$ satisfying%
\begin{equation}
\left\vert T_{1}\right\vert \leq C_{0}\left\langle \lambda\right\rangle
^{i_{0}}\left\langle \lambda^{-1}\right\rangle ^{l_{0}}\mathbf{Y}.
\label{ap235}%
\end{equation}
Differentiating equation (\ref{eq7}) with respect to $\lambda$ we get%
\begin{equation}
\left(  L_{+}-\lambda^{2}V\left(  \left\vert y+\frac{\chi}{\lambda}\right\vert
\right)  \right)  \partial_{\lambda}T_{1}=\partial_{\lambda}f_{1}%
+\partial_{\lambda}\left(  \lambda^{2}V\left(  \left\vert y+\frac{\chi
}{\lambda}\right\vert \right)  \right)  T_{1}. \label{ap236}%
\end{equation}
We write the above equation as
\begin{equation}
\partial_{\lambda}T_{1}=\left(  -\Delta+1\right)  ^{-1}\left(  \left(
pQ^{p-1}+\lambda^{2}V\left(  \left\vert \cdot+\frac{\chi}{\lambda}\right\vert
\right)  \right)  \partial_{\lambda}T_{1}+\partial_{\lambda}f_{1}%
+\partial_{\lambda}\left(  \lambda^{2}V\left(  \left\vert \cdot+\frac{\chi
}{\lambda}\right\vert \right)  \right)  T_{1}\right)  . \label{ap275}%
\end{equation}
Using the explicit expression for the kernel of the Helmholtz operator
$-\Delta+1,$ we estimate%
\[
\left\vert \partial_{\lambda}T_{1}\left(  y\right)  \right\vert \leq
C\Theta^{-\left(  1-\delta\right)  }e^{\delta\left\vert y\right\vert }%
\int_{\mathbb{R}^{d}}\frac{e^{-\left\vert y-z\right\vert }}{\left\vert
y-z\right\vert ^{\frac{d-1}{2}}}\left(  Q^{p-1}\left(  z\right)  \left\vert
\partial_{\lambda}T_{1}\left(  z\right)  \right\vert +\left\vert
\partial_{\lambda}f_{1}\left(  z\right)  \right\vert +\left\vert
\partial_{\lambda}\left(  \lambda^{2}V\left(  \left\vert z+\frac{\chi}%
{\lambda}\right\vert \right)  \right)  T_{1}\left(  z\right)  \right\vert
\right)  dz.
\]
Then, using (\ref{ineqpert}) to control the first term in the right-hand side
of the above equation, by (\ref{ap234}) and (\ref{ap235}) we have
\begin{equation}
\left\vert \partial_{\lambda}T_{1}\right\vert \leq C_{1}\left\langle
\lambda\right\rangle ^{i_{1}}\left\langle \lambda^{-1}\right\rangle ^{l_{1}%
}\mathbf{Y}. \label{ap237}%
\end{equation}
Moreover, from (\ref{ap275}), by the regularity properties of $-\Delta+1$ we
show that (\ref{ap237}) is valid for the derivatives $\partial_{\lambda
}\partial_{y}^{k_{2}}T_{1}$ for all multi-indices $k_{2}.$ Similarly we
estimate $\partial_{\chi}^{k}\partial_{y}^{k_{2}}T_{1}$ for $\left\vert
k\right\vert =1.$ By induction in $k,k_{1}$ we prove
\begin{equation}
\left\vert \mathcal{D}_{2}T_{1}\right\vert \leq C_{K}\left\langle
\lambda\right\rangle ^{i_{K}}\left\langle \lambda^{-1}\right\rangle ^{l_{K}%
}\mathbf{Y}, \label{ap238}%
\end{equation}
for all $\left\vert k\right\vert +k_{1}+\left\vert k_{2}\right\vert
\geq0.\qquad\qquad\qquad\qquad\qquad$

Next, we consider equation (\ref{eq6})%
\begin{equation}
\left(  L_{-}-\lambda^{2}V\left(  \left\vert y+\frac{\chi}{\lambda}\right\vert
\right)  \right)  T_{2}=f_{2} \label{eq8}%
\end{equation}
with%
\begin{equation}
f_{2}=-\lambda M\Lambda Q+2\lambda^{2}\beta\cdot\nabla_{\chi}T_{1} \label{f2}%
\end{equation}
We fix $M_{1}$ in such way that
\[
\left(  \lambda M_{1}\Lambda Q-2\lambda^{2}\beta\cdot\nabla_{\chi}%
T_{1},Q\right)  =0.
\]
That is
\begin{equation}
M_{1}=\frac{2\lambda}{\left(  \Lambda Q,Q\right)  }\left(  \beta\cdot
\nabla_{\chi}T_{1},Q\right)  . \label{m1}%
\end{equation}
Observe that $\left(  \Lambda Q,Q\right)  =\left(  \frac{2}{p-1}-\frac{d}%
{2}\right)  \left(  Q,Q\right)  \neq0.$ By (\ref{ap238}) we have%
\begin{equation}
\left\vert \mathcal{D}_{1}M_{1}\right\vert \leq C_{K}\left\langle
\lambda\right\rangle ^{i_{K}}\left\langle \lambda^{-1}\right\rangle ^{l_{K}%
}\omega_{K}\left(  \beta\right)  \Theta, \label{ap239}%
\end{equation}
for all $\left\vert K\right\vert \geq0,$ where%
\[
\omega_{K}\left(  \beta\right)  =\left\{
\begin{array}
[c]{c}%
\left\vert \beta\right\vert ,\text{ }\left\vert k_{3}\right\vert =0,\\
C_{K},\text{ }\left\vert k_{3}\right\vert \geq1.
\end{array}
\right.
\]
Then, (\ref{ap14}) and the relation in (\ref{BM}) for $M_{1}$ follow from
(\ref{ap239}). Observe by (\ref{estp}) that $f_{1}$ can be estimated as
\begin{equation}
\left\vert \mathcal{D}_{2}f_{1}\right\vert \leq C\left(  \inf_{0<\delta
<1}\left(  e^{\delta\left\vert y\right\vert }\left\vert U_{V}\left(
\left\vert \tfrac{\chi}{\lambda}\right\vert \right)  \right\vert
^{\frac{1+\delta}{2}}\right)  \right)  ,\text{ }\left\vert k\right\vert
+k_{1}+\left\vert k_{2}\right\vert \geq0. \label{ap276}%
\end{equation}
Then, using Lemma \ref{L8} and (\ref{ap238}) we get%
\begin{equation}
\left\Vert e^{-\delta\left\vert \cdot\right\vert }\mathcal{D}_{2}T_{1}\left(
\cdot\right)  \right\Vert _{L^{\infty}}\leq C_{K}\left(  \delta,\delta
^{\prime}\right)  \left(  \left\vert U_{V}\left(  \left\vert \tfrac{\chi
}{\lambda}\right\vert \right)  \right\vert ^{\frac{1+\delta}{2}}+\Theta\left(
\left\vert \tfrac{\chi}{\lambda}\right\vert \right)  ^{1+\delta^{\prime}%
}\right)  , \label{ap278}%
\end{equation}
with $0<\delta^{\prime}<\delta<1$, for all $\left\vert k\right\vert
+k_{1}+\left\vert k_{2}\right\vert \geq0.$ Using the last estimate with
$\delta>\delta^{\prime}=\frac{1}{2}$ in (\ref{m1}) we get the second relation
in (\ref{BM}). Similarly to (\ref{ap234}), from (\ref{f2}) we deduce
\begin{equation}
\left.  \left\vert \mathcal{D}f_{2}\right\vert \leq C_{K}\left\langle
\lambda\right\rangle ^{i_{K}}\left\langle \lambda^{-1}\right\rangle ^{l_{K}%
}\omega_{K}\left(  \beta\right)  \mathbf{Y}.\right.  \label{ap264}%
\end{equation}
for all $\left\vert K\right\vert \geq0.$ By Lemma \ref{Linv}, there exists a
solution $T_{2}\in H^{1}$ to (\ref{eq8}). Moreover, similarly to (\ref{ap238})
using (\ref{ap264}) we prove that
\begin{equation}
\left.  \left\vert \mathcal{D}T_{2}\right\vert \leq C_{K}\left\langle
\lambda\right\rangle ^{i_{K}}\left\langle \lambda^{-1}\right\rangle ^{l_{K}%
}\omega_{K}\left(  \beta\right)  \mathbf{Y}.\right.  \label{ap240}%
\end{equation}
for all $\left\vert K\right\vert \geq0.$ We put
\[
\boldsymbol{T}=T_{1}+iT_{2}.
\]
By (\ref{ap238}) and (\ref{ap240}) we get (\ref{ap14}) for $\boldsymbol{T}.$
Using (\ref{b1}), (\ref{ap238}), (\ref{ap239}), (\ref{ap240}) and
\begin{equation}
\left\vert \mathcal{N}\left(  T\right)  \right\vert \leq C\left\vert
T\right\vert ^{p_{1}}, \label{non2}%
\end{equation}
from (\ref{eq3}) we get (\ref{eps}) with $n=1.$ Therefore, we constructed the
first improved approximation $\boldsymbol{T}^{\left(  1\right)  }.$ Let us
construct $\boldsymbol{T}^{\left(  2\right)  }.$ We write $\boldsymbol{T}%
^{\left(  2\right)  }=T_{1}^{\left(  2\right)  }+iT_{2}^{\left(  2\right)  },$
and introduce $T=\boldsymbol{T}^{\left(  1\right)  }+\boldsymbol{T}^{\left(
2\right)  },$ $B=B_{1}+B_{2},$ $M=M_{1}+M_{2},$ into (\ref{E7}). Then, using
(\ref{eq7}) and (\ref{eq8}) and setting
\begin{equation}
\left.  \left(  L_{+}-\lambda^{2}V\left(  \left\vert y+\frac{\chi}{\lambda
}\right\vert \right)  \right)  T_{1}^{\left(  2\right)  }=-\lambda^{3}\left(
B_{2}\cdot y\right)  Q+f_{3}\right.  \label{eq9}%
\end{equation}
with%
\begin{equation}
\left.  f_{3}=\operatorname{Re}\mathcal{N}\left(  \boldsymbol{T}^{\left(
1\right)  }\right)  \right.  \label{ap279}%
\end{equation}
and%
\begin{equation}
\left(  L_{-}-\lambda^{2}V\left(  \left\vert y+\frac{\chi}{\lambda}\right\vert
\right)  \right)  T_{2}^{\left(  2\right)  }=-\lambda M_{2}\Lambda Q+f_{4}
\label{eq10}%
\end{equation}
with%
\begin{equation}
f_{4}=\operatorname{Im}\mathcal{N}\left(  \boldsymbol{T}^{\left(  1\right)
}\right)  , \label{ap280}%
\end{equation}
we derive%
\begin{equation}
\left.
\begin{array}
[c]{c}%
\mathcal{E}_{\operatorname*{apr}}\left(  Q+T\right)  =-i\lambda M\Lambda
T-\lambda^{3}\left(  B\cdot y\right)  T\\
+i\lambda^{2}\left(  2\beta\cdot\nabla_{\chi}\left(  iT_{2}+\boldsymbol{T}%
^{\left(  2\right)  }\right)  +B\cdot\nabla_{\beta}T+M\frac{\partial
T}{\partial\lambda}\right)  +\mathcal{N}\left(  \boldsymbol{T}^{\left(
1\right)  }+\boldsymbol{T}^{\left(  2\right)  }\right)  -\mathcal{N}\left(
\boldsymbol{T}^{\left(  1\right)  }\right)  .
\end{array}
\right.  \label{er1}%
\end{equation}
From (\ref{ap279}) and (\ref{ap280}), via (\ref{ap238}), (\ref{ap240}) and
(\ref{non2}) we get%
\begin{equation}
\left\vert \mathcal{D}f_{j}\right\vert \leq C\left\langle \lambda\right\rangle
^{i_{j,K}}\left\langle \lambda^{-1}\right\rangle ^{l_{j,K}}\mathbf{Y}^{p_{1}%
},\text{ }j=3,4, \label{ap241}%
\end{equation}
for $\left\vert K\right\vert \leq1$. Note that the restriction on the order of
derivatives $K$ is due to the nonlinear term in the definition of $f_{j}.$ We
put
\begin{equation}
B_{2}=-\frac{\left(  f_{3},\nabla Q\right)  }{\lambda^{3}\left\Vert
Q\right\Vert _{L^{2}}^{2}} \label{B2}%
\end{equation}
and%
\begin{equation}
M_{2}=\frac{\left(  f_{4},Q\right)  }{\lambda\left(  \Lambda Q,Q\right)  }.
\label{M2}%
\end{equation}
so that by Lemma \ref{Linv} there exist solution $T_{1}^{\left(  2\right)  }$
to equation (\ref{eq9}). By (\ref{ap241}) we derive (\ref{ap14}) with $j=2$
for $B_{2}$, $M_{2}$ and hence, for $\boldsymbol{T}^{\left(  2\right)  }$
satisfies (\ref{ap14})$.$ Observe that thanks to the regularity of $\left(
-\Delta+1\right)  ^{-1},$ we can estimate the second derivative on the $y$
variable of $\boldsymbol{T}^{\left(  2\right)  }$ by using (\ref{ap241})
only$.$ Let us prove (\ref{BM}). Using (\ref{ap240}), (\ref{ap278}) with
$\delta=\frac{3-p_{1}}{2p_{1}}$ and $\delta^{\prime}$ close to $\delta,$ and
(\ref{non2}) we estimate
\[
\left\vert \partial_{\chi}^{k}\left(  \operatorname{Re}\mathcal{N}\left(
\boldsymbol{T}^{\left(  1\right)  }\right)  ,\nabla Q\right)  \right\vert
+\left\vert \partial_{\chi}^{k}\left(  \operatorname{Im}\mathcal{N}\left(
\boldsymbol{T}^{\left(  1\right)  }\right)  ,Q\right)  \right\vert \leq
C\left(  \left\vert \beta\right\vert \Theta\right)  ^{p_{1}}+C\left\vert
U_{V}\left(  \left\vert \frac{\chi}{\lambda}\right\vert \right)  \right\vert
^{1+\frac{p_{1}-1}{4}},\text{ }k\geq0.
\]
Hence, from (\ref{B2}) and (\ref{M2}) we prove (\ref{BM}) for $j=2.$ Finally,
as
\begin{equation}
\left\vert \mathcal{N}\left(  a+b\right)  -\mathcal{N}\left(  a\right)
\right\vert \leq C\left\vert b\right\vert \left(  \left\vert a\right\vert
^{p_{1}-1}+\left\vert b\right\vert ^{p_{1}-1}\right)  , \label{nonlinear1}%
\end{equation}
using (\ref{ap14}) and (\ref{BM}) with $j=1,2$, in (\ref{er1}) we obtain
(\ref{eps}) with $n=2.$

We now proceed by induction. Suppose that we have constructed $\boldsymbol{T}%
^{\left(  j\right)  },B_{j},M_{j},$ $j=1,...,n,$ for some $n\geq3,$ satisfying
estimates (\ref{ap14}), (\ref{BM}) and%
\begin{equation}
\left\Vert e^{-\frac{3-p_{1}}{2p_{1}}\left\vert \cdot\right\vert }%
\mathcal{D}_{2}\boldsymbol{T}^{\left(  j\right)  }\left(  \cdot\right)
\right\Vert _{L^{\infty}}\leq C\left(  \left(  \left\vert \beta\right\vert
+\Theta\right)  \Theta+\left\vert U_{V}\left(  \left\vert \tfrac{\chi}%
{\lambda}\right\vert \right)  \right\vert \right)  , \label{ap283}%
\end{equation}
for $\left\vert K\right\vert \leq2$ and $\left\vert k\right\vert
+k_{1}+\left\vert k_{3}\right\vert \leq1.$ Denote
\[
\Upsilon^{\left(  n\right)  }=\mathcal{T}_{1}^{\left(  n\right)
}+i\mathcal{T}_{2}^{\left(  n\right)  },\text{ with\ }\mathcal{T}_{1}^{\left(
n\right)  }=\sum_{j=1}^{n}T_{1}^{\left(  j\right)  },\text{\ }\mathcal{T}%
_{2}^{\left(  n\right)  }=\sum_{j=1}^{n}T_{2}^{\left(  j\right)  },
\]
and
\[
\mathcal{B}^{\left(  n\right)  }=\sum_{j=1}^{n}B_{j},\text{ \ }\mathcal{M}%
^{\left(  n\right)  }=\sum_{j=1}^{n}M_{j}.
\]
Let us consider the equations%
\begin{equation}
\left(  L_{+}-\lambda^{2}V\left(  \left\vert y+\frac{\chi}{\lambda}\right\vert
\right)  \right)  T_{1}^{\left(  n+1\right)  }=f_{2n+1}=-\lambda^{3}\left(
B_{n+1}\cdot y\right)  Q+\operatorname{Re}\left(  \mathcal{N}\left(
\Upsilon^{\left(  n\right)  }\right)  -\mathcal{N}\left(  \Upsilon^{\left(
n-1\right)  }\right)  \right)  \label{eq11}%
\end{equation}
and
\begin{equation}
\left(  L_{-}-\lambda^{2}V\left(  \left\vert y+\frac{\chi}{\lambda}\right\vert
\right)  \right)  T_{2}^{\left(  n+1\right)  }=f_{2n+2}=-\lambda
M_{n+1}\Lambda Q+\operatorname{Im}\left(  \mathcal{N}\left(  \Upsilon^{\left(
n\right)  }\right)  -\mathcal{N}\left(  \Upsilon^{\left(  n-1\right)
}\right)  \right)  . \label{eq12}%
\end{equation}
We put
\begin{equation}
B_{n+1}=-\frac{\left(  \operatorname{Re}\left(  \mathcal{N}\left(
\Upsilon^{\left(  n\right)  }\right)  -\mathcal{N}\left(  \Upsilon^{\left(
n-1\right)  }\right)  \right)  ,\nabla Q\right)  }{\lambda^{3}\left\Vert
Q\right\Vert _{L^{2}}^{2}}\text{ and }M_{n+1}=\frac{\left(  \operatorname{Im}%
\left(  \mathcal{N}\left(  \Upsilon^{\left(  n\right)  }\right)
-\mathcal{N}\left(  \Upsilon^{\left(  n-1\right)  }\right)  \right)
,Q\right)  }{\lambda\left(  \Lambda Q,Q\right)  }. \label{Bn+1}%
\end{equation}
Using (\ref{nonlinear1}) we have
\begin{equation}
\left\vert \mathcal{N}\left(  \left(  \Upsilon^{\left(  n\right)  }\right)
\right)  -\mathcal{N}\left(  \Upsilon^{\left(  n-1\right)  }\right)
\right\vert \ \leq C\left\vert \boldsymbol{T}^{\left(  n\right)  }\right\vert
\left\vert \Upsilon^{\left(  n-1\right)  }\right\vert ^{p_{1}-1}.
\label{nonlinear2}%
\end{equation}
Then, by (\ref{ap14}) we show
\[
\left\vert \mathcal{D}f_{i}\right\vert \leq C\left\langle \lambda\right\rangle
^{k}\left\langle \lambda^{-1}\right\rangle ^{l}\left\langle \left\vert
\beta\right\vert \right\rangle ^{m}\mathbf{Y}\left(  \left\vert \beta
\right\vert +\Theta\left(  \tfrac{\left\vert \chi\right\vert }{\lambda
}\right)  \right)  ^{\left(  p_{1}-1\right)  n},\text{ }i=2n+1,2n+2,
\]
for $\left\vert K\right\vert \leq1$ and some $k,l,m\geq0.$ Then, $B_{n+1}$ and
$M_{n+1}$ satisfy (\ref{ap14}) with $j=n+1.$ By Lemma \ref{Linv} we can solve
(\ref{eq11}) and (\ref{eq12}) with $T_{1}^{\left(  n+1\right)  }$ and
$T_{2}^{\left(  n+1\right)  }$ satisfying (\ref{ap14}) with $j=n+1.$ We put%
\begin{equation}
\boldsymbol{T}^{\left(  n+1\right)  }=T_{1}^{\left(  n+1\right)  }%
+iT_{2}^{\left(  n+1\right)  }. \label{Tn+1}%
\end{equation}
Then, from the equations (\ref{Bn+1}), using (\ref{nonlinear2}) (with $n$
replaced by $n-1$) and (\ref{ap283}) we deduce (\ref{BM}) for $j=n+1.$
Moreover, we prove $\left\vert e^{-\frac{3-p_{1}}{2}\left\vert \cdot
\right\vert }\mathcal{D}_{2}f_{2n+1}\right\vert +\left\vert e^{-\frac{3-p_{1}%
}{2}\left\vert \cdot\right\vert }\mathcal{D}_{2}f_{2n+2}\right\vert \leq
C\left(  \left(  \left(  \left\vert \beta\right\vert +\Theta\right)
\Theta\right)  ^{p_{1}}+\left\vert U_{V}\left(  \left\vert \tfrac{\chi
}{\lambda}\right\vert \right)  \right\vert ^{p_{1}}\right)  \mathbf{,}$ for
$\left\vert K\right\vert \leq1.$ Then, by Lemma \ref{L8} and (\ref{ap14}) we
show that (\ref{ap283}) is true for $\boldsymbol{T}^{\left(  n+1\right)  }.$
Introducing $T=\Upsilon^{\left(  n\right)  }+\boldsymbol{T}^{\left(
n+1\right)  },$ $B=\mathcal{B}^{\left(  n+1\right)  }$, $M=\mathcal{M}%
^{\left(  n+1\right)  },$ into (\ref{E7}) we get%
\[
\left.
\begin{array}
[c]{c}%
\mathcal{E}_{\operatorname*{apr}}\left(  Q+T\right)  =-i\lambda M\Lambda
T-\lambda^{3}\left(  B\cdot y\right)  T\\
+i\lambda^{2}\left(  2\beta\cdot\nabla_{\chi}\left(  T-T_{1}\right)
+B\cdot\nabla_{\beta}T+M\frac{\partial T}{\partial\lambda}\right)
+\mathcal{N}\left(  \left(  \Upsilon^{\left(  n+1\right)  }\right)  \right)
-\mathcal{N}\left(  \Upsilon^{\left(  n\right)  }\right)  .
\end{array}
\right.
\]
Hence, using (\ref{ap14}) and (\ref{BM}) we prove (\ref{eps}) for $n=N+1.$
\end{proof}

In the case when the potential does not decay fast enough, that is,
$V=V^{\left(  2\right)  }$, with $V^{\left(  2\right)  }$ given by (\ref{v2}),
the construction of Lemma \ref{Lemmaapp} is not good enough to obtain a priori
estimates on the modulation parameters of Section \ref{ModPar} (see Remark
\ref{Rem1}). In order to cover also this case, we present a different
construction in the following Lemma. We denote%
\[
\Psi=\min\left\{  \left\vert V^{\prime}\left(  \left\vert \frac{\chi}{\lambda
}\right\vert \right)  \right\vert +\frac{\left\vert V\left(  \left\vert
\frac{\chi}{\lambda}\right\vert \right)  \right\vert }{\left\vert
\chi\right\vert }+e^{-\frac{\left\vert \chi\right\vert }{2\lambda}}%
+e^{-\frac{\left\vert \chi\right\vert }{2}},\left\vert V\left(  \left\vert
\frac{\chi}{\lambda}\right\vert \right)  \right\vert \right\}  ,
\]%
\[
\mathbf{Z=}\left(  \left\vert \beta\right\vert +\left\vert V\left(  \left\vert
\tfrac{\chi}{\lambda}\right\vert \right)  \right\vert \right)
\]
and%
\[
\mathbf{e=e}\left(  y\right)  =\inf_{0<\delta<1}\left\{  C\left(
\delta\right)  e^{-\delta\left\vert y\right\vert }\right\}  ,\text{
}0<C\left(  \delta\right)  <\infty.
\]

\begin{lemma}
\label{Lemmaapp1}Suppose that $V=V^{\left(  2\right)  }$. Let $\Xi=\left(
\chi,\beta,\lambda\right)  \in\mathbb{R}^{d}\times\mathbb{R}^{d}%
\times\mathbb{R}^{+}$ be a vector of parameters with $\lambda\geq\lambda
_{0}>0\ $such that $\lambda^{2}\sup_{r\in\mathbb{R}}V\left(  r\right)  <1.$
There is $C\left(  V\right)  >0$ such that for any $\left\vert \chi\right\vert
\geq C\left(  V\right)  $ the following holds. Given $n\geq0,$ there exist
$\boldsymbol{\tilde{T}}^{\left(  j\right)  },\tilde{B}_{j},\tilde{M}_{j}\in
L^{\infty}\left(  \mathbb{R}^{d}\right)  ,$ $j=0,...,n$ satisfying%
\begin{equation}
\left.
\begin{array}
[c]{c}%
\left\vert \partial_{\lambda}^{k_{1}}\partial_{y}^{k_{2}}\boldsymbol{\tilde
{T}}^{\left(  0\right)  }\right\vert \leq CC_{k,k_{1},j}\left(  \lambda
\right)  \left\vert V\left(  \left\vert \frac{\chi}{\lambda}\right\vert
\right)  \right\vert \mathbf{e},\\
\left\vert \mathcal{D}_{2}\boldsymbol{\tilde{T}}^{\left(  0\right)
}\right\vert \leq C_{K,j}\left(  \lambda\right)  \Psi\mathbf{e},\text{ }%
k\geq1,\\
\left\vert \mathcal{D}\boldsymbol{\tilde{T}}^{\left(  1\right)  }\right\vert
\leq C_{j,K}\left\langle \lambda\right\rangle ^{i_{j,K}}\left\langle
\lambda^{-1}\right\rangle ^{l_{j,K}}\left\langle \left\vert \beta\right\vert
\right\rangle ^{m_{j,K}}\Psi\mathbf{e},\\
\left\vert \mathcal{D}\boldsymbol{\tilde{T}}^{\left(  j\right)  }\right\vert
\leq C_{j,K}\left\langle \lambda\right\rangle ^{i_{j,K}}\left\langle
\lambda^{-1}\right\rangle ^{l_{j,K}}\left\langle \left\vert \beta\right\vert
\right\rangle ^{m_{j,K}}\left(  \Psi^{2}+\left\vert \beta\right\vert
\left\vert V^{\prime\prime}\left(  \left\vert \tfrac{\chi}{\lambda}\right\vert
\right)  \right\vert \right)  \mathbf{Z}^{j-1}\mathbf{e},\text{ }j\geq2,
\end{array}
\right.  \label{B}%
\end{equation}
and%
\begin{equation}%
\begin{array}
[c]{c}%
\left\vert \partial_{\lambda}^{k_{1}}\partial_{\beta}^{k_{3}}\tilde{B}%
_{1}\right\vert +\left\vert \partial_{\lambda}^{k_{1}}\partial_{\beta}^{k_{3}%
}\tilde{M}_{1}\right\vert \leq C_{j,K}\left\langle \lambda\right\rangle
^{i_{j,K}}\left\langle \lambda^{-1}\right\rangle ^{l_{j,K}}\left\langle
\left\vert \beta\right\vert \right\rangle ^{m_{j,K}}\Psi,\text{ }\\
\\
\left\vert \mathcal{D}_{1}\tilde{B}_{1}\right\vert +\left\vert \mathcal{D}%
_{1}\tilde{M}_{1}\right\vert \leq C_{j,K}\left\langle \lambda\right\rangle
^{i_{j,K}}\left\langle \lambda^{-1}\right\rangle ^{l_{j,K}}\left\langle
\left\vert \beta\right\vert \right\rangle ^{m_{j,K}}\left(  \Psi
^{2}+\left\vert V^{\prime\prime}\left(  \left\vert \tfrac{\chi}{\lambda
}\right\vert \right)  \right\vert \right)  ,\text{ }k\geq1,\\
\\
\left\vert \mathcal{D}_{1}\tilde{B}_{j}\right\vert +\left\vert \mathcal{D}%
_{1}\tilde{M}_{j}\right\vert \leq C_{j,K}\left\langle \lambda\right\rangle
^{i_{j,K}}\left\langle \lambda^{-1}\right\rangle ^{l_{j,K}}\left\langle
\left\vert \beta\right\vert \right\rangle ^{m_{j,K}}\left(  \Psi
^{2}+\left\vert \beta\right\vert \left\vert V^{\prime\prime}\left(  \left\vert
\tfrac{\chi}{\lambda}\right\vert \right)  \right\vert \right)  \mathbf{Z}%
^{j-1},\text{ }j\geq2,
\end{array}
\label{B'}%
\end{equation}
for all $\left\vert K\right\vert \geq0,$ with some $i_{j,K},l_{j,K}%
,m_{j,K}\geq0$ and $C_{j,K},\eta,\delta_{j,K}>0$. For the approximation $Q+T,$
with $T=\sum_{j=0}^{n}\boldsymbol{\tilde{T}}^{\left(  j\right)  },$ the error
$\mathcal{E}_{\operatorname*{apr}}\left(  Q+T\right)  $ satisfies the
estimate
\begin{equation}
\left.  \left\vert \mathcal{E}_{\operatorname*{apr}}\left(  Q+T\right)
\right\vert \leq C_{n}\left\langle \lambda\right\rangle ^{i_{n}}\left\langle
\lambda^{-1}\right\rangle ^{l_{n}}\left\langle \left\vert \beta\right\vert
\right\rangle ^{m_{n}}\Psi\mathbf{Z}^{n}\mathbf{e}.\text{ }\right.
\label{eps2}%
\end{equation}

\end{lemma}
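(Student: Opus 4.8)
The plan is to rerun the iterative construction of Lemma \ref{Lemmaapp}, but with a bookkeeping adapted to the slow decay of $V=V^{(2)}=e^{-h_{1}}$. The obstruction to a direct repetition is that, by (\ref{estp1}), the inhomogeneity $\lambda^{2}V(|y+\tfrac{\chi}{\lambda}|)Q$ appearing in $\mathcal{E}_{\operatorname*{apr}}(Q)$ has size $|V(|\tfrac{\chi}{\lambda}|)|\,\mathbf{e}$, whereas by (\ref{ap17}) and Lemma \ref{L2 1} (formula (\ref{l2bis})) the force $B$ is only of size $\Psi\sim|V'(|\tfrac{\chi}{\lambda}|)|\ll|V(|\tfrac{\chi}{\lambda}|)|$; hence $-\lambda^{3}(B\cdot y)Q$ is far too small to cancel the bulk of the potential source, and the first step (\ref{eq5})--(\ref{eq6}) of Lemma \ref{Lemmaapp} cannot be taken verbatim. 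I would therefore insert a preliminary, purely real correction $\boldsymbol{\tilde T}^{(0)}$ that absorbs that bulk.

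\textbf{Step 1: the bulk correction $\boldsymbol{\tilde T}^{(0)}$.} Write $\lambda^{2}V(|y+\tfrac{\chi}{\lambda}|)Q=g_{0}+c_{0}(\chi,\lambda)\nabla Q$ with $(g_{0},\nabla Q)=0$, so that $c_{0}$ is a fixed multiple of $\mathcal{J}(\chi)=\int V(|y+\tfrac{\chi}{\lambda}|)\nabla Q^{2}(y)\,dy$; by (\ref{l2bis}), $c_{0}=O(\Psi)$ (the leading, $y$-independent part $V(|\tfrac{\chi}{\lambda}|)Q$ of the source is even in $y$, so pairs trivially with $\nabla Q$, and the first non-trivial contribution carries a factor $h_{1}'$, i.e.\ a derivative of $V$). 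Since $|g_{0}(y)|\le C|V(|\tfrac{\chi}{\lambda}|)|\,\mathbf{e}(y)$ by (\ref{estp1}), Lemma \ref{Linv} produces a real solution of $(L_{+}-\lambda^{2}V(|\cdot+\tfrac{\chi}{\lambda}|))\boldsymbol{\tilde T}^{(0)}=g_{0}$, $(\boldsymbol{\tilde T}^{(0)},\nabla Q)=0$, and the exponential estimate (\ref{exp}) gives the first line of (\ref{B}). Differentiating this equation in $\lambda$ and $y$, using the regularity of $(1-\Delta)^{-1}$ together with (\ref{ineqpert}) to absorb the $pQ^{p-1}\partial\boldsymbol{\tilde T}^{(0)}$ contribution, propagates the bound to those derivatives; a $\chi$-derivative instead falls on $V(|y+\tfrac{\chi}{\lambda}|)$ and yields $V'$, producing the improved factor $\Psi$ in the second line of (\ref{B}).

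\textbf{Step 2: the first genuine correction and $\tilde B_{1},\tilde M_{1}$.} Feeding $W=Q+\boldsymbol{\tilde T}^{(0)}$ into (\ref{E7}), the residual consists of $c_{0}\nabla Q-\lambda^{3}(B\cdot y)Q-i\lambda M\Lambda Q+2i\lambda^{2}\beta\cdot\nabla_{\chi}\boldsymbol{\tilde T}^{(0)}+\lambda^{2}V(|\cdot+\tfrac{\chi}{\lambda}|)\boldsymbol{\tilde T}^{(0)}+\mathcal{N}(\boldsymbol{\tilde T}^{(0)})$ plus the lower-order terms of (\ref{E7}); by (\ref{B}) and $c_{0}=O(\Psi)$ all of these are $O(\Psi\,\mathbf{e})$, which is already (\ref{eps2}) for $n=0$. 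For $n\ge1$ I then choose $\tilde B_{1}$ so that $-\lambda^{3}(\tilde B_{1}\cdot y)Q$ compensates the $\nabla Q$-component of $c_{0}\nabla Q$ together with the real lower-order residual — this reproduces, to leading order, the expression $B$ of (\ref{ap17}), of size $\Psi$ by (\ref{l2bis}) — and $\tilde M_{1}$ so that the $Q$-component of the imaginary residual is killed; Lemma \ref{Linv} then yields $\boldsymbol{\tilde T}^{(1)}=T_{1}^{(1)}+iT_{2}^{(1)}$ of size $\Psi\,\mathbf{e}$, the third line of (\ref{B}). The bound $|\mathcal{D}_{1}\tilde B_{1}|+|\mathcal{D}_{1}\tilde M_{1}|\lesssim\Psi^{2}+|V''(|\tfrac{\chi}{\lambda}|)|$ comes from differentiating $\mathcal{J}(\chi)$ once more, using (\ref{l2bis}) and the structure of the remainder $\mathbf{r}$ there, while the $\lambda$- and $\beta$-derivative bounds follow, as in Lemma \ref{Lemmaapp}, by differentiating the defining relations and applying Lemma \ref{L8}.

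\textbf{Step 3: induction and the main obstacle.} For $j\ge2$, in analogy with (\ref{eq11})--(\ref{eq12}) I set $(L_{+}-\lambda^{2}V)T_{1}^{(j)}=-\lambda^{3}(\tilde B_{j}\cdot y)Q+\operatorname{Re}\big(\mathcal{N}(\Upsilon^{(j-1)})-\mathcal{N}(\Upsilon^{(j-2)})\big)$ and the corresponding $L_{-}$ equation, where $\Upsilon^{(m)}:=\sum_{i=0}^{m}\boldsymbol{\tilde T}^{(i)}$, with $\tilde B_{j},\tilde M_{j}$ fixed by orthogonality to $\nabla Q$ and $Q$, and solve by Lemma \ref{Linv}; setting $\boldsymbol{\tilde T}^{(j)}=T_{1}^{(j)}+iT_{2}^{(j)}$, each step gains one factor $\mathbf{Z}=|\beta|+|V(|\tfrac{\chi}{\lambda}|)|$, which comes from the difference-of-nonlinearities bound (\ref{nonlinear1})--(\ref{nonlinear2}) and from the fact that $\nabla_{\chi}$ and $\nabla_{\beta}$ applied to the corrections already built cost at most a factor $\mathbf{Z}$, in view of (\ref{B}) and, for $\partial_{\chi}$ of $V(|\cdot+\tfrac{\chi}{\lambda}|)$, of (\ref{estp1}). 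Collecting the errors for $j=0,\dots,n$ and keeping the largest term gives (\ref{eps2}). The genuinely delicate point is Step 1: one must check that, once the bulk $\boldsymbol{\tilde T}^{(0)}$ of size $|V(|\tfrac{\chi}{\lambda}|)|$ is removed, \emph{every} surviving term — and each of its $\chi$-derivatives — carries the strictly smaller size $\Psi$; this rests entirely on the cancellation encoded in (\ref{l2bis}), namely that the $\nabla Q$-projection of the potential source picks up a derivative of $V$, together with the sharp pointwise comparison (\ref{estp1}) between $V(|y+\tfrac{\chi}{\lambda}|)$ and exponentials of $|y|$. The remaining propagation of all bounds through the $\lambda$-, $y$- and $\beta$-derivatives with the stated powers of $\langle\lambda\rangle,\langle\lambda^{-1}\rangle,\langle|\beta|\rangle$ is then routine, exactly as in the proof of Lemma \ref{Lemmaapp}, via differentiation of the elliptic equations and the mapping properties of $(1-\Delta)^{-1}$ and Lemma \ref{L8}.
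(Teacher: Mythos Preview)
Your overall strategy is sound, but Steps~1--2 contain a real gap. Your $\boldsymbol{\tilde T}^{(0)}$ has size $|V(|\tfrac{\chi}{\lambda}|)|\,\mathbf e$, and you then claim that $\mathcal E_{\operatorname*{apr}}(Q+\boldsymbol{\tilde T}^{(0)})=O(\Psi\,\mathbf e)$. But among the residual terms you list is $\mathcal N(\boldsymbol{\tilde T}^{(0)})$, which by (\ref{non2}) is of size $|V|^{p_{1}}\mathbf e$. For a genuinely slowly decaying potential, say $V(r)\sim r^{-\rho}$ with $\rho(p_{1}-1)<1$, one has $|V|^{p_{1}}\sim r^{-\rho p_{1}}\gg r^{-\rho-1}\sim |V'|+|V|/|\chi|\sim\Psi$; the same obstruction hits $\lambda^{2}V\boldsymbol{\tilde T}^{(0)}\sim|V|^{2}$ if that term is really left over. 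A single elliptic solve therefore does not bring the error down to $\Psi$. The paper's remedy is to build $\boldsymbol{\tilde T}^{(0)}$ itself as a finite sum $\sum_{i=0}^{\mathbf j}T^{(i)}$: each $T^{(i)}$ solves an \emph{unperturbed} $L_{+}$--equation whose right-hand side carries only the \emph{constant} factor $V(|\tfrac{\chi}{\lambda}|)$ (hence is even, orthogonality to $\nabla Q$ is automatic, and $T^{(0)}=-\tfrac{\lambda^{2}}{2}V(|\tfrac{\chi}{\lambda}|)\Lambda Q$ is explicit). Each step gains one power of $|V|$, and one iterates until $\mathbf j\rho\ge1$, so that $|V|^{\mathbf j+1}\le|V|/|\chi|\le\Psi$. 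To make this iteration well defined one replaces $\mathcal N$ by its Taylor polynomial $\mathbf P_{m}$ of degree $m=\mathbf j+n$; the genuine nonlinear remainder is then $O(Q^{p-m-1}|T|^{m+1})=O(|V|^{m+1})\le\Psi\mathbf Z^{n}$, cf.\ (\ref{nonlinear}).

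Two further points. First, the statement requires (\ref{B})--(\ref{B'}) for \emph{all} $|K|\ge0$, unlike Lemma~\ref{Lemmaapp} where $|K|\le2$ suffices; arbitrary $\chi$- and $\beta$-derivatives of $\mathcal N(\Upsilon^{(j-1)})-\mathcal N(\Upsilon^{(j-2)})$ are unavailable when $p<2$, which is a second reason the paper passes to the polynomial $\mathbf P_{m}$. Second, your Step~3 keeps only the nonlinearity difference as the recursive source, mirroring (\ref{eq11})--(\ref{eq12}). In the slow-decay regime the derivative terms in (\ref{E7}) --- $\beta\cdot\nabla_{\chi}T$, $B\cdot\nabla_{\beta}T$, $M\partial_{\lambda}T$, $M\Lambda T$, $(B\cdot y)T$ --- are no longer lower order and must be fed back into the sources at every step; compare the paper's $\tilde f_{2n+1},\tilde f_{2n+2}$ in (\ref{f2n+1})--(\ref{f2n+2}). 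This is exactly what forces unrestricted derivative control on the $\boldsymbol{\tilde T}^{(j)}$ and, again, the replacement $\mathcal N\to\mathbf P_{m}$.
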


\begin{proof}
In the recursive construction of $\boldsymbol{T}^{\left(  j\right)  }$
presented in Lemma \ref{Lemmaapp} we do not consider the terms in (\ref{E7})
containing derivatives of $T,$ treating them as an error. It is possible
thanks to the correct decay of these terms. This is not the case now. We need
to take into account the derivatives of $\boldsymbol{T}^{\left(  j\right)  }$
in their recurrent equations$.$ As consequence, we must control high order
derivatives $\mathcal{D}\boldsymbol{T}^{\left(  j\right)  }$. This requires
to control the derivatives of the nonlinear term, which may be not smooth
enough. To avoid any issue, we approximate the nonlinear term by polynomials
in the following way (see step 7 in the proof of Proposition 3.1 of
\cite{MerleRaphael}). For $z\in\mathbb{C}$, we consider the function
$\mathbf{n}\left(  z\right)  =\left\vert 1+z\right\vert ^{p-1}\left(
1+z\right)  .$ For $j\geq0,$ let $P_{j}\left(  z\right)  $ be the homogeneous
term of order $j$ in the Taylor approximation of $\mathbf{n}\left(  z\right)
$ for $\left\vert z\right\vert \leq\frac{1}{2}.$ Then as $\left\vert
\mathbf{n}\left(  z\right)  \right\vert \leq C\left\vert z\right\vert ^{p},$
for all $z\in\mathbb{C}$, we get%
\[
\left\vert \mathbf{n}\left(  z\right)  -\sum_{j=0}^{m}P_{j}\left(  z\right)
\right\vert \leq C_{m}\left\vert z\right\vert ^{m+1},\text{ }z\in
\mathbb{C}\text{, }m\geq0.
\]
Using the last relation we get%
\begin{equation}
\left\vert \left\vert Q+T\right\vert ^{p-1}\left(  Q+T\right)  -Q^{p}%
\sum_{j=0}^{m}P_{j}\left(  \frac{T}{Q}\right)  \right\vert \leq C_{m}%
Q^{p-m-1}\left\vert T\right\vert ^{m+1}. \label{nonlinear0}%
\end{equation}
In particular%
\begin{equation}
\left\vert \mathcal{N}\left(  T\right)  -Q^{p}\mathbf{P}_{m}\left(  \frac
{T}{Q}\right)  \right\vert \leq C_{m}Q^{p-m-1}\left\vert T\right\vert ^{m+1}.
\label{nonlinear}%
\end{equation}
where we denote
\[
\mathbf{P}_{m}=\sum_{j=2}^{m}P_{j},\text{ }m\geq2.
\]

Let us consider the equation
\begin{equation}
L_{+}T^{\left(  0\right)  }=\lambda^{2}V\left(  \left\vert \frac{\chi}%
{\lambda}\right\vert \right)  Q. \label{eq13}%
\end{equation}
Since the right-hand side is orthogonal to $\nabla Q,$ it follows from
(\ref{ineq2}) that there exists a solution $T^{\left(  0\right)  }$ to
(\ref{eq13}) which satisfies%
\begin{equation}
\left.  \left\vert \mathcal{D}_{2}T^{\left(  0\right)  }\right\vert \leq
C_{0}\left(  \lambda\right)  \left\vert V^{\left(  k\right)  }\left(
\left\vert \frac{\chi}{\lambda}\right\vert \right)  \right\vert \mathbf{e}%
,\text{ }k+k_{1}+k_{2}\geq0.\right.  \label{T0}%
\end{equation}
By the parity symmetry of equation (\ref{eq13}), $T^{\left(  0\right)  }$ may
be chosen even. Actually, since $L_{+}\left(  \Lambda Q\right)  =-2Q,$
$T^{\left(  0\right)  }$ is given explicitly by $T^{\left(  0\right)
}=-\lambda^{2}V\left(  \left\vert \frac{\chi}{\lambda}\right\vert \right)
\frac{\Lambda Q}{2}.$

For $j\geq1$ and $m\geq2,$ we define recursively $T_{j}$ as an even solution
of the equation%
\begin{equation}
L_{+}T^{\left(  j\right)  }=F_{j}=\lambda^{2}V\left(  \left\vert \frac{\chi
}{\lambda}\right\vert \right)  T^{\left(  j-1\right)  }+Q^{p}\left(
\mathbf{P}_{m}\left(  Q^{-1}%
{\displaystyle\sum\limits_{i=0}^{j-1}}
T^{\left(  i\right)  }\right)  -\mathbf{P}_{m}\left(  Q^{-1}%
{\displaystyle\sum\limits_{i=0}^{j-2}}
T^{\left(  i\right)  }\right)  \right)  . \label{ap262}%
\end{equation}
Due to the parity of $T^{\left(  0\right)  },$ for $j\geq1$ the right-hand
side of (\ref{ap262}) is orthogonal to $\nabla Q$ and hence, such $T^{\left(
j\right)  }$ exists. Observe that%
\begin{equation}
\left\vert \mathbf{P}_{m}\left(  a+b\right)  -\mathbf{P}_{m}\left(  a\right)
\right\vert \leq C_{m}\left\langle a\right\rangle ^{m}\left\langle
b\right\rangle ^{m}\left\vert b\right\vert \left(  \left\vert b\right\vert
+\left\vert a\right\vert \right)  . \label{Pm}%
\end{equation}
Then, as $\left\vert V^{\left(  k\right)  }\right\vert \leq C\left(  k\right)
\left\vert V\right\vert ,$ for all $k\geq0,$ by induction in $j$ we estimate%
\begin{equation}
\left.
\begin{array}
[c]{c}%
\left\vert \partial_{\lambda}^{k_{1}}\partial_{y}^{k_{2}}T^{\left(  j\right)
}\right\vert \leq C_{k,k_{1},j}\left(  \lambda\right)  \left\vert V\left(
\left\vert \frac{\chi}{\lambda}\right\vert \right)  \right\vert ^{1+j}%
\mathbf{e},\\
\left\vert \mathcal{D}_{2}T^{\left(  j\right)  }\right\vert \leq C_{k,k_{1}%
,j}\left(  \lambda\right)  \left\vert V^{\prime}\left(  \left\vert \frac{\chi
}{\lambda}\right\vert \right)  \right\vert \left\vert V\left(  \left\vert
\frac{\chi}{\lambda}\right\vert \right)  \right\vert ^{j}\mathbf{e},\text{
}k=1,\\
\left\vert \mathcal{D}_{2}T^{\left(  j\right)  }\right\vert \leq C_{k,k_{1}%
,j}\left(  \lambda\right)  \left(  \left\vert V^{\prime}\left(  \left\vert
\frac{\chi}{\lambda}\right\vert \right)  \right\vert ^{2}+\left\vert
V^{\prime\prime}\left(  \left\vert \frac{\chi}{\lambda}\right\vert \right)
\right\vert \left\vert V\left(  \left\vert \frac{\chi}{\lambda}\right\vert
\right)  \right\vert \right)  \left\vert V\left(  \left\vert \frac{\chi
}{\lambda}\right\vert \right)  \right\vert ^{j-1}\mathbf{e},\text{ }k\geq2.
\end{array}
\right.  \label{t01}%
\end{equation}
Let $\mathbf{j}$ be such that $\mathbf{j}\rho\geq1.$ Then, by (\ref{p1})%
\begin{equation}
\left\vert \partial_{\chi}^{k}\partial_{\lambda}^{k_{1}}T^{\left(
\mathbf{j}\right)  }\right\vert \leq C_{k,k_{1},\mathbf{j}}\left(
\lambda\right)  \left\vert V\left(  \left\vert \frac{\chi}{\lambda}\right\vert
\right)  \right\vert \left\vert \chi\right\vert ^{-\mathbf{j}\rho}%
\mathbf{e}\leq CC_{k,k_{1},\mathbf{j}}\left(  \lambda\right)  \frac{\left\vert
V\left(  \left\vert \frac{\chi}{\lambda}\right\vert \right)  \right\vert
}{\left\vert \chi\right\vert }\mathbf{e}. \label{t0j}%
\end{equation}
We put $\boldsymbol{\tilde{T}}^{\left(  0\right)  }=\sum_{i=0}^{\mathbf{j}%
}T^{\left(  i\right)  }$ and $B_{0}=M_{0}=0.$ By (\ref{t01})%
\begin{equation}
\left.
\begin{array}
[c]{c}%
\left\vert \partial_{\lambda}^{k_{1}}\partial_{y}^{k_{2}}\boldsymbol{\tilde
{T}}^{\left(  0\right)  }\right\vert \leq CC_{k,k_{1},\mathbf{j}}\left(
\lambda\right)  \left\vert V\left(  \left\vert \frac{\chi}{\lambda}\right\vert
\right)  \right\vert \mathbf{e},\\
\left\vert \mathcal{D}_{2}\boldsymbol{\tilde{T}}^{\left(  0\right)
}\right\vert \leq C_{K,\mathbf{j}}\left(  \lambda\right)  \left\vert
V^{\prime}\left(  \left\vert \frac{\chi}{\lambda}\right\vert \right)
\right\vert \mathbf{e},\text{ }k=1,\\
\left\vert \mathcal{D}_{2}\boldsymbol{\tilde{T}}^{\left(  0\right)
}\right\vert \leq C_{K,\mathbf{j}}\left(  \lambda\right)  \left(  \left\vert
V^{\prime}\left(  \left\vert \frac{\chi}{\lambda}\right\vert \right)
\right\vert ^{2}+\left\vert V^{\prime\prime}\left(  \left\vert \frac{\chi
}{\lambda}\right\vert \right)  \right\vert \right)  \mathbf{e},\text{ }k\geq2.
\end{array}
\right.  \label{T0bold}%
\end{equation}
In particular, we get (\ref{B}) for $\boldsymbol{\tilde{T}}^{\left(  0\right)
}$. From (\ref{E7}) we have
\begin{equation}
\left.
\begin{array}
[c]{c}%
\mathcal{E}_{\operatorname*{apr}}\left(  Q+\boldsymbol{\tilde{T}}^{\left(
0\right)  }\right)  =\lambda^{2}\left(  V\left(  \left\vert y+\frac{\chi
}{\lambda}\right\vert \right)  -V\left(  \left\vert \frac{\chi}{\lambda
}\right\vert \right)  \right)  \left(  Q+\boldsymbol{\tilde{T}}^{\left(
0\right)  }\right)  +\lambda^{2}V\left(  \left\vert \frac{\chi}{\lambda
}\right\vert \right)  T^{\left(  \mathbf{j}\right)  }\\
+i\lambda^{2}\left(  2\beta\cdot\nabla_{\chi}\boldsymbol{\tilde{T}}^{\left(
0\right)  }\right)  +Q^{p}\left(  \mathbf{P}_{m}\left(  Q^{-1}%
\boldsymbol{\tilde{T}}^{\left(  0\right)  }\right)  -\mathbf{P}_{m}\left(
Q^{-1}\sum_{i=0}^{\mathbf{j}-1}T^{\left(  i\right)  }\right)  \right)
+\mathcal{N}\left(  \boldsymbol{\tilde{T}}^{\left(  0\right)  }\right)
-Q^{p}\mathbf{P}_{m}\left(  Q^{-1}\boldsymbol{\tilde{T}}^{\left(  0\right)
}\right)  .
\end{array}
\right.  \label{tbold0}%
\end{equation}
Similarly to the proof of (\ref{estp1}), by using (\ref{ap248}) we show that
\begin{equation}
\left\vert V\left(  \left\vert y+\frac{\chi}{\lambda}\right\vert \right)
-V\left(  \left\vert \frac{\chi}{\lambda}\right\vert \right)  \right\vert
e^{-\frac{\left\vert y\right\vert }{2}}\leq C\left\langle \lambda\right\rangle
^{i}\left\langle \lambda^{-1}\right\rangle ^{l}\left(  \left\vert V^{\prime
}\left(  \left\vert \frac{\chi}{\lambda}\right\vert \right)  \right\vert
+\frac{\left\vert V\left(  \left\vert \frac{\chi}{\lambda}\right\vert \right)
\right\vert }{\left\vert \chi\right\vert }+e^{-\frac{\left\vert \chi
\right\vert }{2\lambda}}\right)  ,\text{ }i,l>0. \label{potentialprime}%
\end{equation}
Hence, by (\ref{nonlinear}), (\ref{Pm}), (\ref{t01}), (\ref{t0j}) and
(\ref{tbold0}) we obtain%
\begin{equation}
\left.  \left\vert \mathcal{E}_{\operatorname*{apr}}\left(
Q+\boldsymbol{\tilde{T}}^{\left(  0\right)  }\right)  \right\vert \leq
C\left(  \Psi+\left\vert V\left(  \left\vert \frac{\chi}{\lambda}\right\vert
\right)  \right\vert ^{m+1}\right)  \mathbf{e}.\right.  \label{ap281}%
\end{equation}
Taking%
\begin{equation}
m=j+n \label{m}%
\end{equation}
in (\ref{ap281}) and using (\ref{p1}) we get
\[
\left\vert \mathcal{E}_{\operatorname*{apr}}\left(  Q+\boldsymbol{\tilde{T}%
}^{\left(  0\right)  }\right)  \right\vert \leq C\Psi\left(  1+\left\vert
V\left(  \left\vert \frac{\chi}{\lambda}\right\vert \right)  \right\vert
^{n}\right)  \mathbf{e.}%
\]
In particular, we attain (\ref{eps2}) for $n=0.$

Let us consider now the equation%
\begin{equation}
\left.
\begin{array}
[c]{c}%
\left(  L_{+}-\lambda^{2}V\left(  \left\vert y+\frac{\chi}{\lambda}\right\vert
\right)  \right)  \tilde{T}_{1}=-\lambda^{3}\left(  B\cdot y\right)  \left(
Q+\boldsymbol{\tilde{T}}^{\left(  0\right)  }\right)  +\lambda^{2}\left(
V\left(  \left\vert y+\frac{\chi}{\lambda}\right\vert \right)  -V\left(
\left\vert \frac{\chi}{\lambda}\right\vert \right)  \right)  \left(
Q+\boldsymbol{\tilde{T}}^{\left(  0\right)  }\right) \\
+\lambda^{2}V\left(  \left\vert \frac{\chi}{\lambda}\right\vert \right)
T^{\left(  \mathbf{j}\right)  }+Q^{p}\left(  \mathbf{P}_{m}\left(
Q^{-1}\boldsymbol{\tilde{T}}^{\left(  0\right)  }\right)  -\mathbf{P}%
_{m}\left(  Q^{-1}\sum_{i=0}^{\mathbf{j}-1}T^{\left(  i\right)  }\right)
\right)  .
\end{array}
\right.  \label{eq14}%
\end{equation}
Observe that%
\[
\left(  \left(  B\cdot y\right)  \left(  Q+\boldsymbol{\tilde{T}}^{\left(
0\right)  }\right)  ,\nabla Q\right)  =-B\left(  \left\Vert Q\right\Vert
_{L^{2}}^{2}-\left(  y_{1}\boldsymbol{\tilde{T}}^{\left(  0\right)
},q^{\prime}\right)  \right)  .
\]
Moreover, by (\ref{T0bold}), for all $\left\vert \chi\right\vert \geq C>0,$
with $C$ large enough,%
\[
\left\Vert Q\right\Vert _{L^{2}}^{2}-\left(  y_{1}\boldsymbol{\tilde{T}%
}^{\left(  0\right)  },q^{\prime}\right)  \geq\frac{\left\Vert Q\right\Vert
_{L^{2}}^{2}}{2}.
\]
Then, since $\boldsymbol{\tilde{T}}^{\left(  0\right)  }$ is even, taking%
\[
\tilde{B}_{1}=-\frac{\left(  \left(  V\left(  \left\vert y+\frac{\chi}%
{\lambda}\right\vert \right)  -V\left(  \left\vert \frac{\chi}{\lambda
}\right\vert \right)  \right)  \left(  Q+\boldsymbol{\tilde{T}}^{\left(
0\right)  }\right)  ,\nabla Q\right)  }{\lambda\left(  \left\Vert Q\right\Vert
_{L^{2}}^{2}-\left(  y_{1}\boldsymbol{\tilde{T}}^{\left(  0\right)
},q^{\prime}\right)  \right)  }%
\]
we assure that the right-hand side of (\ref{eq14}) is orthogonal to $\nabla
Q.$ Using (\ref{estp1}), (\ref{T0bold}), (\ref{potentialprime}) we deduce%
\begin{align*}
\left\vert \partial_{\lambda}^{k_{1}}\tilde{B}_{1}\right\vert  &  \leq
C_{1,K}\left\langle \lambda\right\rangle ^{i_{1,K}}\left\langle \lambda
^{-1}\right\rangle ^{l_{1,K}}\Psi\\
\left\vert \partial_{\chi}^{k}\partial_{\lambda}^{k_{1}}\tilde{B}%
_{1}\right\vert  &  \leq C_{1,K}\left\langle \lambda\right\rangle ^{i_{1,K}%
}\left\langle \lambda^{-1}\right\rangle ^{l_{1,K}}\left(  \Psi^{2}+\left\vert
V^{\prime\prime}\left(  \left\vert \tfrac{\chi}{\lambda}\right\vert \right)
\right\vert \right)  ,\text{ }k\geq1.
\end{align*}
Thus, (\ref{B'}) for $\tilde{B}_{1}$ follows. Moreover, similarly to
(\ref{ap238}), via (\ref{estp1}) and (\ref{Pm}), we estimate
\begin{equation}
\left.
\begin{array}
[c]{c}%
\left\vert \partial_{\lambda}^{k_{1}}\partial_{y}^{k_{2}}\tilde{T}%
_{1}\right\vert \leq CC_{1,K}\left\langle \lambda\right\rangle ^{i_{1,K}%
}\left\langle \lambda^{-1}\right\rangle ^{l_{1,K}}\Psi\mathbf{e}\\
\left\vert \mathcal{D}_{2}\tilde{T}_{1}\right\vert \leq CC_{1,K}\left\langle
\lambda\right\rangle ^{i_{1,K}}\left\langle \lambda^{-1}\right\rangle
^{l_{1,K}}\left(  \Psi^{2}+\left\vert V^{\prime\prime}\left(  \left\vert
\tfrac{\chi}{\lambda}\right\vert \right)  \right\vert \right)  \mathbf{e}%
,\text{ }k\geq1.\text{ }%
\end{array}
\right.  \label{t1p}%
\end{equation}
We now consider the equation%
\begin{equation}
\left.  \left(  L_{-}-\lambda^{2}V\left(  \left\vert y+\frac{\chi}{\lambda
}\right\vert \right)  \right)  \tilde{T}_{2}=-\lambda M\Lambda\left(
Q+\boldsymbol{\tilde{T}}^{\left(  0\right)  }\right)  +\lambda^{2}\left(
\tilde{M}_{1}\frac{\partial\boldsymbol{\tilde{T}}^{\left(  0\right)  }%
}{\partial\lambda}\right)  +2\lambda^{2}\beta\cdot\nabla_{\chi}\left(
\boldsymbol{T}^{\left(  0\right)  }+\tilde{T}_{1}\right)  .\right.
\label{eq15}%
\end{equation}
Since $\left(  \Lambda Q,Q\right)  \neq0,$ by (\ref{T0bold}) and (\ref{t1p}),
for all $\left\vert \chi\right\vert \geq C>0,$ with $C$ large enough,%
\[
\left\vert \left(  \Lambda\left(  Q+\boldsymbol{\tilde{T}}^{\left(  0\right)
}\right)  -\lambda\frac{\partial\boldsymbol{\tilde{T}}^{\left(  0\right)  }%
}{\partial\lambda},Q\right)  \right\vert \geq\frac{\left\vert \left(  \Lambda
Q,Q\right)  \right\vert }{2}.
\]
Then, we define $\tilde{M}_{1}$ by
\begin{equation}
\tilde{M}_{1}=\frac{2\lambda}{\left(  \Lambda\left(  Q+\boldsymbol{\tilde{T}%
}^{\left(  0\right)  }\right)  -\lambda\frac{\partial\boldsymbol{\tilde{T}%
}^{\left(  0\right)  }}{\partial\lambda},Q\right)  }\left(  \beta\cdot
\nabla_{\chi}\left(  \boldsymbol{T}^{\left(  0\right)  }+\tilde{T}_{1}\right)
,Q\right)  . \label{ap311}%
\end{equation}
Using (\ref{T0bold}) and (\ref{t1p}) we see that
\begin{equation}
\left.
\begin{array}
[c]{c}%
\left\vert \partial_{\lambda}^{k_{1}}\partial_{y}^{k_{2}}\tilde{M}%
_{1}\right\vert \leq CC_{1,K}\left\langle \lambda\right\rangle ^{i_{1,K}%
}\left\langle \lambda^{-1}\right\rangle ^{l_{1,K}}\left\langle \left\vert
\beta\right\vert \right\rangle ^{m_{j,K}}\Psi,\\
\\
\left\vert \mathcal{D}_{2}\tilde{M}_{1}\right\vert \leq CC_{1,K}\left\langle
\lambda\right\rangle ^{i_{1,K}}\left\langle \lambda^{-1}\right\rangle
^{l_{1,K}}\left\langle \left\vert \beta\right\vert \right\rangle ^{m_{j,K}%
}\left(  \Psi^{2}+\left\vert V^{\prime\prime}\left(  \left\vert \tfrac{\chi
}{\lambda}\right\vert \right)  \right\vert \right)  ,\text{ }k\geq1,
\end{array}
\right.  \label{ap312}%
\end{equation}
and thus, we get (\ref{B'}) for $\tilde{M}_{1}.$ Moreover, we have%
\begin{equation}
\left\vert \mathcal{D}\tilde{T}_{2}\right\vert \leq CC_{1,K}\left\langle
\lambda\right\rangle ^{i_{1,K}}\left\langle \lambda^{-1}\right\rangle
^{l_{1,K}}\Psi\mathbf{e}. \label{t2p}%
\end{equation}
We set $\boldsymbol{\tilde{T}}^{\left(  1\right)  }=\tilde{T}_{1}+i\tilde
{T}_{2}.$ By (\ref{t1p}) and (\ref{t2p}) we deduce (\ref{B}) for
$\boldsymbol{\tilde{T}}^{\left(  1\right)  }$. Using (\ref{ap262}),
(\ref{eq14}) and (\ref{eq15}) in (\ref{E7}) we get%
\[
\left.
\begin{array}
[c]{c}%
\mathcal{E}_{\operatorname*{apr}}\left(  T\right)  =\mathcal{E}%
_{\operatorname*{apr}}\left(  Q+\boldsymbol{\tilde{T}}^{\left(  0\right)
}+\boldsymbol{\tilde{T}}^{\left(  1\right)  }\right)  =-i\lambda\tilde{M}%
_{1}\Lambda\boldsymbol{\tilde{T}}^{\left(  1\right)  }-\lambda^{3}\left(
\tilde{B}_{1}\cdot y\right)  \boldsymbol{\tilde{T}}^{\left(  1\right)  }\\
+i\lambda^{2}\left(  2i\beta\cdot\nabla_{\chi}\tilde{T}_{2}+i\tilde{B}%
_{1}\cdot\nabla_{\beta}\tilde{T}_{2}+\tilde{M}_{1}\frac{\partial
\boldsymbol{\tilde{T}}^{\left(  1\right)  }}{\partial\lambda}\right)
+\mathcal{N}\left(  \boldsymbol{\tilde{T}}^{\left(  0\right)  }%
+\boldsymbol{\tilde{T}}^{\left(  1\right)  }\right)  -Q^{p}\left(
\mathbf{P}_{m}\left(  Q^{-1}\left(  \boldsymbol{\tilde{T}}^{\left(  0\right)
}+\boldsymbol{\tilde{T}}^{\left(  1\right)  }\right)  \right)  \right) \\
+Q^{p}\left(  \mathbf{P}_{m}\left(  Q^{-1}\left(  \boldsymbol{\tilde{T}%
}^{\left(  0\right)  }+\boldsymbol{\tilde{T}}^{\left(  1\right)  }\right)
\right)  -\mathbf{P}_{m}\left(  Q^{-1}\boldsymbol{\tilde{T}}^{\left(
0\right)  }\right)  \right)  .
\end{array}
\right.
\]
Therefore, as $m$ is given by (\ref{m}), from (\ref{B}), (\ref{nonlinear}) and
(\ref{Pm}) we get
\[
\left\vert \mathcal{E}_{\operatorname*{apr}}\left(  T\right)  \right\vert \leq
C_{1}\left\langle \lambda\right\rangle ^{i_{1}}\left\langle \lambda
^{-1}\right\rangle ^{l_{1}}\left\langle \left\vert \beta\right\vert
\right\rangle ^{m_{1}}\Psi\left(  \left\vert \beta\right\vert +\left\vert
V\left(  \left\vert \tfrac{\chi}{\lambda}\right\vert \right)  \right\vert
\right)  \left(  1+\left\vert V\left(  \left\vert \tfrac{\chi}{\lambda
}\right\vert \right)  \right\vert ^{n-1}\right)  ,
\]
and hence (\ref{eps2}) with $n=2.$

We now proceed by induction. Suppose that we have constructed
$\boldsymbol{\tilde{T}}^{\left(  j\right)  },\tilde{B}_{j},\tilde{M}_{j},$
$j=0,...,n,$ for some $n\geq3,$ in such a way that (\ref{B}), (\ref{B'}) and
(\ref{eps2}) hold for all $j=0,...,n.$ Denote
\[
\tilde{\Upsilon}^{\left(  n\right)  }=\mathcal{\tilde{T}}_{1}^{\left(
n\right)  }+i\mathcal{\tilde{T}}_{2}^{\left(  n\right)  },\text{
with\ }\mathcal{\tilde{T}}_{i}^{\left(  n\right)  }=\sum_{j=1}^{n}\tilde
{T}_{i}^{\left(  j\right)  },\text{ }i=1,2,
\]
and
\[
\mathcal{\tilde{B}}^{\left(  n\right)  }=\sum_{j=1}^{n}\tilde{B}_{j},\text{
\ }\mathcal{\tilde{M}}^{\left(  n\right)  }=\sum_{j=1}^{n}\tilde{M}_{j}.
\]
Let us consider the equations%
\begin{equation}
\left(  L_{+}-\lambda^{2}V\left(  \left\vert y+\frac{\chi}{\lambda}\right\vert
\right)  \right)  \tilde{T}_{1}^{\left(  n+1\right)  }=-\lambda^{3}\left(
\tilde{B}_{n+1}\cdot y\right)  \left(  Q+\boldsymbol{\tilde{T}}^{\left(
0\right)  }\right)  +\tilde{f}_{2n+1} \label{eq16}%
\end{equation}
with%
\begin{equation}
\left.
\begin{array}
[c]{c}%
\tilde{f}_{2n+1}=-\lambda^{3}\left(  \mathcal{\tilde{B}}^{\left(  n\right)
}\cdot y\right)  \tilde{T}_{1}^{\left(  n\right)  }-\lambda^{3}\left(
\tilde{B}_{n}\cdot y\right)  \mathcal{\tilde{T}}_{1}^{\left(  n-1\right)
}+\lambda\mathcal{\tilde{M}}^{\left(  n\right)  }\Lambda\tilde{T}_{2}^{\left(
n\right)  }+\lambda\tilde{M}_{n}\Lambda\mathcal{\tilde{T}}_{2}^{\left(
n-1\right)  }\\
\\
-2\lambda^{2}\left(  \beta\cdot\nabla_{\chi}\tilde{T}_{2}^{\left(  n\right)
}\right)  -\lambda^{2}\left(  \mathcal{\tilde{B}}^{\left(  n\right)  }%
\cdot\nabla_{\beta}\tilde{T}_{2}^{\left(  n\right)  }+\mathcal{\tilde{M}%
}^{\left(  n\right)  }\frac{\partial\tilde{T}_{2}^{\left(  n\right)  }%
}{\partial\lambda}\right)  -\lambda^{2}\left(  \tilde{B}_{n}\cdot\nabla
_{\beta}\mathcal{\tilde{T}}_{2}^{\left(  n-1\right)  }+\tilde{M}_{n}%
\frac{\partial\mathcal{T}_{2}^{\left(  n-1\right)  }}{\partial\lambda}\right)
\\
\\
+Q^{p}\operatorname{Re}\left(  \mathbf{P}_{m}\left(  Q^{-1}\tilde{\Upsilon
}^{\left(  n\right)  }\right)  -\mathbf{P}_{m}\left(  Q^{-1}\tilde{\Upsilon
}^{\left(  n-1\right)  }\right)  \right)  ,
\end{array}
\right.  \label{f2n+1}%
\end{equation}
and
\begin{equation}
\left(  L_{-}-\lambda^{2}V\left(  \left\vert y+\frac{\chi}{\lambda}\right\vert
\right)  \right)  \tilde{T}_{2}^{\left(  n+1\right)  }=-\lambda\tilde{M}%
_{n+1}\Lambda\left(  Q+\boldsymbol{\tilde{T}}^{\left(  0\right)  }\right)
+\lambda^{2}\left(  \tilde{M}_{n+1}\frac{\partial\boldsymbol{\tilde{T}%
}^{\left(  0\right)  }}{\partial\lambda}\right)  +\tilde{f}_{2n+2}
\label{eq17}%
\end{equation}
with%
\begin{equation}
\left.
\begin{array}
[c]{c}%
\tilde{f}_{2n+2}=-\lambda\mathcal{\tilde{M}}^{\left(  n\right)  }\Lambda
\tilde{T}_{1}^{\left(  n\right)  }-\lambda\tilde{M}_{n}\Lambda\mathcal{\tilde
{T}}_{1}^{\left(  n-1\right)  }-\lambda^{3}\left(  \mathcal{\tilde{B}%
}^{\left(  n\right)  }\cdot y\right)  \tilde{T}_{2}^{\left(  n\right)
}-\lambda^{3}\left(  \tilde{B}_{n}\cdot y\right)  \mathcal{\tilde{T}}%
_{2}^{\left(  n-1\right)  }\\
\\
+2\lambda^{2}\left(  \beta\cdot\nabla_{\chi}\tilde{T}_{1}^{\left(  n\right)
}\right)  +\lambda^{2}\left(  \mathcal{\tilde{B}}^{\left(  n\right)  }%
\cdot\nabla_{\beta}\tilde{T}_{1}^{\left(  n\right)  }+\mathcal{\tilde{M}%
}^{\left(  n\right)  }\frac{\partial\tilde{T}_{1}^{\left(  n\right)  }%
}{\partial\lambda}\right)  +\lambda^{2}\left(  \tilde{B}_{n}\cdot\nabla
_{\beta}\mathcal{\tilde{T}}_{1}^{\left(  n-1\right)  }+\tilde{M}_{n}%
\frac{\partial\mathcal{\tilde{T}}_{1}^{\left(  n-1\right)  }}{\partial\lambda
}\right) \\
\\
+Q^{p}\operatorname{Im}\left(  \mathbf{P}_{m}\left(  Q^{-1}\tilde{\Upsilon
}^{\left(  n\right)  }\right)  -\mathbf{P}_{m}\left(  Q^{-1}\tilde{\Upsilon
}^{\left(  n-1\right)  }\right)  \right)  .
\end{array}
\right.  \label{f2n+2}%
\end{equation}
We put
\begin{equation}
\tilde{B}_{n+1}=-\frac{\left(  \tilde{f}_{2n+1},\nabla Q\right)  }{\lambda
^{3}\left(  \left\Vert Q\right\Vert _{L^{2}}^{2}-\left(  y_{1}%
\boldsymbol{\tilde{T}}^{\left(  0\right)  },q^{\prime}\right)  \right)
}\text{ and }\tilde{M}_{n+1}=\frac{\left(  \tilde{f}_{2n+2},Q\right)
}{\lambda\left(  \Lambda\left(  Q+\boldsymbol{\tilde{T}}^{\left(  0\right)
}\right)  -\lambda\frac{\partial\boldsymbol{\tilde{T}}^{\left(  0\right)  }%
}{\partial\lambda},Q\right)  }. \label{B3}%
\end{equation}
Using the hypothesis of induction we see that $B_{n+1}$ and $M_{n+1}$ satisfy
(\ref{B'}) with $j=n+1.$ We recursively define $T_{1}^{\left(  n+1\right)  }$
and $T_{2}^{\left(  n+1\right)  }$ as solutions to equations (\ref{eq11}) and
(\ref{eq12}), which exist thanks to (\ref{B3}) and Lemma \ref{Linv}$.$ We put%
\[
\boldsymbol{T}^{\left(  n+1\right)  }=T_{1}^{\left(  n+1\right)  }%
+iT_{2}^{\left(  n+1\right)  }.
\]
As (\ref{B}) holds for all $j=0,...,n,$ and (\ref{B'}) is true for all
$j=0,...,n+1,$ from (\ref{f2n+1}) and (\ref{f2n+2}) we deduce (\ref{B}) for
$j=n+1.$ Introducing $T=\Upsilon^{\left(  n\right)  }+\boldsymbol{T}^{\left(
n+1\right)  },$ $B=\mathcal{B}^{\left(  n+1\right)  }$, $M=\mathcal{M}%
^{\left(  n+1\right)  },$ into (\ref{E7}) we get%
\[
\left.
\begin{array}
[c]{c}%
\mathcal{E}_{\operatorname*{apr}}\left(  Q+T\right)  =-i\lambda\mathcal{\tilde
{M}}^{\left(  n+1\right)  }\Lambda\boldsymbol{\tilde{T}}^{\left(  n+1\right)
}-i\lambda\tilde{M}_{n+1}\Lambda\tilde{\Upsilon}^{\left(  n\right)  }%
-\lambda^{3}\left(  \mathcal{\tilde{B}}^{\left(  n+1\right)  }\cdot y\right)
\boldsymbol{\tilde{T}}^{\left(  n+1\right)  }-\lambda^{3}\left(  \tilde
{B}_{n+1}\cdot y\right)  \tilde{\Upsilon}^{\left(  n\right)  }\\
+i\lambda^{2}\left(  2\beta\cdot\nabla_{\chi}\boldsymbol{\tilde{T}}^{\left(
n+1\right)  }+\mathcal{\tilde{B}}^{\left(  n+1\right)  }\cdot\nabla_{\beta
}\boldsymbol{\tilde{T}}^{\left(  n+1\right)  }+\tilde{B}_{n+1}\cdot
\nabla_{\beta}\tilde{\Upsilon}^{\left(  n\right)  }+\mathcal{\tilde{M}%
}^{\left(  n+1\right)  }\frac{\partial\boldsymbol{\tilde{T}}^{\left(
n+1\right)  }}{\partial\lambda}+\tilde{M}_{n+1}\frac{\partial\tilde{\Upsilon
}^{\left(  n\right)  }}{\partial\lambda}\right) \\
+\mathcal{N}\left(  \left(  \Upsilon^{\left(  n+1\right)  }\right)  \right)
-Q^{p}\left(  \mathbf{P}_{m}\left(  Q^{-1}\tilde{\Upsilon}^{\left(  n\right)
}\right)  \right)  .
\end{array}
\right.
\]
From the validity of (\ref{B}) and (\ref{B'}), for any $j=0,...,n+1,$ using
(\ref{nonlinear}), we prove (\ref{eps2}) with $n$ replaced by $n+1.$
\end{proof}

Let us formulate the approximation result. Given a vector of parameters
$\Xi=\left(  \chi,\beta,\lambda\right)  \in\mathbb{R}^{d}\times\mathbb{R}%
^{d}\times\mathbb{R}^{+}$ and $\gamma\in\mathbb{R}$ consider the approximate
solutions $\boldsymbol{T}^{\left(  j\right)  }$ and $\boldsymbol{\tilde{T}%
}^{\left(  j\right)  }$ of Lemmas \ref{Lemmaapp} and \ref{Lemmaapp1},
respectively. For $j\geq0,$ we denote these solutions by $\boldsymbol{T}%
^{\left(  j\right)  }$ independently of the case. Let the approximate soliton
solution to the perturbed NLS equation be%
\begin{equation}
\mathcal{W}^{\left(  N\right)  }\left(  t,x;\Xi\right)  =\lambda^{-\frac
{2}{p-1}}W\left(  \frac{x-\chi}{\lambda}\right)  e^{-i\gamma}e^{i\beta\cdot
x}. \label{w}%
\end{equation}
with%
\begin{equation}
W=Q+%
{\textstyle\sum_{j=0}^{N}}
\boldsymbol{T}^{\left(  j\right)  }. \label{w1}%
\end{equation}
We have the following.

\begin{lemma}
\label{L1}Let $\Xi\left(  t\right)  =\left(  \chi\left(  t\right)
,\beta\left(  t\right)  ,\lambda\left(  t\right)  \right)  $ and
$\gamma\left(  t\right)  $ be $C^{1}$ functions on a time interval
$I=[t_{0},t_{1}],$ $t_{1}\leq\infty.$ Suppose that for $t\in I$%
\begin{align*}
1  &  \leq\frac{\left\vert \chi\left(  t_{0}\right)  \right\vert }{2}%
\leq\left\vert \chi\left(  t\right)  \right\vert ,\text{ \ }\left\vert
\beta\left(  t\right)  \right\vert \leq2\left\vert \beta\left(  t_{0}\right)
\right\vert ,\\
0  &  <\frac{\lambda\left(  t_{0}\right)  }{2}\leq\lambda\left(  t\right)
\leq2\lambda\left(  t_{0}\right)  .
\end{align*}
Let $N\geq1.$ Then, $\mathcal{W=W}^{\left(  N\right)  }$ satisfies
\begin{equation}
i\partial_{t}\mathcal{W}+\Delta\mathcal{W}+\left\vert \mathcal{W}\right\vert
^{p-1}\mathcal{W}+V\mathcal{W}=\lambda^{-\frac{2}{p-1}}\left(  \mathcal{E}%
^{\left(  N\right)  }+R\left(  W\right)  \right)  \left(  \frac{x-\chi\left(
t\right)  }{\lambda\left(  t\right)  }\right)  e^{-i\gamma\left(  t\right)
}e^{i\beta\left(  t\right)  \cdot x}, \label{ap182}%
\end{equation}
where $R\left(  W\right)  $ is given by (\ref{E9}) and the error term
$\mathcal{E}^{\left(  N\right)  }$ satisfies the estimate%
\begin{equation}
\left\vert \mathcal{E}^{\left(  N\right)  }\right\vert \leq C_{N,\lambda
,\beta}\mathbf{Y}\left(  \left(  \left\vert \beta\right\vert +\Theta\left(
\tfrac{\left\vert \chi\right\vert }{\lambda}\right)  \right)  ^{A\left(
N\right)  }+\left\vert U_{V}\left(  \left\vert \tfrac{\chi}{\lambda
}\right\vert \right)  \right\vert \right)  , \label{eps1}%
\end{equation}
with $A\left(  N\right)  =\min\{N\left(  p_{1}-1\right)  ,2\}$ and
$C_{N,\lambda,\beta}>0.$ In addition, in the case when $V=V^{\left(  2\right)
},$ the estimate%
\begin{equation}
\left\vert \mathcal{E}^{\left(  N\right)  }\right\vert \leq C_{N,\lambda
,\beta}\Psi\mathbf{Z}^{N}\mathbf{e} \label{eps1bis}%
\end{equation}
is true.
\end{lemma}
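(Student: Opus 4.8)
The statement is established by a direct scaling computation combined with the two construction lemmas; I would organize the proof in three steps.

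\emph{Step 1: reduction to the stationary error.} First I would observe that $\mathcal{W}^{\left(N\right)}$ is exactly of the form (\ref{modulation}) with the profile $v$ replaced by $W=Q+\sum_{j=0}^{N}\boldsymbol{T}^{\left(j\right)}$. Hence the computation leading to (\ref{calc1}) applies verbatim, and using that $W=W\left(y,\Xi\left(t\right)\right)$ depends on $t$ only through the modulation vector — so that $\partial_{t}W=\dot{\chi}\cdot\nabla_{\chi}W+\dot{\beta}\cdot\nabla_{\beta}W+\dot{\lambda}\,\partial_{\lambda}W$ — brings the right-hand side into the form built from $\mathcal{E}\left(W\right)$ as in (\ref{E2}). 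Inserting the approximate modulation laws $B=\sum_{j}B_{j}$, $M=\sum_{j}M_{j}$ (respectively $\tilde{B}_{j},\tilde{M}_{j}$ when $V=V^{\left(2\right)}$) constructed in Lemma \ref{Lemmaapp} (resp. Lemma \ref{Lemmaapp1}) and applying the splitting (\ref{E6})--(\ref{E9}) yields $\mathcal{E}\left(W\right)=\mathcal{E}_{\operatorname*{apr}}\left(Q+T\right)+R\left(W\right)$ with $T=\sum_{j=0}^{N}\boldsymbol{T}^{\left(j\right)}$. Undoing the scaling back to the $x$-variable and factoring out $e^{-i\gamma}e^{i\beta\cdot x}$ then gives (\ref{ap182}), with $\mathcal{E}^{\left(N\right)}$ the rescaled $\mathcal{E}_{\operatorname*{apr}}\left(Q+\sum_{j=0}^{N}\boldsymbol{T}^{\left(j\right)}\right)$ and with $R\left(W\right)$ as in (\ref{E9}); the latter term is left unestimated here since it carries the modulation defects $\dot{\chi}-2\beta$, $\dot{\beta}-B$, $\dot{\lambda}-M$ and is dealt with in Section \ref{Secproof}.

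\emph{Step 2: uniformity of the constants on $I$.} Next I would check that, on $I$, the parameter vector stays in the regime covered by the construction lemmas. The hypotheses $\tfrac{\left\vert\chi\left(t_{0}\right)\right\vert}{2}\leq\left\vert\chi\left(t\right)\right\vert$, $\left\vert\beta\left(t\right)\right\vert\leq2\left\vert\beta\left(t_{0}\right)\right\vert$ and $\tfrac{\lambda\left(t_{0}\right)}{2}\leq\lambda\left(t\right)\leq2\lambda\left(t_{0}\right)$ — together with $\left\vert\chi\left(t_{0}\right)\right\vert$ taken above the threshold $C\left(V\right)$ and with $\lambda^{2}\sup_{r}V<1$ preserved because $\lambda$ stays comparable to $\lambda\left(t_{0}\right)$ — guarantee $\left\vert\chi\left(t\right)\right\vert\geq C\left(V\right)$ and $\lambda\left(t\right)\geq\lambda_{0}>0$ for all $t\in I$, which is precisely what Lemmas \ref{Lemmaapp} and \ref{Lemmaapp1} require. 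Consequently every factor $\left\langle\lambda\right\rangle^{i}\left\langle\lambda^{-1}\right\rangle^{l}\left\langle\left\vert\beta\right\vert\right\rangle^{m}$ appearing in (\ref{ap14}), (\ref{BM}), (\ref{B}), (\ref{B'}) — as well as any bounded power of $\lambda$ coming from the scaling — is controlled by a constant depending only on $N$, $\lambda\left(t_{0}\right)$ and $\beta\left(t_{0}\right)$, which I call $C_{N,\lambda,\beta}$.

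\emph{Step 3: conclusion.} For the general potentials treated in Lemma \ref{Lemmaapp}, estimate (\ref{eps}) with $n=N$ bounds $\left\vert\mathcal{E}_{\operatorname*{apr}}\left(Q+T\right)\right\vert$ by its right-hand side; substituting the bound of Step 2 for the $\lambda,\beta$-factors yields (\ref{eps1}) with $A\left(N\right)=\min\{N\left(p_{1}-1\right),2\}$. When $V=V^{\left(2\right)}$, estimate (\ref{eps2}) of Lemma \ref{Lemmaapp1} with the same $n=N$ gives in exactly the same way (\ref{eps1bis}). I do not expect a genuine analytic obstacle in this lemma: the substantive work — the construction of the $\boldsymbol{T}^{\left(j\right)}$, $B_{j}$, $M_{j}$ and the decay bounds on them and on the resulting error — is already contained in Lemmas \ref{Lemmaapp} and \ref{Lemmaapp1}; the only points that need care here are the chain rule for $\partial_{t}W$ through $\Xi\left(t\right)$, the bookkeeping of the scaling powers, and the uniformity of all constants over the interval $I$.
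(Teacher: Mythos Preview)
Your proposal is correct and follows exactly the same route as the paper: the paper's proof is the one-liner ``The result follows from (\ref{calc1}), (\ref{E6}) and Lemmas \ref{Lemmaapp}, \ref{Lemmaapp1},'' and your three steps are simply a careful unpacking of that sentence. The additional remarks you make on the uniformity of the constants over $I$ are implicit in the paper's statement of the constant $C_{N,\lambda,\beta}$ and are a reasonable elaboration.
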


\begin{proof}
The result follows from (\ref{calc1}), (\ref{E6}) and Lemmas \ref{Lemmaapp},
\ref{Lemmaapp1}.
\end{proof}

\subsection{Approximate modulation parameters.\label{Sec1}}

We want to construct approximate modulation equations for $\Xi\left(
t\right)  $ and $\gamma\left(  t\right)  .$ For $\lambda^{\infty}\in
\mathbb{R}^{+}$ let us consider the problem of the motion in the central
field
\begin{equation}
\left\{
\begin{array}
[c]{c}%
\dot{\chi}^{\infty}=2\beta^{\infty},\\
\dot{\beta}^{\infty}=\dfrac{1}{2\lambda\left\Vert Q\right\Vert _{L^{2}}^{2}%
}\dfrac{\chi^{\infty}}{\left\vert \chi^{\infty}\right\vert }U_{V_{\lambda
^{\infty}}}^{\prime}\left(  \frac{\left\vert \chi^{\infty}\right\vert
}{\lambda^{\infty}}\right)
\end{array}
\right.  \label{syst1}%
\end{equation}
where we emphasize the dependence of $V$ on $\lambda$ (recall (\ref{vcall})).
The energy of the system is given by
\begin{equation}
E_{0}=\frac{\left\vert \dot{\chi}^{\infty}\right\vert ^{2}}{2}-\frac
{U_{V_{\lambda^{\infty}}}\left(  \frac{r^{\infty}}{\lambda^{\infty}}\right)
}{\left\Vert Q\right\Vert _{L^{2}}^{2}} \label{energy}%
\end{equation}
where $r^{\infty}=\left\vert \chi^{\infty}\right\vert .$ The unbounded
solutions $\chi^{\infty}$ of (\ref{syst}) have the following behaviour for
large $t$ depending on the regime. If $E_{0}>0$, for some
$C_{\operatorname*{hyp}},C_{\operatorname*{hyp}}^{\prime}>0,$ we have
\begin{equation}
r^{\infty}=C_{\operatorname*{hyp}}t+o\left(  t\right)  \text{ and }\left\vert
\beta^{\infty}\right\vert =C_{\operatorname*{hyp}}^{\prime}+o\left(  1\right)
. \label{ap200}%
\end{equation}
In the case $E_{0}=0,$ the unbounded solutions, if they exist behave as%
\begin{equation}
t=\frac{\left\Vert Q\right\Vert _{L^{2}}}{\sqrt{2}}\int_{r_{0}}^{\left\vert
\chi^{\infty}\right\vert }\frac{dr}{\sqrt{U_{V_{\lambda^{\infty}}}\left(
\frac{r}{\lambda^{\infty}}\right)  -\frac{\mu^{2}}{r^{2}}}}+t_{0}
\label{ap192}%
\end{equation}
where $\mu\geq0$ is the angular momentum$,$ and
\begin{equation}
\left\vert \beta^{\infty}\right\vert =\frac{\sqrt{U_{V_{\lambda^{\infty}}%
}\left(  \frac{r^{\infty}}{\lambda^{\infty}}\right)  -\frac{\mu^{2}}{\left(
r^{\infty}\right)  ^{2}}}}{\sqrt{2}\left\Vert Q\right\Vert _{L^{2}}}.
\label{ap193}%
\end{equation}
Let $\lambda^{\infty}\in\mathbb{R}^{+}$ and $\Xi^{\infty}\left(  t\right)
=\left(  \chi^{\infty}\left(  t\right)  ,\beta^{\infty}\left(  t\right)
,\lambda^{\infty}\right)  $ be a solution to (\ref{syst}) and $\gamma^{\infty
}\left(  t\right)  $ be given by (\ref{gamma}).

\begin{remark}
\label{Rem2}Observe that in the case $E_{0}=0$, if the potential
$U_{V_{\lambda^{\infty}}}\left(  \frac{\left\vert \chi^{\infty}\right\vert
}{\lambda^{\infty}}\right)  $ decays faster than the centrifugal energy term
$\frac{\mu^{2}}{\left\vert \chi^{\infty}\right\vert ^{2}},$ with $\mu>0$, all
the solutions are bounded. Therefore, in that case, the unbounded solutions
are possible only if $\mu=0.$ If the potential decays slower than $r^{2},$ for
instance if
\begin{equation}
\left\vert V_{\lambda^{\infty}}\left(  r\right)  \right\vert \geq
c\left\langle r\right\rangle ^{-2+\nu}>0, \label{cond2}%
\end{equation}
for some $0<\nu<2,$ then the solution to (\ref{syst1}) with $E_{0}=0$ is given
by
\begin{equation}
\chi^{\infty}\left(  t\right)  =r^{\infty}\left(  t\right)  \theta^{\infty
}\left(  t\right)  , \label{chiinf}%
\end{equation}
with $r^{\infty}\left(  t\right)  $ solving
\begin{equation}
\dot{r}^{\infty}=\sqrt{\frac{2}{\left\Vert Q\right\Vert _{L^{2}}^{2}%
}U_{V_{\lambda^{\infty}}}\left(  \frac{r^{\infty}}{\lambda^{\infty}}\right)
-\frac{\mu^{2}}{\left(  r^{\infty}\right)  ^{2}}}. \label{ap299}%
\end{equation}
and $\theta^{\infty}\left(  t\right)  $ satisfying
\begin{equation}
\left\vert \dot{\theta}^{\infty}\left(  t\right)  \right\vert =\frac{\mu
}{\left(  r^{\infty}\right)  ^{2}}. \label{angular}%
\end{equation}
By (\ref{cond2}), for any $\theta_{0}^{\infty}\in\mathbb{S}^{d-1},$ there is a
solution $\theta^{\infty}\left(  t\right)  $ for (\ref{angular}) such that
\begin{equation}
\left\vert \theta^{\infty}\left(  t\right)  -\theta_{0}^{\infty}\right\vert
\leq\int_{t}^{\infty}\frac{\mu}{\left(  r^{\infty}\right)  ^{2}}d\tau\leq
C\int_{t}^{\infty}\frac{\dot{r}^{\infty}d\tau}{\sqrt{V\left(  \frac{r^{\infty
}}{\lambda^{\infty}}\right)  }\left(  r^{\infty}\right)  ^{2}}\leq
C\int_{r^{\infty}}^{\infty}\frac{dr^{\infty}}{\left(  r^{\infty}\right)
^{1+\nu/2}}\leq C\left(  r^{\infty}\right)  ^{-\nu/2}. \label{angular1}%
\end{equation}

\end{remark}

We now define the approximate system of modulation parameters. The form of
$R\left(  W\right)  $ in (\ref{ap182}) suggests to define the approximate
system as follows. For $j\geq1$, let $B_{j}$, $M_{j}$ and $\tilde{B}_{j}$,
$\tilde{M}_{j}$ be defined by Lemmas \ref{Lemmaapp} and \ref{Lemmaapp1},
respectively. We omit the tilde and write $B_{j}$, $M_{j}$ in both cases.
Denote
\[
\mathbf{B}^{\left(  N\right)  }=\sum_{j=1}^{N}B_{j}\text{ and }\mathbf{M}%
^{\left(  N\right)  }=\sum_{j=1}^{N}M_{j}%
\]
Consider the system
\begin{equation}
\left\{
\begin{array}
[c]{c}%
\dot{\chi}^{\left(  N\right)  }=2\beta^{\left(  N\right)  },\\
\dot{\beta}^{\left(  N\right)  }=\mathbf{B}^{\left(  N\right)  },\\
\dot{\lambda}^{\left(  N\right)  }=\mathbf{M}^{\left(  N\right)  }.
\end{array}
\right.  \label{syst3}%
\end{equation}
Let us show that (\ref{syst3}) may be solved from infinity with asymptotic
behaviour given by $\Xi^{\infty}\left(  t\right)  .$ Observe that by Condition
\ref{ConidtionPotential} the potential is given either by
\begin{equation}
V\left(  r\right)  =V^{\left(  1\right)  }\left(  r\right)  , \label{cond8}%
\end{equation}
where $\left\vert V^{\left(  1\right)  }\left(  r\right)  \right\vert \leq
Ce^{-cr},$ for some $c>0,$ or else $V\left(  r\right)  =V^{\left(  2\right)
}\left(  r\right)  .$ Also by Condition \ref{ConidtionPotential}%
\begin{equation}
\left\vert U_{V^{\prime}}^{\prime}\left(  r\right)  \right\vert \leq
C\left\vert U_{V}^{\prime}\left(  r\right)  \right\vert . \label{Cond1}%
\end{equation}
By assumption (\ref{condpotin})\ there are constants $\mathcal{R}%
_{1},\mathcal{R}_{2}\in\mathbb{R}$ such that
\begin{equation}
U_{V_{\lambda}^{\prime}}^{\prime}\left(  r\right)  \mathcal{Y}\left(
r,\lambda\right)  =\mathcal{R}_{1}+o\left(  1\right)  \text{ and }%
\frac{U_{V_{\lambda}}^{\prime}\left(  r\right)  }{r}\mathcal{Y}\left(
r\right)  =\mathcal{R}_{2}+o\left(  1\right)  , \label{ap302}%
\end{equation}
as $r\rightarrow\infty,$ where we denote
\[
\mathcal{Y}\left(  r,\lambda\right)  =\left(  \int_{r_{0}}^{r}\frac{d\tau
}{\sqrt{U_{V_{\lambda}}\left(  \frac{\tau}{\lambda}\right)  }}\right)  ^{2}.
\]

\begin{lemma}
\label{approxsyst}Let $V$ satisfy by Condition \ref{ConidtionPotential}. For
any $N\geq1$ there is a solution $\Xi^{\left(  N\right)  }\left(  t\right)
=\left(  \chi^{\left(  N\right)  }\left(  t\right)  ,\beta^{\left(  N\right)
}\left(  t\right)  ,\lambda^{\left(  N\right)  }\left(  t\right)  \right)  $
to (\ref{syst3}) on $[T_{0},\infty),$ with $T_{0}$ large enough. This solution
satisfies the following asymptotics depending on the energy $E_{0}$ given by
(\ref{energy}). If $E_{0}>0,$ then
\begin{equation}
\left\vert \chi^{\left(  N\right)  }\left(  t\right)  -\chi^{\infty}\left(
t\right)  \right\vert +\left\vert \beta^{\infty}\left(  t\right)  \right\vert
^{-1}\left\vert \beta^{\left(  N\right)  }\left(  t\right)  -\beta^{\infty
}\left(  t\right)  \right\vert +\left\vert \lambda^{\left(  N\right)  }\left(
t\right)  -\lambda^{\infty}\right\vert =o\left(  1\right)  . \label{beh1}%
\end{equation}
If $E_{0}=0,$ suppose in addition that $V\left(  r\right)  \geq0,$ for all $r$
large enough. Then
\begin{equation}
\left\vert \frac{\left\vert \chi^{\left(  N\right)  }\left(  t\right)
\right\vert }{\left\vert \chi^{\infty}\left(  t\right)  \right\vert
}-1\right\vert +\left\vert \beta^{\infty}\left(  t\right)  \right\vert
^{-1}\left\vert \beta^{\left(  N\right)  }\left(  t\right)  -\beta^{\infty
}\left(  t\right)  \right\vert +\left\vert \lambda^{\left(  N\right)  }\left(
t\right)  -\lambda^{\infty}\right\vert =o\left(  1\right)  . \label{beh2}%
\end{equation}

\end{lemma}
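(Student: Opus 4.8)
The strategy is to treat the approximate system \eqref{syst3} as a perturbation of the limiting central-field problem \eqref{syst1} and solve it backward from $t=+\infty$ via a fixed-point argument on a weighted space. First I would record, using \eqref{ap14}, \eqref{BM} (and \eqref{B'}, \eqref{eps2} in the $V=V^{(2)}$ case), that the correctors satisfy $|B_j|+|M_j|\leq C\Theta(|\chi|/\lambda)(|\beta|+\Theta(|\chi|/\lambda))^{(p_1-1)(j-1)}$ for $j\geq 2$, while $B_1=B(\chi,\lambda)$ is exactly the quantity in \eqref{ap17}, whose leading behaviour is governed by $U_V'$ through Lemma \ref{L2 1}; more precisely $\mathbf B^{(N)}=\tfrac{1}{2\lambda\|Q\|_{L^2}^2}\tfrac{\chi}{|\chi|}U'_{V_\lambda}(|\chi|/\lambda)+(\text{lower order})$, and $\mathbf M^{(N)}$ is a genuine error term of size $O(\Theta)$ or $O(\Psi)$ times small powers. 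Thus \eqref{syst3} reads schematically $\dot\chi=2\beta$, $\dot\beta=\dot\beta^\infty(\chi,\lambda)+\mathcal{O}(\text{small})$, $\dot\lambda=\mathcal{O}(\text{small})$, with the errors integrable along the relevant trajectory.

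Next I would fix the reference trajectory $\Xi^\infty(t)=(\chi^\infty(t),\beta^\infty(t),\lambda^\infty)$ solving \eqref{syst1} with the prescribed energy $E_0$, using the asymptotics \eqref{ap200} in the hyperbolic case and \eqref{ap192}--\eqref{ap193} (together with Remark \ref{Rem2}, in particular \eqref{chiinf}--\eqref{angular1}) in the parabolic case; the positivity hypothesis $V(r)\geq 0$ is exactly what guarantees unbounded solutions exist when $E_0=0$. I would then set $\chi^{(N)}=\chi^\infty+\zeta$, $\beta^{(N)}=\beta^\infty+b$, $\lambda^{(N)}=\lambda^\infty+\ell$ and write an integral equation for $(\zeta,b,\ell)$ obtained by integrating \eqref{syst3} from $+\infty$ backward: for example $\ell(t)=-\int_t^\infty \mathbf M^{(N)}(\Xi^{(N)}(s))\,ds$, and similarly for $b$ after subtracting the linearization of $\dot\beta^\infty$ in $\chi$ and $\lambda$, with $\zeta(t)=2\int_t^\infty b(s)\,ds$ up to the linearized drift. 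The key estimate is that along trajectories close to $\Xi^\infty$ one has $|\chi^{(N)}(s)|\sim |\chi^\infty(s)|\to\infty$, so $\Theta(|\chi^{(N)}|/\lambda)$ (resp. $\Psi$) decays fast enough that $\int_t^\infty$ of the error terms is finite and tends to $0$ as $t\to\infty$; in the hyperbolic case $|\chi^\infty(s)|\gtrsim s$ gives exponential (or polynomial of summable order) decay, in the parabolic case one uses \eqref{ap299} to change variables $ds=dr/\dot r^\infty$ and the condition \eqref{condpotin}/\eqref{ap302} to control $U_{V_\lambda}'\mathcal Y$ and $U_{V_\lambda}'/r\cdot\mathcal Y$, which is precisely what makes the linearized operator around $\chi^\infty$ invertible with integrable Green's function. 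A contraction mapping on the ball $\{\|\langle\beta^\infty\rangle^{-1}b\|_\infty+\|\zeta\|_\infty+\|\ell\|_\infty\leq\varepsilon\}$ in $C([T_0,\infty))$, with $T_0$ large, then yields a solution satisfying \eqref{beh1} (resp. \eqref{beh2}), the statement about $|\chi^{(N)}|/|\chi^\infty|\to1$ rather than $|\chi^{(N)}-\chi^\infty|\to0$ in the parabolic case being forced by the degeneracy of the angular dynamics \eqref{angular1}, i.e. one only controls the radial variable sharply.

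The main obstacle is the parabolic regime $E_0=0$. There the reference speed $|\beta^\infty(t)|\to0$, so the relevant norm must be weighted by $\langle\beta^\infty\rangle^{-1}$, and the linearized flow of \eqref{syst1} around $\chi^\infty$ is only marginally stable: the Green's kernel involves $\mathcal Y(r,\lambda)=(\int_{r_0}^r U_{V_\lambda}^{-1/2}(\tau/\lambda)d\tau)^2$, and the whole argument hinges on Condition \ref{C1}, which asserts that $U'_{V'_\lambda}\mathcal Y$ and $(U'_{V_\lambda}/r)\mathcal Y$ converge to finite limits $\mathcal R_1,\mathcal R_2$. I would handle this by diagonalizing the $2\times2$ linearized radial system using $\mathcal Y$ as the natural time-like variable, showing the homogeneous solutions grow/decay at most polynomially in $\mathcal Y$, and then checking that the error forcing (of size $\Theta\cdot(\cdots)$ or $\Psi\mathbf Z^N\mathbf e$) integrated against this kernel is summable and $o(1)$ — this is where one needs $N$ large enough, i.e. $A(N)=\min\{N(p_1-1),2\}$ large, so that $(|\beta|+\Theta)^{A(N)}$ beats the growth of the homogeneous solutions. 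The angular component is then closed separately by \eqref{angular1}, which decouples because it does not feed back into the radial equation at leading order.
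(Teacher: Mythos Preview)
Your high-level strategy---perturb the approximate system \eqref{syst3} around the limiting trajectory and close by a fixed-point/bootstrap argument---matches the paper's, and for $E_0>0$ the paper also dismisses this as a routine contraction (citing \cite{Krieger}, Lemma~A.1). The difficulty, as you correctly identify, is the parabolic case $E_0=0$.

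There is however a genuine gap in your parabolic argument. You propose to set $\zeta=\chi^{(N)}-\chi^\infty$ and run a contraction on the ball $\{\|\zeta\|_\infty\le\varepsilon\}$. But $\|\zeta\|_\infty$ is \emph{not} bounded in general: the statement \eqref{beh2} only asserts $|\chi^{(N)}|/|\chi^\infty|\to 1$, not $|\chi^{(N)}-\chi^\infty|\to 0$, and this is not (as you suggest) merely an angular degeneracy---it is already a radial phenomenon. The reason is that $B_1=B(\chi,\lambda)$ agrees with the limiting force $\tfrac{1}{2\lambda\|Q\|^2}\tfrac{\chi}{|\chi|}U'_{V_\lambda}(|\chi|/\lambda)$ only up to a multiplicative $(1+o(1))$ (Lemma~\ref{L2 1}), so the forcing in your $\ddot\zeta$ equation contains a term of size $o(1)\cdot U'_V\sim o(t^{-2})$. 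Integrated against the Green kernel of $\ddot\delta=\mathcal R_1 t^{-2}\delta$, whose homogeneous solutions satisfy $|\delta^{(1)}\delta^{(2)}|\le Ct$, this gives only $\zeta=o(1)\cdot(\text{growing})$ with no explicit rate, and the contraction does not close. For fast-decaying $V$ one has $r^\infty\sim K(V)\ln t$ (see \eqref{ap199}), so an $o(1)$ relative error in $r$ is an unbounded absolute error.

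The paper's remedy is to insert an \emph{intermediate} approximation: first solve the simpler system $\ddot\chi_{\rm app}=2B(\chi_{\rm app},\lambda^\infty)$ with the exact (not leading-order) $B$, and compare $r_{\rm app}$ to $r^\infty$ via the energy-type identity \eqref{ap300}--\eqref{ap303}, which absorbs the multiplicative $o(1)$ and yields $r_{\rm app}/r^\infty\to 1$. Only then does one bootstrap $\delta=\chi^{(N)}-\chi_{\rm app}$: now the forcing consists solely of the higher correctors $B_j,M_j$ ($j\ge 2$) and the $\lambda$-coupling, which by \eqref{BM}/\eqref{B'} decay like $t^{-2p_1}$ or better---fast enough to feed into \eqref{ap304} and recover a quantitative rate $|\delta|\le t^{-p_1/4}$. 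In the slow-decay case $V=V^{(2)}$ the paper introduces a further intermediate system \eqref{syst6} including $M_1$; your outline treats both regimes uniformly, which does not work since the size and structure of the correctors differ (cf.\ Lemmas~\ref{Lemmaapp} vs.\ \ref{Lemmaapp1}). Finally, the argument is a bootstrap/continuation (improving \eqref{boot1}, \eqref{boot2}, \eqref{boot4} strictly), not a contraction in a fixed Banach ball.
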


\begin{proof}
The proof of (\ref{beh1}) follows by a fixed point argument similarly to Lemma
A.1 in \cite{Krieger}. We omit the proof.

We turn to relation (\ref{beh2})$.$ As we are interested in unbounded
solutions, by Remark \ref{Rem2} we put
\begin{equation}
\mu=0,\text{ if \ }\left\vert V\left(  r\right)  \right\vert \leq
C\left\langle r\right\rangle ^{-2}. \label{mu=0}%
\end{equation}
Suppose first that $V=V^{\left(  1\right)  }.$ In this case we can precise the
behaviour of $\left\vert \chi^{\infty}\right\vert .$ By the definition
(\ref{ap190}) and (\ref{ap191}) of $U_{V}\left(  \xi\right)  $, integrating by
parts, for $r_{0}$ large enough we show that%
\[
C^{\prime}\frac{e^{r^{\infty}+\frac{\left(  d-1\right)  }{2}\ln r^{\infty}}%
}{\sqrt{C_{\pm}\left(  r^{\infty}\right)  }}\leq\int_{r_{0}}^{r^{\infty}}%
\frac{dr}{\sqrt{U_{V_{\lambda^{\infty}}}\left(  r\right)  }}\leq
C\frac{e^{r^{\infty}+\frac{\left(  d-1\right)  }{2}\ln r^{\infty}}}%
{\sqrt{C_{\pm}\left(  r^{\infty}\right)  }},\text{ with some }C,C^{\prime}>0,
\]
if $V\left(  r\right)  =V_{-}\left(  r\right)  \ $or $V\left(  r\right)
=V_{+}\left(  r\right)  ,$ with $H\left(  r\right)  =o\left(  r\right)  .$
Moreover, if $V\left(  r\right)  =V_{+}\left(  r\right)  ,$ with $0<cr\leq
H\left(  r\right)  <2r,$ we estimate%
\[
C^{\prime}e^{r^{\infty}+\frac{\left(  d-1\right)  }{2}\ln r^{\infty}%
-\frac{H\left(  r^{\infty}\right)  }{2}}\leq\int_{r_{0}}^{r^{\infty}}\frac
{dr}{\sqrt{U_{V_{\lambda^{\infty}}}\left(  r\right)  }}\leq Ce^{r^{\infty
}+\frac{\left(  d-1\right)  }{2}\ln r^{\infty}-\frac{H\left(  r^{\infty
}\right)  }{2}},
\]
Then, (\ref{mu=0}) and (\ref{ap192}) imply%
\begin{equation}
r^{\infty}=K\left(  V\right)  \left(  1+o\left(  1\right)  \right)  \ln
t,\text{ as }t\rightarrow\infty, \label{ap199}%
\end{equation}
with
\begin{equation}
K\left(  V\right)  =\left\{
\begin{array}
[c]{c}%
1,\text{ if }V=V_{-}\ \text{ or }V=V_{+},\text{ with }H\left(  r\right)
=o\left(  r\right)  ,\\
\left(  \lim_{\tau\rightarrow\infty}\left(  1-\frac{H\left(  \tau\right)
}{2\tau}\right)  \right)  ^{-1},\text{ }V=V_{+},\text{ with }0<cr\leq H\left(
r\right)  <2r.
\end{array}
\right.  \label{K(V)}%
\end{equation}
Let us write $\chi^{\infty}=r^{\infty}\theta^{\infty}\left(  t\right)  ,$ with
$r^{\infty}>0$ and $\theta^{\infty}\left(  t\right)  \in\mathbb{S}^{d-1}.$
Then, as in this case we put $\mu=0,$ we have $\theta^{\infty}\left(
t\right)  =\theta_{0}^{\infty}=$constant and $\left\vert \dot{\chi}^{\infty
}\right\vert =\dot{r}^{\infty}.$ By Lemma \ref{Lemmaapp} we can rewrite
(\ref{syst3}) as follows%
\begin{equation}
\left\{
\begin{array}
[c]{c}%
\dot{\chi}=2\beta,\\
\dot{\beta}=B\left(  \chi,\lambda\right)  +f_{1}\left(  \chi,\beta
,\lambda\right)  ,\\
\dot{\lambda}=g_{1}\left(  \chi,\beta,\lambda\right)  ,
\end{array}
\right.  \label{syst4}%
\end{equation}
(we omit the index $N$ in $\chi,\beta,\lambda$) where
\begin{equation}
\left\vert f_{1}\left(  \chi,\beta,\lambda\right)  \right\vert \leq C\left(
\chi,\beta,\lambda\right)  \left(  \left(  \left(  \left\vert \beta\right\vert
+\Theta\left(  \left\vert \tfrac{\chi}{\lambda}\right\vert \right)  \right)
\Theta\left(  \left\vert \tfrac{\chi}{\lambda}\right\vert \right)  \right)
^{p_{1}}+\left\vert U_{V}\left(  \left\vert \tfrac{\chi}{\lambda}\right\vert
\right)  \right\vert ^{p_{1}}\right)  , \label{f}%
\end{equation}
and%
\begin{equation}
\left\vert g_{1}\left(  \chi,\beta,\lambda\right)  \right\vert \leq C\left(
\chi,\beta,\lambda\right)  \left(  \left(  \left(  \left\vert \beta\right\vert
+\Theta\left(  \left\vert \tfrac{\chi}{\lambda}\right\vert \right)  \right)
\Theta\left(  \left\vert \tfrac{\chi}{\lambda}\right\vert \right)  \right)
+\left\vert U_{V}\left(  \left\vert \tfrac{\chi}{\lambda}\right\vert \right)
\right\vert \right)  . \label{g}%
\end{equation}
Let us consider the intermediate system
\begin{equation}
\left\{
\begin{array}
[c]{c}%
\dot{\chi}_{\operatorname*{app}}=2\beta_{\operatorname*{app}},\\
\dot{\beta}_{\operatorname*{app}}=B\left(  \chi_{\operatorname*{app}}%
,\lambda^{\infty}\right)  .
\end{array}
\right.  \label{ap297}%
\end{equation}
Using Lemma \ref{L2 1} we have%
\[
B\left(  \chi_{\operatorname*{app}},\lambda^{\infty}\right)  =\frac
{\chi_{\operatorname*{app}}}{\left\vert \chi_{\operatorname*{app}}\right\vert
}\frac{c_{0}}{4\lambda^{\infty}}\left(  1+o\left(  1\right)  \right)
U_{V_{\lambda^{\infty}}}^{\prime}\left(  \left\vert \tfrac{\chi
_{\operatorname*{app}}}{\lambda^{\infty}}\right\vert \right)
\]
with $c_{0}=2\left\Vert Q\right\Vert _{L^{2}}^{-2}.$ Then, from (\ref{ap297})
we deduce%
\begin{equation}
\frac{d}{dt}\left\vert \dot{\chi}_{\operatorname*{app}}\right\vert ^{2}%
=c_{0}\left(  1+o\left(  1\right)  \right)  \frac{d}{dt}U_{V_{\lambda^{\infty
}}}\left(  \tfrac{r_{\operatorname*{app}}}{\lambda^{\infty}}\right)  .
\label{ap300}%
\end{equation}
We take $\chi_{\operatorname*{app}}=r_{\operatorname*{app}}\theta_{0}^{\infty
}$ of angular momentum $\mu=0$ such that $r_{\operatorname*{app}}%
\rightarrow\infty$, $\dot{r}_{\operatorname*{app}}\rightarrow0$ as
$t\rightarrow\infty.$ Integrating (\ref{ap300}) on $[t,\infty)$ and using
Lemma \ref{L2 1} we get%
\[
\dot{r}_{\operatorname*{app}}=\left(  c_{0}\left(  \left(  1+o\left(
1\right)  \right)  U_{V_{\lambda^{\infty}}}\left(  \tfrac
{r_{\operatorname*{app}}}{\lambda^{\infty}}\right)  \right)  \right)  ^{1/2},
\]
Hence
\begin{equation}
t=c_{0}^{-1/2}\int_{r_{0}}^{r_{\operatorname*{app}}}\frac{dr}{\sqrt{\left(
1+o\left(  1\right)  \right)  U_{V_{\lambda^{\infty}}}\left(  \tfrac
{r}{\lambda^{\infty}}\right)  }}+t_{0}. \label{ap303}%
\end{equation}
Similarly to (\ref{ap199}) we show that $r_{\operatorname*{app}}=K\left(
V\right)  \left(  1+o\left(  1\right)  \right)  \ln t,$ as $t\rightarrow
\infty.$ Then,
\begin{equation}
\left\vert \frac{r_{\operatorname*{app}}}{r^{\infty}}-1\right\vert =o\left(
1\right)  \text{ and }\left\vert \beta_{\operatorname*{app}}-\beta^{\infty
}\right\vert =o\left(  1\right)  \left\vert \beta^{\infty}\right\vert .
\label{rapp}%
\end{equation}
Let us decompose $\chi\left(  t\right)  =\chi_{_{\operatorname*{app}}}\left(
t\right)  +\delta\left(  t\right)  $ and $\lambda\left(  t\right)
=\lambda^{\infty}+\mu\left(  t\right)  .$ We aim to prove that for some
$T_{0}>0$ sufficiently big the following a priori estimates are true
\begin{equation}
\left\vert \delta\left(  t\right)  \right\vert \leq t^{-\frac{p_{1}}{4}%
},\text{ }\left\vert \dot{\delta}\left(  t\right)  \right\vert \leq
t^{-\left(  1+\frac{p_{1}}{4}\right)  },\text{ }\left\vert \mu\left(
t\right)  \right\vert \leq t^{-\frac{3}{4}} \label{boot1}%
\end{equation}
for all $t\in\lbrack T_{0},\infty).$ In view of relation (\ref{l2}) we write
$B\left(  \chi,\lambda\right)  =\frac{\chi}{\left\vert \chi\right\vert
}b\left(  \left\vert \chi\right\vert ,\lambda\right)  .$ Linearizing $B\left(
\chi,\lambda\right)  $ around $\left(  \chi_{\operatorname*{app}}%
,\lambda^{\infty}\right)  $ we get%
\[
B\left(  \chi,\lambda\right)  =B\left(  \chi_{\operatorname*{app}}%
,\lambda^{\infty}\right)  +\left(  \theta_{0}^{\infty}\cdot\delta\right)
\theta_{0}^{\infty}b^{\prime}\left(  \left\vert \chi_{\operatorname*{app}%
}\right\vert ,\lambda^{\infty}\right)  +f_{2}%
\]
where via (\ref{Cond1})
\begin{align*}
f_{2}  &  =O\left(  \left\vert \partial_{\lambda}B\left(  \chi
_{\operatorname*{app}},\lambda^{\infty}\right)  \right\vert \left\vert
\mu\right\vert +\frac{\left\vert \delta\right\vert }{\left\vert \chi
_{\operatorname*{app}}\right\vert }\left\vert b\left(  \left\vert
\chi_{\operatorname*{app}}\right\vert ,\lambda^{\infty}\right)  \right\vert
+\left\vert \delta\right\vert ^{2}\left\vert b^{\prime}\left(  \left\vert
\chi_{\operatorname*{app}}\right\vert ,\lambda^{\infty}\right)  \right\vert
\right) \\
&  =O\left(  \left\vert U_{V}^{\prime}\left(  r\right)  \right\vert \left(
\left\vert \mu\right\vert +\frac{\left\vert \delta\right\vert }{\left\vert
\chi_{\operatorname*{app}}\right\vert }+\left\vert \delta\right\vert
^{2}\right)  \right)  .
\end{align*}
We use the Cartesian coordinates with the $x_{1}$-axis directed along the
vector $\theta_{0}^{\infty}.$ We decompose $\delta=\sum_{j=1}^{d}\delta
_{j}\vec{e}_{j},$ and $\vec{e}_{j},$ $j=1,...,d$ is the canonical basis in
these coordinates. Then $\left(  \theta_{0}^{\infty}\cdot\delta\right)
\theta_{0}^{\infty}=\delta_{1}$ and by (\ref{syst4}) we obtain
\begin{equation}
\left\{
\begin{array}
[c]{c}%
\ddot{\delta}=b^{\prime}\left(  \left\vert \chi_{\operatorname*{app}%
}\right\vert ,\lambda^{\infty}\right)  \delta_{1}\vec{e}_{1}+2f_{1}\left(
\chi,\beta,\lambda\right)  +2f_{2},\\
\dot{\lambda}=g_{1}\left(  \chi,\beta,\lambda\right)  .
\end{array}
\right.  \label{syst5}%
\end{equation}
It follows from (\ref{ap299}) that $\dot{r}^{\infty}=\mathcal{X},$ where
\begin{equation}
\mathcal{X=}\sqrt{U_{V_{\lambda^{\infty}}}\left(  \frac{r^{\infty}}%
{\lambda^{\infty}}\right)  }. \label{x}%
\end{equation}
Using (\ref{f}), (\ref{g}), (\ref{boot1}) and (\ref{ap199}) we estimate%
\[
\left\vert f_{1}\left(  \chi,\beta,\lambda\right)  \right\vert \leq C\left(
\chi,\beta,\lambda\right)  \mathcal{X}^{p_{1}}\leq Ct^{-2-p_{1}},
\]%
\[
\left\vert f_{2}\right\vert \leq\frac{C}{t^{2}}\left(  t^{-\frac{3}{4}%
}+t^{-\frac{p_{1}}{4}}\left(  \ln t\right)  ^{-1}+t^{-p_{1}}\right)
\]
and
\[
\left\vert g_{1}\left(  \chi,\beta,\lambda\right)  \right\vert \leq C\left(
\chi,\beta,\lambda\right)  \mathcal{X}\leq Ct^{-2}.
\]
Then, from (\ref{syst5}), for some $T_{0}>0$ big enough we get%
\begin{equation}
\left.  \left\vert \delta_{j}\right\vert \leq\frac{1}{2t^{\frac{p_{1}}{4}}%
},\text{ }\left\vert \delta_{j}\right\vert \leq\frac{1}{2t^{1+\frac{p_{1}}{4}%
}},\text{ }j=2,...,d,\right.  \label{delta}%
\end{equation}
and
\begin{equation}
\left\vert \mu\left(  t\right)  \right\vert \leq\frac{1}{2t^{\frac{3}{4}}},
\label{mu}%
\end{equation}
for $t\in\lbrack T_{0},\infty).$ Let us estimate $\delta_{1}.$ Using Lemma
\ref{L2 1} with $V^{\prime}$ instead of $V,$ we have%
\[
b^{\prime}\left(  \left\vert \chi_{\operatorname*{app}}\right\vert
,\lambda^{\infty}\right)  =c_{0}\left(  \lambda^{\infty}\right)  \left(
1+o\left(  1\right)  \right)  U_{V^{\prime}}^{\prime}\left(  \tfrac
{r_{\operatorname*{app}}}{\lambda^{\infty}}\right)  .
\]
Then
\[
\ddot{\delta}_{1}=c_{0}\left(  1+o\left(  1\right)  \right)  U_{V^{\prime}%
}^{\prime}\left(  \tfrac{r_{\operatorname*{app}}}{\lambda^{\infty}}\right)
\delta_{1}+O\left(  t^{-2-p_{1}}\right)  .
\]
By (\ref{ap302}) and (\ref{ap303}) we deduce $U_{V^{\prime}}^{\prime}\left(
\tfrac{r_{\operatorname*{app}}}{\lambda^{\infty}}\right)  =\left(
\mathcal{R}_{1}+o\left(  1\right)  \right)  c_{0}^{-1}\left(  \lambda^{\infty
}\right)  t^{-2}.$ Thus%
\begin{equation}
\ddot{\delta}_{1}=\mathcal{R}_{1}t^{-2}\delta_{1}+O\left(  o\left(  1\right)
t^{-2}\left\vert \delta_{1}\right\vert +t^{-2-p_{1}}\right)  . \label{ap304}%
\end{equation}
If $\mathcal{R}_{1}=0,$ we improve the estimate on $\delta_{1}$ and
$\dot{\delta}_{1}$ (\ref{boot1}) directly by integrating (\ref{ap304}). If
$\mathcal{R}_{1}\neq0,$ the linear equation $\ddot{\delta}_{1}=\mathcal{R}%
_{1}t^{-2}\delta_{1}$ has two linearly independent solutions $\delta
_{1}^{\left(  1\right)  },\delta_{1}^{\left(  2\right)  }$ such that
$\left\vert \delta_{1}^{\left(  1\right)  }\delta_{1}^{\left(  2\right)
}\right\vert \leq Ct.$ By variation of parameters we solve (\ref{ap304}) and
via (\ref{boot1}) we estimate the solution as
\[
\left\vert \delta_{1}\right\vert \leq Co\left(  1\right)  \int_{t}^{\infty
}\left(  \tau^{-1}\left\vert \delta_{1}\right\vert +\tau^{-1-p_{1}}\right)
\text{ and }\left\vert \dot{\delta}_{1}\right\vert \leq Co\left(  1\right)
\left(  t^{-1}\left\vert \delta_{1}\right\vert +t^{-1-p_{1}}\right)
\]
Hence, for $T_{0}>0$ big enough we obtain the bound $\left\vert \delta
_{1}\right\vert \leq\left(  2t^{\frac{p_{1}}{4}}\right)  ^{-1}$ and
$\left\vert \dot{\delta}_{1}\right\vert \leq\left(  2t^{1+\frac{p_{1}}{4}%
}\right)  ^{-1}$ for $t\in\lbrack T_{0},\infty).$ Combining the last relation
with (\ref{delta}) and (\ref{mu}) we strictly improve (\ref{boot1}). Then, by
a contraction argument, via (\ref{rapp}), we prove the existence of a solution
$\Xi^{\left(  N\right)  }\left(  t\right)  $ for (\ref{syst4}) with the
asymptotics (\ref{beh2}).

Let now $V=V^{\left(  2\right)  }.$ First we suppose that the potential $V$
decays faster than $\left\vert x\right\vert ^{-2}.$ Namely,%
\begin{equation}
\left\vert V\left(  r\right)  \right\vert \leq C\left\langle r\right\rangle
^{-2}. \label{cond4}%
\end{equation}
We consider the intermediate system%
\begin{equation}
\left\{
\begin{array}
[c]{c}%
\dot{\chi}_{\operatorname*{app}}=2\beta_{\operatorname*{app}},\\
\dot{\beta}_{\operatorname*{app}}=\frac{\chi_{\operatorname*{app}}}{\left\vert
\chi_{\operatorname*{app}}\right\vert }\left(  \frac{1}{2\lambda^{\infty}%
}V^{\prime}\left(  \tfrac{\left\vert \chi_{\operatorname*{app}}\right\vert
}{\lambda^{\infty}}\right)  +\mathbf{r}\left(  \left\vert \chi
_{\operatorname*{app}}\right\vert \right)  \right)  .
\end{array}
\right.  \label{syst9}%
\end{equation}
Recall that
\begin{equation}
\left\vert h_{1}^{\left(  k\right)  }\left(  r\right)  \right\vert \leq
Cr^{-1}h_{1}^{\left(  k-1\right)  }\left(  r\right)  ,\text{ }k\geq1.
\label{cond3}%
\end{equation}
Then%
\[
\left\vert \mathbf{r}\left(  \tfrac{\left\vert \chi_{\operatorname*{app}%
}\right\vert }{\lambda^{\infty}}\right)  \right\vert \leq C\left\vert
h^{\prime}\left(  \tfrac{\left\vert \chi_{\operatorname*{app}}\right\vert
}{\lambda^{\infty}}\right)  \right\vert \left(  \left(  \left\vert h^{\prime
}\left(  \tfrac{\left\vert \chi_{\operatorname*{app}}\right\vert }%
{\lambda^{\infty}}\right)  \right\vert +\tfrac{1}{\left\vert \chi
_{\operatorname*{app}}\right\vert }\right)  V^{\prime}\left(  \tfrac
{\left\vert \chi_{\operatorname*{app}}\right\vert }{\lambda^{\infty}}\right)
+\left\vert \chi_{\operatorname*{app}}\right\vert ^{-2}V\left(  \tfrac
{\left\vert \chi_{\operatorname*{app}}\right\vert }{\lambda^{\infty}}\right)
\right)  .
\]
Thus, it follows from (\ref{syst9}) that%
\begin{equation}
\frac{d}{dt}\left\vert \dot{\chi}_{\operatorname*{app}}\right\vert ^{2}%
=\frac{d}{dt}\left(  2V\left(  \tfrac{\left\vert \chi_{\operatorname*{app}%
}\right\vert }{\lambda^{\infty}}\right)  \left(  1+O\left(  h^{\prime}\left(
\left\vert \chi_{\operatorname*{app}}\right\vert \right)  \right)  \right)
\right)  . \label{ap314}%
\end{equation}
We take $\chi_{\operatorname*{app}}=r_{\operatorname*{app}}\theta_{0}^{\infty
}$ of angular momentum $\mu=0$ such that $r_{\operatorname*{app}}%
\rightarrow\infty$, $\dot{r}_{\operatorname*{app}}\rightarrow0$ as
$t\rightarrow\infty.$ Integrating (\ref{ap314}) we get%
\[
\dot{r}_{\operatorname*{app}}=\sqrt{2V\left(  \tfrac{r_{\operatorname*{app}}%
}{\lambda^{\infty}}\right)  \left(  1+O\left(  h^{\prime}\left(
r_{\operatorname*{app}}\right)  \right)  \right)  }.
\]
Then,
\begin{equation}
t=\int_{r_{0}}^{r_{\operatorname*{app}}}\frac{dr}{\sqrt{2V\left(  \tfrac
{r}{\lambda^{\infty}}\right)  \left(  1+O\left(  h^{\prime}\left(  r\right)
\right)  \right)  }}+t_{0} \label{ap315}%
\end{equation}
Let $z\left(  t\right)  =\frac{r_{\operatorname*{app1}}\left(  t\right)
}{r^{\infty}\left(  t\right)  }.$ We introduce
\[
F\left(  z\right)  =\int_{r_{0}}^{zr^{\infty}}\frac{dr}{\sqrt{2V\left(
\tfrac{r}{\lambda^{\infty}}\right)  \left(  1+O\left(  h^{\prime}\left(
r\right)  \right)  \right)  }}.
\]
and
\[
F_{0}=\int_{r_{0}}^{r^{\infty}}\frac{dr}{\sqrt{2V\left(  \tfrac{r}%
{\lambda^{\infty}}\right)  }}.
\]
Expanding $F\left(  z\right)  $ around $z=1$ we have%
\[
F\left(  z\right)  =F_{0}+O\left(  \left(  V\left(  \tfrac{r^{\infty}}%
{\lambda^{\infty}}\right)  \right)  ^{-\frac{1}{2}}\right)  +F^{\prime}\left(
1\right)  \left(  z-1\right)  +O\left(  F^{\prime\prime}\left(  1\right)
\left(  z-1\right)  ^{2}\right)  .
\]
By (\ref{ap192}) $t=F_{0}+t_{0}.$ Then, using (\ref{ap315}) we deduce%
\[
\frac{F\left(  z\right)  }{F_{0}}=1=1+\frac{O\left(  \left(  V\left(
\tfrac{r^{\infty}}{\lambda^{\infty}}\right)  \right)  ^{-\frac{1}{2}}\right)
+r^{\infty}F^{\prime}\left(  1\right)  \left(  z-1\right)  +O\left(  \left(
r^{\infty}\right)  ^{2}F^{\prime\prime}\left(  1\right)  \left(  z-1\right)
^{2}\right)  }{F_{0}}.
\]
Hence,
\[
\left\vert z-1\right\vert =\frac{O\left(  \left(  V\left(  \tfrac{r^{\infty}%
}{\lambda^{\infty}}\right)  \right)  ^{-\frac{1}{2}}\right)  }{r^{\infty
}F^{\prime}\left(  1\right)  }=O\left(  \frac{1}{r^{\infty}}\right)  .
\]
Thus, we obtain
\[
\left\vert \frac{r_{\operatorname*{app}}\left(  t\right)  }{r^{\infty}\left(
t\right)  }-1\right\vert =o\left(  1\right)  \text{ and }\left\vert
\beta_{\operatorname*{app}}-\beta^{\infty}\right\vert =o\left(  1\right)
\left\vert \beta^{\infty}\right\vert .
\]
Then, arguing similarly to (\ref{boot1}), we prove the existence of a solution
$\Xi^{\left(  N\right)  }\left(  t\right)  $ for (\ref{syst4}) with the
asymptotics (\ref{beh2}) in the case when $V$ decay faster than $\left\vert
x\right\vert ^{-2}$.

Now, we consider the case of potentials that decay slower than $\left\vert
x\right\vert ^{-2}.$ We suppose that (\ref{cond2}) holds. Let $\chi^{\infty
}=r^{\infty}\theta^{\infty}$ and $\theta_{0}^{\infty}=\lim_{t\rightarrow
\infty}\theta^{\infty}\left(  t\right)  .$ Recall the definition (\ref{ap311})
of $M_{1}$ and estimate (\ref{ap312}). Let us consider the intermediate system%
\begin{equation}
\left\{
\begin{array}
[c]{c}%
\dot{\chi}_{\operatorname*{app1}}=2\beta_{\operatorname*{app1}},\\
\dot{\beta}_{\operatorname*{app1}}=\frac{1}{2\lambda_{\operatorname*{app1}}%
}\frac{\chi_{\operatorname*{app1}}}{\left\vert \chi_{\operatorname*{app1}%
}\right\vert }V^{\prime}\left(  \tfrac{\left\vert \chi_{\operatorname*{app1}%
}\right\vert }{\lambda_{\operatorname*{app1}}}\right)  ,\\
\dot{\lambda}_{\operatorname*{app1}}=M_{1}\left(  \lambda
_{\operatorname*{app1}}\right)  =O\left(  \left\vert \beta
_{\operatorname*{app1}}\right\vert \Psi\right)  .
\end{array}
\right.  \label{syst6}%
\end{equation}
Denote%
\[
F_{\mu}\left(  \xi\right)  =\sqrt{2V\left(  \tfrac{\xi}{\lambda^{\infty}%
}\right)  -\frac{\mu^{2}}{\xi^{2}}}.
\]
We search a solution to (\ref{syst6}) in the form%
\[
\chi_{\operatorname*{app1}}=r_{\operatorname*{app1}}\theta_{0}^{\infty}.
\]
We denote $\mu_{\operatorname*{app1}}=\lambda_{\operatorname*{app1}}%
-\lambda^{\infty}.$ Using (\ref{cond3}) we get $O\left(  \mu
_{\operatorname*{app1}}\left(  V^{\prime}\left(  \tfrac{r_{\operatorname*{app}%
}}{\lambda_{\operatorname*{app1}}}\right)  +r_{\operatorname*{app}}%
V^{\prime\prime}\left(  \tfrac{r_{\operatorname*{app}}}{\lambda
_{\operatorname*{app1}}}\right)  \right)  \right)  =O\left(  \mu
_{\operatorname*{app1}}V^{\prime}\left(  \tfrac{r_{\operatorname*{app}}%
}{\lambda_{\operatorname*{app1}}}\right)  \right)  $ and system (\ref{syst6})
reads
\begin{equation}
\left\{
\begin{array}
[c]{c}%
\ddot{r}_{\operatorname*{app1}}=\frac{1}{\lambda^{\infty}}V^{\prime}\left(
\tfrac{r_{\operatorname*{app1}}}{\lambda^{\infty}}\right)  +O\left(
\mu_{\operatorname*{app1}}V^{\prime}\left(  \tfrac{r_{\operatorname*{app1}}%
}{\lambda_{\operatorname*{app1}}}\right)  \right)  ,\\
\dot{\lambda}_{\operatorname*{app1}}=M_{1}\left(  \lambda
_{\operatorname*{app1}}\right)  =O\left(  \dot{r}_{\operatorname*{app1}}%
\Psi\right)  ,
\end{array}
\right.  \label{syst8}%
\end{equation}
Let us prove that for some $r_{\operatorname*{app1}}$ there is $T_{0}>0$ such
that
\begin{equation}
\left\vert z\left(  t\right)  -1\right\vert \leq\frac{1}{\left(  r^{\infty
}\right)  ^{\frac{\rho}{4}}}\text{ and }\left\vert \mu_{\operatorname*{app1}%
}\right\vert \leq\left(  r^{\infty}\right)  ^{-\frac{\rho}{2}},\text{ for
}t\in\lbrack T_{0},\infty), \label{boot2}%
\end{equation}
where we denote $z\left(  t\right)  =\frac{r_{\operatorname*{app1}}\left(
t\right)  }{r^{\infty}\left(  t\right)  }.$ Integrating the first equation we
get%
\[
\dot{r}_{\operatorname*{app1}}=F_{0}\left(  r_{\operatorname*{app1}}\right)
+O\left(  \mu_{\operatorname*{app1}}\sqrt{V\left(  \tfrac
{r_{\operatorname*{app1}}}{\lambda_{\operatorname*{app1}}}\right)  }\right)
\]
From (\ref{ap299}) we get%
\[
\dot{r}^{\infty}=F_{\mu}\left(  r^{\infty}\right)  .
\]
Then%
\begin{align*}
\dot{z}  &  =\frac{1}{r^{\infty}}\left(  \dot{r}_{\operatorname*{app1}}%
-z\dot{r}^{\infty}\right)  =\frac{1}{r^{\infty}}\left(  F_{0}\left(
zr^{\infty}\right)  -zF_{\mu}\left(  r^{\infty}\right)  \right) \\
&  =\frac{1}{r^{\infty}}\left(  \left(  F_{0}\left(  zr^{\infty}\right)
-zF_{0}\left(  r^{\infty}\right)  \right)  +z\left(  F_{0}\left(  r^{\infty
}\right)  -F_{\mu}\left(  r^{\infty}\right)  \right)  +O\left(  \mu
_{\operatorname*{app1}}\sqrt{V\left(  \tfrac{r_{\operatorname*{app1}}}%
{\lambda_{\operatorname*{app1}}}\right)  }\right)  \right)  .
\end{align*}
We put $w=z-1.$ Linearizing $F_{0}\left(  zr^{\infty}\right)  $ around $z=1$
we have%
\begin{equation}
\dot{w}=\left(  F_{0}^{\prime}\left(  r^{\infty}\right)  -\frac{F_{0}\left(
r^{\infty}\right)  }{r^{\infty}}\right)  w+\mathbf{w}\left(  r^{\infty
}\right)  \label{ap305}%
\end{equation}
with%
\[
\mathbf{w}\left(  r^{\infty}\right)  =O\left(  r^{\infty}F_{0}^{\prime\prime
}\left(  r^{\infty}\right)  w^{2}+\left(  r^{\infty}\right)  ^{-3}%
F_{0}^{\prime}\left(  r^{\infty}\right)  +\frac{\sqrt{V\left(  \tfrac
{r^{\infty}}{\lambda^{\infty}}\right)  }}{\left(  r^{\infty}\right)
^{1+\frac{\rho}{2}}}\right)  .
\]
Using (\ref{cond2}) we see that $\left\vert h_{1}\left(  r^{\infty}\right)
\right\vert \leq c\ln r^{\infty}.$ Then, by (\ref{boot2}), by (\ref{cond3}) we
get
\begin{equation}
\mathbf{w}\left(  r^{\infty}\right)  =O\left(  \frac{\dot{r}^{\infty}}{\left(
r^{\infty}\right)  ^{1+\frac{\rho}{2}}}\right)  . \label{ap310}%
\end{equation}
By variation of parameters we get%
\[
w\left(  t\right)  =w_{0}^{-1}\left(  t\right)  \int_{t}^{\infty}w_{0}\left(
\tau\right)  \mathbf{w}\left(  r^{\infty}\left(  \tau\right)  \right)  d\tau
\]
with%
\[
w_{0}\left(  t\right)  =e^{\int_{T_{0}}^{t}\left(  F_{0}^{\prime}\left(
r^{\infty}\right)  -\frac{F_{0}\left(  r^{\infty}\right)  }{r^{\infty}%
}\right)  }.
\]
For any $0<\delta<1,$ there exists $T_{0}>0$ sufficiently big, such that
\begin{equation}
\ln\left(  \frac{\left(  r^{\infty}\right)  ^{1-\delta}}{V^{1+\delta}}\right)
\leq-\int_{T_{0}}^{t}\left(  F_{0}^{\prime}\left(  r^{\infty}\right)
-\frac{F_{0}\left(  r^{\infty}\right)  }{r^{\infty}}\right)  \leq\ln\left(
\frac{\left(  r^{\infty}\right)  ^{1+\delta}}{V^{1-\delta}}\right)  .
\label{ap308}%
\end{equation}
Then, using (\ref{ap310}) and (\ref{p1}), and taking $\delta$ small enough we
obtain
\begin{equation}
\left\vert w\left(  t\right)  \right\vert \leq C\left(  Vr^{\infty}\right)
^{2\delta}\int_{t}^{\infty}\frac{\dot{r}^{\infty}}{\left(  r^{\infty}\right)
^{1+\frac{\rho}{2}}}d\tau\leq C\left(  r^{\infty}\right)  ^{2\delta\left(
1-\rho\right)  -\frac{\rho}{2}}\leq\frac{1}{2\left(  r^{\infty}\right)
^{\frac{\rho}{4}}}. \label{ap324}%
\end{equation}
Integrating the second equation in (\ref{syst8}) we deduce $\left\vert
\mu_{\operatorname*{app1}}\right\vert \leq\frac{1}{2}\left(  r^{\infty
}\right)  ^{-\frac{\rho}{2}}.$ Therefore, we strictly improve (\ref{boot2}).
Thus, we show that there is a solution $\chi_{\operatorname*{app1}}$ to
(\ref{syst6}) such that
\begin{equation}
\left\vert \frac{r_{\operatorname*{app1}}\left(  t\right)  }{r^{\infty}\left(
t\right)  }-1\right\vert \leq\frac{1}{\left(  r^{\infty}\right)  ^{\frac{\rho
}{4}}},\text{ }\left\vert \beta_{\operatorname*{app1}}-\beta^{\infty
}\right\vert =o\left(  1\right)  \left\vert \beta^{\infty}\right\vert \text{
and }\left\vert \mu_{\operatorname*{app1}}\right\vert \leq\left(  r^{\infty
}\right)  ^{-\frac{\rho}{2}},\text{ for }t\in\lbrack T_{0},\infty)\text{.}
\label{ap313}%
\end{equation}
Let us now return to the full system (\ref{syst3}). By Lemmas \ref{L2 1} and
\ref{Lemmaapp1} we have%
\begin{equation}
\left\{
\begin{array}
[c]{c}%
\dot{\chi}=2\beta,\\
\dot{\beta}=\frac{\chi}{\left\vert \chi\right\vert }\left(  \frac{1}{2\lambda
}V^{\prime}\left(  \tfrac{\left\vert \chi\right\vert }{\lambda}\right)
\right)  +f_{2}\left(  \chi,\beta,\lambda\right)  ,\\
\dot{\lambda}=M_{1}\left(  \lambda\right)  +g_{2}\left(  \chi,\beta
,\lambda\right)  ,
\end{array}
\right.  \label{syst7}%
\end{equation}
(we omit the index $N$ in $\chi,\beta,\lambda$) where
\[
\left\vert f_{2}\left(  \chi,\beta,\lambda\right)  \right\vert \leq C\left(
\chi,\beta,\lambda\right)  \left(  \Psi^{2}+\mathbf{r}\left(  \left\vert
\tfrac{\chi}{\lambda}\right\vert \right)  +\left\vert \beta\right\vert
\left\vert V^{\prime\prime}\left(  \left\vert \tfrac{\chi}{\lambda}\right\vert
\right)  \right\vert +e^{-\frac{\left\vert \chi\right\vert }{2}}\right)  ,
\]
and%
\[
\left\vert g_{2}\left(  \chi,\beta,\lambda\right)  \right\vert \leq C\left(
\chi,\beta,\lambda\right)  \Psi^{2}.
\]
We write $\chi=\chi_{\operatorname*{app1}}+\tilde{\delta}$ and $\lambda
=\lambda_{\operatorname*{app1}}+\mu.$ Then, as $\chi_{\operatorname*{app1}}$
and $\lambda_{\operatorname*{app1}}$ solve (\ref{syst6}), from (\ref{syst7})
we deduce%
\[
\left\{
\begin{array}
[c]{c}%
\tilde{\delta}^{\prime\prime}=-\tfrac{1}{\lambda_{\operatorname*{app1}}}%
\frac{1}{r_{\operatorname*{app1}}}\left(  \left(  \delta\cdot\theta
_{0}^{\infty}\right)  \theta_{0}^{\infty}+\delta\right)  V^{\prime}\left(
\tfrac{r_{\operatorname*{app1}}}{\lambda_{\operatorname*{app1}}}\right)
+\left(  \delta\cdot\theta_{0}^{\infty}\right)  \theta_{0}^{\infty}\left(
\lambda_{\operatorname*{app1}}\right)  ^{-2}V^{\prime\prime}\left(
\tfrac{r_{\operatorname*{app1}}}{\lambda_{\operatorname*{app1}}}\right)
+f_{3}\left(  \chi,\beta,\lambda\right) \\
\\
\dot{\mu}=M_{1}^{\prime}\left(  \lambda_{\operatorname*{app1}}\right)
\mu+O\left(  \Psi^{2}\right)  .
\end{array}
\right.
\]
where%
\begin{align*}
f_{3}\left(  \chi,\beta,\lambda\right)   &  =f_{2}\left(  \chi,\beta
,\lambda\right)  +O\left(  \left(  V^{\prime\prime}\left(  \tfrac
{r_{\operatorname*{app1}}}{\lambda^{\infty}}\right)  r_{\operatorname*{app1}%
}+V^{\prime}\left(  \tfrac{r_{\operatorname*{app1}}}{\lambda^{\infty}}\right)
\right)  \left\vert \mu\right\vert \right) \\
&  +O\left(  V^{\prime\prime\prime}\left(  \tfrac{r_{\operatorname*{app1}}%
}{\lambda^{\infty}}\right)  +\left(  r_{\operatorname*{app1}}\right)
^{-1}V^{\prime\prime}\left(  \tfrac{r_{\operatorname*{app1}}}{\lambda^{\infty
}}\right)  +\left(  r_{\operatorname*{app1}}\right)  ^{-2}V^{\prime}\left(
\tfrac{r_{\operatorname*{app1}}}{\lambda^{\infty}}\right)  \right)  \left\vert
\delta\right\vert ^{2}.
\end{align*}
We claim that for some $T_{0}>0$ sufficiently big,
\begin{equation}
\left\vert \delta\left(  t\right)  \right\vert \leq r_{\operatorname*{app1}%
}^{-\rho/4},\text{ }\left\vert \dot{\delta}\left(  t\right)  \right\vert \leq
r_{\operatorname*{app1}}^{-\left(  1+\rho/4\right)  },\text{ }\left\vert
\mu\left(  t\right)  \right\vert \leq r_{\operatorname*{app1}}^{-\left(
1+\rho/4\right)  }. \label{boot4}%
\end{equation}
Observe that by (\ref{cond2}) $\left\vert h_{1}\left(  r\right)  \right\vert
\leq C\ln r.$ Then, using (\ref{p1}), (\ref{cond3}) and (\ref{boot4}) we
estimate%
\[
\Psi\leq C\frac{\ln r_{\operatorname*{app1}}}{r_{\operatorname*{app1}}%
}\left\vert V\left(  \tfrac{r_{\operatorname*{app1}}}{\lambda^{\infty}%
}\right)  \right\vert
\]
and%
\[
\left\vert f_{3}\left(  \chi,\beta,\lambda\right)  \right\vert \leq
C\frac{\left(  \ln^{3}r_{\operatorname*{app1}}\right)  \left\vert V\left(
r_{\operatorname*{app1}}\right)  \right\vert }{r_{\operatorname*{app1}}^{2}%
}\left(  \frac{1}{r_{\operatorname*{app1}}}+\left\vert V\left(  \tfrac
{r_{\operatorname*{app1}}}{\lambda^{\infty}}\right)  \right\vert
^{1/2}\right)  .
\]
Arguing similarly to the case of (\ref{syst5}) we show (\ref{boot4}).
Therefore, using (\ref{ap313}), we prove the existence of a solution
$\Xi^{\left(  N\right)  }\left(  t\right)  $ for (\ref{syst3}) with the
asymptotics (\ref{beh2}).
\end{proof}

\section{Proof of Theorem \ref{T1}.\bigskip\label{Sec2}}

For a solution $\Xi^{\left(  N\right)  }\left(  t\right)  $ given by Lemma
\ref{approxsyst} and
\[
\dot{\gamma}^{\left(  N\right)  }\left(  t\right)  =-\frac{1}{\left(
\lambda^{\left(  N\right)  }\left(  t\right)  \right)  ^{2}}+\left\vert
\beta^{\left(  N\right)  }\left(  t\right)  \right\vert ^{2}+\dot{\beta
}^{\left(  N\right)  }\left(  t\right)  \cdot\chi^{\left(  N\right)  }\left(
t\right)  ,
\]
we define $\mathcal{W}^{\left(  N\right)  }\left(  t,x;\Xi^{\left(  N\right)
}\left(  t\right)  \right)  $ by (\ref{w}) and\ (\ref{w1}). For $n\in
\mathbb{N}$ let\ a sequence $T_{n}\rightarrow\infty,$ as $n\rightarrow\infty$
and $u_{n}\in H^{1}$ be the solution to the NLS\ equation with initial data
$u_{n}\left(  T_{n},x\right)  =\mathcal{W}^{\left(  N\right)  }\left(
T_{n},x;\Xi^{\left(  N\right)  }\left(  T_{n}\right)  \right)  $. That is
\begin{equation}
\left\{
\begin{array}
[c]{c}%
i\partial_{t}u_{n}+\Delta u_{n}+\left\vert u_{n}\right\vert ^{p-1}%
u_{n}+V\left(  x\right)  u_{n}=0\\
u_{n}\left(  T_{n},x\right)  =\mathcal{W}^{\left(  N\right)  }\left(
T_{n},x;\Xi^{\left(  N\right)  }\left(  T_{n}\right)  \right)  .
\end{array}
\right.  \label{appsys}%
\end{equation}

Recall that for a given vector $\Xi=\left(  \chi,\beta,\lambda\right)
\in\mathbb{R}^{d}\times\mathbb{R}^{d}\times\mathbb{R}^{+}$ and $\gamma
\in\mathbb{R}$ (see (\ref{w}) and\ (\ref{w1}))%
\begin{equation}
\mathcal{W}\left(  t,x;\Xi\right)  =\mathcal{W}^{\left(  N\right)  }\left(
t,x;\Xi\right)  =\lambda^{-\frac{2}{p-1}}W\left(  \frac{x-\chi}{\lambda
}\right)  e^{-i\gamma}e^{i\beta\cdot x}. \label{ap350}%
\end{equation}
By using the implicit function theorem in the following lemma we show that as
long as the solution evolves close to the solitary-wave solution for the free
NLS equation, it may be decomposed as%
\begin{equation}
u_{n}\left(  t,x\right)  =\mathcal{W}^{\left(  N\right)  }\left(
t,x;\tilde{\Xi}\left(  t\right)  \right)  +\varepsilon\left(  t,x\right)  .
\label{ap223}%
\end{equation}
where the modulation parameters $\Xi\left(  t\right)  \in C^{1}\left(
T_{n}-T,T_{n}+T\right)  ,$ for some $T>0,$ are chosen in such a way that the
remainder $\varepsilon$ satisfies the orthogonal conditions
\begin{equation}
\left(  \zeta,W\right)  =\left(  \zeta,yW\right)  =\left(  \zeta,i\Lambda
W\right)  =\left(  \zeta,i\nabla W\right)  =0 \label{ap208}%
\end{equation}
with
\begin{equation}
\zeta\left(  t,y\right)  =\left(  \lambda\left(  t\right)  \right)  ^{\frac
{2}{p-1}}\varepsilon\left(  t,\lambda\left(  t\right)  y+\chi\left(  t\right)
\right)  e^{i\gamma\left(  t\right)  }e^{-i\beta\left(  t\right)  \cdot\left(
\lambda\left(  t\right)  y+\chi\left(  t\right)  \right)  }.
\label{epsilonbar}%
\end{equation}

\begin{lemma}
\label{L5}Given $\Xi_{0}=\left(  \chi_{0},\beta_{0},\lambda_{0}\right)
\in\mathbb{R}^{d}\times\mathbb{R}^{d}\times\mathbb{R}^{+}$ and $\gamma_{0}%
\in\mathbb{R}$, let $u\left(  t,x\right)  $ be a solution for the
NLS\ equation with a potential (\ref{NLS}) defined on the interval
$[T,T_{\operatorname*{in}}]$, with some $0<T<T_{\operatorname*{in}},$ with the
initial value $u\left(  T_{\operatorname*{in}},x\right)  \in H^{2}$
satisfying
\begin{equation}
\left\Vert u\left(  T_{\operatorname*{in}},x\right)  -\mathcal{W}\left(
T_{\operatorname*{in}},x;\Xi_{0}\right)  \right\Vert \leq\delta,
\label{initial}%
\end{equation}
with $\delta>0$. Suppose that $\varepsilon\left(  T_{\operatorname*{in}%
},x\right)  =u\left(  T_{\operatorname*{in}},x\right)  -\mathcal{W}\left(
t,x;\Xi_{0}\right)  $ satisfies (\ref{ap208}). Then, there exist $A>0$
sufficiently big and $\delta_{0}>0$ small enough such that for all $\left\vert
\chi_{0}\right\vert \geq A$ and $\delta\leq\delta_{0}$ the following
affirmation is true. There is an open interval $I\left(  \delta\right)  \ni
T_{\operatorname*{in}}$, a unique vector $\Xi\left(  t\right)  =\left(
\chi\left(  t\right)  ,\beta\left(  t\right)  ,\lambda\left(  t\right)
\right)  \in C^{1}\left(  I\left(  \delta\right)  \right)  $ and
$\gamma\left(  t\right)  \in C^{1}\left(  I\left(  \delta\right)  \right)  $
satisfying $\Xi\left(  T_{\operatorname*{in}}\right)  =\Xi_{0}$ and
$\gamma\left(  T_{\operatorname*{in}}\right)  =\gamma_{0},$ such that $u$
decomposes as
\begin{equation}
u\left(  t,x\right)  =\mathcal{W}\left(  t,x;\Xi\left(  t\right)  \right)
+\varepsilon\left(  t,x\right)  , \label{decomp}%
\end{equation}
where the error $\varepsilon$ satisfies (\ref{ap208}) for any $t\in I\left(
\delta\right)  $.
\end{lemma}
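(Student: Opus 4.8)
The plan is to prove Lemma \ref{L5} by the classical modulation argument (as in \cite{Weisntein,Martel,Krieger}): build a vector-valued functional whose vanishing encodes the orthogonality relations \eqref{ap208}, check that its differential with respect to the modulation parameters is invertible at the base point, and then apply the implicit function theorem along the trajectory $t\mapsto u(t)$. Concretely, for $w\in H^{1}$, a parameter vector $\Xi=(\chi,\beta,\lambda)\in\mathbb{R}^{d}\times\mathbb{R}^{d}\times\mathbb{R}^{+}$ and $\gamma\in\mathbb{R}$, let $\zeta=\zeta[\Xi,\gamma;w]$ be obtained from $w-\mathcal{W}(\cdot;\Xi,\gamma)$ by the rescaling in \eqref{epsilonbar}; equivalently $\zeta=R_{\Xi,\gamma}(w)-W(\cdot\,;\Xi)$, where $R_{\Xi,\gamma}$ is the rescaling operator inverse to the symmetry action and $W=Q+\sum_{j=0}^{N}\boldsymbol{T}^{(j)}$ depends smoothly on $\Xi$ by Lemmas \ref{Lemmaapp} and \ref{Lemmaapp1}. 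I would then introduce the $C^{1}$ map
\[
\mathbf{G}(\Xi,\gamma;w)=\big(\,(\zeta,W),\ (\zeta,y_{1}W),\dots,(\zeta,y_{d}W),\ (\zeta,i\partial_{y_{1}}W),\dots,(\zeta,i\partial_{y_{d}}W),\ (\zeta,i\Lambda W)\,\big)\in\mathbb{R}^{2d+2},
\]
which has exactly as many components as there are free parameters $(\chi,\beta,\lambda,\gamma)$. By hypothesis $\mathbf{G}(\Xi_{0},\gamma_{0};u(T_{\operatorname*{in}}))=0$, and $\mathbf{G}(\Xi_{0},\gamma_{0};\mathcal{W}(\cdot;\Xi_{0},\gamma_{0}))=0$ trivially; the goal is to solve $\mathbf{G}(\Xi,\gamma;u(t))=0$ for $(\Xi,\gamma)$ near $(\Xi_{0},\gamma_{0})$, for $t$ near $T_{\operatorname*{in}}$.

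The core step is the computation of $D_{(\chi,\beta,\lambda,\gamma)}\mathbf{G}$ at $w=\mathcal{W}(\cdot;\Xi_{0},\gamma_{0})$, where $\zeta=0$. From the explicit form of the symmetries one gets $\partial_{\gamma}\zeta=iW$, $\partial_{\beta_{j}}\zeta=-i(\lambda_{0}y_{j}+\chi_{0,j})W-\partial_{\beta_{j}}W$, $\partial_{\chi_{j}}\zeta=\lambda_{0}^{-1}\partial_{y_{j}}W-\partial_{\chi_{j}}W$, $\partial_{\lambda}\zeta=\lambda_{0}^{-1}\Lambda W-\partial_{\lambda}W$, where $\partial_{\Xi}W$ is of the size of $\sum\|\boldsymbol{T}^{(j)}\|_{L^{2}}$. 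Using the parity of $Q$ and the elementary identities $\operatorname{Re}\!\int f\overline{f}\in\mathbb{R}$, $\operatorname{Re}\!\int f\overline{\partial_{y_{k}}f}=0$, one finds that, ordering the parameters as $(\lambda,\chi,\beta,\gamma)$ and the components of $\mathbf{G}$ as $\big((\zeta,W),(\zeta,yW),(\zeta,i\nabla W),(\zeta,i\Lambda W)\big)$, the Jacobian is block lower triangular, with diagonal blocks
\[
\lambda_{0}^{-1}(\Lambda Q,Q),\qquad -\tfrac{1}{2\lambda_{0}}\|Q\|_{L^{2}}^{2}\,\mathrm{Id}_{d},\qquad \tfrac{\lambda_{0}}{2}\|Q\|_{L^{2}}^{2}\,\mathrm{Id}_{d},\qquad (\Lambda Q,Q),
\]
up to perturbations of size $\sum\|\boldsymbol{T}^{(j)}\|_{L^{2}}$. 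The only large below-diagonal entry is the $\beta$-to-$(\zeta,i\Lambda W)$ block, which is $\approx-\chi_{0,j}(\Lambda Q,Q)$; it is harmless because it lies strictly below the diagonal and hence does not enter the determinant, and the $\chi_{0,j}$-amplification that would a priori occur in the $\beta$-columns above the diagonal cancels identically, the relevant pairings being of the form $\operatorname{Im}\!\int y_{k}|W|^{2}=0$. Since $\sum\|\boldsymbol{T}^{(j)}\|_{L^{2}}\to0$ as $|\chi_{0}|\to\infty$ by Lemmas \ref{Lemmaapp} and \ref{Lemmaapp1}, one has $W\to Q$ in $H^{1}$, and $(\Lambda Q,Q)=\big(\tfrac{2}{p-1}-\tfrac{d}{2}\big)\|Q\|_{L^{2}}^{2}\neq0$ because $p<1+\tfrac{4}{d}$; thus for $|\chi_{0}|\geq A$ with $A$ large the Jacobian is invertible, with inverse bounded uniformly in $\chi_{0}$.

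With this uniformly invertible Jacobian, I would apply the implicit function theorem to $(t,\Xi,\gamma)\mapsto\mathbf{G}(\Xi,\gamma;u(t))$. Here I use that $u(T_{\operatorname*{in}})\in H^{2}$, together with the global well-posedness of \eqref{NLS} (Corollary 6.1.2 of \cite{Cazenave} in $H^{1}$, plus $H^{2}$-persistence), to get $u\in C([T,T_{\operatorname*{in}}];H^{2})\cap C^{1}([T,T_{\operatorname*{in}}];L^{2})$; since the exponentially weighted functionals defining $\mathbf{G}$ are continuous on $L^{2}$ and $\mathcal{W},W$ depend smoothly on $(\Xi,\gamma)$, the map is jointly $C^{1}$. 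Continuity of $u$ in $t$ and $\|u(T_{\operatorname*{in}})-\mathcal{W}(\cdot;\Xi_{0},\gamma_{0})\|\leq\delta\leq\delta_{0}$ keep $u(t)$ inside the domain of validity of the IFT on a short interval $I(\delta)\ni T_{\operatorname*{in}}$ (shrink it further so that $\lambda(t)>0$ and $|\chi(t)|\geq A$ persist), and the quantitative IFT — with constants controlled by the uniform bound on the inverse Jacobian and the modulus of continuity of $D\mathbf{G}$, both uniform in $|\chi_{0}|\geq A$ and $\delta\leq\delta_{0}$ — produces a unique $C^{1}$ curve $t\mapsto(\Xi(t),\gamma(t))$ with $\Xi(T_{\operatorname*{in}})=\Xi_{0}$, $\gamma(T_{\operatorname*{in}})=\gamma_{0}$, solving $\mathbf{G}(\Xi(t),\gamma(t);u(t))=0$. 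Setting $\varepsilon(t,x)=u(t,x)-\mathcal{W}(t,x;\Xi(t))$ then gives \eqref{decomp}, and the corresponding $\zeta$ satisfies \eqref{ap208} by construction.

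The main obstacle is the Jacobian analysis in the large-$|\chi_{0}|$ regime: the derivative $\partial_{\beta_{j}}\zeta$ carries a factor $\chi_{0,j}$, which is precisely the quantity the lemma forbids one to control directly, so one must recognise the block-triangular structure and the algebraic cancellation $\operatorname{Im}\!\int y_{k}|W|^{2}=0$ that confine this growth to a below-diagonal block irrelevant for invertibility. The remaining ingredients — persistence of $u$ near the profile on a positive interval and $C^{1}$-regularity of the modulation parameters — are routine consequences of local well-posedness and the implicit function theorem, but keeping $I(\delta)$ and all constants uniform in $|\chi_{0}|\geq A$, so that the bootstrap of Section \ref{Secproof} can be run, requires some bookkeeping.
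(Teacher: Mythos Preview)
Your proposal is correct and follows exactly the standard implicit function theorem modulation argument that the paper defers to: the paper's own proof simply reads ``Lemma \ref{L5} is proved similarly to Lemma 3 of \cite{MartelMerle} or Lemma 3 of \cite{Nguyen}. We omit the details.'' Your sketch is in fact more detailed than what the paper provides, and your identification of the potentially dangerous $\chi_{0,j}$ factor in $\partial_{\beta_{j}}\zeta$ together with the cancellation $\operatorname{Im}\int y_{k}|W|^{2}=0$ that confines it below the diagonal is precisely the point one needs to check carefully in this setting (and is the reason the orthogonality conditions are chosen as in \eqref{ap208}).
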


\begin{proof}
Lemma \ref{L5} is proved similarly to Lemma 3 of \cite{MartelMerle} or Lemma 3
of \cite{Nguyen}. We omit the details.
\end{proof}

We now aim to compare the solution $\left(  \Xi\left(  t\right)
,\gamma\left(  t\right)  \right)  $ given by Lemma \ref{L5} with the solution
$\left(  \Xi^{\left(  N\right)  }\left(  t\right)  ,\gamma^{\left(  N\right)
}\left(  t\right)  \right)  $ of the approximate system (\ref{syst3}). We have
the following a priori estimates.

\begin{lemma}
\label{L6}Let $\lambda^{\infty}\geq\lambda_{0}>0$ satisfy $\left(
\lambda^{\infty}\right)  ^{2}\sup_{r\in\mathbb{R}}V\left(  r\right)  <1.$
Suppose that $\left\vert V\left(  r\right)  \right\vert \leq Ce^{-cr},$ for
some $c>0.$ There exists $T_{0}>0$ large enough such that for all $t\in\lbrack
T_{0},T_{n}],$ the estimates
\begin{equation}
\left\{
\begin{array}
[c]{c}%
\left\Vert \varepsilon\left(  t\right)  \right\Vert _{H^{1}}\leq t^{-2},\\
\left\vert \beta\left(  t\right)  -\beta^{\left(  N\right)  }\left(  t\right)
\right\vert +\left\vert \lambda\left(  t\right)  -\lambda^{\left(  N\right)
}\left(  t\right)  \right\vert \leq t^{-\left(  1+\frac{p_{1}-1}{4}\right)
}\\
\left\vert \chi\left(  t\right)  -\chi^{\left(  N\right)  }\left(  t\right)
\right\vert \leq t^{-\frac{p_{1}-1}{4}}\\
\left\vert \gamma\left(  t\right)  -\gamma^{\left(  N\right)  }\left(
t\right)  \right\vert \leq t^{-\frac{p_{1}-1}{8}}.
\end{array}
\right.  \label{boot}%
\end{equation}
are satisfied. If $V=V^{\left(  2\right)  },$ for some $T_{0}>0$ large enough
the following a priori estimates are true%
\begin{equation}
\left\{
\begin{array}
[c]{c}%
\left\Vert \varepsilon\left(  t\right)  \right\Vert _{H^{1}}\leq
\mathcal{X}^{N},\\
\\
\left\vert \beta\left(  t\right)  -\beta^{\left(  N\right)  }\left(  t\right)
\right\vert +\left\vert \lambda\left(  t\right)  -\lambda^{\left(  N\right)
}\left(  t\right)  \right\vert \leq\left(  \Psi^{2}+\left\vert V^{\prime
\prime}\left(  \left\vert \tfrac{\chi}{\lambda}\right\vert \right)
\right\vert \right)  \mathcal{X}^{\frac{N}{2}+2},\\
\\
\left\vert \chi\left(  t\right)  -\chi^{\left(  N\right)  }\left(  t\right)
\right\vert +\left\vert \gamma\left(  t\right)  -\gamma^{\left(  N\right)
}\left(  t\right)  \right\vert \leq\Psi\mathcal{X}^{\frac{N}{2}+1}%
\end{array}
\right.  \label{bootbis}%
\end{equation}
for all $t\in\lbrack T_{0},T_{n}].$
\end{lemma}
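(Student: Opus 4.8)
The plan is to run a bootstrap (continuity) argument on the time interval $[T_0,T_n]$, treating the a priori bounds \eqref{boot} (resp. \eqref{bootbis}) as the bootstrap hypotheses with slightly worse constants, and improving them. First I would set up the decomposition: using Lemma \ref{L5}, as long as the solution stays close to the approximate solitary wave, we may write $u_n(t,x)=\mathcal W^{(N)}(t,x;\Xi(t))+\varepsilon(t,x)$ with $\varepsilon$ obeying the orthogonality conditions \eqref{ap208}, and $\Xi(t)=(\chi,\beta,\lambda)$, $\gamma(t)$ are $C^1$. Writing $\zeta$ as in \eqref{epsilonbar}, the equation for $\zeta$ takes the schematic form $i\partial_t\zeta = \mathcal L_V\zeta + (\text{modulation error terms}) + \mathcal E^{(N)} + \mathcal N(\zeta)$, where $\mathcal L_V$ is the perturbed linearized operator \eqref{Lv}, the modulation error terms are linear in $\dot\chi-2\beta$, $\dot\beta-\mathbf B^{(N)}$, $\dot\lambda-\mathbf M^{(N)}$, $\dot\gamma-\dot\gamma^{(N)}$, and $\mathcal E^{(N)}$ is the error from Lemma \ref{L1}, bounded by \eqref{eps1} (resp. \eqref{eps1bis}). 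The first step is to derive the modulation equations: differentiating the four orthogonality conditions in time and using the structure of the equation, I would obtain a system of the form $|\dot\chi-2\beta|+|\dot\beta-\mathbf B^{(N)}|+|\dot\lambda-\mathbf M^{(N)}|+|\dot\gamma-\dot\gamma^{(N)}|\le C\big(\|\varepsilon\|_{H^1}(\text{something small}) + \|\mathcal E^{(N)}\|_{L^2}\big)$; the smallness comes from the fact that the potential and the refined approximation terms decay in $|\chi|$, which under the bootstrap is $\gtrsim t$ (hyperbolic, fast case) or $\gtrsim \ln t$ (parabolic/slow case).

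The heart of the argument is the energy estimate for $\varepsilon$. I would introduce a localized energy–virial type functional, e.g. $\mathcal I(t)=\frac12(\mathcal L_V\zeta,\zeta)$ possibly corrected by lower-order terms, and differentiate in time. The coercivity estimate \eqref{ineqpert1} of Lemma \ref{Linv} gives $\mathcal I(t)\ge c\|\zeta\|_{H^1}^2 - \tfrac1c(\text{quadratic in the projections onto }Q, xQ, i\Lambda Q)$; the bad projection terms are controlled using the orthogonality conditions \eqref{ap208} together with the fact that $W=Q+\sum T^{(j)}$ is a small perturbation of $Q$, so the projections are quadratically small in $\|\varepsilon\|_{H^1}$ times decaying factors. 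Differentiating $\mathcal I$, the main terms are: a contribution from $\partial_t\mathcal L_V$ (i.e. $\dot\lambda, \dot\chi$ hitting the potential), which is small because $\partial_\lambda(\lambda^2 V(|\cdot+\chi/\lambda|))$ and its $\chi$-derivative are bounded by $\Theta(|\chi|/\lambda)$-type quantities; a contribution from the source term $(\mathcal L_V\zeta,\mathcal E^{(N)})$-type pairings, bounded by $\|\zeta\|_{H^1}\|\mathcal E^{(N)}\|_{H^{-1}}$; the modulation-error contributions, bounded by the modulation estimate above times $\|\zeta\|_{H^1}$; and the nonlinear contribution $(\mathcal L_V\zeta,\mathcal N(\zeta))$, bounded by $\|\zeta\|_{H^1}^{1+p_1}$. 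Collecting everything and using $\frac{d}{dt}\|\zeta\|_{H^1}^2 \lesssim \frac{d}{dt}\mathcal I + (\text{small})$, I would obtain a differential inequality of Gronwall type that, after integration from $t$ to $T_n$ (using that $\varepsilon(T_n)=0$ by construction of $u_n$) and the decay rates $\mathcal E^{(N)}\lesssim \mathcal X^N$ or $t^{-A(N)}$ with $N$ chosen large (so that $A(N)=2$), strictly improves the bound $\|\varepsilon(t)\|_{H^1}\le t^{-2}$ (resp. $\le \mathcal X^N$).

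Once $\|\varepsilon\|_{H^1}$ is improved, I would feed this back into the modulation equations to improve the bounds on $\beta-\beta^{(N)}$, $\lambda-\lambda^{(N)}$ by integrating $|\dot\beta - \mathbf B^{(N)}|$ etc. from $t$ to $T_n$ (again using the "final data matches" condition $\Xi(T_n)=\Xi^{(N)}(T_n)$), then integrate once more to improve $\chi-\chi^{(N)}$ and $\gamma - \gamma^{(N)}$; here one uses $\dot\chi - 2\beta$ small together with $\beta-\beta^{(N)}$ small, and the asymptotics for the trajectory (\eqref{ap199}, \eqref{ap303}, \eqref{ap313}) to convert the decaying-in-$|\chi|$ bounds into decaying-in-$t$ bounds. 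Since every bootstrap bound is strictly improved on $[T_0,T_n]$ for $T_0$ large enough (independently of $n$), a standard continuity argument closes the estimates on all of $[T_0,T_n]$. The main obstacle is the energy estimate: one must be careful that the coercivity loss in \eqref{ineqpert1} (the $xQ$ and $i\Lambda Q$ directions, which are not eigendirections and correspond to the scaling/Galilean degeneracies) is genuinely absorbed by the orthogonality conditions and does not feed back into a divergent growth — this is exactly the delicate point flagged in the introduction as "degeneracies in the control of the infinite-dimensional part", and in the parabolic case $E_0=0$ the slow ($\sim\ln t$) growth of $|\chi|$ makes all the decaying factors only logarithmically small, so the Gronwall exponent must be tracked carefully to ensure the argument still closes; choosing $N$ large enough is what makes $A(N)$ saturate at $2$ and gives the necessary room.
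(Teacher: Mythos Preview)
Your overall bootstrap strategy is correct and matches the paper's, but the energy functional you propose has a genuine gap that would prevent the argument from closing.

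You write $\mathcal I(t)=\tfrac12(\mathcal L_V\zeta,\zeta)$ and list, among the contributions to $\tfrac{d}{dt}\mathcal I$, ``a contribution from $\partial_t\mathcal L_V$ (i.e.\ $\dot\lambda,\dot\chi$ hitting the potential)''. But $\mathcal L_V$ (or rather its expression in the original $x$-variables) also carries the \emph{moving soliton profile} $Q^{p-1}\!\big(\tfrac{x-\chi(t)}{\lambda(t)}\big)$, and $\partial_t$ hitting this produces a term $\tfrac{\dot\chi}{\lambda}\cdot\nabla Q^{p-1}$ of size $|\beta|$, not $\Theta$. Paired with $|\varepsilon|^2$ this yields a contribution $|\beta|\,\|\varepsilon\|_{H^1}^2$ to $\tfrac{d}{dt}\mathcal I$. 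In the hyperbolic regime $E_0>0$ one has $|\beta|=O(1)$, so this is the same order as $\mathcal I$ itself; a Gronwall argument then only gives exponential-in-$(T_n-t)$ bounds, which blow up as $T_n\to\infty$. Even in the parabolic regime with fast-decaying potential, $|\beta|\sim t^{-1}$ makes this term exactly borderline ($\int_t^{T_n}\tau^{-1}\|\varepsilon\|^2\sim t^{-4}$ under the bootstrap), and you would not get the strict improvement needed to close. The paper's remedy is to build the functional out of the \emph{conserved} quantities of the full flow, adding in particular a \emph{localized momentum correction}
\[
-\beta(t)\cdot\operatorname{Im}\int\psi_\chi\,\nabla\varepsilon\,\overline{\varepsilon}\;+\;\tfrac12|\beta|^2\int|\varepsilon|^2,
\]
with $\psi_\chi$ a cutoff centred at $\chi(t)$. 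When differentiated, this produces a term $-\operatorname{Re}\int\mathbf W\,\mathbf N(\mathcal W,\varepsilon)$ that exactly cancels the Galilean-drift contribution $\operatorname{Re}\int\mathbf W\,\mathbf N(\mathcal W,\varepsilon)$ coming from $\dot{\mathcal W}$ (compare \eqref{ap378} and \eqref{ap380}); the localization is what makes the extra commutator terms harmless. This cancellation is the technical heart of the energy estimate and is not visible in your sketch.

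A second, smaller point: the control of $\chi-\chi^{(N)}$ is not obtained by ``integrating once more''. After the modulation estimate \eqref{estmod1}, one is left (for the component $\delta_1$ of $\chi-\chi^{(N)}$ along the direction of motion) with a first-order linear ODE of the form $\dot\delta_1=\tfrac{K_2}{t}\delta_1+O(t^{-1-\frac{p_1-1}{4}})$ with $K_2>1$; naive integration from $T_n$ would lose a power, and the paper instead uses the explicit fundamental solution $t^{K_2}$ together with the decay of the source to recover the bound $|\delta_1|\lesssim t^{-\frac{p_1-1}{4}}$ (see \eqref{ap321}--\eqref{ap326}).
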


\begin{remark}
The condition on $\lambda^{\infty}$ in Lemma \ref{L6} is made in order to
assure, via Lemma \ref{approxsyst}, that the assumptions of Lemmas \ref{Linv},
\ref{Lemmaapp} and \ref{Lemmaapp1} are satisfied.
\end{remark}

Lemma \ref{L6} is proved in Section \ref{Secproof} below. We now use the
following corollary to prove Theorem \ref{T1}.

\begin{corollary}
\label{L7}For $n\in\mathbb{N}$ let\ a sequence $T_{n}\rightarrow\infty,$ as
$n\rightarrow\infty$ and $u_{n}\in H^{1}$ be a solution to (\ref{appsys}).
Then there exists $T_{0}$ independent of $n$ such that for all $n\in
\mathbb{N}$ and $t\in\lbrack T_{0},T_{n}],$
\begin{equation}
\left\Vert u_{n}\left(  t\right)  -\mathcal{W}\left(  t,x;\Xi^{\left(
N\right)  }\left(  t\right)  \right)  \right\Vert _{H^{1}}\leq C\Theta
_{\infty} \label{l7}%
\end{equation}
where%
\[
\Theta_{\infty}=\left\{
\begin{array}
[c]{c}%
t^{-\frac{p_{1}-1}{8}},\text{ if }\left\vert V\left(  r\right)  \right\vert
\leq Ce^{-cr},\text{ }c>0,\\
\mathcal{X}^{\frac{N}{2}+1},\text{ if }V=V^{\left(  2\right)  }.
\end{array}
\right.
\]

\end{corollary}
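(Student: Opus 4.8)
The plan is to obtain Corollary~\ref{L7} from the modulation Lemma~\ref{L5} and the a priori bounds of Lemma~\ref{L6} by a continuation argument, followed by a triangle inequality comparing the two approximate profiles and exploiting the smooth dependence of $\mathcal{W}$ on its parameters.

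Fix $N$ and $n$. By construction $u_{n}(T_{n},\cdot)=\mathcal{W}^{(N)}(T_{n},\cdot;\Xi^{(N)}(T_{n}))$, so $\varepsilon(T_{n},\cdot)\equiv 0$ and the orthogonality conditions~(\ref{ap208}) hold trivially at $T_{n}$. Lemma~\ref{L5}, applied at initial time $T_{n}$ with $\Xi_{0}=\Xi^{(N)}(T_{n})$ and $\gamma_{0}=\gamma^{(N)}(T_{n})$, therefore yields a $C^{1}$ decomposition $u_{n}(t,x)=\mathcal{W}(t,x;\Xi(t))+\varepsilon(t,x)$ with $(\Xi,\gamma)(T_{n})=(\Xi^{(N)},\gamma^{(N)})(T_{n})$ and $\varepsilon$ satisfying~(\ref{ap208}) on a left neighbourhood of $T_{n}$. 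A standard continuation argument, using Lemma~\ref{L5} to reopen the decomposition and the strict improvement of the a priori bounds proved in Section~\ref{Secproof} (Lemma~\ref{L6}) to keep $(\Xi,\gamma)(t)$ in the admissible range, extends this decomposition to all of $[T_{0},T_{n}]$ with $T_{0}$ independent of $n$; on that interval~(\ref{boot}) (resp.~(\ref{bootbis}) when $V=V^{(2)}$) holds.

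It then remains to bound, for $t\in[T_{0},T_{n}]$,
\[
\left\Vert u_{n}(t)-\mathcal{W}(t,x;\Xi^{(N)}(t))\right\Vert _{H^{1}}\leq\left\Vert \varepsilon(t)\right\Vert _{H^{1}}+\left\Vert \mathcal{W}(t,x;\Xi(t))-\mathcal{W}(t,x;\Xi^{(N)}(t))\right\Vert _{H^{1}}.
\]
The first term is controlled by~(\ref{boot}) (resp.~(\ref{bootbis})). For the second I would use that $W=Q+\sum_{j=0}^{N}\boldsymbol{T}^{(j)}$ is $C^{1}$ in the parameters, with the derivative bounds of Lemmas~\ref{Lemmaapp} and~\ref{Lemmaapp1} and the exponential localization of $Q$ and of the $\boldsymbol{T}^{(j)}$; from the explicit form~(\ref{ap350}) this gives
\[
\left\Vert \partial_{\chi}\mathcal{W}\right\Vert _{H^{1}}+\left\Vert \partial_{\lambda}\mathcal{W}\right\Vert _{H^{1}}+\left\Vert \partial_{\gamma}\mathcal{W}\right\Vert _{H^{1}}\leq C(\lambda,\beta),\qquad\left\Vert \partial_{\beta}\mathcal{W}\right\Vert _{H^{1}}\leq C(\lambda,\beta)\left\langle \left\vert \chi\right\vert \right\rangle ,
\]
the weight $\left\langle \left\vert \chi\right\vert \right\rangle $ on the $\beta$-derivative coming from the factor $x$ produced by $e^{i\beta\cdot x}$. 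By the mean value theorem,
\[
\left\Vert \mathcal{W}(t;\Xi)-\mathcal{W}(t;\Xi^{(N)})\right\Vert _{H^{1}}\leq C\left( \left\vert \chi-\chi^{(N)}\right\vert +\left\vert \lambda-\lambda^{(N)}\right\vert +\left\vert \gamma-\gamma^{(N)}\right\vert +\left\langle \left\vert \chi^{(N)}\right\vert \right\rangle \left\vert \beta-\beta^{(N)}\right\vert \right) .
\]
Inserting~(\ref{boot}) together with $\left\vert \chi^{(N)}(t)\right\vert =O(\ln t)$ from~(\ref{ap199}) (resp.~(\ref{bootbis}), the polynomial growth of $\left\vert \chi^{(N)}\right\vert $, and the refined bounds $\Psi+\left\vert V^{\prime\prime}(\left\vert \chi/\lambda\right\vert )\right\vert \leq C\left\vert \chi\right\vert ^{-1}\left\vert V(\left\vert \chi/\lambda\right\vert )\right\vert \leq C\left\vert \chi\right\vert ^{-1}\mathcal{X}^{2}$ that follow from Condition~\ref{ConidtionPotential}), one checks term by term that the right-hand side is $\leq C\Theta_{\infty}$. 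Together with the bound on $\left\Vert \varepsilon\right\Vert _{H^{1}}$ this yields~(\ref{l7}).

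Apart from invoking Lemma~\ref{L6}, the main point is this last book-keeping: the weight $\left\langle \left\vert \chi\right\vert \right\rangle $ acquired when comparing $e^{i\beta\cdot x}$ with $e^{i\beta^{(N)}\cdot x}$ must be absorbed by the slack in the exponents defining $\Theta_{\infty}$. This is precisely why~(\ref{boot}) controls $\gamma$ and $\chi$ only at the degraded rates $t^{-(p_{1}-1)/8}$ and $t^{-(p_{1}-1)/4}$ while $\beta,\lambda$ are controlled at rate $t^{-(1+(p_{1}-1)/4)}$, so that $\left\langle \left\vert \chi^{(N)}\right\vert \right\rangle \left\vert \beta-\beta^{(N)}\right\vert =O(\ln t\cdot t^{-(1+(p_{1}-1)/4)})=O(t^{-(p_{1}-1)/8})$, and why in the case $V=V^{(2)}$ the losses of powers of $\mathcal{X}$ in~(\ref{bootbis}) are arranged so that $\left\langle \left\vert \chi^{(N)}\right\vert \right\rangle \left( \Psi^{2}+\left\vert V^{\prime\prime}\right\vert \right) \mathcal{X}^{N/2+2}\leq C\mathcal{X}^{N/2+1}$. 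All the substantive analytic work is already isolated in Lemmas~\ref{L6} and~\ref{approxsyst}.
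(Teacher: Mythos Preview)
Your proposal is correct and follows exactly the paper's approach: the paper's proof is the one-line triangle inequality
\[
\left\Vert u_{n}(t)-\mathcal{W}(t,x;\Xi^{(N)}(t))\right\Vert_{H^{1}}\leq\left\Vert u_{n}(t)-\mathcal{W}(t,x;\Xi(t))\right\Vert_{H^{1}}+\left\Vert \mathcal{W}(t,x;\Xi(t))-\mathcal{W}(t,x;\Xi^{(N)}(t))\right\Vert_{H^{1}}\leq C\Theta_{\infty},
\]
with all the work delegated to Lemma~\ref{L6}. You have simply spelled out the details the paper leaves implicit: the continuation argument via Lemma~\ref{L5}, the mean value estimate on $\mathcal{W}$ in its parameters, and the book-keeping showing that the $\langle|\chi|\rangle$ weight on $\partial_{\beta}\mathcal{W}$ is absorbed by the rates in~(\ref{boot}) and~(\ref{bootbis}).
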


\begin{proof}
We have
\[
\left\Vert u_{n}\left(  t\right)  -\mathcal{W}\left(  t,x;\Xi^{\left(
N\right)  }\left(  t\right)  \right)  \right\Vert _{H^{1}}\leq\left\Vert
u_{n}\left(  t\right)  -\mathcal{W}\left(  t,x;\Xi\left(  t\right)  \right)
\right\Vert _{H^{1}}+\left\Vert \mathcal{W}\left(  t,x;\Xi\left(  t\right)
\right)  -\mathcal{W}\left(  t,x;\Xi^{\left(  N\right)  }\left(  t\right)
\right)  \right\Vert _{H^{1}}\leq C\Theta_{\infty}.
\]

\end{proof}

By a compactness argument (see page 1525 of \cite{Krieger}) we now show that
Theorem \ref{T1} is a direct consequence of the uniform backward estimate for
the sequence $u_{n}$ presented in Corollary \ref{L7}. By (\ref{l7}) there is
$C>0$ such that for any $n\in\mathbb{N}$,%
\[
\left\Vert u_{n}\left(  t\right)  \right\Vert _{H^{1}}\leq C,
\]
for $t\in\lbrack T_{0},T_{n}].$ Then, by Lemma 3.4 of \cite{Krieger} there
exists $U_{0}\in H^{1}\left(  \mathbb{R}^{d}\right)  $ and a subsequence
$\{u_{n_{k}}\}$ of $\{u_{n}\}$ such that $u_{n_{k}}\left(  T_{0}\right)
\rightarrow U_{0}$ in $L^{2}\left(  \mathbb{R}^{d}\right)  ,$ as
$n_{k}\rightarrow\infty.$ We consider the solution $u$ to the initial value
problem
\[
\left\{
\begin{array}
[c]{c}%
\left.  i\partial_{t}u+\Delta u+\left\vert u\right\vert ^{p-1}u+\mathcal{V}%
\left(  x\right)  u=0,\right. \\
\text{ }(t,x)\in\mathbb{R\times R}^{d},\text{ }u\left(  T_{0}\right)  =U_{0}.
\end{array}
\right.
\]
Fix $t\geq T_{0}.$ There is $n_{0}$ large enough such that $T_{n}\geq t,$ for
$n\geq n_{0}.$ By the continuous dependence of the solution of (\ref{NLS}) on
the initial data in $L^{2}\left(  \mathbb{R}^{d}\right)  $ (see Theorem 4.6.1
of \cite{Cazenave}) $u\left(  t\right)  $ is global and
\[
u_{n_{k}}\left(  t\right)  \rightarrow u\left(  t\right)  \text{ in }%
L^{2}\left(  \mathbb{R}^{d}\right)  ,\text{ as }n\rightarrow\infty.
\]
Since $u_{n_{k}}\left(  t\right)  -\mathcal{W}\left(  t,x;\Xi^{\left(
N\right)  }\left(  t\right)  \right)  $ is uniformly bounded in $H^{1}\left(
\mathbb{R}^{d}\right)  ,$ it converges weakly to $u\left(  t\right)
-\mathcal{W}\left(  t,x;\Xi^{\left(  N\right)  }\left(  t\right)  \right)  $
in $H^{1}\left(  \mathbb{R}^{d}\right)  ,$ as $n\rightarrow\infty.$ Thus, by
(\ref{l7}) we prove that
\[
\left\Vert u_{n_{k}}\left(  t\right)  -\mathcal{W}\left(  t,x;\Xi^{\left(
N\right)  }\left(  t\right)  \right)  \right\Vert _{H^{1}}\leq C\Theta
_{\infty},
\]
for all $t\geq T_{0}.$ Therefore, Theorem \ref{T1} follows from the definition
(\ref{w}) of $\mathcal{W}$, Lemmas \ref{Lemmaapp} and \ref{Lemmaapp1}, the
properties of $\Xi^{\left(  N\right)  }\left(  t\right)  $ described by Lemma
\ref{approxsyst} and the relation $U_{V_{\lambda^{\infty}}}=\mathbf{U}%
_{\lambda^{\infty},\mathcal{V}},$ that follows from Definition \ref{Def1} and
(\ref{vcall}).

\section{A priori estimates. Proof of Lemma \ref{L6}.\label{Secproof}}

\subsection{Control of the Modulation Parameters.\label{ModPar}}

\subsubsection*{Case I: Fast decaying potentials.}

Let the potential $\left\vert V\left(  r\right)  \right\vert \leq Ce^{-cr},$
for some $c>0.$ Suppose that (\ref{boot}) is true for all $t\in\lbrack
T^{\ast},T_{n}],$ with $T_{0}\leq T^{\ast}<T_{n}.$ Observe that by
(\ref{boot}), (\ref{beh1}) and (\ref{beh2})%
\begin{equation}
\frac{\left\vert \chi\left(  t\right)  \right\vert }{\left\vert \chi^{\infty
}\left(  t\right)  \right\vert }=1+o\left(  1\right)  \text{, }\frac
{\left\vert \beta\left(  t\right)  \right\vert }{\left\vert \beta^{\infty
}\left(  t\right)  \right\vert }=1+o\left(  1\right)  \text{ and }%
\lambda\left(  t\right)  =\lambda^{\infty}+o\left(  1\right)  , \label{beh3}%
\end{equation}
as $t\rightarrow\infty.$ Let $\Phi\in C^{\infty}\left(  \mathbb{R}^{d}\right)
$ satisfy the estimate $\left\vert \Phi\left(  y\right)  \right\vert \leq
C\left(  \left\vert y\right\vert ^{m}Q\left(  y\right)  +\mathbf{Y}\right)  $
with some $m\geq$ $0.$\ We denote
\[
\Phi_{1}\left(  t,x\right)  =\lambda\left(  t\right)  ^{-\frac{2}{p-1}}%
\Phi\left(  \frac{x-\chi\left(  t\right)  }{\lambda\left(  t\right)  }\right)
e^{-i\gamma\left(  t\right)  }e^{i\beta\left(  t\right)  \cdot x}.
\]
Let $N\geq1.$ For $1\leq j\leq N$ and $\Xi\left(  t\right)  ,$ let
$B_{j}=B_{j}\left(  \Xi\left(  t\right)  \right)  $, $M_{j}=M_{j}\left(
\Xi\left(  t\right)  \right)  $ be the approximated modulation equations given
by Lemma \ref{Lemmaapp} corresponding to $\Xi\left(  t\right)  .$ Let
\[
\mathbf{B}^{\left(  N\right)  }=\mathbf{B}^{\left(  N\right)  }\left(
\Xi\left(  t\right)  \right)  =\sum_{j=1}^{N}B_{j}\left(  \Xi\left(  t\right)
\right)  \text{ \ and \ }\mathbf{M}^{\left(  N\right)  }=\mathbf{M}^{\left(
N\right)  }\left(  \Xi\left(  t\right)  \right)  =\sum_{j=1}^{N}M_{j}\left(
\Xi\left(  t\right)  \right)  .
\]
We put%
\[
\mathcal{M}\left(  t\right)  =\left\vert \dot{\chi}\left(  t\right)
-2\beta\left(  t\right)  \right\vert +\left\vert \dot{\gamma}\left(  t\right)
+\frac{1}{\lambda^{2}\left(  t\right)  }-\left\vert \beta\left(  t\right)
\right\vert ^{2}-\dot{\beta}\left(  t\right)  \cdot\chi\left(  t\right)
\right\vert +\left\vert \dot{\beta}\left(  t\right)  -\mathbf{B}^{\left(
N\right)  }\left(  \Xi\left(  t\right)  \right)  \right\vert +\left\vert
\dot{\lambda}\left(  t\right)  -\mathbf{M}^{\left(  N\right)  }\left(
\Xi\left(  t\right)  \right)  \right\vert .
\]
Let us write the equation for $\varepsilon.$ Introducing the decomposition
(\ref{decomp}) into (\ref{NLS}) and using (\ref{ap182}) we obtain%
\begin{equation}
i\partial_{t}\varepsilon=-\Delta\varepsilon-\mathcal{V}\left(  x\right)
\varepsilon-\left(  \lambda^{-\frac{2}{p-1}}\left(  t\right)  \left(
\lambda^{-2}\left(  t\right)  \mathcal{N}_{0}\left(  W,\zeta\right)
+\mathcal{E}^{\left(  N\right)  }+R\left(  W\right)  \right)  \left(
t,\frac{x-\chi\left(  t\right)  }{\lambda\left(  t\right)  }\right)
e^{-i\gamma\left(  t\right)  }e^{i\beta\left(  t\right)  \cdot x}\right)
\label{eqeps}%
\end{equation}
where we denote
\begin{equation}
\mathcal{N}_{0}\left(  W,\zeta\right)  =\left\vert W+\zeta\right\vert
^{p-1}\left(  W+\zeta\right)  -\left\vert W\right\vert ^{p-1}W\text{.}
\label{n0}%
\end{equation}
with $W$ given by (\ref{w1}) and $\zeta$ defined by (\ref{epsilonbar}). Using
(\ref{eqeps}) we have%
\begin{equation}
\left.
\begin{array}
[c]{c}%
\frac{d}{dt}\operatorname{Im}\int\varepsilon\overline{\Phi_{1}}%
=-\operatorname{Re}\int i\partial_{t}\varepsilon\overline{\Phi_{1}%
}+\operatorname{Re}\int\varepsilon\overline{\left(  i\partial_{t}\Phi
_{1}\right)  }\\
=\operatorname{Re}\int\varepsilon\left(  \overline{i\partial_{t}\Phi
_{1}+\Delta\Phi_{1}+\mathcal{V}\left(  x\right)  \Phi_{1}}\right)
+\operatorname{Re}\int\mathcal{N}_{0}\left(  \mathcal{W},\varepsilon\right)
\overline{\Phi_{1}\left(  t,x\right)  }+\lambda^{-\frac{4}{p-1}}%
\operatorname{Re}\int\left(  \mathcal{E}^{\left(  N\right)  }+R\left(
W\right)  \right)  \overline{\Phi}.
\end{array}
\right.  \label{ap243}%
\end{equation}
Similarly to (\ref{calc1}) we calculate%
\[
\left.
\begin{array}
[c]{c}%
\left(  i\partial_{t}+\Delta+\mathcal{V}\left(  x\right)  \right)  \Phi
_{1}=\lambda^{-\frac{2p}{p-1}}\left(  -\mathcal{\tilde{L}}\Phi+\lambda
^{2}V\left(  \left\vert y+\frac{\chi}{\lambda}\right\vert \right)
\Phi-i\lambda\mathbf{M}^{\left(  N\right)  }\Lambda\Phi-\lambda^{3}\left(
\mathbf{B}^{\left(  N\right)  }\cdot y\right)  \Phi\right. \\
-\frac{p+1}{2}\left\vert W\right\vert ^{p-1}\Phi-\frac{p-1}{2}\left\vert
W\right\vert ^{p-3}W^{2}\Phi-i\lambda\left(  \dot{\lambda}-\mathbf{M}^{\left(
N\right)  }\right)  \Lambda\Phi-\lambda^{3}\left(  \left(  \dot{\beta
}-\mathbf{B}^{\left(  N\right)  }\right)  \cdot y\right)  \Phi\\
\left.  +\lambda^{2}\left(  \dot{\gamma}+\frac{1}{\lambda^{2}}-\left\vert
\beta\right\vert ^{2}-\dot{\beta}\cdot\chi\right)  \Phi\right)  \left(
t,\frac{x-\chi}{\lambda}\right)  e^{-i\gamma}e^{i\beta\cdot x}.
\end{array}
\right.
\]
where we define%
\begin{equation}
\mathcal{\tilde{L}}f:=-\Delta f+f-\frac{p+1}{2}\left\vert W\right\vert
^{p-1}f-\frac{p-1}{2}\left\vert W\right\vert ^{p-3}W^{2}{\overline{f}},\text{
\ \ }f\in H^{1}. \label{Ltilde}%
\end{equation}
Then, using (\ref{BM}) to estimate the modulation equations $\mathbf{B}%
^{\left(  N\right)  },\mathbf{M}^{\left(  N\right)  }$ and relation
(\ref{eps1}) from (\ref{ap243}) we get%
\begin{equation}
\left.  \frac{d}{dt}\operatorname{Im}%
{\displaystyle\int}
\varepsilon\overline{\Phi_{1}}=\lambda^{-\frac{4}{p-1}}\operatorname{Re}%
{\displaystyle\int}
R\left(  W\right)  \overline{\Phi}-\operatorname{Re}%
{\displaystyle\int}
\varepsilon\lambda^{-\frac{2p}{p-1}}\left(  \left(  \mathcal{\tilde{L}%
}-\lambda^{2}V\left(  \left\vert y+\frac{\chi}{\lambda}\right\vert \right)
\right)  \Phi\right)  \left(  t,\tfrac{x-\chi}{\lambda}\right)  e^{-i\gamma
}e^{i\beta\cdot x}+E_{\mathcal{N}_{1}}+E_{0}\right.  \label{ap271}%
\end{equation}
where%
\begin{equation}
\mathcal{N}_{1}\left(  W,\zeta\right)  =\mathcal{N}_{0}\left(  W,\zeta\right)
-\dfrac{p+1}{2}\left\vert W\right\vert ^{p-1}\zeta-\dfrac{p-1}{2}\left\vert
W\right\vert ^{p-3}W^{2}\overline{\zeta} \label{n1}%
\end{equation}%
\[
E_{\mathcal{N}_{1}}=\operatorname{Re}%
{\displaystyle\int}
\lambda^{-\frac{2p}{p-1}}\left(  \mathcal{N}_{1}\left(  W,\zeta\right)
\Phi\right)  \left(  t,\frac{x-\chi}{\lambda}\right)  e^{-i\gamma}%
e^{i\beta\cdot x}%
\]
and%
\[
E_{0}=O\left(  \Theta\left(  \left(  \left\vert \beta\right\vert
+\Theta\right)  ^{A\left(  N\right)  }+U\right)  +\left(  \left(  \left(
\left\vert \beta\right\vert +\Theta\right)  ^{2}+U\right)  +\mathcal{M}\left(
t\right)  \right)  \left\Vert \varepsilon\right\Vert _{H^{1}}\right)
\]
with $\Theta=\Theta\left(  \tfrac{\left\vert \chi\right\vert }{\lambda
}\right)  $ and $U=\left\vert U_{V}\left(  \left\vert \tfrac{\chi}{\lambda
}\right\vert \right)  \right\vert .$ Let us estimate $E_{\mathcal{N}_{1}}$.
Observe that
\begin{equation}
\left\vert \mathcal{N}_{1}\left(  W,\zeta\right)  \right\vert =O\left(
\left\vert \zeta\right\vert ^{p}+\left\vert \zeta\right\vert ^{p-\delta
}+\left\vert \zeta\right\vert ^{2}\right)  ,\text{ }\delta>0, \label{ap352}%
\end{equation}
for $p\neq2$ ($\mathcal{N}_{1}\left(  W,\zeta\right)  =O\left(  \left\vert
\zeta\right\vert ^{2}\right)  $, $p=2$). If we estimate $E_{\mathcal{N}_{1}}$
by using (\ref{ap352}) we have $E_{\mathcal{N}_{1}}=O\left(  \left\Vert
\varepsilon\right\Vert _{H^{1}}^{p-\delta}+\left\Vert \varepsilon\right\Vert
_{H^{1}}^{2}\right)  .$ By (\ref{boot}) this gives the decay $E_{\mathcal{N}%
_{1}}\sim t^{-p+\delta}+t^{-2}.$ In order to obtain the a priori estimates on
modulation parameters in (\ref{boot}), we need $\mathcal{M}\left(  t\right)  $
to be integrable twice on $[T_{0},\infty).$ But, in the case when $p<2,$ we
only have $E_{\mathcal{N}_{1}}\sim t^{-p+\delta}.\ $Hence, we need to obtain a
better estimate on $E_{\mathcal{N}_{1}}.$ We use an argument of \cite{Nguyen}
(see the proof of Proposition 10 on page 41). Let
\begin{equation}
\Omega=\left\{  y\in\mathbb{R}^{d}:\max_{1\leq j\leq N}\left\vert
\boldsymbol{T}^{\left(  j\right)  }\left(  y\right)  \right\vert \geq\frac
{1}{2N}Q\left(  y\right)  \right\}  . \label{ap244}%
\end{equation}
Then, as by (\ref{ap14}) $\left\vert \boldsymbol{T}^{\left(  j\right)
}\right\vert \leq C\mathbf{Y,}$ $1\leq j\leq N,$ we estimate
\[
\left\vert \Phi\left(  y\right)  \right\vert \leq C\left(  \left\vert
y\right\vert ^{M}Q\left(  y\right)  +\mathbf{Y}\right)  \leq C_{N}%
\inf_{0<\delta<1}\left(  C\left(  \delta\right)  Q^{1-\delta}\left(  y\right)
\left\vert y\right\vert ^{M}e^{-\delta\left\vert y\right\vert }\right)
+C\mathbf{Y}\leq C\mathbf{Y},\text{ }y\in\Omega,
\]
Thus, using the relation (\ref{ap352}) and Sobolev embedding theorem to
control the $L^{p}$ norm of $\zeta,$ we estimate
\begin{equation}%
{\displaystyle\int_{y\in\Omega}}
\left\vert \left(  \mathcal{N}_{1}\left(  W,\zeta\right)  \Phi\right)  \left(
y\right)  \right\vert dy\leq C_{N}C\left(  \delta\right)  \left\Vert
\zeta\right\Vert _{H^{1}}^{p_{0}}\Theta^{1-\delta},\text{ }p_{0}%
=\min\{p-\delta,2\}, \label{ap292}%
\end{equation}
for all $0<\delta<1.$ Suppose now that $y\notin\Omega$. Using (\ref{ap352}) we
estimate%
\begin{equation}
\left\vert \mathcal{N}_{1}\left(  W,\zeta\right)  \right\vert =O\left(
\left\vert W\right\vert ^{p-2}\left\vert \zeta\right\vert ^{2}\right)  .
\label{ap301}%
\end{equation}
Thus, by Sobolev embedding theorem, we get
\begin{equation}%
{\displaystyle\int_{y\notin\Omega}}
\left\vert \left(  \mathcal{N}_{1}\left(  W,\zeta\right)  \Phi\right)  \left(
y\right)  \right\vert dy\leq C\left\Vert \zeta\right\Vert _{H^{1}}^{2}.
\label{ap293}%
\end{equation}
Hence, from (\ref{ap292}) and (\ref{ap293}) we estimate%
\begin{equation}
\left\vert \operatorname{Re}%
{\displaystyle\int}
\lambda^{-\frac{2p}{p-1}}\left(  \mathcal{N}_{1}\left(  W,\zeta\right)
\Phi\right)  \left(  t,\frac{x-\chi}{\lambda}\right)  e^{-i\gamma}%
e^{i\beta\cdot x}\right\vert \leq C_{N}C\left(  \delta\right)  \left\Vert
\zeta\right\Vert _{H^{1}}^{p_{0}}\Theta^{1-\delta}+C\left\Vert \zeta
\right\Vert _{H^{1}}^{2} \label{ap294}%
\end{equation}
for all $0<\delta<1.$

Let us now calculate $\mathcal{\tilde{L}}\Phi$ for $\Phi=$ $iW,$ $iyW,$
$\Lambda W,$ $\nabla W.$ We use (\ref{E7}) and Lemma \ref{Lemmaapp} to derive%
\begin{equation}
\Delta W-W+\left\vert W\right\vert ^{p-1}W+\lambda^{2}V\left(  \left\vert
y+\tfrac{\chi}{\lambda}\right\vert \right)  W=O\left(  E_{1}\right)
\label{ap284}%
\end{equation}
where%
\[
E_{1}=\left(  \left\vert \beta\right\vert +\Theta+U\right)  e^{-\delta
\left\vert y\right\vert },\text{ }\delta>0.
\]
Letting act the group of symmetries of the free NLS\ equation on (\ref{ap284})
and differentiating with respect to the symmetry parameters the resulting
relation we calculate%
\begin{equation}
\left(  \mathcal{\tilde{L}-}\lambda^{2}\left(  V\left(  \left\vert
y+\tfrac{\chi}{\lambda}\right\vert \right)  \right)  \right)  \left(
iW\right)  =O\left(  E_{1}\right)  , \label{lb1}%
\end{equation}%
\begin{equation}
\left(  \mathcal{\tilde{L}-}\lambda^{2}V\left(  \left\vert y+\tfrac{\chi
}{\lambda}\right\vert \right)  \right)  \left(  iyW\right)  =-2\nabla
W+O\left(  E_{1}\right)  \label{lb2}%
\end{equation}%
\begin{equation}
\left(  \mathcal{\tilde{L}-}\lambda^{2}V\left(  \left\vert y+\tfrac{\chi
}{\lambda}\right\vert \right)  \right)  \left(  \Lambda W\right)
=-2W+2\lambda^{2}V\left(  \left\vert y+\tfrac{\chi}{\lambda}\right\vert
\right)  W+O\left(  \left\vert V^{\prime}\left(  \left\vert y+\tfrac{\chi
}{\lambda}\right\vert \right)  \left(  yW\left(  y\right)  \right)
\right\vert \right)  +O\left(  E_{1}\right)  , \label{lb3}%
\end{equation}%
\begin{equation}
\left(  \mathcal{\tilde{L}-}\lambda^{2}V\left(  \left\vert y+\tfrac{\chi
}{\lambda}\right\vert \right)  \right)  \left(  \nabla W\right)  =O\left(
\left\vert V^{\prime}\left(  \left\vert y+\tfrac{\chi}{\lambda}\right\vert
\right)  W\right\vert \right)  +O\left(  E_{1}\right)  . \label{lb4}%
\end{equation}
We now use (\ref{ap271}) with $\Phi=$ $iW,$ $iyW,$ $\Lambda W,$ $\nabla W.$ By
the orthogonal conditions (\ref{ap208}), using (\ref{lb1})-(\ref{lb4}) and
(\ref{ap294}) we obtain
\begin{equation}
\left.
\begin{array}
[c]{c}%
\left\vert \mathcal{M}\left(  t\right)  \right\vert \leq C\Theta\left(
\left(  \left\vert \beta\right\vert +\Theta\right)  ^{A\left(  N\right)
}+U\right)  +C\left(  \delta\right)  \left(  \left\vert \beta\right\vert
+\Theta+U+\mathcal{M}\left(  t\right)  +\left\Vert \zeta\right\Vert _{H^{1}%
}^{p_{0}}\Theta^{1-\delta}+\left\Vert \zeta\right\Vert _{H^{1}}\right)
\left\Vert \zeta\right\Vert _{H^{1}}\\
+C\left(  \left\Vert \left(  V\left(  \left\vert y+\tfrac{\chi}{\lambda
}\right\vert \right)  -V\left(  \left\vert \tfrac{\chi}{\lambda}\right\vert
\right)  \right)  W\right\Vert _{L^{2}}+\left\Vert V^{\prime}\left(
\left\vert y+\tfrac{\chi}{\lambda}\right\vert \right)  \left(  yW\left(
y\right)  \right)  \right\Vert _{L^{2}}\right)  \left\Vert \zeta\right\Vert
_{H^{1}},
\end{array}
\right.  \label{ap273}%
\end{equation}
for all $0<\delta<1.$ By (\ref{estp}) we estimate%
\begin{equation}
\left\Vert \left(  V\left(  \left\vert y+\tfrac{\chi}{\lambda}\right\vert
\right)  -V\left(  \left\vert \tfrac{\chi}{\lambda}\right\vert \right)
\right)  W\right\Vert _{L^{2}}+\left\Vert V^{\prime}\left(  \left\vert
y+\tfrac{\chi}{\lambda}\right\vert \right)  yW\left(  y\right)  \right\Vert
_{L^{2}}\leq Cr^{\infty}\sqrt{U}. \label{ap274}%
\end{equation}
Thus, from (\ref{ap273}) we get%
\begin{equation}
\left\vert \mathcal{M}\left(  t\right)  \right\vert \leq C\Theta\left(
\left(  \left\vert \beta\right\vert +\Theta\right)  ^{A\left(  N\right)
}+U\right)  +C\left(  \delta\right)  \left(  \left\vert \beta\right\vert
+\Theta+r^{\infty}\sqrt{U}+\left\Vert \zeta\right\Vert _{H^{1}}^{p_{0}}%
\Theta^{1-\delta}+\left\Vert \zeta\right\Vert _{H^{1}}\right)  \left\Vert
\zeta\right\Vert _{H^{1}}. \label{Mt}%
\end{equation}

By assumption, (\ref{boot}) is true for all $t\in\lbrack T^{\ast},T_{n}].$
From (\ref{syst1}) $\left\vert \beta^{\infty}\right\vert \leq C\mathcal{X},$
with $\mathcal{X}=\sqrt{U_{V}\left(  \frac{\left\vert \chi^{\infty}\right\vert
}{\lambda^{\infty}}\right)  }.$ Let $N\geq\frac{2}{\left(  p_{1}-1\right)  },$
so that $A\left(  N\right)  \geq2.$ Relation (\ref{ap302}) implies that
$\mathcal{X}\leq Ct^{-1}.$ Then, as $\Theta\leq C\mathcal{X}$, from
(\ref{beh3}) and (\ref{Mt}) with $\delta<\frac{p_{1}-1}{2}$
\begin{equation}
\left\vert \mathcal{M}\left(  t\right)  \right\vert \leq Ct^{-5/2}
\label{estmod1}%
\end{equation}
for all $t\in\lbrack T^{\ast},T_{n}].$ We are now\ in position to improve
(\ref{boot}) for the modulation parameters by using (\ref{estmod1}). By
(\ref{estmod1}) and (\ref{BM}) the vector $\Xi\left(  t\right)  $ solves%
\begin{equation}
\left\{
\begin{array}
[c]{c}%
\dot{\chi}\left(  t\right)  =2\beta\left(  t\right)  +O\left(  t^{-5/2}%
\right)  ,\\
\dot{\beta}\left(  t\right)  =B_{1}\left(  \Xi\left(  t\right)  \right)
+O\left(  t^{-5/2}+t^{-2p_{1}}\right)  ,\\
\dot{\lambda}\left(  t\right)  =M_{1}\left(  \Xi\left(  t\right)  \right)
+O\left(  t^{-5/2}+t^{-2p_{1}}\right)  ,\\
\dot{\gamma}\left(  t\right)  =-\frac{1}{\lambda^{2}\left(  t\right)
}+\left\vert \beta\left(  t\right)  \right\vert ^{2}+\dot{\beta}\left(
t\right)  \cdot\chi\left(  t\right)  +O\left(  t^{-5/2}\right)  .
\end{array}
\right.  \label{syst10}%
\end{equation}
As $\Xi\left(  t\right)  $ is only $C^{1},$ we cannot proceed directly as when
we considered (\ref{boot1}). We argue slightly different. Using (\ref{l2}) we
write $B_{1}\left(  \Xi\left(  t\right)  \right)  =\frac{\chi}{\left\vert
\chi\right\vert }b\left(  \left\vert \chi\right\vert ,\lambda\right)  .$ By
(\ref{boot1}) we deduce $\left\vert \chi^{\left(  N\right)  }\left(  t\right)
-r_{\operatorname*{app}}\theta_{0}^{\infty}\right\vert \leq t^{-\frac{p_{1}%
}{4}},$ for some constant vector $\theta_{0}^{\infty}\in\mathbb{S}^{d-1}.$
Using (\ref{BM}) and (\ref{beh3}) we estimate $\left\vert B_{1}\right\vert
+\left\vert \nabla_{\chi}B_{1}\right\vert +\left\vert \partial_{\lambda}%
B_{1}\right\vert \leq C\mathcal{X}^{2}\leq Ct^{-2}.$ Then, the equation for
$\dot{\beta}\left(  t\right)  $ in (\ref{syst10}) takes the form%
\begin{equation}
\dot{\beta}\left(  t\right)  =\theta_{0}^{\infty}b\left(  \left\vert
\chi\right\vert ,\lambda^{\left(  N\right)  }\right)  +O\left(  \left(
\left\vert \lambda-\lambda^{\left(  N\right)  }\right\vert +\frac{\left\vert
\chi-\chi^{\left(  N\right)  }\right\vert }{\left\vert \chi^{\left(  N\right)
}\right\vert }+t^{-\frac{p_{1}}{4}}\right)  t^{-2}\right)  +O\left(
t^{-5/2}+t^{-2p_{1}}\right)  . \label{ap318}%
\end{equation}
Similarly, we have%
\begin{equation}
\dot{\beta}^{\left(  N\right)  }\left(  t\right)  =\theta_{0}^{\infty}b\left(
\left\vert \chi^{\left(  N\right)  }\right\vert ,\lambda^{\left(  N\right)
}\right)  +O\left(  t^{-2-\frac{p_{1}}{4}}\right)  . \label{ap320}%
\end{equation}
As in (\ref{syst5}), we use the Cartesian coordinates with the $x_{1}$-axis
directed along the vector $\theta_{0}^{\infty}\ $and for a vector
$\mathbf{A}\in\mathbb{R}^{d}$ we decompose $\mathbf{A}=\sum_{j=1}^{d}A_{j}%
\vec{e}_{j},$ and $\vec{e}_{j},$ $j=1,...,d$ is the canonical basis in these
coordinates. Then, from (\ref{ap318}) and (\ref{ap320}) we deduce%
\begin{equation}
\dot{\beta}_{1}=b\left(  \left\vert \chi\right\vert ,\lambda^{\left(
N\right)  }\right)  +O\left(  \left(  \left\vert \mu\right\vert +\frac
{\left\vert \delta\right\vert }{r^{\infty}}+t^{-\frac{p_{1}}{4}}\right)
t^{-2}+t^{-5/2}+t^{-2p_{1}}\right)  , \label{beta1}%
\end{equation}%
\begin{equation}
\dot{\beta}_{1}^{\left(  N\right)  }=b\left(  \left\vert \chi^{\left(
N\right)  }\right\vert ,\lambda^{\left(  N\right)  }\right)  +O\left(  \left(
\frac{\left\vert \delta\right\vert }{r^{\infty}}+t^{-\frac{p_{1}}{4}}\right)
t^{-2}+t^{-5/2}+t^{-2p_{1}}\right)  \label{beta1N}%
\end{equation}
and%
\begin{equation}
\dot{\beta}_{j}-\dot{\beta}_{j}^{\left(  N\right)  }=O\left(  \left(
\frac{\left\vert \delta\right\vert }{r^{\infty}}+t^{-\frac{p_{1}}{4}}\right)
t^{-2}+t^{-5/2}+t^{-2p_{1}}\right)  ,\text{ }j=2,...,d, \label{betaj}%
\end{equation}
where we denote by $\delta\left(  t\right)  =\chi\left(  t\right)
-\chi^{\left(  N\right)  }\left(  t\right)  $ and $\mu\left(  t\right)
=\lambda\left(  t\right)  -\lambda^{\left(  N\right)  }\left(  t\right)  .$
Then, integrating (\ref{betaj}) and using (\ref{boot}) we show that for any
constant $A>0$ there is $T_{0}>0$ such that
\begin{equation}
\left\vert \delta_{j}\left(  t\right)  \right\vert \leq\frac{1}{At^{\frac
{p_{1}-1}{4}}}\text{ and }\left\vert \dot{\delta}_{j}\left(  t\right)
\right\vert \leq\frac{1}{At^{1+\frac{p_{1}-1}{4}}},\text{ }t\in\lbrack
T^{\ast},T_{n}],\text{ }T^{\ast}\geq T_{0}, \label{ap322}%
\end{equation}
for $j=2,...,d.$ From (\ref{beta1}) and (\ref{beta1N}), by using the equation
for $\dot{\chi}\left(  t\right)  $ in (\ref{syst10}) and integrating, via
(\ref{boot}) and (\ref{ap322}) we deduce%
\begin{equation}
\dot{\delta}_{1}=\left(  1+o\left(  1\right)  \right)  \left(  2\dot
{r}^{\infty}\right)  ^{-1}b\left(  r^{\infty},\lambda^{\infty}\right)
\delta_{1}+O\left(  \frac{1}{At^{1+\frac{p_{1}-1}{4}}}\right)  . \label{ap321}%
\end{equation}
By (\ref{l2}) and (\ref{ap302}) $\left(  \dot{r}^{\infty}\right)
^{-1}b\left(  r^{\infty},\lambda^{\infty}\right)  =\frac{K_{1}}{t}\left(
1+o\left(  1\right)  \right)  ,$ with $K_{1}>1.$ Then%
\begin{equation}
\dot{\delta}_{1}=\frac{K_{2}}{t}\delta_{1}+\frac{o\left(  1\right)
}{t^{1+\frac{p_{1}-1}{4}}}+O\left(  \frac{1}{At^{1+\frac{p_{1}-1}{4}}}\right)
. \label{ap326}%
\end{equation}
where $K_{2}>1.$ Thus, similarly to (\ref{ap324}) we deduce that for some
$T_{0}>0$
\[
\left\vert \delta_{1}\right\vert \leq\frac{K_{3}}{A}t^{-\frac{p_{1}-1}{4}%
},\text{ }t\in\lbrack T^{\ast},T_{n}],\text{ }K_{3}>1.
\]
Moreover, from (\ref{ap326}) we get%
\[
\left\vert \dot{\delta}_{1}\right\vert \leq\frac{K_{4}}{A}t^{-\left(
1+\frac{p_{1}-1}{4}\right)  },\text{ }K_{4}>1.
\]
Taking $A\geq2K_{3}K_{4}$ we deduce
\[
\left\vert \delta_{1}\right\vert \leq\frac{1}{2}t^{-\frac{p_{1}-1}{4}}\text{
and }\left\vert \dot{\delta}_{1}\right\vert \leq\frac{1}{2}t^{-\left(
1+\frac{p_{1}-1}{4}\right)  },\text{ }t\in\lbrack T^{\ast},T_{n}].
\]
Combining the last inequalities with (\ref{ap322}) we prove%
\begin{equation}
\left\vert \delta\right\vert \leq\frac{1}{2}t^{-\frac{p_{1}-1}{4}}\text{ and
}\left\vert \dot{\delta}\right\vert \leq\frac{1}{2}t^{-\left(  1+\frac
{p_{1}-1}{4}\right)  },\text{ }t\in\lbrack T^{\ast},T_{n}]. \label{imp1}%
\end{equation}
By (\ref{BM}) and (\ref{beh3}) we estimate $\left\vert M_{1}\right\vert
+\left\vert \nabla_{\chi}M_{1}\right\vert +\left\vert \partial_{\lambda}%
M_{1}\right\vert \leq C\mathcal{X}^{2}\leq Ct^{-2}.$ Moreover, by (\ref{BM})%
\[
\dot{\lambda}^{\left(  N\right)  }\left(  t\right)  =M_{1}\left(  \Xi^{\left(
N\right)  }\left(  t\right)  \right)  +O\left(  t^{-2p_{1}}\right)  .
\]
Then, by (\ref{boot})
\begin{align*}
\left\vert \dot{\lambda}-\dot{\lambda}^{\left(  N\right)  }\right\vert  &
\leq\left\vert M_{1}\left(  \Xi\left(  t\right)  \right)  -M_{1}^{\left(
N\right)  }\left(  \Xi^{\left(  N\right)  }\left(  t\right)  \right)
\right\vert +O\left(  t^{-2p_{1}}\right) \\
&  \leq\left\vert \nabla_{\chi}M_{1}\right\vert \left\vert \delta\right\vert
+\left\vert \partial_{\lambda}M_{1}\right\vert \left\vert \mu\right\vert
+O\left(  t^{-2p_{1}}\right)  \leq Ct^{-2-\frac{p_{1}-1}{4}}.
\end{align*}
Finally, using (\ref{boot}), as $\left\vert \dot{\beta}^{\left(  N\right)
}\left(  t\right)  \right\vert \leq C\mathcal{X}^{2}\leq Ct^{-2}$ and by
(\ref{beh2}), (\ref{ap199}) $\left\vert \chi^{\left(  N\right)  }\left(
t\right)  \right\vert \leq C\ln t,$ we have
\begin{align*}
\left\vert \dot{\gamma}\left(  t\right)  -\dot{\gamma}^{\left(  N\right)
}\left(  t\right)  \right\vert  &  \leq C\left\vert \lambda\left(  t\right)
-\lambda^{\left(  N\right)  }\left(  t\right)  \right\vert +C\left\vert
\beta\left(  t\right)  -\beta^{\left(  N\right)  }\left(  t\right)
\right\vert +C\left\vert \dot{\beta}^{\left(  N\right)  }\left(  t\right)
\right\vert \left\vert \chi\left(  t\right)  -\chi^{\left(  N\right)  }\left(
t\right)  \right\vert \\
&  +C\left\vert \chi^{\left(  N\right)  }\left(  t\right)  \right\vert \left(
\left\vert \dot{\beta}^{\left(  N\right)  }\left(  t\right)  \right\vert
+\left\vert \dot{\beta}\left(  t\right)  \right\vert \right)  +Ct^{-5/2}\leq
Ct^{-\left(  1+\frac{p_{1}-1}{4}\right)  }.
\end{align*}
Hence, there is $T_{0}>0$ such that
\begin{equation}
\left\vert \lambda-\lambda^{\left(  N\right)  }\right\vert \leq\frac
{t^{-\left(  1+\frac{p_{1}-1}{4}\right)  }}{2}\text{ and }\left\vert
\dot{\gamma}\left(  t\right)  -\dot{\gamma}^{\left(  N\right)  }\left(
t\right)  \right\vert \leq\frac{t^{-\frac{p_{1}-1}{8}}}{2}, \label{imp2}%
\end{equation}
for $t\in\lbrack T^{\ast},T_{n}].$ Therefore, from (\ref{imp1}) and
(\ref{imp2}) we improve (\ref{boot}) for the modulation parameters $\left(
\Xi\left(  t\right)  ,\gamma\left(  t\right)  \right)  $ on $[T^{\ast}%
,T_{n}].$ By a continuity argument, we show that (\ref{boot}) for $\left(
\Xi\left(  t\right)  ,\gamma\left(  t\right)  \right)  $ is satisfied on
$[T_{0},T_{n}].$\

\begin{remark}
\label{Rem1}In the case when $V=V^{\left(  2\right)  }$ the best bound on the
error term in (\ref{eps1}) that we are able to obtain by following the
construction provided by Lemma \ref{Lemmaapp} is $\mathcal{E}^{\left(
N\right)  }\sim V^{\prime}\dot{r}^{\infty}.$ For potentials that behave as
$V\left(  r\right)  \sim r^{-\rho}$, $0<\rho\leq2,$ we cannot\ even integrate
$V^{\prime}\dot{r}^{\infty}$ two times on $[t,\infty)$. Then, the proof of
(\ref{boot}) fails for the approximate modulations equations given in Lemma
\ref{Lemmaapp}. Therefore, we use a different approximated modulation
equations constructed in Lemma \ref{Lemmaapp1} to cover this case.
\end{remark}

\subsubsection*{Case II: Slow decaying potentials.}

We let now the potential $V=V^{\left(  2\right)  }.$ In this case we suppose
that (\ref{bootbis}) is true for all $t\in\lbrack T^{\ast},T_{n}],$ with
$T_{0}\leq T^{\ast}<T_{n}.$ By (\ref{bootbis}), (\ref{beh1}) and (\ref{beh2})
we see that (\ref{beh3}) is true. Let $N\geq1.$ For $1\leq j\leq N$ and
$\Xi\left(  t\right)  ,$ let $\tilde{B}_{j}=\tilde{B}_{j}\left(  \Xi\left(
t\right)  \right)  $, $\tilde{M}_{j}=\tilde{M}_{j}\left(  \Xi\left(  t\right)
\right)  $ be the approximated modulation equations given by Lemma
\ref{Lemmaapp1} corresponding to $\Xi\left(  t\right)  .$ We\ omit the tilde
and denote these equations by $B_{j}=B_{j}\left(  \Xi\left(  t\right)
\right)  $, $M_{j}=M_{j}\left(  \Xi\left(  t\right)  \right)  $. Let $\Phi\in
C^{\infty}\left(  \mathbb{R}^{d}\right)  $ satisfy the estimate $\left\vert
\Phi\left(  y\right)  \right\vert \leq C\left(  \left\vert y\right\vert
^{m}Q\left(  y\right)  +\Psi\mathbf{e}\right)  $ with some $m\geq$ $0.$\ We
denote
\[
\Phi_{1}\left(  t,x\right)  =\lambda\left(  t\right)  ^{-\frac{2}{p-1}}%
\Phi\left(  \frac{x-\chi\left(  t\right)  }{\lambda\left(  t\right)  }\right)
e^{-i\gamma\left(  t\right)  }e^{i\beta\left(  t\right)  \cdot x}.
\]
By using (\ref{B'}) and (\ref{eps1bis}) instead of (\ref{BM}), (\ref{eps1}),
relation (\ref{ap271}) takes the form
\begin{equation}
\left.
\begin{array}
[c]{c}%
\frac{d}{dt}\operatorname{Im}%
{\displaystyle\int}
\varepsilon\overline{\Phi_{1}}=\lambda^{-\frac{4}{p-1}}\operatorname{Re}%
{\displaystyle\int}
\mathcal{R}\overline{\Phi}-\operatorname{Re}%
{\displaystyle\int}
\varepsilon\lambda^{-\frac{2p}{p-1}}\left(  \left(  \mathcal{\tilde{L}%
}-\lambda^{2}V\left(  \left\vert y+\frac{\chi}{\lambda}\right\vert \right)
\right)  \Phi\right)  \left(  t,\frac{x-\chi}{\lambda}\right)  e^{-i\gamma
}e^{i\beta\cdot x}\\
\\
+\operatorname{Re}%
{\displaystyle\int}
\lambda^{-\frac{2p}{p-1}}\left(  \mathcal{N}_{1}\left(  W,\zeta\right)
\Phi\right)  \left(  t,\frac{x-\chi}{\lambda}\right)  e^{-i\gamma}%
e^{i\beta\cdot x}+O\left(  \Psi\mathbf{Z}^{N}+\Psi\left\Vert \varepsilon
\right\Vert _{H^{1}}\right)  .
\end{array}
\right.  \label{ap323}%
\end{equation}
By (\ref{ap352}) and Sobolev embedding theorem we get%
\begin{equation}
\left\vert
{\displaystyle\int}
\lambda^{-\frac{2p}{p-1}}\left(  \mathcal{N}_{1}\left(  W,\zeta\right)
\Phi\right)  \left(  t,\tfrac{x-\chi}{\lambda}\right)  e^{-i\gamma}%
e^{i\beta\cdot x}\right\vert \leq C\left\Vert \varepsilon\right\Vert _{H^{1}%
}^{p-\delta}. \label{ap325}%
\end{equation}
Using (\ref{E7}) and Lemma \ref{Lemmaapp1} we obtain (\ref{ap284}) with
$E_{1}$ replaced by $O\left(  \mathbf{Z}\Psi\right)  .$ Then, (\ref{lb1}%
)-(\ref{lb4}) are true with $O\left(  \mathbf{Z}\Psi\right)  $ instead of
$E_{1}.$ Therefore, similarly to (\ref{ap273}), by using (\ref{ap323}) with
$\Phi=$ $iW,$ $iyW,$ $\Lambda W,$ $\nabla W,$ via (\ref{ap325}) we get%
\[
\left.
\begin{array}
[c]{c}%
\left\vert \mathcal{M}\left(  t\right)  \right\vert \leq C\Psi\left\Vert
\varepsilon\right\Vert _{H^{1}}+C\left(  \delta\right)  \left(  \Psi
^{1-\delta}\left(  \Psi^{p-1}+\left\Vert \zeta\right\Vert _{H^{1}}%
^{p-1}\right)  +C\left\Vert \zeta\right\Vert _{H^{1}}\right)  \left\Vert
\zeta\right\Vert _{H^{1}}\\
\\
+C\left(  \left\Vert \left(  V\left(  \left\vert y+\tfrac{\chi}{\lambda
}\right\vert \right)  -V\left(  \left\vert \tfrac{\chi}{\lambda}\right\vert
\right)  \right)  W\right\Vert _{L^{2}}+\left\Vert V^{\prime}\left(
\left\vert y+\tfrac{\chi}{\lambda}\right\vert \right)  W\right\Vert _{L^{2}%
}\right)  \left\Vert \zeta\right\Vert _{H^{1}}.
\end{array}
\right.
\]
By (\ref{estp1}) and (\ref{potentialprime}) \
\[
\left\Vert \left(  V\left(  \left\vert y+\tfrac{\chi}{\lambda}\right\vert
\right)  -V\left(  \left\vert \tfrac{\chi}{\lambda}\right\vert \right)
\right)  W\right\Vert _{L^{2}}+\left\Vert V^{\prime}\left(  \left\vert
y+\tfrac{\chi}{\lambda}\right\vert \right)  W\right\Vert _{L^{2}}\leq C\Psi.
\]
Hence, we obtain%
\begin{equation}
\left\vert \mathcal{M}\left(  t\right)  \right\vert \leq C\Psi\mathbf{Z}%
^{N}+C\left(  \delta\right)  \left(  \Psi^{1-\delta}\left\Vert \zeta
\right\Vert _{H^{1}}^{p-1}+\Psi+\left\Vert \zeta\right\Vert _{H^{1}}\right)
\left\Vert \zeta\right\Vert _{H^{1}} \label{Mtbis}%
\end{equation}

\bigskip\ By assumption, (\ref{bootbis}) is true for all $t\in\lbrack T^{\ast
},T_{n}].$ Recall that $\left\vert \beta^{\infty}\right\vert \leq
C\mathcal{X}$, where $\mathcal{X}=\sqrt{U_{V}\left(  \frac{\left\vert
\chi^{\infty}\right\vert }{\lambda^{\infty}}\right)  }=\sqrt{V\left(
\frac{\left\vert \chi^{\infty}\right\vert }{\lambda^{\infty}}\right)  },$ and%
\[
\mathbf{Z}\leq C\mathcal{X}\text{.}%
\]
Taking $[N-8]\rho>4,$ we estimate
\begin{equation}
\mathcal{X}^{N/4}\leq\Psi. \label{Z}%
\end{equation}
Then%
\begin{equation}
\left\vert \mathcal{M}\left(  t\right)  \right\vert \leq C\left(
\Psi\mathcal{X}^{N}+\mathcal{X}^{2N}\right)  \leq C\Psi\mathcal{X}^{N}.
\label{estmod2}%
\end{equation}
We now use (\ref{estmod2}) to improve (\ref{bootbis}). By (\ref{B'}) we
estimate
\begin{equation}
\left\vert \nabla_{\chi}\mathbf{M}^{\left(  N\right)  }\right\vert +\left\vert
\nabla_{\chi}\mathbf{B}^{\left(  N\right)  }\right\vert \leq C\left(  \Psi
^{2}+\left\vert V^{\prime\prime}\left(  \left\vert \tfrac{\chi}{\lambda
}\right\vert \right)  \right\vert \right)  \label{ap307}%
\end{equation}
and
\begin{equation}
\left\vert \nabla_{\beta}\mathbf{M}^{\left(  N\right)  }\right\vert
+\left\vert \nabla_{\beta}\mathbf{B}^{\left(  N\right)  }\right\vert
+\left\vert \partial_{\lambda}\mathbf{M}^{\left(  N\right)  }\right\vert
+\left\vert \partial_{\lambda}\mathbf{B}^{\left(  N\right)  }\right\vert \leq
C\Psi. \label{ap316}%
\end{equation}
From the equations for $\beta^{\left(  N\right)  },\lambda^{\left(  N\right)
}$ we get
\begin{equation}
\left.
\begin{array}
[c]{c}%
\left\vert \dot{\lambda}-\dot{\lambda}^{\left(  N\right)  }\right\vert
+\left\vert \dot{\beta}-\dot{\beta}^{\left(  N\right)  }\right\vert
\leq\left\vert \mathcal{M}\left(  t\right)  \right\vert +\left\vert
\mathbf{M}^{\left(  N\right)  }\left(  \Xi\left(  t\right)  \right)
-\mathbf{M}^{\left(  N\right)  }\left(  \Xi^{\left(  N\right)  }\left(
t\right)  \right)  \right\vert +\left\vert \mathbf{B}^{\left(  N\right)
}\left(  \Xi\left(  t\right)  \right)  -\mathbf{B}^{\left(  N\right)  }\left(
\Xi^{\left(  N\right)  }\left(  t\right)  \right)  \right\vert \\
\\
\leq\left\vert \mathcal{M}\left(  t\right)  \right\vert +C\left(  \left\vert
\nabla_{\chi}\mathbf{M}^{\left(  N\right)  }\right\vert +\left\vert
\nabla_{\chi}\mathbf{B}^{\left(  N\right)  }\right\vert \right)  \left\vert
\chi-\chi^{\left(  N\right)  }\right\vert \\
\\
+C\left(  \left\vert \nabla_{\beta}\mathbf{M}^{\left(  N\right)  }\right\vert
+\left\vert \nabla_{\beta}\mathbf{B}^{\left(  N\right)  }\right\vert \right)
\left\vert \beta-\beta^{\left(  N\right)  }\right\vert +C\left(  \left\vert
\partial_{\lambda}\mathbf{M}^{\left(  N\right)  }\right\vert +\left\vert
\partial_{\lambda}\mathbf{B}^{\left(  N\right)  }\right\vert \right)
\left\vert \lambda-\lambda^{\left(  N\right)  }\right\vert
\end{array}
\right.  \label{ap347}%
\end{equation}
Recall that by (\ref{cond5}) there is $N_{0}>0$ such that $\mathcal{X}%
^{2N_{0}}\leq C\left\vert V^{\prime\prime}\left(  \tfrac{r^{\infty}}%
{\lambda^{\infty}}\right)  \right\vert .$ Taking $N>4\left(  N_{0}+1\right)  $
we get%
\begin{equation}
\mathcal{X}^{N}\leq\mathcal{X}^{2N_{0}}\mathcal{X}^{\frac{N}{2}+2}\leq
C\left\vert V^{\prime\prime}\left(  \tfrac{r^{\infty}}{\lambda^{\infty}%
}\right)  \right\vert \mathcal{X}^{\frac{N}{2}+2}. \label{ap344}%
\end{equation}
Then, by (\ref{estmod2})%
\begin{equation}
\left\vert \mathcal{M}\left(  t\right)  \right\vert \leq C\left(  \Psi
^{2}+\left\vert V^{\prime\prime}\left(  \left\vert \tfrac{r^{\infty}}%
{\lambda^{\infty}}\right\vert \right)  \right\vert \right)  \Psi
\mathcal{X}^{\frac{N}{2}+2}. \label{ap346}%
\end{equation}
Thus, using (\ref{ap307}) and (\ref{ap316}), by (\ref{bootbis}) and (\ref{Z})
from (\ref{ap347}) we have%
\begin{equation}
\left\vert \dot{\lambda}-\dot{\lambda}^{\left(  N\right)  }\right\vert
+\left\vert \dot{\beta}-\dot{\beta}^{\left(  N\right)  }\right\vert \leq
C\left(  \Psi^{2}+\left\vert V^{\prime\prime}\left(  \left\vert \tfrac
{r^{\infty}}{\lambda^{\infty}}\right\vert \right)  \right\vert \right)
\Psi\mathcal{X}^{\frac{N}{2}+1}. \label{ap209}%
\end{equation}
As $V,V^{\prime}$ are monotone, $V^{\prime},V^{\prime\prime}$ are of a
definite sign. Integrating (\ref{ap209}) on $[t,\infty)$ we deduce%
\begin{equation}
\left.
\begin{array}
[c]{c}%
\left\vert \lambda\left(  t\right)  -\lambda^{\left(  N\right)  }\left(
t\right)  \right\vert +\left\vert \beta\left(  t\right)  -\beta^{\left(
N\right)  }\left(  t\right)  \right\vert \\
\leq C\left(  \Psi^{2}+\left\vert V^{\prime\prime}\left(  \left\vert
\tfrac{\chi}{\lambda}\right\vert \right)  \right\vert \right)  \left(
\left\vert \int_{r^{\infty}}^{\infty}V^{\prime}\left(  r\right)  V^{\frac
{N}{4}}\left(  r\right)  dr\right\vert +\left(  r^{\infty}\right)  ^{\rho
\frac{N}{4}}\mathcal{X}^{\frac{N}{2}+2}\int_{r^{\infty}}^{\infty}\frac
{dr}{r^{1+\rho\frac{N}{4}}}\right) \\
\leq\frac{C}{\rho N}\left(  \Psi^{2}+\left\vert V^{\prime\prime}\left(
\tfrac{r^{\infty}}{\lambda^{\infty}}\right)  \right\vert \right)
\mathcal{X}^{\frac{N}{2}+2},
\end{array}
\right.  \label{ap342}%
\end{equation}
for all $t\in\lbrack T^{\ast},T_{n}].$ Using (\ref{ap346}) and (\ref{ap342})
we obtain%
\[
\left\vert \dot{\chi}-\dot{\chi}^{\left(  N\right)  }\right\vert \leq
C\Psi\mathcal{X}^{N}+C\left\vert \beta\left(  t\right)  -\beta^{\left(
N\right)  }\left(  t\right)  \right\vert \leq C\left(  \Psi^{2}+\left\vert
V^{\prime\prime}\left(  \tfrac{r^{\infty}}{\lambda^{\infty}}\right)
\right\vert \right)  \Psi\mathcal{X}^{\frac{N}{2}+2}+\frac{C}{\rho N}\left(
\Psi^{2}+\left\vert V^{\prime\prime}\left(  \tfrac{r^{\infty}}{\lambda
^{\infty}}\right)  \right\vert \right)  \mathcal{X}^{\frac{N}{2}+2},
\]
and then,%
\begin{equation}
\left\vert \chi\left(  t\right)  -\chi^{\left(  N\right)  }\left(  t\right)
\right\vert \leq\frac{C}{\rho N}\Psi\mathcal{X}^{\frac{N}{2}+1}. \label{ap348}%
\end{equation}
for all $t\in\lbrack T^{\ast},T_{n}].$ Finally, by (\ref{ap346}),
(\ref{ap209}), (\ref{ap342}) and (\ref{ap348}) we deduce%
\begin{align*}
\left\vert \dot{\gamma}\left(  t\right)  -\dot{\gamma}^{\left(  N\right)
}\left(  t\right)  \right\vert  &  \leq C\left(  \Psi^{2}+\left\vert
V^{\prime\prime}\left(  \left\vert \tfrac{r^{\infty}}{\lambda^{\infty}%
}\right\vert \right)  \right\vert \right)  \Psi\mathcal{X}^{\frac{N}{2}%
+2}+\frac{C}{\rho N}\left(  \Psi^{2}+\left\vert V^{\prime\prime}\left(
\tfrac{r^{\infty}}{\lambda^{\infty}}\right)  \right\vert \right)
\mathcal{X}^{\frac{N}{2}+2}+\frac{C}{\rho N}\Psi\mathcal{X}^{\frac{N}{2}%
+1}\left\vert \dot{\beta}^{\left(  N\right)  }\left(  t\right)  \right\vert \\
&  +C\left\vert \chi^{\left(  N\right)  }\left(  t\right)  \right\vert \left(
\Psi^{2}+\left\vert V^{\prime\prime}\left(  \left\vert \tfrac{r^{\infty}%
}{\lambda^{\infty}}\right\vert \right)  \right\vert \right)  \Psi
\mathcal{X}^{\frac{N}{2}+1}.
\end{align*}
Hence, \
\begin{equation}
\left\vert \gamma\left(  t\right)  -\gamma^{\left(  N\right)  }\left(
t\right)  \right\vert \leq\frac{C}{\rho N}\Psi\mathcal{X}^{\frac{N}{2}%
+2},\text{ }t\in\lbrack T^{\ast},T_{n}]. \label{ap349}%
\end{equation}
Therefore, by (\ref{ap342}), (\ref{ap348}) and (\ref{ap349}), taking $N$
sufficiently large we prove that
\[
\left.
\begin{array}
[c]{c}%
\left\vert \lambda\left(  t\right)  -\lambda^{\left(  N\right)  }\left(
t\right)  \right\vert +\left\vert \beta\left(  t\right)  -\beta^{\left(
N\right)  }\left(  t\right)  \right\vert \leq\frac{1}{2}\left(  \Psi
^{2}+\left\vert V^{\prime\prime}\left(  \tfrac{r^{\infty}}{\lambda^{\infty}%
}\right)  \right\vert \right)  \mathcal{X}^{\frac{N}{2}+2},\\
\left\vert \chi\left(  t\right)  -\chi^{\left(  N\right)  }\left(  t\right)
\right\vert +\left\vert \gamma\left(  t\right)  -\gamma^{\left(  N\right)
}\left(  t\right)  \right\vert \leq\frac{1}{2}\Psi\mathcal{X}^{\frac{N}{2}+1},
\end{array}
\right.
\]
on $t\in\lbrack T^{\ast},T_{n}],$ which strictly improve (\ref{bootbis}) for
the modulation parameters $\left(  \Xi\left(  t\right)  ,\gamma\left(
t\right)  \right)  $. Again, by a continuity argument, we show that
(\ref{bootbis})\ for $\left(  \Xi\left(  t\right)  ,\gamma\left(  t\right)
\right)  $ is true on $[T_{0},T_{n}].$

\subsection{Control of the error $\varepsilon$.\label{Sectionepsilon}}

Let us consider the energy, the mass and the momentum of $u_{n}$. By using the
orthogonal conditions (\ref{ap208})\ we have%
\begin{equation}
\left.
\begin{array}
[c]{c}%
E\left(  u_{n}\right)  =E\left(  \mathcal{W+\varepsilon}\right)  =E\left(
\mathcal{W}\right)  +\frac{1}{2}%
{\displaystyle\int}
\left\vert \mathcal{\nabla}\varepsilon\right\vert ^{2}-\frac{1}{2}%
{\displaystyle\int}
\mathcal{V}\left\vert \varepsilon\right\vert ^{2}-\dfrac{1}{1+p}\left(
{\displaystyle\int}
\left\vert \mathcal{W+\varepsilon}\right\vert ^{p+1}-%
{\displaystyle\int}
\left\vert \mathcal{W}\right\vert ^{p+1}\right) \\
-\operatorname{Re}%
{\displaystyle\int}
\mathcal{\Delta W}\overline{\varepsilon}-\operatorname{Re}%
{\displaystyle\int}
\mathcal{VW}\overline{\varepsilon},
\end{array}
\right.  \label{ap210}%
\end{equation}%
\begin{equation}
M\left(  u_{n}\right)  =\operatorname{Im}\int\mathcal{\nabla}\left(
\mathcal{W+}\varepsilon\right)  \left(  \overline{\mathcal{W+}\varepsilon
}\right)  =\operatorname{Im}\int\mathcal{\nabla W}\overline{\mathcal{W}%
}+\operatorname{Im}\int\mathcal{\nabla}\varepsilon\overline{\varepsilon}
\label{ap211}%
\end{equation}
and%
\begin{equation}
\int\left\vert u_{n}\right\vert ^{2}=\int\left\vert \mathcal{W}\right\vert
^{2}+\int\left\vert \varepsilon\right\vert ^{2}. \label{ap212}%
\end{equation}
Moreover, by (\ref{ap182}) and (\ref{ap208}) we have%
\[
\operatorname{Re}%
{\displaystyle\int}
\left(  \mathcal{\Delta W+V}\left(  \cdot\right)  \mathcal{W}\right)
\overline{\varepsilon}=-\operatorname{Re}%
{\displaystyle\int}
\left(  \left\vert \mathcal{W}\right\vert ^{p-1}\mathcal{W}+\lambda^{-\frac
{2}{p-1}}\mathcal{E}^{\left(  N\right)  }\left(  \frac{x-\chi\left(  t\right)
}{\lambda\left(  t\right)  }\right)  e^{-i\gamma\left(  t\right)  }%
e^{i\beta\left(  t\right)  \cdot x}\right)  \overline{\varepsilon}.
\]
Then, from (\ref{ap210})%
\begin{equation}
\left.
\begin{array}
[c]{c}%
E\left(  u_{n}\right)  =E\left(  \mathcal{W+\varepsilon}\right)  =E\left(
\mathcal{W}\right)  +\frac{1}{2}%
{\displaystyle\int}
\left\vert \mathcal{\nabla}\varepsilon\right\vert ^{2}-\frac{1}{2}%
{\displaystyle\int}
\mathcal{V}\left\vert \varepsilon\right\vert ^{2}\\
-\dfrac{1}{1+p}\left(
{\displaystyle\int}
\left\vert \mathcal{W+\varepsilon}\right\vert ^{p+1}-%
{\displaystyle\int}
\left\vert \mathcal{W}\right\vert ^{p+1}-\left(  p+1\right)  \left\vert
\mathcal{W}\right\vert ^{p-1}\operatorname{Re}%
{\displaystyle\int}
\mathcal{W}\overline{\varepsilon}\right)  +\operatorname*{Er},
\end{array}
\right.  \label{ap319}%
\end{equation}
with%
\[
\operatorname*{Er}=\operatorname{Re}%
{\displaystyle\int}
\left(  \lambda^{-\frac{2}{p-1}}\mathcal{E}^{\left(  N\right)  }\left(
\frac{x-\chi\left(  t\right)  }{\lambda\left(  t\right)  }\right)
e^{-i\gamma\left(  t\right)  }e^{i\beta\left(  t\right)  \cdot x}\right)
\overline{\varepsilon}.
\]
Note that by Lemma \ref{L1} $\operatorname*{Er}$ is small.

\subsubsection*{\bigskip Case I: Fast decaying potentials.}

Let the potential $\left\vert V\left(  r\right)  \right\vert \leq Ce^{-cr},$
for some $c>0.$ Suppose that (\ref{boot}) is true for all $t\in\lbrack
T^{\ast},T_{n}],$ with $T_{0}\leq T^{\ast}<T_{n}.$ Let $\psi_{\lambda^{\infty
}}\in C^{\infty}\left(  \mathbb{R}^{d}\right)  $ be such that $0\leq\psi
\leq1,$ $\psi\left(  x\right)  =1$ for $\left\vert x\right\vert \leq\frac
{1}{2\lambda^{\infty}}$ and $\psi\left(  x\right)  =0$ for $\left\vert
x\right\vert \geq\frac{1}{\lambda^{\infty}}.$ Set
\[
\psi_{\chi}\left(  x\right)  =\psi_{\lambda^{\infty}}\left(  \frac{8\left(
x-\chi\left(  t\right)  \right)  }{\lambda\left(  t\right)  \ln t}\right)  .
\]
We introduce the following conserved quantity of $u_{n}$ which is a
combination of the three conservation laws (\ref{ap210})-(\ref{ap212}) for the
NLS equation with a potential (\ref{NLS})%
\[
K_{\operatorname*{tot}}=E\left(  u_{n}\right)  -\beta\left(  t\right)
\cdot\operatorname{Im}\int\psi_{\chi}\mathcal{\nabla}u_{n}\overline{u_{n}%
}+\frac{1}{2}\left(  \lambda^{-2}\left(  t\right)  +\left\vert \beta\left(
t\right)  \right\vert ^{2}\right)
{\displaystyle\int}
\left\vert u_{n}\right\vert ^{2}.
\]
Observe that we localize the momentum in a neighborhood of the potential. We
compare $K_{\operatorname*{tot}}$ with
\[
K_{\operatorname*{sol}}=E\left(  \mathcal{W}\right)  -\beta\left(  t\right)
\cdot\operatorname{Im}\int\psi_{\chi}\mathcal{\nabla W}\overline{\mathcal{W}%
}+\frac{1}{2}\left(  \lambda^{-2}\left(  t\right)  +\left\vert \beta\left(
t\right)  \right\vert ^{2}\right)  \int\left\vert \mathcal{W}\right\vert
^{2}.
\]
Then, from (\ref{ap211}), (\ref{ap212}),\ (\ref{ap319}) we deduce%
\[
\left.  K_{\operatorname*{tot}}-K_{\operatorname*{sol}}=\mathcal{G}\left(
t\right)  +\operatorname*{Er}\right.
\]
where%
\begin{align*}
\mathcal{G}\left(  \varepsilon\left(  t\right)  \right)  =\mathcal{G}%
_{\mathcal{W}}\left(  \varepsilon\left(  t\right)  \right)  =  &  \frac{1}{2}%
{\displaystyle\int}
\left\vert \mathcal{\nabla}\varepsilon\right\vert ^{2}+\frac{1}{2}\left(
\lambda^{-2}\left(  t\right)  +\left\vert \beta\left(  t\right)  \right\vert
^{2}\right)  \int\left\vert \varepsilon\right\vert ^{2}-\frac{1}{2}%
{\displaystyle\int}
\mathcal{V}\left\vert \varepsilon\right\vert ^{2}\\
&  -\dfrac{1}{1+p}%
{\displaystyle\int}
\left(  \left\vert \mathcal{W+\varepsilon}\right\vert ^{p+1}-\left\vert
\mathcal{W}\right\vert ^{p+1}-\left(  1+p\right)  \left\vert \mathcal{W}%
\right\vert ^{p-1}\operatorname{Re}\left(  \mathcal{W}\overline{\varepsilon
}\right)  \right) \\
&  -\beta\left(  t\right)  \cdot\operatorname{Im}\int\psi_{\chi}%
\mathcal{\nabla}\varepsilon\overline{\varepsilon}.
\end{align*}
($\mathcal{V}$ is related to $V$ by (\ref{vcall})). We now study the
properties of $\mathcal{G}\left(  \varepsilon\left(  t\right)  \right)  .$ We
have the following result.

\begin{lemma}
\label{Lemmaenergy}\bigskip There exist $T_{0}$ large enough such that for
$t\in\lbrack T^{\ast},T_{n}],$ $T^{\ast}\geq T_{0},$ the following hold.

i) (Coercivity of the linearized energy) There is a constant $c_{0}>0$ such
that%
\begin{equation}
\mathcal{G}\left(  \varepsilon\left(  t\right)  \right)  \geq c_{0}\left\Vert
\varepsilon\right\Vert _{H^{1}}^{2}. \label{estbelow1}%
\end{equation}

ii) (Energy estimate on $\varepsilon$) For some $N$ large enough%
\begin{equation}
\left\vert \frac{d}{dt}\mathcal{G}\left(  \varepsilon\left(  t\right)
\right)  \right\vert \leq\frac{C}{t^{3}\ln t}. \label{estbelow}%
\end{equation}

\end{lemma}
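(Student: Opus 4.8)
The plan is to prove Lemma \ref{Lemmaenergy} in two independent parts, mirroring the structure of the statement.

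\textbf{Part (i): Coercivity.} First I would expand the nonlinear bracket
\[
\int\left(\left\vert\mathcal{W}+\varepsilon\right\vert^{p+1}-\left\vert\mathcal{W}\right\vert^{p+1}-(1+p)\left\vert\mathcal{W}\right\vert^{p-1}\operatorname{Re}\left(\mathcal{W}\overline{\varepsilon}\right)\right)
\]
to second order, writing it as the quadratic form attached to the linearized operator $\mathcal{\tilde L}$ at $\mathcal W$ plus a remainder controlled by $\left\Vert\varepsilon\right\Vert_{H^1}^{p_1+1}$ via Sobolev embedding and the fact that $p<1+\frac4d$ keeps $L^{p+1}\hookleftarrow H^1$ subcritical. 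After rescaling via (\ref{epsilonbar}) so that $\varepsilon$ becomes $\zeta$ with $W=Q+\sum\boldsymbol T^{(j)}$ in place of $\mathcal W$, the leading quadratic part of $\mathcal G$ becomes $\tfrac12(\mathcal L_V\zeta,\zeta)$ up to lower-order corrections: the $\tfrac12(\lambda^{-2}+|\beta|^2)\int|\varepsilon|^2$ and $-\tfrac12\int\mathcal V|\varepsilon|^2$ terms recombine, after rescaling and using $\lambda^2\sup\mathcal V<1$, with the mass part of $(\mathcal L_V\zeta,\zeta)$, while the localized momentum term $-\beta\cdot\operatorname{Im}\int\psi_\chi\nabla\varepsilon\overline\varepsilon$ is a small perturbation of size $O(|\beta|\left\Vert\varepsilon\right\Vert_{H^1}^2)$ (completing the square in the Galilean-shifted variable absorbs the $|\beta|^2$ term cleanly). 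The difference between $\mathcal{\tilde L}$ (linearization at $W$) and $\mathcal L$ (linearization at $Q$) is $O(\Theta+|\beta|)$ in $L^\infty$ by Lemma \ref{Lemmaapp}, hence a small relative perturbation on $H^1$. Then I invoke the second coercivity estimate (\ref{ineqpert1}) of Lemma \ref{Linv}: $(\mathcal L_V\zeta,\zeta)\geq c\left\Vert\zeta\right\Vert_{H^1}^2-\tfrac1c((\zeta,Q)^2+|(\zeta,xQ)|^2+(\zeta,i\Lambda Q)^2)$. The four orthogonality conditions (\ref{ap208}) say exactly that $\zeta$ is orthogonal to $W$, $yW$, $i\Lambda W$, $i\nabla W$; since these differ from $Q$, $yQ$, $i\Lambda Q$, $i\nabla Q$ by $O(\Theta+|\beta|)$ corrections, the negative directions in (\ref{ineqpert1}) are bounded by $(\Theta+|\beta|)^2\left\Vert\zeta\right\Vert_{H^1}^2$, which is reabsorbed for $|\chi|$ large and $|\beta|$ small. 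This yields (\ref{estbelow1}) with $c_0$ half the constant $c$ of Lemma \ref{Linv}.

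\textbf{Part (ii): Energy estimate.} The quantity $K_{\operatorname{tot}}$ is \emph{not} exactly conserved because the momentum is localized by $\psi_\chi$ and because the NLS has a potential; however $E(u_n)$, $\int|u_n|^2$ are conserved and $\operatorname{Im}\int\psi_\chi\nabla u_n\overline{u_n}$ has an explicitly computable time derivative (the Morawetz/virial-type identity for the localized momentum, whose error terms involve $\nabla\psi_\chi$ supported in an annulus of radius $\sim\lambda\ln t$ away from $\chi$, on which $u_n\approx\mathcal W$ is exponentially small, $\lesssim e^{-c\ln t}=t^{-c}$, plus $\nabla\mathcal V$ contributions of size $\lesssim e^{-cr^\infty}\lesssim t^{-c}$). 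Thus $\frac{d}{dt}K_{\operatorname{tot}}$ is controlled by $t^{-c}$ for any fixed power, in particular it beats $t^{-3}/\ln t$. For $K_{\operatorname{sol}}$ one differentiates directly using (\ref{syst10}): $\dot\chi-2\beta$, $\dot\beta-\mathbf B^{(N)}$, $\dot\lambda-\mathbf M^{(N)}$ are all $O(t^{-5/2})$ by (\ref{estmod1})–(\ref{Mt}), and $\mathbf B^{(N)},\mathbf M^{(N)}$ themselves are $O(\mathcal X^2)=O(t^{-2})$, while the explicit $W$-integrals produce factors of $\mathcal X^2\sim t^{-2}$ so that $\frac{d}{dt}K_{\operatorname{sol}}=O(t^{-2}\cdot t^{-5/2})+O(\ldots)$; one must keep track that the dangerous-looking terms carry an extra factor $\Theta\sim\mathcal X$, giving $\lesssim t^{-3}$, and comparing with the $1/(t^3\ln t)$ target forces using that $r^\infty\sim\ln t$ so that $U_V\sim e^{-2r^\infty}r^{-(d-1)}$ actually decays like a \emph{power} of $t$, comfortably faster than $t^{-3}/\ln t$ once $N$ is large. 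Finally $\frac{d}{dt}\operatorname{Er}$: write $\frac{d}{dt}\operatorname{Re}\int(\lambda^{-2/(p-1)}\mathcal E^{(N)}\,\cdots)\overline\varepsilon$, use (\ref{eqeps}) for $\partial_t\varepsilon$ (integrating the Laplacian by parts onto $\mathcal E^{(N)}$, which is Schwartz-class in $y$), the error bound (\ref{eps1}) with $A(N)\ge 2$, and the bootstrap $\left\Vert\varepsilon\right\Vert_{H^1}\le t^{-2}$; since $\mathcal E^{(N)}\sim\mathbf Y\mathcal X^{A(N)}\sim t^{-A(N)}$ this is $\lesssim t^{-A(N)}t^{-2}$, again negligible against $t^{-3}/\ln t$ for $N$ large. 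Summing the three pieces and using $\frac{d}{dt}\mathcal G=\frac{d}{dt}(K_{\operatorname{tot}}-K_{\operatorname{sol}})-\frac{d}{dt}\operatorname{Er}$ gives (\ref{estbelow}).

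\textbf{Main obstacle.} The delicate point is (ii): making the bound \emph{exactly} $C/(t^3\ln t)$ rather than a rougher $Ct^{-5/2}$. This requires exploiting two structural cancellations simultaneously — first, that the worst time-derivative terms in $K_{\operatorname{sol}}$ come paired with an extra factor $\Theta\sim U_V^{1/2}\sim\mathcal X$ (so the natural estimate $\mathcal M(t)\sim t^{-5/2}$ improves to $\Theta\mathcal M(t)\sim t^{-3}$ or better because, by (\ref{ap199}), $r^\infty\sim\ln t$ makes $\mathcal X$ decay super-polynomially in the fat-tail case and a genuine power in the slow case); and second, that the localized-momentum error terms (the $\nabla\psi_\chi$ boundary terms and the $\nabla\mathcal V$ force terms) are exponentially small in $r^\infty\sim\ln t$, hence any power of $t$. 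Tracking exactly how the factor $\ln t$ enters — it comes from $\mathcal X^2\sim t^{-2}$ being multiplied against modulation errors that themselves pick up a $(\ln t)^{-1}$ from $r^\infty\sim K(V)\ln t$ — is the bookkeeping core of the argument; everything else is a routine expansion-and-estimate once the coercivity of Part (i) and Lemmas \ref{Linv}, \ref{Lemmaapp} are in hand.
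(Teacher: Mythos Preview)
Your treatment of Part (i) is correct and essentially identical to the paper's: expand the nonlinearity to second order, rescale to the $\zeta$ variable, invoke the coercivity estimate (\ref{ineqpert1}) for $\mathcal L_V$, and kill the negative directions using the orthogonality conditions (\ref{ap208}) together with $W-Q=O(\Theta)$.

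Part (ii), however, has a genuine gap. Your claim that $\frac{d}{dt}K_{\operatorname{tot}}$ is ``controlled by $t^{-c}$ for any fixed power'' is false. The coefficients $\beta(t)$ and $\lambda^{-2}(t)$ in the definition of $K_{\operatorname{tot}}$ are themselves time-dependent, and their derivatives $\dot\beta,\dot\lambda\sim t^{-2}$ multiply the $O(1)$ quantities $\operatorname{Im}\int\psi_\chi\nabla u_n\overline{u_n}\sim\beta\|Q\|_{L^2}^2$ and $\int|u_n|^2\sim\|Q\|_{L^2}^2$. These contributions are $O(t^{-2})$, not $O(t^{-c})$ for arbitrary $c$. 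The same holds for $\frac{d}{dt}K_{\operatorname{sol}}$: for instance $\frac{d}{dt}\big(\tfrac12\lambda^{-2}\int|\mathcal W|^2\big)\sim\dot\lambda\sim t^{-2}$, and your sketch (``the explicit $W$-integrals produce factors of $\mathcal X^2$'') does not account for this. These large terms do cancel in the difference $K_{\operatorname{tot}}-K_{\operatorname{sol}}$ (precisely because $\int|u_n|^2-\int|\mathcal W|^2=\int|\varepsilon|^2$ and similarly for the momentum via (\ref{ap208})), but you cannot estimate the two pieces separately.

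Even after fixing that, you miss the decisive cancellation. The paper differentiates $\mathcal G$ directly and splits the result as $\mathcal G_{11}+\mathcal G_{12}+\mathcal G_{13}$, where $\mathcal G_{11}=-(\dot{\mathcal W}_1,\mathcal N_1)$ carries the nonlinear interaction with the moving profile and $\mathcal G_{13}$ is the time derivative of the localized momentum term. The dangerous piece of $\dot{\mathcal W}_1$ is the translational part $\tfrac{\dot\chi}{\lambda}\cdot\nabla W\sim\tfrac{2\beta}{\lambda}\cdot\nabla W$; paired with the exact quadratic part $\mathbf N(\mathcal W_1,\varepsilon_1)$ of $\mathcal N_1$, it produces a term $\operatorname{Re}\int\mathbf W\,\mathbf N(\mathcal W_1,\varepsilon_1)$ of size $|\beta|\cdot\|\varepsilon\|_{H^1}^2$ which on its own would only give $\sim t^{-1}\|\varepsilon\|_{H^1}^2$ (this is the obstruction for $p$ close to $1$, since $\mathcal N_1$ is not genuinely superquadratic). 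The whole point of the localized-momentum correction is that $\mathcal G_{13}$, after integrating by parts and using the equation for $\varepsilon_1$, produces exactly $-\operatorname{Re}\int\mathbf W\,\mathbf N(\mathcal W_1,\varepsilon_1)$, cancelling this term. What survives are the $\nabla\psi_\chi$ boundary terms, which carry the factor $|\chi|^{-1}\sim(\ln t)^{-1}$ responsible for the $(\ln t)^{-1}$ in (\ref{estbelow}). The ``cancellations'' you describe (extra $\Theta$ factors, exponential smallness on the annulus) are real but secondary; without the $\mathbf W\mathbf N$ cancellation between $\mathcal G_{11}$ and $\mathcal G_{13}$ you cannot reach $Ct^{-3}/\ln t$.
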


Before proving Lemma \ref{Lemmaenergy} we improve (\ref{boot}) for
$\varepsilon\left(  t\right)  .$ Integrating (\ref{estbelow}) we have%
\[
\mathcal{G}\left(  \varepsilon\left(  t\right)  \right)  \leq\frac{C}{t^{2}\ln
t}.
\]
Then, using (\ref{estbelow1}) we prove
\begin{equation}
\left\Vert \varepsilon\right\Vert _{H^{1}}^{2}\leq\frac{C}{c_{0}t^{2}\ln t}.
\label{ap388}%
\end{equation}
Thus, taking $T_{0}>e^{2C/c_{0}}$ we strictly improve (\ref{boot}) for
$\varepsilon\left(  t\right)  .$ By a continuity argument, we conclude that
(\ref{bootbis})\ for $\varepsilon\left(  t\right)  $ is true on $[T_{0}%
,T_{n}].$

\begin{proof}
[Proof of Lemma \ref{Lemmaenergy}]\textit{Proof of item i. }We decompose
$\mathcal{G}\left(  \varepsilon\left(  t\right)  \right)  $ as
\begin{equation}
\mathcal{G}\left(  \varepsilon\left(  t\right)  \right)  =\mathcal{G}%
_{1}+\mathcal{G}_{2}+\mathcal{G}_{3}, \label{ap383}%
\end{equation}
where%
\[
\mathcal{G}_{1}=\tfrac{1}{2}%
{\displaystyle\int}
\left(  \left\vert \mathcal{\nabla}\varepsilon\right\vert ^{2}+\lambda
^{-2}\left(  t\right)  \left\vert \varepsilon\right\vert ^{2}-\mathcal{V}%
\left\vert \varepsilon\right\vert ^{2}-\tfrac{p+1}{2}\left\vert \mathcal{W}%
\right\vert ^{p-1}\left\vert \varepsilon\right\vert ^{2}-\tfrac{p-1}%
{2}\left\vert \mathcal{W}\right\vert ^{p-3}\operatorname{Re}\left(
\mathcal{W}\overline{\varepsilon}\right)  ^{2}\right)  {,}%
\]%
\[
\mathcal{G}_{2}=-\dfrac{1}{1+p}\left(
{\displaystyle\int}
\left(  \left\vert \mathcal{W+\varepsilon}\right\vert ^{p+1}-\left\vert
\mathcal{W}\right\vert ^{p+1}-\left(  1+p\right)  \left\vert \mathcal{W}%
\right\vert ^{p-1}\operatorname{Re}\left(  \mathcal{W}\overline{\varepsilon
}\right)  \right)  -\frac{1+p}{2}\operatorname{Re}\left(  \tfrac{p+1}%
{2}\left\vert \mathcal{W}\right\vert ^{p-1}\left\vert \varepsilon\right\vert
^{2}+\tfrac{p-1}{2}\left\vert \mathcal{W}\right\vert ^{p-3}\left(
\mathcal{W}\overline{\varepsilon}\right)  ^{2}\right)  \right)
\]
and%
\begin{equation}
\mathcal{G}_{3}=-\beta\left(  t\right)  \cdot\operatorname{Im}\int\psi_{\chi
}\mathcal{\nabla}\varepsilon\overline{\varepsilon}+\tfrac{1}{2}\left\vert
\beta\left(  t\right)  \right\vert ^{2}\int\left\vert \varepsilon\right\vert
^{2}=O\left(  \left\vert \beta\right\vert \left\Vert \varepsilon\right\Vert
_{H^{1}}^{2}\right)  . \label{ap384}%
\end{equation}
By Sobolev embedding theorem we estimate%
\begin{equation}
\mathcal{G}_{2}=O\left(  \left\Vert \varepsilon\right\Vert _{H^{1}%
}^{p+1-\delta}\right)  ,\text{ }0<\delta<1. \label{ap385}%
\end{equation}
Using (\ref{epsilonbar}) and (\ref{w}) we have%
\[
\mathcal{G}_{1}=\tfrac{1}{2}\left(  \lambda\left(  t\right)  \right)
^{d-\frac{2\left(  p+1\right)  }{p-1}}\left(  \mathcal{L}_{V}\zeta
,\zeta\right)  +\mathcal{G}_{11}%
\]
with $\mathcal{L}_{V}$ defined by (\ref{Lv}) and%
\[
\mathcal{G}_{11}=-\left(  \lambda\left(  t\right)  \right)  ^{d-\frac{2\left(
p+1\right)  }{p-1}}%
{\displaystyle\int}
\left(  \tfrac{p+1}{2}\left(  \left\vert W\right\vert ^{p-1}-Q^{p-1}\right)
\left\vert \zeta\right\vert ^{2}+\tfrac{p-1}{2}\operatorname{Re}\left(
\left(  \left\vert W\right\vert ^{p-3}W^{2}-Q^{p-1}\right)  \overline{\zeta
}^{2}\right)  \right)  .
\]
By Lemma \ref{Lemmaapp} we show that $\mathcal{G}_{11}=O\left(  \Theta
^{1-\delta}\left\Vert \varepsilon\right\Vert _{H^{1}}^{2}\right)  $ and%
\[
\left\vert \left(  \zeta,W-Q\right)  \right\vert +\left\vert \left(
\zeta,x\left(  W-Q\right)  \right)  \right\vert +\left\vert \left(
\zeta,i\Lambda\left(  W-Q\right)  \right)  \right\vert =O\left(
\Theta^{1-\delta}\left\Vert \varepsilon\right\Vert _{H^{1}}\right)  .
\]
Then, from (\ref{ineqpert1}), by using (\ref{ap208}) we conclude that there is
$c>0$ such that%
\[
\mathcal{G}_{1}\geq c\left\Vert \varepsilon\right\Vert _{H^{1}}^{2}+O\left(
\Theta^{1-\delta}\left\Vert \varepsilon\right\Vert _{H^{1}}^{2}\right)  .
\]
Hence, from (\ref{ap383}), (\ref{ap384}), (\ref{ap385}), via (\ref{boot}), we
conclude that for $T_{0}$ large enough there is $c_{0}>0$ such that for
$t\in\lbrack T^{\ast},T_{n}],$ $T^{\ast}\geq T_{0}$ (\ref{estbelow1}) is true.

\textit{Proof of item ii. }Let us make the change of variables $\varepsilon
_{1}=e^{-i\gamma_{1}\left(  t\right)  }\varepsilon$ and $\mathcal{W}%
_{1}=e^{-i\gamma_{1}\left(  t\right)  }\mathcal{W}$, with
\[
\gamma_{1}\left(  t\right)  =\int_{T_{0}}^{t}\left(  \lambda^{-2}\left(
\tau\right)  +\left\vert \beta\left(  t\right)  \right\vert ^{2}\right)
d\tau.
\]
Then $\mathcal{G}_{\mathcal{W}}\left(  \varepsilon\left(  t\right)  \right)
=\mathcal{G}_{\mathcal{W}_{1}}\left(  \varepsilon_{1}\left(  t\right)
\right)  .$ \ Differentiating $\mathcal{G}_{\mathcal{W}_{1}}\left(
\varepsilon_{1}\left(  t\right)  \right)  $ we have%
\begin{equation}
\frac{d}{dt}\mathcal{G}_{\mathcal{W}_{1}}\left(  \varepsilon_{1}\left(
t\right)  \right)  =\mathcal{G}_{11}\left(  t\right)  +\mathcal{G}_{12}\left(
t\right)  +\mathcal{G}_{13}\left(  t\right)  , \label{dtG}%
\end{equation}
where%
\[
\mathcal{G}_{11}\left(  t\right)  =-\left(  \mathcal{\dot{W}}_{1}%
,\mathcal{N}_{1}\left(  \mathcal{W}_{1},\varepsilon_{1}\right)  \right)  ,
\]%
\[
\mathcal{G}_{12}\left(  t\right)  =\left(  \dot{\varepsilon}_{1}%
,-\Delta\varepsilon_{1}+\left(  \lambda^{-2}\left(  t\right)  +\left\vert
\beta\left(  t\right)  \right\vert ^{2}\right)  \varepsilon_{1}-\mathcal{V}%
\varepsilon_{1}-\mathcal{N}_{0}\left(  \mathcal{W}_{1},\varepsilon_{1}\right)
\right)  .
\]
and%
\[
\mathcal{G}_{13}\left(  t\right)  =-\dot{\beta}\cdot\operatorname{Im}\int
\psi_{\chi}\mathcal{\nabla}\varepsilon_{1}\overline{\varepsilon}_{1}%
-\beta\cdot\operatorname{Im}\int\dot{\psi}_{\chi}\mathcal{\nabla}%
\varepsilon_{1}\overline{\varepsilon}_{1}+\beta\cdot\operatorname{Im}%
\int\mathcal{\nabla}\psi_{\chi}\dot{\varepsilon}_{1}\overline{\varepsilon}%
_{1}+2\beta\cdot\operatorname{Im}\int\psi_{\chi}\mathcal{\nabla}%
\overline{\varepsilon_{1}}\dot{\varepsilon}_{1}.
\]
Let us consider $\mathcal{G}_{11}\left(  t\right)  .$ By (\ref{w}) we have%
\begin{equation}
\left.
\begin{array}
[c]{c}%
\mathcal{\dot{W}}_{1}=-\lambda^{-\frac{2}{p-1}}\left(  \frac{\dot{\lambda}%
}{\lambda}\Lambda W+\left(  \frac{\dot{\chi}}{\lambda}\cdot\nabla
W+2i\left\vert \beta\right\vert ^{2}W\right)  \right)  \left(  \frac{x-\chi
}{\lambda}\right)  e^{-i\left(  \gamma\left(  t\right)  +\gamma_{1}\left(
t\right)  \right)  }e^{i\beta\left(  t\right)  \cdot x}\\
-\lambda^{-\frac{2}{p-1}}\left(  i\left(  \dot{\gamma}+\lambda^{-2}-\left\vert
\beta\right\vert ^{2}-\dot{\beta}\cdot\chi\right)  W-i\lambda\left(
\dot{\beta}\cdot y\right)  W+\dot{W}\right)  \left(  \frac{x-\chi}{\lambda
}\right)  e^{-i\left(  \gamma\left(  t\right)  +\gamma_{1}\left(  t\right)
\right)  }e^{i\beta\left(  t\right)  \cdot x}.
\end{array}
\right.  \label{ap357}%
\end{equation}
Let $\phi$ be the indicator function of the complement of the set $\Omega$
defined by (\ref{ap244}). In particular, $\left\vert T\left(  y\right)
\right\vert \leq\frac{1}{2}Q\left(  y\right)  $ when $\phi=1.$ Moreover, as by
(\ref{ap14}) $\left\vert \boldsymbol{T}^{\left(  j\right)  }\right\vert \leq
C\mathbf{Y,}$ $Q\left(  y\right)  \leq C\mathbf{Y}$ whereas $\phi=0.$ Using
(\ref{ap357}) we decompose%
\begin{equation}
\mathcal{G}_{11}\left(  t\right)  =\operatorname{Re}\int\mathbf{W}%
\mathcal{N}_{1}\left(  \mathcal{W}_{1},\varepsilon_{1}\right)  dx+\mathcal{G}%
_{11}^{\left(  0\right)  }\left(  t\right)  +\mathcal{G}_{11}^{\left(
1\right)  }\left(  t\right)  , \label{g11'}%
\end{equation}
with%
\begin{equation}
\mathbf{W=}\lambda^{-\frac{2}{p-1}}\left(  \phi\left(  \frac{\dot{\chi}%
}{\lambda}\cdot\nabla W+2i\left\vert \beta\right\vert ^{2}W\right)  \right)
\left(  \frac{x-\chi}{\lambda}\right)  e^{-i\left(  \gamma\left(  t\right)
+\gamma_{1}\left(  t\right)  \right)  }e^{i\beta\cdot x}, \label{wbold}%
\end{equation}%
\[
\mathcal{G}_{11}^{\left(  0\right)  }\left(  t\right)  =-\lambda^{-\frac
{2}{p-1}}\operatorname{Re}\int\left(  1-\phi\right)  \mathcal{\dot{W}}%
_{1}\mathcal{N}_{1}\left(  \mathcal{W}_{1},\varepsilon_{1}\right)  dx
\]
and%
\[
\mathcal{G}_{11}^{\left(  1\right)  }\left(  t\right)  =\lambda^{-\frac
{2}{p-1}}\operatorname{Re}\int\mathcal{N}_{1}\left(  \mathcal{W}%
_{1},\varepsilon_{1}\right)  \mathbf{W}_{1}dx,
\]
with%
\[
\mathbf{W}_{1}=\left(  \phi\left(  \frac{\dot{\lambda}}{\lambda}\Lambda
W+i\left(  \dot{\gamma}+\lambda^{-2}-\left\vert \beta\right\vert ^{2}%
-\dot{\beta}\cdot\chi\right)  W-i\lambda\left(  \dot{\beta}\cdot y\right)
W+\dot{W}\right)  \right)  \left(  \tfrac{x-\chi}{\lambda}\right)
e^{-i\left(  \gamma\left(  t\right)  +\gamma_{1}\left(  t\right)  \right)
}e^{i\beta\cdot x}.
\]
Note that
\begin{equation}
\dot{W}=2\beta\cdot\nabla_{\chi}W+B\cdot\nabla_{\beta}W+M\frac{\partial
W}{\partial\lambda}. \label{wdot}%
\end{equation}
Therefore, using (\ref{ap357}), and Lemma \ref{Lemmaapp} to control $\Lambda
W,W,yW,\dot{W},$ we get
\begin{equation}
\left\vert \mathcal{\dot{W}}_{1}\left(  y\right)  \right\vert \leq C\left(
\delta_{1}\right)  \left(  \left\vert \beta\right\vert +\mathcal{M}\left(
t\right)  +\left\vert \mathbf{B}^{\left(  N\right)  }\right\vert +\left\vert
\mathbf{M}^{\left(  N\right)  }\right\vert \right)  \left(  Q^{1-\delta_{1}%
}\left(  y\right)  +\mathbf{Y}\right)  ,\text{ }y\in\mathbb{R}^{d},
\label{ap353}%
\end{equation}
for $0<\delta_{1}<1.$ Thus, by (\ref{ap352}), (\ref{ap353}), Sobolev embedding
theorem and (\ref{boot}), as $Q\left(  y\right)  \leq C\mathbf{Y}$ on the
support of $1-\phi,$ taking $\delta_{1}$ and $\delta$ small enough, we
estimate
\begin{equation}
\left\vert \mathcal{G}_{11}^{\left(  0\right)  }\left(  t\right)  \right\vert
\leq C\left(  \left\vert \beta\right\vert +\mathcal{M}\left(  t\right)
+\left\vert \mathbf{B}^{\left(  N\right)  }\right\vert +\left\vert
\mathbf{M}^{\left(  N\right)  }\right\vert \right)  \mathbf{Y}^{1-\delta_{1}%
}\left(  \left\Vert \varepsilon_{1}\right\Vert _{H^{1}}^{p-\delta}+\left\Vert
\varepsilon_{1}\right\Vert _{H^{1}}^{2}\right)  \leq Ct^{-3-\frac{\left(
p_{1}-1\right)  }{2}}, \label{g110'}%
\end{equation}
for $t\in\lbrack T^{\ast},T_{n}].$ On the support of $\phi$ we have
\begin{equation}
\left\vert W\left(  y\right)  \right\vert \geq2^{-1}Q\left(  y\right)  .
\label{ap317}%
\end{equation}
Using (\ref{ap301}) and (\ref{ap317}), and taking $\delta<\frac{p_{1}-1}{2}$
we estimate%
\begin{equation}
\phi e^{-\left(  1-\delta\right)  \left\vert y\right\vert }\left\vert
\mathcal{N}_{1}\left(  W,\zeta\right)  \right\vert \leq Ce^{-\left(
1-\delta\right)  \left\vert y\right\vert }\left\vert W\right\vert
^{p-2}\left\vert \zeta\right\vert ^{2}\leq Ce^{-\left(  1-\delta\right)
\left\vert y\right\vert }Q^{p_{1}-2}\left(  y\right)  \left\vert
\zeta\right\vert ^{2}\leq Ce^{-\frac{p_{1}-1}{2}\left\vert y\right\vert
}\left\vert \zeta\right\vert ^{2}. \label{ap270}%
\end{equation}
(Here $\zeta$ is related to $\varepsilon_{1}$ by (\ref{epsilonbar})). By Lemma
\ref{Lemmaapp} we get
\[
\left\vert \frac{\dot{\lambda}}{\lambda}\Lambda W+i\left(  \dot{\gamma
}+\lambda^{-2}-\left\vert \beta\right\vert ^{2}-\dot{\beta}\cdot\chi\right)
W-i\lambda\left(  \dot{\beta}\cdot y\right)  W+\dot{W}\right\vert \leq
C\left(  \left\vert \beta\right\vert \Theta^{\nu}+\mathcal{M}\left(  t\right)
+\left\vert \mathbf{B}^{\left(  N\right)  }\right\vert +\left\vert
\mathbf{M}^{\left(  N\right)  }\right\vert \right)  e^{-\left(  1-\nu
-\delta_{1}\right)  \left\vert y\right\vert },
\]
with $0<\nu+\delta_{1}<1.$ Then, from (\ref{ap270}) and (\ref{boot}), for
$\nu+\delta_{1}<\frac{p_{1}-1}{2}$ we get
\begin{equation}
\left\vert \mathcal{G}_{11}^{\left(  1\right)  }\left(  t\right)  \right\vert
\leq C\left(  \left\vert \beta\right\vert \Theta^{\nu}+\mathcal{M}\left(
t\right)  +\left\vert \mathbf{B}^{\left(  N\right)  }\right\vert +\left\vert
\mathbf{M}^{\left(  N\right)  }\right\vert \right)  \left\Vert \varepsilon
_{1}\right\Vert _{H^{1}}^{2}\leq Ct^{-3-\nu}, \label{g111'}%
\end{equation}
for $t\in\lbrack T^{\ast},T_{n}],$ with $0<\nu<\frac{p_{1}-1}{2}.$ If
$\left\vert \varepsilon_{1}\right\vert \leq\frac{\left\vert \mathcal{W}%
_{1}\right\vert }{2}$ we expand
\begin{equation}
\left.  \mathcal{N}_{1}\left(  \mathcal{W}_{1},\varepsilon_{1}\right)
=\mathbf{N}\left(  \mathcal{W}_{1},\varepsilon_{1}\right)  +O\left(
\left\vert \mathcal{W}_{1}\right\vert ^{-1+\delta}\left\vert \varepsilon
_{1}\right\vert ^{p+1-\delta}\right)  ,\text{ }\delta>0,\right.
\label{expnon}%
\end{equation}
where%
\[
\mathbf{N}\left(  \mathcal{W}_{1},\varepsilon_{1}\right)  =\frac{p-1}%
{2}\left\vert \mathcal{W}_{1}\right\vert ^{p-3}\overline{\mathcal{W}_{1}%
}\varepsilon_{1}^{2}+\frac{p^{2}-1}{4}\left\vert \mathcal{W}_{1}\right\vert
^{p-3}\mathcal{W}_{1}\left\vert \varepsilon_{1}\right\vert ^{2}+\frac{\left(
p-1\right)  \left(  p-3\right)  }{2}\left\vert \mathcal{W}_{1}\right\vert
^{p-5}\mathcal{W}_{1}\operatorname{Re}\left(  \mathcal{W}_{1}\overline
{\varepsilon_{1}}\right)  ^{2}.
\]
If $\left\vert \varepsilon_{1}\right\vert \geq\frac{\left\vert \mathcal{W}%
_{1}\right\vert }{2}$ then $\left\vert \mathcal{N}_{1}\left(  \mathcal{W}%
_{1},\varepsilon_{1}\right)  \right\vert =O\left(  \left\vert \mathcal{W}%
_{1}\right\vert ^{-1+\delta}\left\vert \varepsilon_{1}\right\vert
^{p+1-\delta}\right)  $, $\delta>0.$ Hence
\begin{equation}
\mathcal{N}_{1}\left(  \mathcal{W}_{1},\varepsilon_{1}\right)  =\mathbf{N}%
\left(  \mathcal{W}_{1},\varepsilon_{1}\right)  +O\left(  \left\vert
\mathcal{W}_{1}\right\vert ^{-1+\delta}\left\vert \varepsilon_{1}\right\vert
^{p+1-\delta}\right)  ,\text{ }\delta>0. \label{ap309}%
\end{equation}
Using (\ref{ap309}) in (\ref{g11'}) we obtain%
\begin{equation}
\mathcal{G}_{11}\left(  t\right)  =\operatorname{Re}\int\mathbf{WN}\left(
\mathcal{W}_{1},\varepsilon_{1}\right)  +\mathcal{G}_{11}^{\left(  0\right)
}\left(  t\right)  +\mathcal{G}_{11}^{\left(  1\right)  }\left(  t\right)
+\mathcal{G}_{11}^{\left(  2\right)  }\left(  t\right)  , \label{g11'bis}%
\end{equation}
where%
\[
\mathcal{G}_{11}^{\left(  2\right)  }\left(  t\right)  =O\left(
\int\mathbf{W}\left(  \left\vert \mathcal{W}_{1}\right\vert ^{-1+\delta
}\left\vert \varepsilon_{1}\right\vert ^{p+1-\delta}\right)  dx\right)  .
\]
Using Lemma \ref{Lemmaapp} and (\ref{boot}) we control%
\[
\left\vert \mathbf{W}\right\vert \leq C\left(  \left\vert \dot{\chi
}\right\vert +\left\vert \beta\right\vert ^{2}\right)  e^{-\left(
1-\delta\right)  \left\vert y\right\vert }\leq Ct^{-1}e^{-\left(
1-\delta\right)  \left\vert y\right\vert },
\]
for $t\in\lbrack T^{\ast},T_{n}].$ Then, from (\ref{ap317}), via Sobolev
embedding theorem, taking $\delta$ sufficiently small we deduce%
\begin{equation}
\left\vert \mathcal{G}_{11}^{\left(  2\right)  }\left(  t\right)  \right\vert
\leq Ct^{-1}\left\Vert \varepsilon_{1}\right\Vert _{H^{1}}^{p+1-\delta}\leq
Ct^{-3-\frac{p-1}{2}}. \label{g112'}%
\end{equation}
Gathering (\ref{g110'}), (\ref{g111'}) and (\ref{g112'}), from (\ref{g11'bis})
we get%
\begin{equation}
\mathcal{G}_{11}\left(  t\right)  =\operatorname{Re}\int\mathbf{WN}\left(
\mathcal{W}_{1},\varepsilon_{1}\right)  +O\left(  t^{-3-\nu}\right)  ,\text{
}0<\nu<\frac{p_{1}-1}{2} \label{ap378}%
\end{equation}
for $t\in\lbrack T^{\ast},T_{n}].$

We turn now to $\mathcal{G}_{12}\left(  t\right)  .$ Using (\ref{eqeps}) we
obtain the equation for $\varepsilon_{1}$
\begin{equation}
i\partial_{t}\varepsilon_{1}=-\Delta\varepsilon_{1}+\left(  \lambda
^{-2}\left(  t\right)  +\left\vert \beta\left(  t\right)  \right\vert
^{2}\right)  \varepsilon_{1}-\mathcal{V}\varepsilon_{1}-\mathcal{N}_{0}\left(
\mathcal{W}_{1},\varepsilon_{1}\right)  -\left(  \lambda^{-\frac{2}{p-1}%
}\left(  t\right)  \left(  \mathcal{E}^{\left(  N\right)  }+R\left(  W\right)
\right)  \left(  t,\frac{x-\chi\left(  t\right)  }{\lambda\left(  t\right)
}\right)  e^{-i\left(  \gamma\left(  t\right)  +\gamma_{1}\left(  t\right)
\right)  }e^{i\beta\left(  t\right)  \cdot x}\right)  . \label{eqeps2}%
\end{equation}
Then%
\[
\mathcal{G}_{12}^{\left(  1\right)  }\left(  t\right)  =-\left(
\mathbf{R},i\left(  -\Delta\varepsilon_{1}+\left(  \lambda^{-2}\left(
t\right)  +\left\vert \beta\left(  t\right)  \right\vert ^{2}\right)
\varepsilon_{1}-\mathcal{V}\varepsilon_{1}-\mathcal{N}_{0}\left(
\mathcal{W}_{1},\varepsilon_{1}\right)  \right)  \right)  ,
\]
with
\[
\mathbf{R=}\lambda^{-\frac{2}{p-1}}\left(  t\right)  \left(  \mathcal{E}%
^{\left(  N\right)  }+R\left(  W\right)  \right)  \left(  t,\frac
{x-\chi\left(  t\right)  }{\lambda\left(  t\right)  }\right)  e^{-i\left(
\gamma\left(  t\right)  +\gamma_{1}\left(  t\right)  \right)  }e^{i\beta
\left(  t\right)  \cdot x}.
\]
Using (\ref{epsilonbar}) and (\ref{eps1}), as%
\begin{equation}
\left\vert R\left(  W\right)  \right\vert \leq Ce^{-\left(  1-\delta\right)
\left\vert y\right\vert }\mathcal{M}\left(  t\right)  , \label{ap360}%
\end{equation}
we get%
\[
\mathcal{G}_{12}\left(  t\right)  =\mathcal{G}_{12}^{\left(  1\right)
}\left(  t\right)  +\mathcal{G}_{12}^{\left(  2\right)  }\left(  t\right)
\]
where%
\[
\mathcal{G}_{12}^{\left(  1\right)  }\left(  t\right)  =-\lambda^{-2-\frac
{4}{p-1}}\left(  t\right)  \left(  R\left(  W\right)  ,i\left(  -\Delta
\zeta+\zeta-\lambda^{2}\left(  t\right)  V\left(  \left\vert \cdot+\frac{\chi
}{\lambda}\right\vert \right)  \zeta-\mathcal{N}_{0}\left(  W,\zeta\right)
\right)  \right)
\]
and%
\[
\mathcal{G}_{12}^{\left(  2\right)  }\left(  t\right)  =O\left(  \left(
\Theta^{3-\delta}\left(  \tfrac{\left\vert \chi\right\vert }{\lambda}\right)
+\left\vert \beta\right\vert \mathcal{M}\left(  t\right)  \right)  \left\Vert
\varepsilon_{1}\right\Vert _{H^{1}}\right)  .
\]
We decompose
\[
\mathcal{G}_{12}^{\left(  1\right)  }\left(  t\right)  =-\lambda^{-2-\frac
{4}{p-1}}\left(  t\right)  \left(  \left(  \mathcal{\tilde{L}}-\lambda
^{2}\left(  t\right)  V\left(  \left\vert \cdot+\frac{\chi}{\lambda
}\right\vert \right)  \right)  R\left(  W\right)  ,i\zeta\right)
+\lambda^{-2-\frac{4}{p-1}}\left(  t\right)  \left(  R\left(  W\right)
,i\mathcal{N}_{1}\left(  W,\zeta\right)  \right)
\]
where $\mathcal{\tilde{L}}$ is defined by (\ref{Ltilde}). Using (\ref{ap352})
and Sobolev embedding theorem, we estimate the second term in the right-hand
side of the last relation as $O\left(  \mathcal{M}\left(  t\right)  \left\Vert
\varepsilon_{1}\right\Vert _{H^{1}}^{p_{1}-\delta}\right)  .$ By the
orthogonal conditions (\ref{ap208}), using (\ref{lb1})-(\ref{lb4}) and
(\ref{ap274}) we control%
\[
\left\vert \mathcal{G}_{12}^{\left(  1\right)  }\left(  t\right)  \right\vert
\leq Cr^{\infty}\sqrt{U}\mathcal{M}\left(  t\right)  \left\Vert \varepsilon
_{1}\right\Vert _{H^{1}}.
\]
Hence, by (\ref{boot}), we get%
\begin{equation}
\left\vert \mathcal{G}_{12}\left(  t\right)  \right\vert \leq C\left(  \left(
\ln t\right)  t^{-1}\mathcal{M}\left(  t\right)  +t^{-3+\delta}\right)
\left\Vert \varepsilon_{1}\right\Vert _{H^{1}}. \label{ap379}%
\end{equation}

Next, we consider $\mathcal{G}_{13}\left(  t\right)  .$ The first term in the
right-hand side of $\mathcal{G}_{13}\left(  t\right)  $ is estimated by
Cauchy-Schwartz as
\begin{equation}
\left\vert \dot{\beta}\cdot\operatorname{Im}\int\psi_{\chi}\mathcal{\nabla
}\varepsilon_{1}\overline{\varepsilon}_{1}\right\vert \leq C\left\vert
\dot{\beta}\right\vert \left\Vert \varepsilon_{1}\right\Vert _{H^{1}}^{2}.
\label{ap377}%
\end{equation}
Since
\begin{equation}
\left\vert \dot{\psi}_{\chi}\left(  x\right)  \right\vert \leq\left(
1+\left\vert \tfrac{x-\chi\left(  t\right)  }{\left\vert \chi\left(  t\right)
\right\vert }\right\vert \right)  \tfrac{\left\vert \dot{\chi}\left(
t\right)  \right\vert }{\left\vert \chi\left(  t\right)  \right\vert
}\left\vert \psi^{\prime}\left(  \tfrac{x-\chi\left(  t\right)  }{\left\vert
\chi\left(  t\right)  \right\vert }\right)  \right\vert \label{ap267}%
\end{equation}
and
\begin{equation}
\left\vert \mathcal{\nabla}\psi_{\chi}\right\vert \leq\tfrac{1}{\left\vert
\chi\left(  t\right)  \right\vert }\left\vert \psi^{\prime}\left(
\tfrac{x-\chi\left(  t\right)  }{\left\vert \chi\left(  t\right)  \right\vert
}\right)  \right\vert \label{ap266}%
\end{equation}
we estimate%
\begin{equation}
\left\vert \beta\cdot\operatorname{Im}\int\dot{\psi}_{\chi}\mathcal{\nabla
}\varepsilon\overline{\varepsilon}\right\vert \leq C\left\vert \beta
\right\vert \tfrac{\left\vert \dot{\chi}\left(  t\right)  \right\vert
}{\left\vert \chi\left(  t\right)  \right\vert }\left\Vert \varepsilon
_{1}\right\Vert _{H^{1}}^{2} \label{ap374}%
\end{equation}
and%
\begin{equation}
\left\vert \beta\cdot\operatorname{Im}\int\mathcal{\nabla}\psi_{\chi}%
\dot{\varepsilon}_{1}\overline{\varepsilon}_{1}\right\vert \leq C\frac
{\left\vert \beta\right\vert }{\left\vert \chi\left(  t\right)  \right\vert
}\left\Vert \varepsilon_{1}\right\Vert _{H^{1}}^{2}. \label{ap375}%
\end{equation}
Using (\ref{eqeps2}), (\ref{ap360}), (\ref{eps1}) and integrating by parts we
have%
\begin{equation}
\operatorname{Im}\int\psi_{\chi}\mathcal{\nabla}\overline{\varepsilon_{1}}%
\dot{\varepsilon}_{1}=I_{1}+I_{2}+I_{3}+O\left(  \left(  \Theta^{3-\delta
}\left(  \tfrac{\left\vert \chi\right\vert }{\lambda}\right)  +\mathcal{M}%
\left(  t\right)  \right)  \left\Vert \varepsilon_{1}\right\Vert _{H^{1}%
}\right)  . \label{ap371}%
\end{equation}
where%
\[
I_{1}=\operatorname{Re}\int\mathcal{\nabla}\psi_{\chi}\overline{\varepsilon
_{1}}\left(  -\Delta\varepsilon_{1}+\left(  \lambda^{-2}\left(  t\right)
+\left\vert \beta\left(  t\right)  \right\vert ^{2}\right)  \varepsilon
_{1}-\mathcal{V}\varepsilon_{1}\right)  ,
\]%
\[
I_{2}=-\operatorname{Re}\int\psi_{\chi}\mathcal{\nabla V}\left\vert
\varepsilon_{1}\right\vert ^{2}%
\]
and%
\[
I_{3}=\operatorname{Re}\int\psi_{\chi}\mathcal{\nabla}\overline{\varepsilon
_{1}}\mathcal{N}_{0}\left(  \mathcal{W}_{1},\varepsilon_{1}\right)  .
\]
By (\ref{ap266}) we have%
\begin{equation}
\left\vert I_{1}\right\vert \leq\frac{C}{\left\vert \chi\left(  t\right)
\right\vert }\left\Vert \varepsilon_{1}\right\Vert _{H^{1}}^{2}. \label{I1'}%
\end{equation}
By (\ref{boot}), there is $T_{0}>0$ such that $\left\vert \lambda\left(
t\right)  \right\vert \leq2\lambda^{\infty},$ $t\in\lbrack T^{\ast},T_{n}],$
$T^{\ast}\geq T_{0}.$ Then $\left\Vert \psi\left(  \frac{8\left(  \left(
\cdot\right)  -\chi\left(  t\right)  \right)  }{\lambda\left(  t\right)  \ln
t}\right)  \mathcal{\nabla V}\left(  \cdot\right)  \right\Vert _{L^{\infty}%
}\leq C\left\vert \mathcal{V}^{\prime}\left(  \frac{3}{4}\ln t\right)
\right\vert \leq Ct^{-\frac{3}{2}K\left(  V\right)  }$ ($K\left(  V\right)  $
given by (\ref{K(V)})). Hence, by (\ref{boot}) we get%
\begin{equation}
\left\vert I_{2}\right\vert \leq Ct^{-\frac{3}{2}K\left(  V\right)
}\left\Vert \varepsilon_{1}\right\Vert _{H^{1}}^{2}\leq Ct^{-2-\frac{3}%
{2}K\left(  V\right)  }, \label{I2'}%
\end{equation}
$t\in\lbrack T^{\ast},T_{n}].$ By (\ref{n1}) we write%
\[
\mathcal{N}_{0}\left(  \mathcal{W}_{1},\varepsilon_{1}\right)  =\mathcal{N}%
_{L}\left(  \mathcal{W}_{1},\varepsilon_{1}\right)  +\mathcal{N}_{1}\left(
\mathcal{W}_{1},\varepsilon_{1}\right)  ,
\]
where
\[
\mathcal{N}_{L}\left(  \mathcal{W}_{1},\varepsilon_{1}\right)  =\tfrac{p+1}%
{2}\left\vert \mathcal{W}_{1}\right\vert ^{p-1}\varepsilon_{1}+\tfrac{p-1}%
{2}\left\vert \mathcal{W}_{1}\right\vert ^{p-3}\mathcal{W}_{1}^{2}%
\overline{\varepsilon_{1}}.
\]
We decompose%
\begin{equation}
I_{3}=I_{31}+I_{32} \label{I3'}%
\end{equation}
with
\[
I_{31}=\operatorname{Re}\int\psi_{\chi}\mathcal{\nabla}\overline
{\varepsilon_{1}}\mathcal{N}_{L}\left(  \mathcal{W}_{1},\varepsilon
_{1}\right)
\]
and%
\[
I_{32}=\operatorname{Re}\int\psi_{\chi}\mathcal{\nabla}\overline
{\varepsilon_{1}}\mathcal{N}_{1}\left(  \mathcal{W}_{1},\varepsilon
_{1}\right)  .
\]
Integrating by parts we have%
\begin{equation}
I_{31}=-\operatorname{Re}\int\psi_{\chi}\left(  \nabla\mathcal{W}_{1}\right)
\mathbf{N}\left(  \mathcal{W}_{1},\varepsilon_{1}\right)  +I_{31}^{\left(
1\right)  }, \label{ap372}%
\end{equation}
with%
\[
I_{31}^{\left(  1\right)  }=-\operatorname{Re}\int\left(  \mathcal{\nabla}%
\psi_{\chi}\right)  \left(  \tfrac{p+1}{4}\left\vert \mathcal{W}%
_{1}\right\vert ^{p-1}\left\vert \varepsilon_{1}\right\vert ^{2}+\tfrac
{p-1}{4}\operatorname{Re}\left(  \left\vert \mathcal{W}_{1}\right\vert
^{p-3}\mathcal{W}_{1}^{2}\left(  \overline{\varepsilon_{1}}\right)
^{2}\right)  \right)  .
\]
By (\ref{ap352}) we estimate%
\begin{equation}
\left\vert I_{32}\right\vert \leq C\left\Vert \varepsilon_{1}\right\Vert
_{H^{1}}^{p_{1}+1-\delta}. \label{I32}%
\end{equation}
Using that (\ref{ap266}) we control%
\begin{equation}
\left\vert I_{31}^{\left(  1\right)  }\right\vert \leq C\tfrac{1}{\left\vert
\chi\left(  t\right)  \right\vert }\left\Vert \varepsilon_{1}\right\Vert
_{H^{1}}^{2}. \label{ap373}%
\end{equation}
By Lemma \ref{Lemmaapp} we have
\[
Q\left(  y\right)  -\max_{1\leq j\leq N}\left\vert \boldsymbol{T}^{\left(
j\right)  }\left(  y\right)  \right\vert \geq Q\left(  y\right)  \left(
1-t^{-1/2}\right)  ,\text{ }\left\vert y\right\vert \leq\tfrac{\ln t}{4}.
\]
Then, there is $T_{0}>0$ such that
\begin{equation}
\max_{1\leq j\leq N}\left\vert \boldsymbol{T}^{\left(  j\right)  }\left(
y\right)  \right\vert <\left(  2N\right)  ^{-1}Q\left(  y\right)  ,
\label{ap370}%
\end{equation}
for all $\left\vert y\right\vert \leq\tfrac{\ln t}{4}$ and $t\in\lbrack
T^{\ast},T_{n}],$ $T^{\ast}\geq T_{0}.$ In particular, $\psi_{\chi}\phi
=\psi_{\chi}$ and then%
\[
\int\psi_{\chi}\left(  \nabla\mathcal{W}_{1}\right)  \mathbf{N}\left(
\mathcal{W}_{1},\varepsilon_{1}\right)  =\int\phi\left(  \nabla\mathcal{W}%
_{1}\right)  \mathbf{N}\left(  \mathcal{W}_{1},\varepsilon_{1}\right)
+\int\phi\left(  \psi_{\chi}-1\right)  \left(  \nabla\mathcal{W}_{1}\right)
\mathbf{N}\left(  \mathcal{W}_{1},\varepsilon_{1}\right)  .
\]
From (\ref{ap370}) we get $\left\vert W\left(  y\right)  \right\vert
\geq2^{-1}Q\left(  y\right)  .$ Then, as%
\begin{equation}
\left\vert \mathbf{N}\left(  \mathcal{W}_{1},\varepsilon_{1}\right)
\right\vert \leq C\left\vert \mathcal{W}_{1}\right\vert ^{p-2}\left\vert
\varepsilon_{1}\right\vert ^{2} \label{ap382}%
\end{equation}
we obtain
\[
\left\vert \phi\left(  \psi_{\chi}-1\right)  \left(  \nabla\mathcal{W}%
_{1}\right)  \mathbf{N}\left(  \mathcal{W}_{1},\varepsilon_{1}\right)
\right\vert \leq Ct^{-\frac{p-1}{8}}\left\vert \varepsilon_{1}\right\vert
^{2}.
\]
Hence%
\[
\int\psi_{\chi}\left(  \nabla\mathcal{W}_{1}\right)  \mathbf{N}\left(
\mathcal{W}_{1},\varepsilon_{1}\right)  =\int\phi\left(  \nabla\mathcal{W}%
_{1}\right)  \mathbf{N}\left(  \mathcal{W}_{1},\varepsilon_{1}\right)
+O\left(  t^{-\frac{p-1}{8}}\left\Vert \varepsilon_{1}\right\Vert _{H^{1}}%
^{2}\right)  .
\]
Using the last relation in (\ref{ap372}), from (\ref{ap373}) we deduce
\[
I_{31}=-\operatorname{Re}\int\phi\left(  \nabla\mathcal{W}_{1}\right)
\mathbf{N}\left(  \mathcal{W}_{1},\varepsilon_{1}\right)  +O\left(  \left(
t^{-\frac{p-1}{8}}+\tfrac{1}{\left\vert \chi\left(  t\right)  \right\vert
}\right)  \left\Vert \varepsilon_{1}\right\Vert _{H^{1}}^{2}\right)  .
\]
Then, by (\ref{I3'}) and (\ref{I32}) we get%
\[
I_{3}=-\operatorname{Re}\int\phi\left(  \nabla\mathcal{W}_{1}\right)
\mathbf{N}\left(  \mathcal{W}_{1},\varepsilon_{1}\right)  +O\left(  \left(
t^{-\frac{p-1}{8}}+\tfrac{1}{\left\vert \chi\left(  t\right)  \right\vert
}\right)  \left\Vert \varepsilon_{1}\right\Vert _{H^{1}}^{2}+\left\Vert
\varepsilon_{1}\right\Vert _{H^{1}}^{p_{1}+1-\delta}\right)  ,
\]
with $0<\delta<1.$ Thus, from (\ref{ap371}), (\ref{I1'}) and (\ref{I2'}), via
(\ref{boot}) it follows that%
\begin{equation}
\operatorname{Im}\int\psi_{\chi}\mathcal{\nabla}\overline{\varepsilon_{1}}%
\dot{\varepsilon}_{1}=-\operatorname{Re}\int\phi\left(  \nabla\mathcal{W}%
_{1}\right)  \mathbf{N}\left(  \mathcal{W}_{1},\varepsilon_{1}\right)
+O\left(  \frac{1}{t^{2}\ln t}\right)  . \label{ap376}%
\end{equation}
From (\ref{w}) we have%
\begin{equation}
\nabla\mathcal{W}_{1}=\lambda^{-\frac{2}{p-1}}\left(  \lambda^{-1}\nabla
W+i\beta W\right)  \left(  \frac{x-\chi}{\lambda}\right)  e^{-i\left(
\gamma\left(  t\right)  +\gamma_{1}\left(  t\right)  \right)  }e^{i\beta
\left(  t\right)  \cdot x}. \label{ap381}%
\end{equation}
Then, using (\ref{ap370}) and (\ref{ap382}) we get%
\[
2\beta\cdot\int\phi\left(  \nabla\mathcal{W}_{1}\right)  \mathbf{N}\left(
\mathcal{W}_{1},\varepsilon_{1}\right)  =\int\mathbf{WN}\left(  \mathcal{W}%
_{1},\varepsilon_{1}\right)  +O\left(  \mathcal{M}\left(  t\right)  \left\Vert
\varepsilon_{1}\right\Vert _{H^{1}}^{2}\right)
\]
Therefore, by (\ref{ap377}), (\ref{ap374}), (\ref{ap375}), (\ref{ap376}),
(\ref{boot}) taking into account (\ref{wbold}) we derive%
\begin{equation}
\mathcal{G}_{13}\left(  t\right)  =-\int\mathbf{WN}\left(  \mathcal{W}%
_{1},\varepsilon_{1}\right)  +O\left(  \frac{1}{t^{3}\ln t}\right)  .
\label{ap380}%
\end{equation}
for $t\in\lbrack T^{\ast},T_{n}].$ Finally, gathering together (\ref{ap378}),
(\ref{ap379}), (\ref{ap380}), from (\ref{dtG}) we attain (\ref{estbelow}).
\end{proof}

\subsubsection*{Case II: Slow decaying potentials.}

Let now the potential $V=V^{\left(  2\right)  }\ $and suppose that
(\ref{bootbis}) is true for all $t\in\lbrack T^{\ast},T_{n}],$ with $T_{0}\leq
T^{\ast}<T_{n}.$Let $\varphi\in C^{\infty}\left(  \mathbb{R}^{d}\right)  $ be
such that $0\leq\varphi\leq1,$ $\varphi\left(  x\right)  =1$ for $\left\vert
x\right\vert \leq\frac{1}{4\lambda^{\infty}}$ and $\varphi\left(  x\right)
=0$ for $\left\vert x\right\vert \geq\frac{1}{2\lambda^{\infty}}.$ Set
\[
\varphi_{\chi}\left(  x\right)  =\varphi\left(  \frac{x-\chi\left(  t\right)
}{\lambda\left(  t\right)  \left\vert \chi\left(  t\right)  \right\vert
}\right)  .
\]
We consider
\begin{align*}
\mathcal{\tilde{G}}\left(  \varepsilon\left(  t\right)  \right)
=\mathcal{\tilde{G}}_{\mathcal{W}}\left(  \varepsilon\left(  t\right)
\right)  =  &  \tfrac{1}{2}%
{\displaystyle\int}
\left\vert \mathcal{\nabla}\varepsilon\right\vert ^{2}+\tfrac{1}{2}\left(
\lambda^{-2}\left(  t\right)  +\left\vert \beta\left(  t\right)  \right\vert
^{2}\right)  \int\left\vert \varepsilon\right\vert ^{2}-\tfrac{1}{2}%
{\displaystyle\int}
\mathcal{V}\left\vert \varepsilon\right\vert ^{2}\\
&  -\tfrac{1}{1+p}%
{\displaystyle\int}
\left(  \left\vert \mathcal{W+\varepsilon}\right\vert ^{p+1}-\left\vert
\mathcal{W}\right\vert ^{p+1}-\left(  1+p\right)  \left\vert \mathcal{W}%
\right\vert ^{p-1}\operatorname{Re}\left(  \mathcal{W}\overline{\varepsilon
}\right)  \right) \\
&  -\beta\left(  t\right)  \cdot\operatorname{Im}\int\varphi_{\chi
}\mathcal{\nabla}\varepsilon\overline{\varepsilon}.
\end{align*}
As in the case of Lemma \ref{Lemmaenergy} we show that for some $T_{0}>0$
there is a constant $c_{0}>0$ such that%
\begin{equation}
\mathcal{\tilde{G}}\left(  \varepsilon\left(  t\right)  \right)  \geq
c_{0}\left\Vert \varepsilon\right\Vert _{H^{1}}^{2} \label{ap389}%
\end{equation}
for any $t\geq T_{0}.$ Suppose that for some $N$ large enough%
\begin{equation}
\left\vert \frac{d}{dt}\mathcal{\tilde{G}}\left(  \varepsilon\left(  t\right)
\right)  \right\vert \leq C\left(  \mathcal{X}\left(  \left(  r^{\infty
}\right)  ^{-1}+\left\vert \mathcal{V}^{\prime}\left(  \tfrac{r^{\infty}}%
{4}\right)  \right\vert \right)  +\Psi\right)  \mathcal{X}^{2N} \label{dtGbis}%
\end{equation}
for $t\in\lbrack T^{\ast},T_{n}],$ $T^{\ast}\geq T_{0},$ with constant $C$
independent on $N.$ Integrating (\ref{dtGbis}) and using (\ref{ap389}) we get%
\[
\left\Vert \varepsilon\right\Vert _{H^{1}}^{2}\leq\frac{C}{c_{0}N}%
\mathcal{X}^{2N}.
\]
with some constant $C$ independent on $N.$ Then for some $N$ big enough we
strictly improve (\ref{bootbis}) for $\varepsilon.$ Hence, by continuity we
conclude that (\ref{bootbis})\ for $\varepsilon\left(  t\right)  $ is true on
$[T_{0},T_{n}].$

Therefore we need to prove (\ref{dtGbis}). Similarly to (\ref{dtG}) we
decompose%
\begin{equation}
\frac{d}{dt}\mathcal{\tilde{G}}_{\mathcal{W}_{1}}\left(  \varepsilon
_{1}\left(  t\right)  \right)  =\mathcal{\tilde{G}}_{11}\left(  t\right)
+\mathcal{\tilde{G}}_{12}\left(  t\right)  +\mathcal{\tilde{G}}_{13}\left(
t\right)  , \label{dtG1}%
\end{equation}
where%
\[
\mathcal{\tilde{G}}_{11}\left(  t\right)  =-\left(  \mathcal{\dot{W}}%
_{1},\mathcal{N}_{1}\left(  \mathcal{W}_{1},\varepsilon_{1}\right)  \right)
,
\]%
\[
\mathcal{\tilde{G}}_{12}\left(  t\right)  =\left(  \dot{\varepsilon}%
_{1},-\Delta\varepsilon_{1}+\left(  \lambda^{-2}\left(  t\right)  +\left\vert
\beta\left(  t\right)  \right\vert ^{2}\right)  \varepsilon_{1}-\mathcal{V}%
\varepsilon_{1}-\mathcal{N}_{0}\left(  \mathcal{W}_{1},\varepsilon_{1}\right)
\right)  .
\]
and%
\begin{equation}
\mathcal{\tilde{G}}_{13}\left(  t\right)  =-\dot{\beta}\cdot\operatorname{Im}%
\int\varphi_{\chi}\mathcal{\nabla}\varepsilon_{1}\overline{\varepsilon}%
_{1}-\beta\cdot\operatorname{Im}\int\dot{\varphi}_{\chi}\mathcal{\nabla
}\varepsilon_{1}\overline{\varepsilon}_{1}+\beta\cdot\operatorname{Im}%
\int\mathcal{\nabla}\varphi_{\chi}\dot{\varepsilon}_{1}\overline{\varepsilon
}_{1}+2\beta\cdot\operatorname{Im}\int\varphi_{\chi}\mathcal{\nabla}%
\overline{\varepsilon_{1}}\dot{\varepsilon}_{1}. \label{g13tilde}%
\end{equation}
Let us consider $\mathcal{G}_{11}\left(  t\right)  .$ We define%
\begin{equation}
\tilde{\varphi}_{\mathcal{X}}\left(  y\right)  =\varphi\left(  \frac{y}%
{4\ln\mathcal{X}^{-2N}}\right)  . \label{psi}%
\end{equation}
Using (\ref{ap357}) we split%
\begin{equation}
\mathcal{\tilde{G}}_{11}\left(  t\right)  =\operatorname{Re}\int
\mathbf{\tilde{W}}\mathcal{N}_{1}\left(  \mathcal{W}_{1},\varepsilon
_{1}\right)  dx+\mathcal{\tilde{G}}_{11}^{\left(  0\right)  }\left(  t\right)
+\mathcal{\tilde{G}}_{11}^{\left(  1\right)  }\left(  t\right)  , \label{g11}%
\end{equation}
with%
\[
\mathbf{\tilde{W}=}\lambda^{-\frac{2}{p-1}}\left(  \tilde{\psi}_{\mathcal{X}%
}\left(  \frac{\dot{\chi}}{\lambda}\cdot\nabla W+2i\left\vert \beta\right\vert
^{2}W\right)  \right)  \left(  \frac{x-\chi}{\lambda}\right)  e^{-i\left(
\gamma\left(  t\right)  +\gamma_{1}\left(  t\right)  \right)  }e^{i\beta\cdot
x},
\]%
\[
\mathcal{\tilde{G}}_{11}^{\left(  0\right)  }\left(  t\right)  =-\lambda
^{-\frac{2}{p-1}}\operatorname{Re}\int\left(  1-\tilde{\varphi}_{\mathcal{X}%
}\right)  \mathcal{\dot{W}}_{1}\mathcal{N}_{1}\left(  \mathcal{W}%
_{1},\varepsilon_{1}\right)  dx
\]
and%
\[
\mathcal{\tilde{G}}_{11}^{\left(  1\right)  }\left(  t\right)  =\lambda
^{-\frac{2}{p-1}}\operatorname{Re}\int\mathcal{N}_{1}\left(  \mathcal{W}%
_{1},\varepsilon_{1}\right)  \mathbf{\tilde{W}}_{1}dx,
\]
where%
\[
\mathbf{\tilde{W}}_{1}=\left(  \tilde{\varphi}_{\mathcal{X}}\left(  \frac
{\dot{\lambda}}{\lambda}\Lambda W+i\left(  \dot{\gamma}+\lambda^{-2}%
-\left\vert \beta\right\vert ^{2}-\dot{\beta}\cdot\chi\right)  W-i\lambda
\left(  \dot{\beta}\cdot y\right)  W+\dot{W}\right)  \right)  \left(
\tfrac{x-\chi}{\lambda}\right)  e^{-i\left(  \gamma\left(  t\right)
+\gamma_{1}\left(  t\right)  \right)  }e^{i\beta\cdot x}.
\]
Using (\ref{ap357}) and Lemma \ref{Lemmaapp1} we obtain (\ref{ap353}). Thus,
by (\ref{ap352}), (\ref{ap353}), Sobolev embedding theorem and (\ref{bootbis})
we estimate
\begin{equation}
\left\vert \mathcal{\tilde{G}}_{11}^{\left(  0\right)  }\left(  t\right)
\right\vert \leq C\mathcal{X}^{N}\left(  \left\Vert \varepsilon_{1}\right\Vert
_{H^{1}}^{p-\delta}+\left\Vert \varepsilon_{1}\right\Vert _{H^{1}}^{2}\right)
\leq C\mathcal{X}^{N}\left(  \mathcal{X}^{\left(  p-\delta\right)
N}+\mathcal{X}^{2N}\right)  . \label{g110}%
\end{equation}
Using (\ref{ap309}) in (\ref{g11}) we obtain%
\begin{equation}
\mathcal{\tilde{G}}_{11}\left(  t\right)  =\operatorname{Re}\int
\mathbf{\tilde{W}N}\left(  \mathcal{W}_{1},\varepsilon_{1}\right)
+\mathcal{\tilde{G}}_{11}^{\left(  0\right)  }\left(  t\right)
+\mathcal{\tilde{G}}_{11}^{\left(  1\right)  }\left(  t\right)
+\mathcal{\tilde{G}}_{11}^{\left(  2\right)  }\left(  t\right)  ,
\label{ap358}%
\end{equation}
where%
\[
\mathcal{\tilde{G}}_{11}^{\left(  2\right)  }\left(  t\right)  =O\left(
\int\mathbf{\tilde{W}}\left(  \left\vert \mathcal{W}_{1}\right\vert
^{-1+\delta}\left\vert \varepsilon_{1}\right\vert ^{p+1-\delta}\right)
dx\right)  .
\]
By (\ref{bootbis}), there is $T_{0}>0$ such that $\left\vert \lambda\left(
t\right)  \right\vert \leq\frac{3}{2}\lambda^{\infty},$ $t\in\lbrack T^{\ast
},T_{n}],$ $T^{\ast}\geq T_{0}.$ From Lemma \ref{Lemmaapp1} it follows
\[
\left\vert W\left(  y\right)  \right\vert \geq Q\left(  y\right)  -\left\vert
T\left(  y\right)  \right\vert \geq Q\left(  y\right)  \left(  1-\left\vert
V\left(  \left\vert \tfrac{\chi}{\lambda}\right\vert \right)  \right\vert
\mathcal{X}^{-8\delta N}\right)  ,\text{ }\left\vert y\right\vert \leq
4\ln\mathcal{X}^{-2N}.
\]
From (\ref{bootbis}) it follows that $\left\vert V\left(  \left\vert
\tfrac{\chi}{\lambda}\right\vert \right)  \right\vert \mathcal{X}^{-8\delta
N}\leq C\mathcal{X}$, for $\delta<\left(  8N\right)  ^{-1}.$ Then, there is
$C_{0}>0$ such that
\begin{equation}
\left\vert W\left(  y\right)  \right\vert \geq2^{-1}Q\left(  y\right)  ,
\label{ap354}%
\end{equation}
for all $\left\vert y\right\vert \leq4\ln\mathcal{X}^{-2N}$ and $\left\vert
\chi\right\vert \geq C_{0}.$ Using (\ref{ap301}) and (\ref{ap354}) we estimate%
\[
e^{-\left(  1-\delta\right)  \left\vert y\right\vert }\left\vert
\mathcal{N}_{1}\left(  \mathcal{W}_{1},\varepsilon_{1}\right)  \right\vert
\leq Ce^{-\left(  1-\delta\right)  \left\vert y\right\vert }\left\vert
\mathcal{W}_{1}\right\vert ^{p-2}\left\vert \varepsilon_{1}\right\vert
^{2}\leq Ce^{-\left(  1-\delta\right)  \left\vert y\right\vert }Q^{p_{1}%
-2}\left(  y\right)  \left\vert \varepsilon_{1}\right\vert ^{2}\leq
C\left\vert \varepsilon_{1}\right\vert ^{2},
\]
for all $\left\vert y\right\vert \leq4\ln\mathcal{X}^{-2N}.$ By (\ref{wdot}),
using Lemma \ref{Lemmaapp1} to control $\Lambda W,W,yW,\dot{W},$ we get
\begin{equation}
\left\vert \mathcal{\tilde{G}}_{11}^{\left(  1\right)  }\left(  t\right)
\right\vert \leq C\left(  \mathcal{M}\left(  t\right)  +\left\vert
\mathbf{B}^{\left(  N\right)  }\right\vert +\left\vert \mathbf{M}^{\left(
N\right)  }\right\vert +\left\vert \beta\right\vert \Psi\right)  \left\Vert
\varepsilon_{1}\right\Vert _{H^{1}}^{2} \label{g111}%
\end{equation}
Moreover, via Sobolev embedding theorem we deduce%
\begin{equation}
\left\vert \mathcal{\tilde{G}}_{11}^{\left(  2\right)  }\left(  t\right)
\right\vert \leq C\left\vert \dot{\chi}\right\vert \left(  \left\Vert
\varepsilon_{1}\right\Vert _{H^{1}}^{2+\delta}+\left\Vert \varepsilon
_{1}\right\Vert _{H^{1}}^{p+1-\delta}\right)  , \label{g112}%
\end{equation}
with $0<\delta<p-1$. Using (\ref{g110}), (\ref{g111}) and (\ref{g112}) in
(\ref{ap358}) we get%
\begin{equation}
\left.
\begin{array}
[c]{c}%
\mathcal{\tilde{G}}_{11}\left(  t\right)  =\operatorname{Re}\int
\mathbf{\tilde{W}N}\left(  \mathcal{W}_{1},\varepsilon_{1}\right)
+C\mathcal{X}^{N}\left(  \mathcal{X}^{\left(  p-\delta\right)  N}%
+\mathcal{X}^{2N}\right) \\
+O\left(  \mathcal{M}\left(  t\right)  +\left\vert \mathbf{B}^{\left(
N\right)  }\right\vert +\left\vert \mathbf{M}^{\left(  N\right)  }\right\vert
+\left\vert \beta\right\vert \Psi\right)  \left\Vert \varepsilon
_{1}\right\Vert _{H^{1}}^{2}+\left\vert \dot{\chi}\right\vert \left(
\left\Vert \varepsilon_{1}\right\Vert _{H^{1}}^{2+\delta}+\left\Vert
\varepsilon_{1}\right\Vert _{H^{1}}^{p+1-\delta}\right)  .
\end{array}
\right.  \label{g11bis}%
\end{equation}
Using (\ref{ap360}), $\left\vert \mathcal{N}_{0}\left(  \mathcal{W}%
_{1},\varepsilon_{1}\right)  \right\vert \leq C\left\vert \varepsilon
_{1}\right\vert \ $and (\ref{eps1bis}), via Sobolev theorem we control%
\begin{equation}
\left\vert \mathcal{\tilde{G}}_{12}\left(  t\right)  \right\vert \leq C\left(
\Psi\mathcal{X}^{N}+\mathcal{M}\left(  t\right)  \right)  \left\Vert
\varepsilon_{1}\right\Vert _{H^{1}}. \label{g12}%
\end{equation}
Next, we consider $\mathcal{\tilde{G}}_{13}\left(  t\right)  .$ Let us
estimate the last term in the right-hand side of (\ref{g13tilde}). Using
(\ref{eqeps2}), (\ref{ap360}), (\ref{eps1bis}) and integrating by parts we
have%
\begin{equation}
\operatorname{Im}\int\varphi_{\chi}\mathcal{\nabla}\overline{\varepsilon_{1}%
}\dot{\varepsilon}_{1}=\tilde{I}_{1}+\tilde{I}_{2}+\tilde{I}_{3}+O\left(
\left(  \Psi\mathcal{X}^{N}+\mathcal{M}\left(  t\right)  \right)  \left\Vert
\varepsilon_{1}\right\Vert _{H^{1}}\right)  . \label{mom}%
\end{equation}
where%
\[
\tilde{I}_{1}=\operatorname{Re}\int\mathcal{\nabla}\varphi_{\chi}%
\overline{\varepsilon_{1}}\left(  -\Delta\varepsilon_{1}+\left(  \lambda
^{-2}\left(  t\right)  +\left\vert \beta\left(  t\right)  \right\vert
^{2}\right)  \varepsilon_{1}-\mathcal{V}\varepsilon_{1}\right)  ,
\]%
\[
\tilde{I}_{2}=-\operatorname{Re}\int\varphi_{\chi}\mathcal{\nabla V}\left\vert
\varepsilon_{1}\right\vert ^{2}%
\]
and%
\[
\tilde{I}_{3}=\operatorname{Re}\int\varphi_{\chi}\mathcal{\nabla}%
\overline{\varepsilon_{1}}\mathcal{N}_{0}\left(  \mathcal{W}_{1}%
,\varepsilon_{1}\right)  .
\]
By (\ref{ap266}) we have%
\begin{equation}
\left\vert \tilde{I}_{1}\right\vert \leq\frac{C}{\left\vert \chi\left(
t\right)  \right\vert }\left\Vert \varepsilon_{1}\right\Vert _{H^{1}}^{2}.
\label{I1}%
\end{equation}
Noting that $\left\Vert \varphi\left(  \frac{\left(  \cdot\right)
-\chi\left(  t\right)  }{\lambda\left(  t\right)  \left\vert \chi\left(
t\right)  \right\vert }\right)  \mathcal{\nabla V}\left(  \cdot\right)
\right\Vert _{L^{\infty}}\leq C\left\vert \mathcal{V}^{\prime}\left(
\frac{\left\vert \chi\left(  t\right)  \right\vert }{4}\right)  \right\vert ,$
we get%
\begin{equation}
\left\vert \tilde{I}_{2}\right\vert \leq C\left\vert \mathcal{V}^{\prime
}\left(  \frac{\left\vert \chi\left(  t\right)  \right\vert }{4}\right)
\right\vert \left\Vert \varepsilon_{1}\right\Vert _{H^{1}}^{2}. \label{I2}%
\end{equation}
Observe that there is $C_{0}>0$ such that $\varphi_{\chi}\tilde{\varphi
}_{\mathcal{X}}=\tilde{\varphi}_{\mathcal{X}},$ with $\tilde{\varphi
}_{\mathcal{X}}$ given by (\ref{psi}). By (\ref{n1}) we write%
\[
\mathcal{N}_{0}\left(  \mathcal{W}_{1},\varepsilon_{1}\right)  =\mathcal{N}%
_{L}\left(  \mathcal{W}_{1},\varepsilon_{1}\right)  +\mathcal{N}_{1}\left(
\mathcal{W}_{1},\varepsilon_{1}\right)  ,
\]
where
\[
\mathcal{N}_{L}\left(  \mathcal{W}_{1},\varepsilon_{1}\right)  =\tfrac{p+1}%
{2}\left\vert \mathcal{W}_{1}\right\vert ^{p-1}\varepsilon_{1}+\tfrac{p-1}%
{2}\left\vert \mathcal{W}_{1}\right\vert ^{p-3}\mathcal{W}_{1}^{2}%
\overline{\varepsilon_{1}}.
\]
We decompose%
\[
\tilde{I}_{3}=\tilde{I}_{31}+\tilde{I}_{32}%
\]
with%
\[
\tilde{I}_{31}=\operatorname{Re}\int\tilde{\varphi}_{\mathcal{X}%
}\mathcal{\nabla}\overline{\varepsilon_{1}}\mathcal{N}_{L}\left(
\mathcal{W}_{1},\varepsilon_{1}\right)
\]
and%
\[
\tilde{I}_{32}=\operatorname{Re}\int\varphi_{\chi}\left(  1-\tilde{\varphi
}_{\mathcal{X}}\right)  \mathcal{\nabla}\overline{\varepsilon_{1}}%
\mathcal{N}_{0}\left(  \mathcal{W}_{1},\varepsilon_{1}\right)
+\operatorname{Re}\int\tilde{\varphi}_{\mathcal{X}}\mathcal{\nabla}%
\overline{\varepsilon_{1}}\mathcal{N}_{1}\left(  \mathcal{W}_{1}%
,\varepsilon_{1}\right)  .
\]
Integrating by parts we have%
\[
\tilde{I}_{31}=-\operatorname{Re}\int\tilde{\varphi}_{\mathcal{X}}\left(
\nabla\mathcal{W}_{1}\right)  \mathbf{N}\left(  \mathcal{W}_{1},\varepsilon
_{1}\right)  +\tilde{I}_{31}^{\left(  1\right)  },
\]
with%
\[
\tilde{I}_{31}^{\left(  1\right)  }=-\operatorname{Re}\int\left(
\mathcal{\nabla}\tilde{\varphi}_{\mathcal{X}}\right)  \left(  \tfrac{p+1}%
{4}\left\vert \mathcal{W}_{1}\right\vert ^{p-1}\left\vert \varepsilon
_{1}\right\vert ^{2}+\tfrac{p-1}{4}\operatorname{Re}\left(  \left\vert
\mathcal{W}_{1}\right\vert ^{p-3}\mathcal{W}_{1}^{2}\left(  \overline
{\varepsilon_{1}}\right)  ^{2}\right)  \right)  .
\]
Using that $\left\vert \mathcal{N}_{0}\left(  \mathcal{W}_{1},\varepsilon
_{1}\right)  \right\vert \leq C\left(  \left\vert \mathcal{W}_{1}\right\vert
^{p}+\left\vert \varepsilon_{1}\right\vert ^{p}\right)  ,$ as $\left\vert
\left(  \mathcal{\nabla}\tilde{\psi}_{\mathcal{X}}\right)  \mathcal{W}%
_{1}\left(  y\right)  \right\vert +\left\vert \left(  1-\tilde{\psi
}_{\mathcal{X}}\right)  \mathcal{W}_{1}\left(  y\right)  \right\vert \leq
C\mathcal{X}^{2\left(  1-\delta\right)  N}$ and $\left\vert \mathcal{N}%
_{1}\left(  \mathcal{W}_{1},\varepsilon_{1}\right)  \right\vert \leq C\left(
\left\vert \varepsilon_{1}\right\vert ^{p_{1}}+\left\vert \varepsilon
_{1}\right\vert ^{p_{1}-\delta}\right)  ,$ $\delta>0,$ ($p_{1}=\min\{p,2\}$)
we get
\begin{align*}
&  \left\vert \int\left(  \mathcal{\nabla}\tilde{\varphi}_{\mathcal{X}%
}\right)  \left(  \tfrac{p+1}{4}\left\vert \mathcal{W}_{1}\right\vert
^{p-1}\left\vert \varepsilon_{1}\right\vert ^{2}+\tfrac{p-1}{4}%
\operatorname{Re}\left(  \left\vert \mathcal{W}_{1}\right\vert ^{p-3}%
\mathcal{W}_{1}^{2}\left(  \overline{\varepsilon_{1}}\right)  ^{2}\right)
\right)  \right\vert +\left\vert I_{32}\right\vert \\
&  \leq C\left(  \mathcal{X}^{2\left(  1-\delta\right)  Np}+\mathcal{X}%
^{2\left(  1-\delta\right)  N\left(  p-1\right)  }\left\Vert \varepsilon
_{1}\right\Vert _{H^{1}}^{2}+\left\Vert \varepsilon_{1}\right\Vert _{H^{1}%
}^{p_{1}-\delta}\right)  \left\Vert \varepsilon_{1}\right\Vert _{H^{1}}.
\end{align*}
Hence%
\begin{equation}
\tilde{I}_{3}=-\operatorname{Re}\int\tilde{\varphi}_{\mathcal{X}}\left(
\nabla\mathcal{W}_{1}\right)  \mathbf{N}\left(  \mathcal{W}_{1},\varepsilon
_{1}\right)  +O\left(  \left(  \mathcal{X}^{2\left(  1-\delta\right)
Np}+\mathcal{X}^{2\left(  1-\delta\right)  N\left(  p-1\right)  }\left\Vert
\varepsilon_{1}\right\Vert _{H^{1}}^{2}+\left\Vert \varepsilon_{1}\right\Vert
_{H^{1}}^{p_{1}-\delta}\right)  \left\Vert \varepsilon_{1}\right\Vert _{H^{1}%
}\right)  . \label{I3}%
\end{equation}
Using (\ref{I1}), (\ref{I2}) and (\ref{I3}) in (\ref{mom}) we get
\begin{equation}
2\beta\cdot\operatorname{Im}\int\varphi_{\chi}\mathcal{\nabla}\overline
{\varepsilon_{1}}\dot{\varepsilon}_{1}=-2\beta\cdot\operatorname{Re}\int
\tilde{\varphi}_{\mathcal{X}}\left(  \nabla\mathcal{W}_{1}\right)
\mathbf{N}\left(  \mathcal{W}_{1},\varepsilon_{1}\right)  +\operatorname*{Er}%
\nolimits_{1} \label{mom1}%
\end{equation}
with%
\begin{align*}
\operatorname*{Er}\nolimits_{1}  &  =O\left(  \left\vert \beta\right\vert
\left(  \left\vert \chi\left(  t\right)  \right\vert ^{-1}+\left\vert
\mathcal{V}^{\prime}\left(  \tfrac{\left\vert \chi\left(  t\right)
\right\vert }{4}\right)  \right\vert \right)  \left\Vert \varepsilon
_{1}\right\Vert _{H^{1}}^{2}\right) \\
&  +O\left(  \left\vert \beta\right\vert \left(  \mathcal{X}^{2\left(
1-\delta\right)  Np}+\Psi\mathcal{X}^{N}+\mathcal{M}\left(  t\right)
+\mathcal{X}^{2\left(  1-\delta\right)  N\left(  p-1\right)  }\left\Vert
\varepsilon_{1}\right\Vert _{H^{1}}^{2}+\left\Vert \varepsilon_{1}\right\Vert
_{H^{1}}^{p_{1}-\delta}\right)  \left\Vert \varepsilon_{1}\right\Vert _{H^{1}%
}\right)  .
\end{align*}
Similarly to (\ref{ap377}), (\ref{ap374}) and (\ref{ap375}) we estimate the
first three terms in (\ref{g13tilde}) by $O\left(  \left(  \left\vert
\dot{\beta}\right\vert +\frac{\left\vert \beta\right\vert }{\left\vert
\chi\left(  t\right)  \right\vert }\right)  \left\Vert \varepsilon
_{1}\right\Vert _{H^{1}}^{2}\right)  .$\ Therefore, from (\ref{mom1})\ we
obtain%
\[
\mathcal{\tilde{G}}_{13}\left(  t\right)  =-2\beta\cdot\operatorname{Re}%
\int\tilde{\varphi}_{\mathcal{X}}\left(  \nabla\mathcal{W}_{1}\right)
\mathbf{N}\left(  \mathcal{W}_{1},\varepsilon_{1}\right)  +\operatorname*{Er}%
\nolimits_{2}%
\]
with%
\[
\operatorname*{Er}\nolimits_{2}=\operatorname*{Er}\nolimits_{1}+O\left(
\left(  \left\vert \dot{\beta}\right\vert +\frac{\left\vert \beta\right\vert
}{\left\vert \chi\left(  t\right)  \right\vert }\right)  \left\Vert
\varepsilon_{1}\right\Vert _{H^{1}}^{2}\right)  .
\]
By (\ref{ap354}) $\left\vert \tilde{\varphi}_{\mathcal{X}}\mathbf{N}\left(
\mathcal{W}_{1},\varepsilon_{1}\right)  \right\vert \leq C\left\vert
\varepsilon_{1}\right\vert ^{2}.$ Then, using (\ref{ap381}) we get
\begin{equation}
\mathcal{\tilde{G}}_{13}\left(  t\right)  =-\operatorname{Re}\int
\mathbf{\tilde{W}N}\left(  \mathcal{W}_{1},\varepsilon_{1}\right)
+\operatorname*{Er}\nolimits_{2}+O\left(  \mathcal{M}\left(  t\right)
\left\Vert \varepsilon_{1}\right\Vert _{H^{1}}^{2}\right)  . \label{g13}%
\end{equation}
Using (\ref{g11bis}), (\ref{g12}), (\ref{g13}) in (\ref{dtG1}) we arrive to%
\begin{align*}
\frac{d}{dt}\mathcal{\tilde{G}}_{\mathcal{W}}\left(  \varepsilon\left(
t\right)  \right)   &  =\frac{d}{dt}\mathcal{\tilde{G}}_{\mathcal{W}_{1}%
}\left(  \varepsilon_{1}\left(  t\right)  \right)  =\operatorname*{Er}%
\nolimits_{2}+C\mathcal{X}^{N}\left(  \mathcal{X}^{\left(  p-\delta\right)
N}+\mathcal{X}^{2N}\right) \\
&  +\left(  \mathcal{M}\left(  t\right)  +\left\vert \mathbf{B}^{\left(
N\right)  }\right\vert +\left\vert \mathbf{M}^{\left(  N\right)  }\right\vert
+\left\vert \beta\right\vert \Psi\right)  \left\Vert \varepsilon
_{1}\right\Vert _{H^{1}}^{2}+\left\vert \dot{\chi}\right\vert \left(
\left\Vert \varepsilon_{1}\right\Vert _{H^{1}}^{2+\delta}+\left\Vert
\varepsilon_{1}\right\Vert _{H^{1}}^{p+1-\delta}\right)  .
\end{align*}
Finally, using (\ref{Mtbis}) and (\ref{bootbis}), for $N$ sufficiently big
such that $\mathcal{X}^{\frac{N\left(  p-1\right)  }{2}}\leq C\left(  \left(
r^{\infty}\right)  ^{-1}+\left\vert \mathcal{V}^{\prime}\left(  \tfrac
{r^{\infty}}{4}\right)  \right\vert +\Psi\right)  $ we prove (\ref{dtGbis}).

\section{Asymptotics of $\mathcal{J}\left(  \chi\right)  $.\label{AppendixA}}

Let us study the asymptotics as $\left\vert \chi\right\vert \rightarrow\infty$
of the integral
\begin{equation}
\mathcal{J}\left(  \chi\right)  =\int G\left(  \left\vert y+\chi\right\vert
\right)  \nabla Q^{2}\left(  y\right)  dy=%
{\displaystyle\int}
G\left(  \left\vert z\right\vert \right)  \nabla Q^{2}\left(  \chi-z\right)
dz \label{ap28}%
\end{equation}
where $G\in C^{\infty}$.\ We consider $G\in C^{\infty}$ of the form
(\ref{potential}). That is
\begin{equation}
G\left(  r\right)  =V_{+}\left(  r\right)  \text{ or }G\left(  r\right)
=V_{-}\left(  r\right)  . \label{ap171}%
\end{equation}
Recall that $\upsilon\left(  d\right)  $ and $C_{\pm}\left(  \lambda\right)  $
are defined by (\ref{ap142}) and (\ref{ap167}), respectively. First we study
the case when $\nabla Q^{2}$ determines the behavior of (\ref{ap28}), as
$\left\vert \chi\right\vert \rightarrow\infty.$ That is
\begin{equation}
G\left(  r\right)  =V_{-}\left(  r\right)  ,\text{ }H\geq0. \label{ap154}%
\end{equation}
We denote by $K$ the limit $e^{-H}\rightarrow K.$ We prove the following.

\begin{lemma}
\label{L3}Let $G\in C^{\infty}\ $be as in (\ref{ap154})$,$ where $H,H^{\prime
}$ are monotone. Then, the following is true. If $d\geq4,$ the asymptotics
\begin{equation}
\mathcal{J}\left(  \chi\right)  =\frac{\chi}{\left\vert \chi\right\vert
}e^{-2\left\vert \chi\right\vert }\left\vert \chi\right\vert ^{-\left(
d-1\right)  }\left(  \int\left(  \mathcal{A}^{2}G\left(  \left\vert
z\right\vert \right)  +K\kappa Q^{2}\left(  z\right)  \right)  e^{2\frac
{\chi\cdot z}{\left\vert \chi\right\vert }}dz+o\left(  1\right)  \right)
\label{ap150}%
\end{equation}
as $\left\vert \chi\right\vert \rightarrow\infty$ holds. Suppose that $d=2$ or
$d=3.$ If $r^{-\frac{d-1}{2}}e^{-H\left(  r\right)  }\in L^{1}\left(
[1,\infty)\right)  ,$ then%
\begin{equation}
\mathcal{J}\left(  \chi\right)  =\frac{\chi}{\left\vert \chi\right\vert
}e^{-2\left\vert \chi\right\vert }\left\vert \chi\right\vert ^{-\left(
d-1\right)  }\left(  \mathcal{A}^{2}\int G\left(  \left\vert z\right\vert
\right)  e^{2\frac{\chi\cdot z}{\left\vert \chi\right\vert }}dz+o\left(
1\right)  \right)  \label{ap151}%
\end{equation}
as $\left\vert \chi\right\vert \rightarrow\infty$. In the case when
$r^{-\frac{d-1}{2}}e^{-H\left(  r\right)  }\notin L^{1}\left(  [1,\infty
)\right)  ,$ the expansion
\begin{equation}
\left.  \mathcal{J}\left(  \chi\right)  =\frac{\chi}{\left\vert \chi
\right\vert }\kappa\mathcal{A}^{2}\upsilon\left(  d\right)  \left(  1+o\left(
1\right)  \right)  C_{-}\left(  \left\vert \chi\right\vert \right)
e^{-2\left\vert \chi\right\vert }\left\vert \chi\right\vert ^{-\left(
d-1\right)  },\right.  \label{ap152}%
\end{equation}
as $\left\vert \chi\right\vert \rightarrow\infty$ takes place.
\end{lemma}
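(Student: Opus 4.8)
The plan is to analyze the integral $\mathcal{J}(\chi)=\int G(|z|)\nabla Q^{2}(\chi-z)\,dz$ by localizing to the region where the product $G(|z|)Q^{2}(\chi-z)$ is largest, and comparing the decay rates of $G$ (which behaves like $\kappa e^{-2r-(d-1)\ln r-H(r)}$ with $H\geq 0$) against that of $Q^{2}$ (which behaves like $\mathcal{A}^{2}e^{-2|y|}|y|^{-(d-1)}$ by \eqref{ap2}). First I would use the exact asymptotics \eqref{ap2}--\eqref{ap29} to replace $Q^{2}(\chi-z)$ by $\mathcal{A}^{2}e^{-2|\chi-z|}|\chi-z|^{-(d-1)}$ up to a controlled relative error $O(|\chi-z|^{-1})$, and similarly factor $G(|z|)=\kappa e^{-2|z|}|z|^{-(d-1)}e^{-H(|z|)}$. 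Since $\nabla Q^{2}(\chi-z)=-\tfrac{\chi-z}{|\chi-z|}\,(Q^{2})'(|\chi-z|)(1+o(1))$, the vector factor is $\tfrac{\chi-z}{|\chi-z|}\sim\tfrac{\chi}{|\chi|}$ on the dominant set, which produces the prefactor $\tfrac{\chi}{|\chi|}$ in each of \eqref{ap150}--\eqref{ap152}. The exponent in the product is $-2|z|-2|\chi-z|$; writing $z=s\omega+z^{\perp}$ along $\chi/|\chi|$, one has $|z|+|\chi-z|\geq|\chi|$ with equality on the segment $[0,\chi]$, so the mass concentrates near that segment.

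The three cases of the lemma correspond to how the competing polynomial-and-$H$ weights integrate along the segment. For $d\geq 4$ the factors $|z|^{-(d-1)}|\chi-z|^{-(d-1)}$ are integrable near both endpoints, so the integral localizes to bounded neighborhoods of $z=0$ and $z=\chi$; near $z=0$ one gets the contribution $\int \mathcal{A}^{2}G(|z|)e^{2\chi\cdot z/|\chi|}\,dz$ (using $|\chi-z|\approx|\chi|-\chi\cdot z/|\chi|$ and $|\chi-z|^{-(d-1)}\approx|\chi|^{-(d-1)}$), and near $z=\chi$, substituting $w=\chi-z$, one gets $\kappa K\int Q^{2}(w)e^{2\chi\cdot w/|\chi|}\,dw$ with $K=\lim e^{-H}$; collecting the common factor $e^{-2|\chi|}|\chi|^{-(d-1)}$ gives \eqref{ap150}. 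For $d=2,3$ with $r^{-(d-1)/2}e^{-H(r)}\in L^{1}$, the endpoint $z=\chi$ is no longer integrable for the soliton-soliton contribution once the extra $e^{-H}$ decay is gone, but the integrability hypothesis kills exactly that endpoint's divergence from the $G$ side while the $Q^{2}$-endpoint contribution now vanishes in the limit; only the $z\approx 0$ piece survives, yielding \eqref{ap151}. In the remaining subcase $r^{-(d-1)/2}e^{-H(r)}\notin L^{1}$, neither endpoint dominates and the integral is genuinely spread along the whole segment: after the Gaussian integration in the transverse variable $z^{\perp}$ (which produces $\upsilon(d)=\int_{0}^{\infty}e^{-2\eta^{2}}\eta^{d-2}d\eta$ and the Jacobian factor), the longitudinal integral is precisely $\int_{1}^{|\chi|-1}(s(|\chi|-s))^{-(d-1)/2}e^{-H(s)}\,ds$, i.e. $|\chi|^{-(d-1)/2}C_{-}(|\chi|)$ up to the normalization, giving \eqref{ap152}.

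The main obstacle will be making the Laplace-type localization rigorous \emph{uniformly} while the weights $|z|^{-(d-1)/2}e^{-H(|z|)}$ vary with $|\chi|$ — in particular justifying that the error terms from \eqref{ap2} (the $O(|y|^{-1})$ relative corrections to $Q$ and the $o(1)$ in the vector direction) contribute only $o(1)$ after multiplication by the leading weight and integration. This is handled by the monotonicity assumptions on $H,H'$: monotone $H'$ forces $H'(r)\to a\in[0,2]$, and in the $V_{-}$ case $H\geq 0$ means the extra factor $e^{-H}$ only helps the $G$-side decay, so one can dominate the transverse Gaussian integral and the longitudinal tail by elementary estimates of the form $\int e^{-2\eta^{2}/|\chi-z|}d\eta^{\perp}=O(|\chi-z|^{(d-1)/2})$ together with $|z|+|\chi-z|-|\chi|\gtrsim |z^{\perp}|^{2}/|\chi|$ on the relevant region. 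I would organize the argument by splitting $\mathbb{R}^{d}$ into $\{|z|\leq |\chi|/2\}$ and $\{|z|\geq|\chi|/2\}$, treat each half by a change of variables centered at the near endpoint, use dominated convergence for the $d\geq 4$ and integrable $d=2,3$ cases, and an explicit stationary-segment computation (transverse Gaussian + longitudinal weight $C_{-}$) for the non-integrable subcase; Lemma \ref{L4} presumably handles the parallel $V=V^{(2)}$ situation separately, so here I only need the $V_{\pm}$ forms.
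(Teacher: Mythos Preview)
Your plan is essentially the paper's own argument: it too proves $\mathcal J(\chi)\parallel\chi$ first, splits the integral into a neighbourhood of $z=0$ and of $z=\chi$ (the paper uses the three zones $|z|\le|\chi|/2$, $|\chi|/2\le|z|\le 3|\chi|/2$, $|z|\ge 3|\chi|/2$ and passes from the middle zone to a near--$\chi$ integral by the substitution $z\mapsto\chi-z$), replaces $|\chi-z|$ by $|\chi|-\tfrac{\chi\cdot z}{|\chi|}$ with the explicit error bounds \eqref{ap115}--\eqref{ap116}, and then applies dominated convergence when the longitudinal weight $r^{-(d-1)/2}e^{-H(r)}$ is integrable (using the computation \eqref{polar} for $d\ge4$) and the explicit transverse-Gaussian / longitudinal-$C_-$ evaluation \eqref{ap126}--\eqref{ap130} otherwise.

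One small imprecision in your case analysis for $d=2,3$ with $r^{-(d-1)/2}e^{-H}\in L^1$: the divergence near $z=\chi$ is not ``killed by the integrability hypothesis from the $G$ side'' --- that hypothesis is exactly what makes the $z\approx0$ integral $\int G(|z|)e^{2\chi\cdot z/|\chi|}dz$ finite. The $z\approx\chi$ piece carries the divergent factor $\int_1^{|\chi|/2}s^{-(d-1)/2}ds$ from the $Q^2$ side, but it also carries the prefactor $e^{-H(|\chi|)}$; the monotonicity of $H$ together with $r^{-(d-1)/2}e^{-H}\in L^1$ forces $r^{(d-1)/2}e^{-H(r)}\to0$, which is precisely what makes that endpoint contribution $o(1)$ relative to the leading term. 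With this clarification your outline matches the paper's proof.
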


\begin{proof}
First, we note that $\mathcal{J}\left(  \chi\right)  $ is directed along the
vector $\chi.$ Indeed, we introduce the polar coordinate system, where the
$x_{1}-$axis is directed along the vector $\chi.$ Then, $\chi=\left\vert
\chi\right\vert \left(  1,0,...,0\right)  $ and $y=\left\vert y\right\vert
\left(  \cos\theta,\sin\theta\cos\theta_{1},...,\sin\theta\sin\theta
_{1}...\cos\theta_{d-2}\right)  ,$ where $\theta$ is the angle between $\chi$
and $z.$ Thus
\begin{equation}
\left.
\begin{array}
[c]{c}%
\mathcal{J}\left(  \chi\right)  =\int_{0}^{\infty}\left(  \int_{0}^{\pi
}...\int_{0}^{\pi}\int_{0}^{2\pi}G\left(  \left\vert \sqrt{\left\vert
\chi\right\vert ^{2}+r^{2}+2\left\vert \chi\right\vert r\cos\theta}\right\vert
\right)  \left(  q^{2}\right)  ^{\prime}\left(  r\right)  r^{d-1}y_{\Theta
}d\Omega\right)  dr\\
\\
=C\frac{\chi}{\left\vert \chi\right\vert }\int_{0}^{\infty}\int_{0}^{\pi
}G\left(  \left\vert \sqrt{\left\vert \chi\right\vert ^{2}+r^{2}+2\left\vert
\chi\right\vert r\cos\theta}\right\vert \right)  \cos\theta\left(
q^{2}\right)  ^{\prime}\left(  r\right)  r^{d-1}d\theta dr,
\end{array}
\right.  \label{coord}%
\end{equation}
with $y_{\Theta}:=\left(  \cos\theta,\sin\theta\cos\theta_{1},...,\sin
\theta\sin\theta_{1}...\cos\theta_{d-2}\right)  $ and $d\Omega=\sin
^{d-2}\theta\sin^{d-3}\theta_{1}...\sin\theta_{d-3}d\theta d\theta
_{1}...d\theta_{d-2}.$

Using (\ref{ap29}) we estimate
\[
\left.
\begin{array}
[c]{c}%
\left\vert
{\displaystyle\int_{\left\vert z\right\vert \geq\frac{3\left\vert
\chi\right\vert }{2}}}
G\left(  \left\vert z\right\vert \right)  \nabla Q^{2}\left(  \chi-z\right)
dz\right\vert \leq C%
{\displaystyle\int_{\left\vert z\right\vert \geq\frac{3\left\vert
\chi\right\vert }{2}}}
e^{-2\left\vert z\right\vert -\left(  d-1\right)  \ln\left\vert z\right\vert
-H\left(  \left\vert z\right\vert \right)  }\left\vert \nabla Q^{2}\left(
\chi-z\right)  \right\vert dz\\
\\
\leq Ce^{-3\left\vert \chi\right\vert }\left(
{\displaystyle\int}
\left\vert \nabla Q^{2}\left(  \chi-z\right)  \right\vert dz\right)  .
\end{array}
\right.
\]
Therefore%
\begin{equation}
\left\vert
{\displaystyle\int_{\left\vert z\right\vert \geq\frac{3\left\vert
\chi\right\vert }{2}}}
G\left(  \left\vert z\right\vert \right)  \nabla Q^{2}\left(  \chi-z\right)
dz\right\vert \leq Ce^{-3\left\vert \chi\right\vert }. \label{ap36}%
\end{equation}
Next, we consider the region $\frac{\left\vert \chi\right\vert }{2}%
\leq\left\vert z\right\vert \leq\frac{3\left\vert \chi\right\vert }{2}.$ Since
for $\left\vert z\right\vert \leq\frac{\left\vert \chi\right\vert }{2},$ the
inequality $\frac{\left\vert \chi\right\vert }{2}\leq\left\vert z\right\vert
\leq\frac{3\left\vert \chi\right\vert }{2}$ holds, we write%
\begin{equation}
\left.
\begin{array}
[c]{c}%
{\displaystyle\int_{\frac{\left\vert \chi\right\vert }{2}\leq\left\vert
z\right\vert \leq\frac{3\left\vert \chi\right\vert }{2}}}
G\left(  \left\vert z\right\vert \right)  \nabla Q^{2}\left(  \chi-z\right)
dz=-%
{\displaystyle\int_{\frac{\left\vert \chi\right\vert }{2}\leq\left\vert
\chi-z\right\vert \leq\frac{3\left\vert \chi\right\vert }{2}}}
G\left(  \left\vert \chi-z\right\vert \right)  \nabla Q^{2}\left(  z\right)
dz\\
\\
=-\kappa e^{-2\left\vert \chi\right\vert }%
{\displaystyle\int_{\left\vert z\right\vert \leq\frac{\left\vert
\chi\right\vert }{2}}}
e^{-2\left(  \left\vert z-\chi\right\vert -\left\vert \chi\right\vert
+\left\vert z\right\vert \right)  }\left\vert \chi-z\right\vert ^{-\left(
d-1\right)  }e^{-H\left(  \left\vert \chi-z\right\vert \right)  }\left(
\frac{z}{\left\vert z\right\vert }e^{2\left\vert z\right\vert }\partial
_{\left\vert z\right\vert }q^{2}\left(  \left\vert z\right\vert \right)
\right)  dz\\
\\
-%
{\displaystyle\int_{\frac{\left\vert \chi\right\vert }{2}\leq\left\vert
\chi-z\right\vert \leq\frac{3\left\vert \chi\right\vert }{2};\frac{\left\vert
\chi\right\vert }{2}\leq\left\vert z\right\vert }}
G\left(  \left\vert \chi-z\right\vert \right)  \nabla Q^{2}\left(  z\right)
dz.
\end{array}
\right.  \label{ap122}%
\end{equation}
We now present the following estimates. \ Using the coordinate system in
(\ref{coord}) we have%
\begin{equation}
\left\vert \frac{z}{\left\vert z\right\vert }-\frac{\chi}{\left\vert
\chi\right\vert }\right\vert \leq C\left(  \left(  1-\cos\theta\right)
+\sin\theta\right)  . \label{ap123}%
\end{equation}
By
\begin{equation}
\left\vert y+\chi\right\vert -\left\vert \chi\right\vert +\left\vert
y\right\vert =\frac{2\left\vert \chi\right\vert \left\vert y\right\vert
\left(  1+\frac{\chi\cdot y}{\left\vert \chi\right\vert \left\vert
y\right\vert }\right)  }{\left\vert y+\chi\right\vert +\left\vert
\chi\right\vert -\left\vert y\right\vert } \label{ap118}%
\end{equation}
we get%
\begin{equation}
\left\vert e^{-2\left(  \left\vert z-\chi\right\vert -\left\vert
\chi\right\vert +\left\vert z\right\vert \right)  }-e^{-\frac{2\left\vert
\chi\right\vert \left\vert z\right\vert \left(  1-\cos\theta\right)
}{\left\vert \chi\right\vert -\left\vert z\right\vert }}\right\vert \leq
Ce^{-\left\vert z\right\vert \left(  1-\cos\theta\right)  }\frac{\left\vert
z\right\vert ^{2}\left(  1-\cos\theta\right)  ^{2}}{\left\vert \chi\right\vert
}. \label{ap115}%
\end{equation}
Also, note that%
\begin{equation}
\left\vert \left\vert \chi-z\right\vert ^{-1}-\left(  \left\vert
\chi\right\vert -\left\vert z\right\vert \right)  ^{-1}\right\vert \leq
C\frac{\left\vert z\right\vert \left(  1-\cos\theta\right)  }{\left\vert
\chi\right\vert ^{2}},\text{ for }\left\vert z\right\vert \leq\frac{\left\vert
\chi\right\vert }{2}. \label{ap116}%
\end{equation}
Suppose first that $d\geq4.$ Let $e^{-H}\rightarrow K\ $and $\psi\in
L^{\infty}\left(  \mathbb{R}\right)  $ such that $\psi\left(  r\right)  =1,$
for $0\leq r\leq1,$ and $\psi\left(  r\right)  =0,$ for $r>1.$ Using
(\ref{ap29}), (\ref{ap115}) and (\ref{ap116}) we have%
\begin{equation}
\left.  \left\vert
{\displaystyle\int_{\left\vert z\right\vert \leq\frac{\left\vert
\chi\right\vert }{2}}}
e^{-2\left(  \left\vert z-\chi\right\vert -\left\vert \chi\right\vert
+\left\vert z\right\vert \right)  }\left\vert \chi-z\right\vert ^{-\left(
d-1\right)  }e^{-H\left(  \left\vert \chi-z\right\vert \right)  }\left(
\frac{z}{\left\vert z\right\vert }e^{2\left\vert z\right\vert }\partial
_{\left\vert z\right\vert }q^{2}\left(  \left\vert z\right\vert \right)
\right)  dz-I_{1}\right\vert \leq r_{1}\right.  \label{ap117}%
\end{equation}
with%
\[
I_{1}=K%
{\displaystyle\int}
\psi\left(  \frac{2\left\vert z\right\vert }{\left\vert \chi\right\vert
}\right)  e^{-\frac{2\left\vert \chi\right\vert \left\vert z\right\vert
\left(  1-\cos\theta\right)  }{\left\vert \chi\right\vert -\left\vert
z\right\vert }}\left(  \left\vert \chi\right\vert -\left\vert z\right\vert
\right)  ^{-\left(  d-1\right)  }\left(  \frac{z}{\left\vert z\right\vert
}e^{2\left\vert z\right\vert }\partial_{\left\vert z\right\vert }q^{2}\left(
\left\vert z\right\vert \right)  \right)  dz
\]
and
\[
\left.  r_{1}=C\left\vert \chi\right\vert ^{-\left(  d-1\right)  }\left(
\sup_{\left\vert z\right\vert \leq\frac{\left\vert \chi\right\vert }{2}%
}\left\vert e^{-H\left(  \left\vert \chi-z\right\vert \right)  }-K\right\vert
+\left\vert \chi\right\vert ^{-1}\right)  \int_{\left\vert z\right\vert
\leq\frac{\left\vert \chi\right\vert }{2}}e^{-\frac{\left\vert z\right\vert
}{2}\left(  1-\cos\theta\right)  }\left\vert z\right\vert ^{-\left(
d-1\right)  }dz.\right.
\]
By (\ref{ap29})%
\[
\psi\left(  \frac{2\left\vert z\right\vert }{\left\vert \chi\right\vert
}\right)  e^{-\frac{2\left\vert \chi\right\vert \left\vert z\right\vert
\left(  1-\cos\theta\right)  }{\left\vert \chi\right\vert -\left\vert
z\right\vert }}\left(  \left\vert \chi\right\vert -\left\vert z\right\vert
\right)  ^{-\left(  d-1\right)  }e^{2\left\vert z\right\vert }\partial
_{\left\vert z\right\vert }q^{2}\left(  \left\vert z\right\vert \right)  \leq
C\left\vert \chi\right\vert ^{-\left(  d-1\right)  }e^{-\frac{\left\vert
z\right\vert }{2}\left(  1-\cos\theta\right)  }\left\vert z\right\vert
^{-\left(  d-1\right)  }.
\]
Let us show that $e^{-\frac{\left\vert z\right\vert }{2}\left(  1-\cos
\theta\right)  }\left\vert z\right\vert ^{-\left(  d-1\right)  }$ is
integrable if $d\geq4.$ Using the polar coordinate system in (\ref{coord}) we
have%
\begin{equation}
\left.
\begin{array}
[c]{c}%
{\displaystyle\int}
e^{-\frac{\left\vert z\right\vert }{2}\left(  1-\cos\theta\right)  }\left\vert
z\right\vert ^{-\left(  d-1\right)  }dz\leq C%
{\displaystyle\int_{0}^{\infty}}
\left(
{\displaystyle\int_{0}^{\pi}}
e^{-\frac{r}{2}\left(  1-\cos\theta\right)  }\theta^{d-2}d\theta\right)  dr\\
\\
\leq C%
{\displaystyle\int_{0}^{1}}
\left(
{\displaystyle\int_{0}^{\pi}}
e^{-\frac{r}{2}\left(  1-\cos\theta\right)  }\theta^{d-2}d\theta\right)  dr+C%
{\displaystyle\int_{1}^{\infty}}
r^{-\frac{d-1}{2}}\left(
{\displaystyle\int_{0}^{\pi\sqrt{r}}}
e^{-\frac{\theta^{2}}{2}}\theta^{d-2}d\theta\right)  dr\\
\\
\leq C+C\left(
{\displaystyle\int_{0}^{\infty}}
e^{-\frac{\theta^{2}}{2}}\theta^{d-2}d\theta\right)
{\displaystyle\int_{1}^{\infty}}
r^{-\frac{d-1}{2}}dr\leq C.
\end{array}
\right.  \label{polar}%
\end{equation}
Thus, by the dominated convergence theorem%
\[
I_{1}=K\left\vert \chi\right\vert ^{-\left(  d-1\right)  }%
{\displaystyle\int}
\frac{z}{\left\vert z\right\vert }e^{2\frac{\chi\cdot z}{\left\vert
\chi\right\vert }}\partial_{\left\vert z\right\vert }q^{2}\left(  \left\vert
z\right\vert \right)  dz+\left\vert \chi\right\vert ^{-\left(  d-1\right)
}o\left(  1\right)  ,
\]
as $\left\vert \chi\right\vert \rightarrow\infty$. Integrating by parts we get%
\begin{equation}
I_{1}=-K\frac{\chi}{\left\vert \chi\right\vert }\left\vert \chi\right\vert
^{-\left(  d-1\right)  }%
{\displaystyle\int}
e^{2\frac{\chi\cdot z}{\left\vert \chi\right\vert }}Q^{2}\left(  z\right)
dz+\left\vert \chi\right\vert ^{-\left(  d-1\right)  }o\left(  1\right)  .
\label{ap121}%
\end{equation}
Also from (\ref{polar}) it follows that%
\begin{equation}
r_{1}=\left\vert \chi\right\vert ^{-\left(  d-1\right)  }o\left(  1\right)  .
\label{ap120}%
\end{equation}
To estimate the last term in the right-hand side of (\ref{ap122}) we decompose%
\begin{align*}
&
{\displaystyle\int_{\frac{\left\vert \chi\right\vert }{2}\leq\left\vert
\chi-z\right\vert \leq\frac{3\left\vert \chi\right\vert }{2};\frac{\left\vert
\chi\right\vert }{2}\leq\left\vert z\right\vert }}
G\left(  \left\vert \chi-z\right\vert \right)  \nabla Q^{2}\left(  z\right)
dz\\
&  =%
{\displaystyle\int_{\frac{\left\vert \chi\right\vert }{2}\leq\left\vert
\chi-z\right\vert \leq\frac{3\left\vert \chi\right\vert }{2};\frac{\left\vert
\chi\right\vert }{2}\leq\left\vert z\right\vert \leq\frac{\left\vert
\chi\right\vert }{2}+\left\vert \chi\right\vert ^{\frac{1}{4}}}}
G\left(  \left\vert \chi-z\right\vert \right)  \nabla Q^{2}\left(  z\right)
dz\\
&  +%
{\displaystyle\int_{\frac{\left\vert \chi\right\vert }{2}\leq\left\vert
\chi-z\right\vert \leq\frac{3\left\vert \chi\right\vert }{2};\frac{\left\vert
\chi\right\vert }{2}+\left\vert \chi\right\vert ^{\frac{1}{4}}\leq\left\vert
z\right\vert }}
G\left(  \left\vert \chi-z\right\vert \right)  \nabla Q^{2}\left(  z\right)
dz.
\end{align*}
Using (\ref{ap29}) and passing to the polar system as in (\ref{coord}) we get%
\begin{equation}
\left.
\begin{array}
[c]{c}%
\left\vert
{\displaystyle\int_{\frac{\left\vert \chi\right\vert }{2}\leq\left\vert
\chi-z\right\vert \leq\frac{3\left\vert \chi\right\vert }{2};\frac{\left\vert
\chi\right\vert }{2}\leq\left\vert z\right\vert \leq\frac{\left\vert
\chi\right\vert }{2}+\left\vert \chi\right\vert ^{\frac{1}{4}}}}
G\left(  \left\vert \chi-z\right\vert \right)  \nabla Q^{2}\left(  z\right)
dz\right\vert \\
\leq Ce^{-2\left\vert \chi\right\vert }\left\vert \chi\right\vert ^{-\left(
d-1\right)  }e^{-H\left(  \frac{\left\vert \chi\right\vert }{2}\right)  }%
{\displaystyle\int_{\frac{\left\vert \chi\right\vert }{2}\leq\left\vert
z\right\vert \leq\frac{\left\vert \chi\right\vert }{2}+\left\vert
\chi\right\vert ^{\frac{1}{4}}}}
e^{-\frac{\left\vert z\right\vert }{2}\left(  1-\cos\theta\right)  }\left\vert
z\right\vert ^{-\left(  d-1\right)  }dz\\
\leq Ce^{-2\left\vert \chi\right\vert }\left\vert \chi\right\vert ^{-\left(
d-1\right)  }e^{-H\left(  \frac{\left\vert \chi\right\vert }{2}\right)  }%
{\displaystyle\int_{\frac{\left\vert \chi\right\vert }{2}}^{\frac{\left\vert
\chi\right\vert }{2}+\left\vert \chi\right\vert ^{\frac{1}{4}}}}
r^{-\frac{d-1}{2}}\left(
{\displaystyle\int_{0}^{\pi\sqrt{r}}}
e^{-\frac{1}{2}\theta^{2}}\theta^{-\left(  d-2\right)  }d\theta\right)  dr\leq
Ce^{-2\left\vert \chi\right\vert }\left\vert \chi\right\vert ^{-\left(
d-1\right)  }e^{-H\left(  \frac{\left\vert \chi\right\vert }{2}\right)
}\left\vert \chi\right\vert ^{\frac{1}{4}-\frac{d-1}{2}}%
\end{array}
\right.  \label{ap155}%
\end{equation}
and%
\[
\left\vert
{\displaystyle\int_{\frac{\left\vert \chi\right\vert }{2}\leq\left\vert
\chi-z\right\vert \leq\frac{3\left\vert \chi\right\vert }{2};\frac{\left\vert
\chi\right\vert }{2}+\left\vert \chi\right\vert ^{\frac{1}{4}}\leq\left\vert
z\right\vert }}
G\left(  \left\vert \chi-z\right\vert \right)  \nabla Q^{2}\left(  z\right)
dz\right\vert \leq Ce^{-2\left\vert \chi\right\vert }\left\vert \chi
\right\vert ^{-\left(  d-1\right)  }e^{-H\left(  \frac{\left\vert
\chi\right\vert }{2}\right)  }e^{-\left\vert \chi\right\vert ^{\frac{1}{4}}}.
\]
Hence
\begin{equation}
\left\vert
{\displaystyle\int_{\frac{\left\vert \chi\right\vert }{2}\leq\left\vert
\chi-z\right\vert \leq\frac{3\left\vert \chi\right\vert }{2};\frac{\left\vert
\chi\right\vert }{2}\leq\left\vert z\right\vert }}
G\left(  \left\vert \chi-z\right\vert \right)  \nabla Q^{2}\left(  z\right)
dz\right\vert \leq Ce^{-2\left\vert \chi\right\vert }\left\vert \chi
\right\vert ^{-\left(  d-1\right)  }e^{-H\left(  \frac{\left\vert
\chi\right\vert }{2}\right)  }\left\vert \chi\right\vert ^{\frac{1}{4}%
-\frac{d-1}{2}}. \label{ap119}%
\end{equation}
Making use of (\ref{ap117}), (\ref{ap121}), (\ref{ap120}) and (\ref{ap119}) in
(\ref{ap122}) we arrive to%
\begin{equation}
\left.
{\displaystyle\int_{\frac{\left\vert \chi\right\vert }{2}\leq\left\vert
z\right\vert \leq\frac{3\left\vert \chi\right\vert }{2}}}
G\left(  \left\vert z\right\vert \right)  \nabla Q^{2}\left(  \chi-z\right)
dz=K\kappa\frac{\chi}{\left\vert \chi\right\vert }e^{-2\left\vert
\chi\right\vert }\left\vert \chi\right\vert ^{-\left(  d-1\right)  }%
{\displaystyle\int}
e^{2\frac{\chi\cdot z}{\left\vert \chi\right\vert }}Q^{2}\left(  z\right)
dz+o\left(  e^{-2\left\vert \chi\right\vert }\left\vert \chi\right\vert
^{-\left(  d-1\right)  }\right)  .\right.  \label{ap143}%
\end{equation}
Next, we consider the cases $d=2$ and $d=3.$ If $H^{\prime}\left(  r\right)  $
does not tend to $0,$ as $r\rightarrow\infty,$ then as $H\geq0,$ we see that
$H\left(  r\right)  \geq cr,$ for some $c>0.$ In this case by (\ref{ap29}) we
have
\[
\left.
\begin{array}
[c]{c}%
\left\vert
{\displaystyle\int_{\frac{\left\vert \chi\right\vert }{2}\leq\left\vert
z\right\vert \leq\frac{3\left\vert \chi\right\vert }{2}}}
G\left(  \left\vert z\right\vert \right)  \nabla Q^{2}\left(  \chi-z\right)
dz\right\vert \\
\leq C\left\vert \chi\right\vert ^{-\left(  d-1\right)  }e^{-2\left\vert
\chi\right\vert }e^{-\frac{c\left\vert z\right\vert }{4}}%
{\displaystyle\int_{\frac{\left\vert \chi\right\vert }{2}\leq\left\vert
z\right\vert \leq\frac{3\left\vert \chi\right\vert }{2}}}
e^{-\frac{c\left\vert z\right\vert }{2}}\left\vert \chi-z\right\vert
^{-\left(  d-1\right)  }dz\leq C\left\vert \chi\right\vert ^{-\left(
d-1\right)  }e^{-2\left\vert \chi\right\vert }e^{-\frac{c\left\vert
\chi\right\vert }{4}}.
\end{array}
\right.
\]
Suppose now that $H^{\prime}\rightarrow0.$ We take the following estimate into
account%
\begin{equation}
\left\vert e^{-H\left(  \left\vert \chi-z\right\vert \right)  }-e^{-H\left(
\left\vert \chi\right\vert -\left\vert z\right\vert \right)  }\right\vert \leq
C\left\vert H^{\prime}\left(  \left\vert \chi\right\vert -\left\vert
z\right\vert \right)  \right\vert e^{-H\left(  \left\vert \chi\right\vert
-\left\vert z\right\vert \right)  }\left\vert z\right\vert \left(
1-\cos\theta\right)  . \label{ap134}%
\end{equation}
Using (\ref{ap2}), (\ref{ap123}), (\ref{ap115}), (\ref{ap116}) and
(\ref{ap134}) we estimate%
\begin{equation}
\left.  \left\vert
{\displaystyle\int_{\left\vert z\right\vert \leq\frac{\left\vert
\chi\right\vert }{2}}}
e^{-2\left(  \left\vert z-\chi\right\vert -\left\vert \chi\right\vert
+\left\vert z\right\vert \right)  }\left\vert \chi-z\right\vert ^{-\left(
d-1\right)  }e^{-H\left(  \left\vert \chi-z\right\vert \right)  }\left(
\frac{z}{\left\vert z\right\vert }e^{2\left\vert z\right\vert }\partial
_{\left\vert z\right\vert }q^{2}\left(  \left\vert z\right\vert \right)
\right)  dz+I_{2}\right\vert \leq r_{2}\right.  \label{ap124}%
\end{equation}
with%
\[
I_{2}=\mathcal{A}^{2}\frac{\chi}{\left\vert \chi\right\vert }%
{\displaystyle\int_{\left\vert z\right\vert \leq\frac{\left\vert
\chi\right\vert }{2}}}
e^{-\frac{2\left\vert \chi\right\vert \left\vert z\right\vert \left(
1-\cos\theta\right)  }{\left\vert \chi\right\vert -\left\vert z\right\vert }%
}\left(  \left\vert \chi\right\vert -\left\vert z\right\vert \right)
^{-\left(  d-1\right)  }e^{-H\left(  \left\vert \chi\right\vert -\left\vert
z\right\vert \right)  }\left\vert z\right\vert ^{-\left(  d-1\right)  }dz
\]
and
\[
\left.
\begin{array}
[c]{c}%
r_{2}=C\left\vert \chi\right\vert ^{-\left(  d-1\right)  }%
{\displaystyle\int_{\left\vert z\right\vert \leq\frac{\left\vert
\chi\right\vert }{2}}}
e^{-\frac{\left\vert z\right\vert }{2}\left(  1-\cos\theta\right)  }\left\vert
H^{\prime}\left(  \left\vert \chi\right\vert -\left\vert z\right\vert \right)
\right\vert e^{-H\left(  \left\vert \chi\right\vert -\left\vert z\right\vert
\right)  }\left\vert z\right\vert ^{-\left(  d-1\right)  }dz\\
+C\left\vert \chi\right\vert ^{-\left(  d-1\right)  }e^{-H\left(
\frac{\left\vert \chi\right\vert }{2}\right)  }%
{\displaystyle\int_{\left\vert z\right\vert \leq\frac{\left\vert
\chi\right\vert }{2}}}
e^{-\frac{\left\vert z\right\vert }{2}\left(  1-\cos\theta\right)  }\left(
\left(  1+\left\vert z\right\vert \right)  ^{-1}+\theta\right)  \left\vert
z\right\vert ^{-\left(  d-1\right)  }dz.
\end{array}
\right.
\]
Passing to the polar system as in (\ref{coord}) we get%
\begin{equation}
\left.
\begin{array}
[c]{c}%
\left\vert
{\displaystyle\int_{\left\vert z\right\vert \leq\frac{\left\vert
\chi\right\vert }{2}}}
e^{-\frac{2\left\vert \chi\right\vert \left\vert z\right\vert \left(
1-\cos\theta\right)  }{\left\vert \chi\right\vert -\left\vert z\right\vert }%
}\left(  \left\vert \chi\right\vert -\left\vert z\right\vert \right)
^{-\left(  d-1\right)  }e^{-H\left(  \left\vert \chi\right\vert -\left\vert
z\right\vert \right)  }\left\vert z\right\vert ^{-\left(  d-1\right)
}dz\right. \\
\left.  -\sigma%
{\displaystyle\int_{1}^{\frac{\left\vert \chi\right\vert }{2}}}
\left(  \left\vert \chi\right\vert -r\right)  ^{-\left(  d-1\right)
}e^{-H\left(  \left\vert \chi\right\vert -r\right)  }\left(
{\displaystyle\int_{0}^{\pi}}
e^{-\frac{2\left\vert \chi\right\vert r\theta^{2}}{\left\vert \chi\right\vert
-r}}\theta^{d-2}d\theta\right)  dr\right\vert \\
\leq C\left\vert \chi\right\vert ^{-\left(  d-1\right)  }e^{-H\left(
\frac{\left\vert \chi\right\vert }{2}\right)  }\left(  1+%
{\displaystyle\int_{\left\vert z\right\vert \leq\frac{\left\vert
\chi\right\vert }{2}}}
e^{-\frac{\left\vert z\right\vert }{2}\left(  1-\cos\theta\right)  }%
\theta\left\vert z\right\vert ^{-\left(  d-1\right)  }dz\right)  .
\end{array}
\right.  \label{ap125}%
\end{equation}%
\[
\sigma=\frac{2\pi^{\frac{d-1}{2}}}{\Gamma\left(  \frac{d-1}{2}\right)  },
\]
Making the change $\theta\rightarrow\left(  \frac{\left\vert \chi\right\vert
-r}{\left\vert \chi\right\vert r}\right)  ^{\frac{1}{2}}\eta$ we obtain%
\begin{equation}
\left.
\begin{array}
[c]{c}%
{\displaystyle\int_{1}^{\frac{\left\vert \chi\right\vert }{2}}}
\left(  \left\vert \chi\right\vert -r\right)  ^{-\left(  d-1\right)
}e^{-H\left(  \left\vert \chi\right\vert -r\right)  }\left(
{\displaystyle\int_{0}^{\pi}}
e^{-\frac{2\left\vert \chi\right\vert r\theta^{2}}{\left\vert \chi\right\vert
-r}}\theta^{d-2}d\theta\right)  dr=\upsilon\left(  d\right)  \left\vert
\chi\right\vert ^{-\frac{d-1}{2}}%
{\displaystyle\int_{\frac{\left\vert \chi\right\vert }{2}}^{\left\vert
\chi\right\vert -1}}
\left(  r\left(  \left\vert \chi\right\vert -r\right)  \right)  ^{-\frac
{d-1}{2}}e^{-H\left(  r\right)  }dr\\
-\left\vert \chi\right\vert ^{-\frac{d-1}{2}}%
{\displaystyle\int_{1}^{\frac{\left\vert \chi\right\vert }{2}}}
\left(  r\left(  \left\vert \chi\right\vert -r\right)  \right)  ^{-\frac
{d-1}{2}}e^{-H\left(  \left\vert \chi\right\vert -r\right)  }\left(
{\displaystyle\int_{\pi\sqrt{\frac{\left\vert \chi\right\vert r}{\left\vert
\chi\right\vert -r}}}^{\infty}}
e^{-2\eta^{2}}\eta^{d-2}d\eta\right)  dr.
\end{array}
\right.  \label{ap126}%
\end{equation}
where $\upsilon\left(  d\right)  $ is given by (\ref{ap142}). The second term
in the right-hand side of (\ref{ap126}) is estimated by using%
\begin{equation}
\left.
\begin{array}
[c]{c}%
\left\vert \chi\right\vert ^{-\frac{d-1}{2}}%
{\displaystyle\int_{1}^{\frac{\left\vert \chi\right\vert }{2}}}
\left(  r\left(  \left\vert \chi\right\vert -r\right)  \right)  ^{-\frac
{d-1}{2}}\left(
{\displaystyle\int_{\pi\sqrt{\frac{\left\vert \chi\right\vert r}{\left\vert
\chi\right\vert -r}}}^{\infty}}
e^{-2\eta^{2}}\eta^{d-2}d\eta\right)  dr\\
\leq C\left\vert \chi\right\vert ^{-\left(  d-1\right)  }%
{\displaystyle\int_{1}^{\infty}}
r^{-\frac{d-1}{2}}e^{-\frac{\pi^{2}}{2}r}dr\leq C\left\vert \chi\right\vert
^{-\left(  d-1\right)  }.
\end{array}
\right.  \label{ap129}%
\end{equation}
From (\ref{ap126}) and (\ref{ap129}) we get%
\begin{equation}
\left.
\begin{array}
[c]{c}%
{\displaystyle\int_{1}^{\frac{\left\vert \chi\right\vert }{2}}}
\left(  \left\vert \chi\right\vert -r\right)  ^{-\left(  d-1\right)
}e^{-H\left(  \left\vert \chi\right\vert -r\right)  }\left(
{\displaystyle\int_{0}^{\pi}}
e^{-\frac{2\left\vert \chi\right\vert r\theta^{2}}{\left\vert \chi\right\vert
-r}}\theta^{d-2}d\theta\right)  dr\\
=\upsilon\left(  d\right)  \left\vert \chi\right\vert ^{-\frac{d-1}{2}}%
{\displaystyle\int_{\frac{\left\vert \chi\right\vert }{2}}^{\left\vert
\chi\right\vert -1}}
\left(  r\left(  \left\vert \chi\right\vert -r\right)  \right)  ^{-\frac
{d-1}{2}}e^{-H\left(  r\right)  }dr+O\left(  \left\vert \chi\right\vert
^{-\left(  d-1\right)  }e^{-H\left(  \frac{\left\vert \chi\right\vert }%
{2}\right)  }\right)  .
\end{array}
\right.  \label{ap130}%
\end{equation}
Passing to the polar system as in (\ref{coord}) we estimate%
\begin{equation}
\left.
\begin{array}
[c]{c}%
{\displaystyle\int_{\left\vert z\right\vert \leq\frac{\left\vert
\chi\right\vert }{2}}}
e^{-\frac{\left\vert z\right\vert }{2}\left(  1-\cos\theta\right)  }\left\vert
H^{\prime}\left(  \left\vert \chi\right\vert -\left\vert z\right\vert \right)
\right\vert e^{-H\left(  \left\vert \chi\right\vert -\left\vert z\right\vert
\right)  }\left\vert z\right\vert ^{-\left(  d-1\right)  }dz\\
\leq C\left\vert H^{\prime}\left(  \frac{\left\vert \chi\right\vert }%
{2}\right)  \right\vert
{\displaystyle\int_{1}^{\frac{\left\vert \chi\right\vert }{2}}}
r^{-\frac{d-1}{2}}e^{-H\left(  \left\vert \chi\right\vert -r\right)  }\left(
{\displaystyle\int_{0}^{\pi\sqrt{r}}}
e^{-\frac{\theta^{2}}{4}}\theta^{d-2}d\theta\right)  dr\\
\leq C\left\vert H^{\prime}\left(  \frac{\left\vert \chi\right\vert }%
{2}\right)  \right\vert
{\displaystyle\int_{\frac{\left\vert \chi\right\vert }{2}}^{\left\vert
\chi\right\vert -1}}
\left(  \left\vert \chi\right\vert -r\right)  ^{-\frac{d-1}{2}}e^{-H\left(
r\right)  }dr
\end{array}
\right.  \label{ap135}%
\end{equation}
and%
\begin{equation}
\left.
\begin{array}
[c]{c}%
{\displaystyle\int_{\left\vert z\right\vert \leq\frac{\left\vert
\chi\right\vert }{2}}}
e^{-\frac{\left\vert z\right\vert }{2}\left(  1-\cos\theta\right)  }\left(
\left(  1+\left\vert z\right\vert \right)  ^{-1}+\theta\right)  \left\vert
z\right\vert ^{-\left(  d-1\right)  }dz\\
\leq C\left(  1+%
{\displaystyle\int_{1}^{\frac{\left\vert \chi\right\vert }{2}}}
r^{-\frac{d}{2}}\left(
{\displaystyle\int_{0}^{\pi\sqrt{r}}}
e^{-\frac{\theta^{2}}{4}}\left(  1+\theta\right)  ^{d-1}d\theta\right)
dr\right)  \leq C\left\{
\begin{array}
[c]{c}%
\ln\left\vert \chi\right\vert ,\text{ \ }d=2,\\
1,\text{ \ }d=3.
\end{array}
\right.
\end{array}
\right.  \label{ap136}%
\end{equation}
Thus, using (\ref{ap130}) and (\ref{ap136}) in (\ref{ap125}) we obtain%
\begin{equation}
\left.  \left\vert I_{2}-\mathcal{A}^{2}\frac{\chi}{\left\vert \chi\right\vert
}\upsilon\left(  d\right)  \left\vert \chi\right\vert ^{-\frac{d-1}{2}}%
{\displaystyle\int_{\frac{\left\vert \chi\right\vert }{2}}^{\left\vert
\chi\right\vert -1}}
\left(  r\left(  \left\vert \chi\right\vert -r\right)  \right)  ^{-\frac
{d-1}{2}}e^{-H\left(  r\right)  }dr\right\vert \leq C\left\vert \chi
\right\vert ^{-\left(  d-1\right)  }\allowbreak e^{-H\left(  \frac{\left\vert
\chi\right\vert }{2}\right)  }\left\{
\begin{array}
[c]{c}%
\ln\left\vert \chi\right\vert ,\text{ \ }d=2,\\
1,\text{ \ }d=3.
\end{array}
\right.  .\right.  \label{ap137}%
\end{equation}
Moreover (\ref{ap135}) and (\ref{ap136}) imply
\begin{equation}
\left.  r_{2}\leq C\left\vert \chi\right\vert ^{-\left(  d-1\right)  }\left(
\left\vert H^{\prime}\left(  \frac{\left\vert \chi\right\vert }{2}\right)
\right\vert
{\displaystyle\int_{\frac{\left\vert \chi\right\vert }{2}}^{\left\vert
\chi\right\vert -1}}
\left(  \left\vert \chi\right\vert -r\right)  ^{-\frac{d-1}{2}}e^{-H\left(
r\right)  }dr+e^{-H\left(  \frac{\left\vert \chi\right\vert }{2}\right)
}\left\{
\begin{array}
[c]{c}%
\ln\left\vert \chi\right\vert ,\text{ \ }d=2,\\
1,\text{ \ }d=3.
\end{array}
\right.  \right)  \right.  \label{ap131}%
\end{equation}
Therefore, since%
\begin{equation}
\left\vert \chi\right\vert ^{-\frac{d-1}{2}}%
{\displaystyle\int_{\frac{\left\vert \chi\right\vert }{2}}^{\left\vert
\chi\right\vert -1}}
\left(  r\left(  \left\vert \chi\right\vert -r\right)  \right)  ^{-\frac
{d-1}{2}}dr\geq C\left\vert \chi\right\vert ^{-\left(  d-1\right)  }\left\{
\begin{array}
[c]{c}%
\sqrt{\left\vert \chi\right\vert },\text{ \ }d=2,\\
\ln\left\vert \chi\right\vert ,\text{ \ }d=3,
\end{array}
\right.  \label{ap146}%
\end{equation}
via (\ref{ap122}), (\ref{ap119}), (\ref{ap124}), (\ref{ap137}) and
(\ref{ap131}) we arrive to%
\begin{equation}
\left.
\begin{array}
[c]{c}%
{\displaystyle\int_{\frac{\left\vert \chi\right\vert }{2}\leq\left\vert
z\right\vert \leq\frac{3\left\vert \chi\right\vert }{2}}}
G\left(  \left\vert z\right\vert \right)  \nabla Q^{2}\left(  \chi-z\right)
dz\\
=\kappa\mathcal{A}^{2}\upsilon\left(  d\right)  \left(  \frac{\chi}{\left\vert
\chi\right\vert }+o\left(  1\right)  \right)  e^{-2\left\vert \chi\right\vert
}\left\vert \chi\right\vert ^{-\frac{d-1}{2}}%
{\displaystyle\int_{\frac{\left\vert \chi\right\vert }{2}}^{\left\vert
\chi\right\vert -1}}
\left(  r\left(  \left\vert \chi\right\vert -r\right)  \right)  ^{-\frac
{d-1}{2}}e^{-H\left(  r\right)  }dr.
\end{array}
\right.  \label{ap138}%
\end{equation}
Let us consider now the region $\left\vert z\right\vert \leq\frac{\left\vert
\chi\right\vert }{2}.$ Note that%
\begin{equation}
\left\vert \frac{\chi-z}{\left\vert \chi-z\right\vert }-\frac{\chi}{\left\vert
\chi\right\vert }\right\vert \leq C\frac{\left\vert z\right\vert \left(
1-\cos\theta\right)  +\left\vert z\right\vert \sin\theta}{\left\vert
\chi\right\vert }. \label{ap156}%
\end{equation}
Then, using (\ref{ap2}), (\ref{ap115}) and (\ref{ap116}) we have%
\begin{equation}
\left.  \left\vert
{\displaystyle\int_{\left\vert z\right\vert \leq\frac{\left\vert
\chi\right\vert }{2}}}
G\left(  \left\vert z\right\vert \right)  \nabla Q^{2}\left(  \chi-z\right)
dz-\kappa\mathcal{A}^{2}\frac{\chi}{\left\vert \chi\right\vert }%
e^{-2\left\vert \chi\right\vert }%
{\displaystyle\int_{\left\vert z\right\vert \leq\frac{\left\vert
\chi\right\vert }{2}}}
e^{-\frac{2\left\vert \chi\right\vert \left\vert z\right\vert \left(
1-\cos\theta\right)  }{\left\vert \chi\right\vert -\left\vert z\right\vert }%
}\left\vert z\right\vert ^{-\left(  d-1\right)  }e^{-H\left(  \left\vert
z\right\vert \right)  }\left(  \left\vert \chi\right\vert -\left\vert
z\right\vert \right)  ^{-\left(  d-1\right)  }dz\right\vert \leq
r_{3},\right.  \label{ap140}%
\end{equation}
with%
\begin{equation}
r_{3}=Ce^{-2\left\vert \chi\right\vert }\left\vert \chi\right\vert
^{-d+\frac{1}{2}}%
{\displaystyle\int_{\left\vert z\right\vert \leq\frac{\left\vert
\chi\right\vert }{2}}}
e^{-\frac{\left\vert z\right\vert }{2}\left(  1-\cos\theta\right)  }\left\vert
z\right\vert ^{-\left(  d-1\right)  }e^{-H\left(  \left\vert z\right\vert
\right)  }dz. \label{ap139}%
\end{equation}
If $d\geq4,$ by (\ref{polar}) the integral in the right-hand side of
(\ref{ap139}) exists. Then by the dominated convergence theorem
\[%
{\displaystyle\int_{\left\vert z\right\vert \leq\frac{\left\vert
\chi\right\vert }{2}}}
e^{-\frac{2\left\vert \chi\right\vert \left\vert z\right\vert \left(
1-\cos\theta\right)  }{\left\vert \chi\right\vert -\left\vert z\right\vert }%
}\left\vert z\right\vert ^{-\left(  d-1\right)  }e^{-H\left(  \left\vert
z\right\vert \right)  }\left(  \left\vert \chi\right\vert -\left\vert
z\right\vert \right)  ^{-\left(  d-1\right)  }dz=\left\vert \chi\right\vert
^{-\left(  d-1\right)  }\int e^{-2\left\vert z\right\vert \left(  1-\cos
\theta\right)  }\left\vert z\right\vert ^{-\left(  d-1\right)  }e^{-H\left(
\left\vert z\right\vert \right)  }dz+o\left(  \left\vert \chi\right\vert
^{-\left(  d-1\right)  }\right)  .
\]
Thus, from (\ref{ap140}) it follows%
\begin{equation}%
{\displaystyle\int_{\left\vert z\right\vert \leq\frac{\left\vert
\chi\right\vert }{2}}}
G\left(  \left\vert z\right\vert \right)  \nabla Q^{2}\left(  \chi-z\right)
dz=\mathcal{A}^{2}\frac{\chi}{\left\vert \chi\right\vert }e^{-2\left\vert
\chi\right\vert }\left\vert \chi\right\vert ^{-\left(  d-1\right)  }\int
G\left(  \left\vert z\right\vert \right)  e^{2\frac{\chi\cdot z}{\left\vert
\chi\right\vert }}dz+o\left(  e^{-2\left\vert \chi\right\vert }\left\vert
\chi\right\vert ^{-\left(  d-1\right)  }\right)  . \label{ap141}%
\end{equation}
In the case of dimensions $d=2$ or $d=3.~$If $r^{-\frac{d-1}{2}}e^{-H\left(
r\right)  }\in L^{1}\left(  [1,\infty)\right)  ,$ $r_{3}=O\left(
e^{-2\left\vert \chi\right\vert }\left\vert \chi\right\vert ^{-d+\frac{1}{2}%
}\right)  ,$ similarly to the case $d\geq4$\ we obtain (\ref{ap141}). If
$r^{-\frac{d-1}{2}}e^{-H\left(  r\right)  }\notin L^{1}\left(  [1,\infty
)\right)  $ similarly to (\ref{ap137}) via (\ref{ap136}), (\ref{ap129}),
(\ref{ap142}) we have%
\begin{equation}
\left.
\begin{array}
[c]{c}%
\left\vert
{\displaystyle\int_{\left\vert z\right\vert \leq\frac{\left\vert
\chi\right\vert }{2}}}
e^{-\frac{2\left\vert \chi\right\vert \left\vert z\right\vert \left(
1-\cos\theta\right)  }{\left\vert \chi\right\vert -\left\vert z\right\vert }%
}\left\vert z\right\vert ^{-\left(  d-1\right)  }e^{-H\left(  \left\vert
z\right\vert \right)  }\left(  \left\vert \chi\right\vert -\left\vert
z\right\vert \right)  ^{-\left(  d-1\right)  }dz-\upsilon\left(  d\right)
\left\vert \chi\right\vert ^{-\frac{d-1}{2}}%
{\displaystyle\int_{1}^{\frac{\left\vert \chi\right\vert }{2}}}
\left(  r\left(  \left\vert \chi\right\vert -r\right)  \right)  ^{-\frac
{d-1}{2}}e^{-H\left(  r\right)  }dr\right\vert \\
\leq C\left\vert \chi\right\vert ^{-\left(  d-1\right)  }%
{\displaystyle\int_{1}^{\frac{\left\vert \chi\right\vert }{2}}}
r^{-\frac{d}{2}}e^{-H\left(  r\right)  }dr.
\end{array}
\right.  \label{ap157}%
\end{equation}
Moreover
\[
r_{3}\leq Ce^{-2\left\vert \chi\right\vert }\left\vert \chi\right\vert
^{-d+\frac{1}{2}}%
{\displaystyle\int_{1}^{\frac{\left\vert \chi\right\vert }{2}}}
r^{-\frac{d-1}{2}}e^{-H\left(  r\right)  }dr\leq Ce^{-2\left\vert
\chi\right\vert }\left\vert \chi\right\vert ^{-\left(  d-1\right)  }.
\]
Then as $\left\vert \chi\right\vert ^{-\frac{d-1}{2}}%
{\displaystyle\int_{1}^{\frac{\left\vert \chi\right\vert }{2}}}
\left(  r\left(  \left\vert \chi\right\vert -r\right)  \right)  ^{-\frac
{d-1}{2}}e^{-H\left(  r\right)  }dr\geq\left\vert \chi\right\vert ^{-\left(
d-1\right)  }%
{\displaystyle\int_{1}^{\frac{\left\vert \chi\right\vert }{2}}}
r^{-\frac{d-1}{2}}e^{-H\left(  r\right)  }dr,$ (\ref{ap140}) implies%
\begin{equation}
\left.
{\displaystyle\int_{\left\vert z\right\vert \leq\frac{\left\vert
\chi\right\vert }{2}}}
G\left(  \left\vert z\right\vert \right)  \nabla Q^{2}\left(  \chi-z\right)
dz=\kappa\mathcal{A}^{2}\upsilon\left(  d\right)  \left(  \frac{\chi
}{\left\vert \chi\right\vert }+o\left(  1\right)  \right)  e^{-2\left\vert
\chi\right\vert }\left\vert \chi\right\vert ^{-\frac{d-1}{2}}%
{\displaystyle\int_{1}^{\frac{\left\vert \chi\right\vert }{2}}}
\left(  r\left(  \left\vert \chi\right\vert -r\right)  \right)  ^{-\frac
{d-1}{2}}e^{-H\left(  r\right)  }dr.\right.  \label{ap144}%
\end{equation}

We now conclude as follows. If $d\geq4,$ gathering together (\ref{ap36}),
(\ref{ap143}) and (\ref{ap141}) we get (\ref{ap150}). In the case of
dimensions $d=2$ or $d=3,$ suppose first that $r^{-\frac{d-1}{2}}e^{-H\left(
r\right)  }\in L^{1}\left(  [1,\infty)\right)  .$ Then, (\ref{ap138}) implies
that
\[
\left.
{\displaystyle\int_{\frac{\left\vert \chi\right\vert }{2}\leq\left\vert
z\right\vert \leq\frac{3\left\vert \chi\right\vert }{2}}}
G\left(  \left\vert z\right\vert \right)  \nabla Q^{2}\left(  \chi-z\right)
dz=o\left(  e^{-2\left\vert \chi\right\vert }\left\vert \chi\right\vert
^{-\frac{d-1}{2}}\right)  .\right.
\]
Therefore, from (\ref{ap36}) and (\ref{ap141}) we attain (\ref{ap151}). If
$r^{-\frac{d-1}{2}}e^{-H\left(  r\right)  }\notin L^{1}\left(  [1,\infty
)\right)  ,$ by (\ref{ap36}), (\ref{ap138}) and (\ref{ap144}) we deduce
(\ref{ap152}).
\end{proof}

We now turn to the study of the case when $G$ determines the behavior of
(\ref{ap28})$.$ We consider bounded function $G\in C^{\infty}\left(
\mathbb{R}^{d}\right)  $ of the form
\begin{equation}
G\left(  r\right)  =V_{+}\left(  r\right)  ,\text{ }H\geq0. \label{ap166}%
\end{equation}
We prove the following.

\begin{lemma}
\label{L4}Let $G\in C^{\infty}\left(  \mathbb{R}^{d}\right)  $ be bounded and
of the form (\ref{ap166})$,$ where $H,H^{\prime}$ are monotone. If $H\left(
r\right)  =o\left(  r\right)  ,$ as $r\rightarrow\infty,$ the asymptotics
\begin{equation}
\mathcal{J}\left(  \chi\right)  =\frac{\chi}{\left\vert \chi\right\vert
}\kappa\mathcal{A}^{2}\upsilon\left(  d\right)  \left(  1+o\left(  1\right)
\right)  C_{+}\left(  \left\vert \chi\right\vert \right)  e^{-2\left\vert
\chi\right\vert }\left\vert \chi\right\vert ^{-\left(  d-1\right)  }
\label{ap168}%
\end{equation}
as $\left\vert \chi\right\vert \rightarrow\infty$ are valid with $C_{+}\left(
\left\vert \chi\right\vert \right)  $ given by (\ref{ap167}). If $0<cr\leq
H\left(  r\right)  <2r$ suppose in addition that $H^{\prime\prime}\left(
r\right)  $ is bounded. Then
\begin{equation}
\mathcal{J}\left(  \chi\right)  =\mathcal{I}\frac{\chi}{\left\vert
\chi\right\vert }\left(  1+O\left(  \tfrac{\left\vert K^{\prime}-H^{\prime
}\left(  \left\vert \chi\right\vert \right)  \right\vert }{\left\vert
\chi\right\vert }\right)  \right)  \left(  2-H^{\prime}\left(  \left\vert
\chi\right\vert \right)  \right)  G\left(  \left\vert \chi\right\vert \right)
, \label{ap169}%
\end{equation}
where $K^{\prime}=\lim_{r\rightarrow\infty}H^{\prime}\left(  r\right)  $ and
$\mathcal{I}$ is defined by (\ref{ap184}). Finally, if $V=V^{\left(  2\right)
},$
\begin{equation}
\mathcal{J}\left(  \chi\right)  =-\frac{\chi}{\left\vert \chi\right\vert
}\left(  \left(  \int Q^{2}\left(  z\right)  dz\right)  V^{\prime}\left(
\left\vert \chi\right\vert \right)  +\mathbf{r}\left(  \left\vert
\chi\right\vert \right)  +e^{-\frac{\left\vert \chi\right\vert }{2}}\right)  ,
\label{ap306}%
\end{equation}
with%
\begin{equation}
\mathbf{r}\left(  \left\vert \chi\right\vert \right)  =O\left(  \left(
\left\vert h^{\prime}\left(  \left\vert \chi\right\vert \right)  \right\vert
\left(  \left\vert h^{\prime}\left(  \left\vert \chi\right\vert \right)
\right\vert ^{2}+\tfrac{\left\vert h^{\prime}\left(  \left\vert \chi
\right\vert \right)  \right\vert }{\left\vert \chi\right\vert }+\left\vert
h^{\prime\prime}\left(  \left\vert \chi\right\vert \right)  \right\vert
+\left\vert \chi\right\vert ^{-2}\right)  +\tfrac{\left\vert h^{\prime\prime
}\left(  \left\vert \chi\right\vert \right)  \right\vert }{\left\vert
\chi\right\vert }+\left\vert h^{\prime\prime\prime}\left(  \left\vert
\chi\right\vert \right)  \right\vert \right)  V\left(  \left\vert
\chi\right\vert \right)  \right)  . \label{Rcal}%
\end{equation}

\end{lemma}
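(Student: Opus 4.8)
The plan is to adapt the scheme of the proof of Lemma \ref{L3}, now in the complementary regime in which the tail of $G$ is comparable to or fatter than that of $Q^{2}$. As a preliminary step, exactly the symmetry computation \eqref{coord} used for Lemma \ref{L3} shows that $\mathcal{J}(\chi)$ is parallel to $\chi$, so in each case it suffices to compute the scalar radial coefficient.

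For the case $H(r)=o(r)$, which forces $H'\to 0$, I would run the ``non-integrable branch'' of the proof of Lemma \ref{L3} (the one producing \eqref{ap152}) with $e^{-H}$ systematically replaced by $e^{+H}$. Concretely, write $\mathcal{J}(\chi)=\int G(|z|)\nabla Q^{2}(\chi-z)\,dz$ and split the domain into $\{|z|\ge\tfrac32|\chi|\}$, which is $O(e^{-3|\chi|})$ as in \eqref{ap36}, and the two regions $\{\tfrac12|\chi|\le|z|\le\tfrac32|\chi|\}$ and $\{|z|\le\tfrac12|\chi|\}$. On the latter two, insert \eqref{ap2} and \eqref{ap29}, use the elementary expansions \eqref{ap118}, \eqref{ap115}, \eqref{ap116} and the analogue of \eqref{ap134} (legitimate since $H,H'$ are monotone and $H'\to0$), pass to polar coordinates as in \eqref{coord}, and evaluate the transverse Gaussian integral $\int_{0}^{\infty}e^{-2\eta^{2}}\eta^{d-2}\,d\eta=\upsilon(d)$ after the change of variable of \eqref{ap126}. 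Adding the two pieces reassembles $\int_{1}^{|\chi|-1}(r(|\chi|-r))^{-(d-1)/2}e^{H(r)}\,dr=|\chi|^{-(d-1)/2}C_{+}(|\chi|)$, which yields \eqref{ap168} with leading constant $\kappa\mathcal{A}^{2}\upsilon(d)$.

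For $0<cr\le H(r)<2r$ the mechanism is the opposite one: $G$ decays strictly slower than $Q^{2}$, so the mass of $\mathcal{J}(\chi)=\int G(|y+\chi|)\nabla Q^{2}(y)\,dy$ concentrates on $|y|$ bounded relative to $|\chi|$. I would integrate by parts to move the derivative onto $G$, getting $\mathcal{J}(\chi)=-\int G'(|y+\chi|)\tfrac{y+\chi}{|y+\chi|}Q^{2}(y)\,dy$, and then use $(\log G)'(r)=-2-(d-1)/r+H'(r)$ in the integrated form $\log G(|y+\chi|)-\log G(|\chi|)=\int_{|\chi|}^{|y+\chi|}(\log G)'(s)\,ds$ together with $|y+\chi|-|\chi|=\tfrac{\chi\cdot y}{|\chi|}+O(|y|^{2}/|\chi|)$ to obtain $G(|y+\chi|)=G(|\chi|)e^{(H'(|\chi|)-2)\chi\cdot y/|\chi|}(1+o(1))$ on a suitable window of $y$; the monotonicity of $H'$ and boundedness of $H''$ are what control the error in this step, and the complementary region is exponentially smaller than $G(|\chi|)$ precisely because $H(|\chi|)\ge c|\chi|$. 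Combining with $-G'(r)=G(r)(2-H'(r))(1+O(1/r))$ and one further integration by parts, which turns $\int e^{(H'(|\chi|)-2)\chi\cdot y/|\chi|}Q^{2}(y)\,dy$ into $\mathcal{I}$ of \eqref{ap184} as $H'(|\chi|)\to K'$, gives \eqref{ap169} with the stated relative error $O(|K'-H'(|\chi|)|/|\chi|)$.

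For $V=V^{(2)}=e^{-h_{1}}$ the same integration by parts applies and $V$ is now slowly varying, so the Taylor expansion of $V'(|y+\chi|)\tfrac{y+\chi}{|y+\chi|}$ about $y=0$ is valid on all of $\{|y|\le\tfrac12|\chi|\}$, the region $\{|y|\ge\tfrac12|\chi|\}$ being $O(e^{-|\chi|/2})$ from $|V'|\le\|V'\|_{L^{\infty}}$ and $Q^{2}(y)\le Ce^{-2|y|}$. The zeroth order term gives $-\tfrac{\chi}{|\chi|}V'(|\chi|)\int Q^{2}$, the first order term vanishes by parity of $Q^{2}$, and the quadratic remainder---estimated with the derivative bounds $|h_{1}^{(k)}(r)|\le Cr^{-1}|h_{1}^{(k-1)}(r)|$ and $|h_{1}^{(k)}(r/2)|\le C_{k}|h_{1}^{(k)}(r)|$ of Condition \ref{ConidtionPotential} (used to compare $V^{(k)}$ at points within a factor two of $|\chi|$), together with the $O(1/|\chi|)$ geometric factors coming from $(d-1)/|y+\chi|$ and from the quadratic part of $|y+\chi|-|\chi|$---produces exactly $\mathbf{r}(|\chi|)$ of \eqref{Rcal}, giving \eqref{ap306}. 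The step I expect to be the main obstacle is the case $H=o(r)$: keeping the ridge integral defining $C_{+}(|\chi|)$ uniformly under control when $e^{H(r)}$ is a genuinely growing (though subexponential) factor, and gluing the truncated $r$-integrals over $\{|z|\le\tfrac12|\chi|\}$ and $\{\tfrac12|\chi|\le|z|\le\tfrac32|\chi|\}$ into $\int_{1}^{|\chi|-1}$ without losing the leading constant --- which amounts to re-running, with care, the whole chain of elementary estimates \eqref{ap118}--\eqref{ap146} from the proof of Lemma \ref{L3}.
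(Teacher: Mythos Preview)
Your proposal is correct and follows essentially the same route as the paper: the same three-region splitting and polar/Gaussian reduction for $H(r)=o(r)$, the same Taylor expansion of $G(|\chi-z|)$ around $|\chi|$ for $cr\le H(r)<2r$ (the paper expands $G$ first and integrates by parts afterwards rather than the reverse, but this is cosmetic), and the same parity-plus-second-order-remainder argument for $V=V^{(2)}$. The paper handles your anticipated obstacle in the $H=o(r)$ case by choosing the intermediate cutoff $A=\max\{|\chi|^{1/4},H(|\chi|)\}$ when estimating the cross term $r_{4}$, which is exactly the device needed to glue the two truncated integrals into $C_{+}(|\chi|)$ without loss.
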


\begin{proof}
Suppose first that $H\left(  r\right)  =o\left(  r\right)  ,$ as
$r\rightarrow\infty.$ In this case, as $H^{\prime}$ is monotone, $H^{\prime
}\left(  r\right)  =o\left(  1\right)  ,$ as $r\rightarrow\infty.$ Similarly
to (\ref{ap36}) we estimate
\begin{equation}
\left.  \left\vert
{\displaystyle\int_{\left\vert z\right\vert \geq\frac{3\left\vert
\chi\right\vert }{2}}}
G\left(  \left\vert \chi-z\right\vert \right)  \nabla Q^{2}\left(  z\right)
dz\right\vert \leq Ce^{-3\left\vert \chi\right\vert }%
{\displaystyle\int}
\left\vert G\left(  \left\vert \chi-z\right\vert \right)  \right\vert dz\leq
Ce^{-3\left\vert \chi\right\vert }.\right.  \label{ap161}%
\end{equation}
Next, we decompose%
\begin{equation}
\left.
\begin{array}
[c]{c}%
{\displaystyle\int_{\frac{\left\vert \chi\right\vert }{2}\leq\left\vert
z\right\vert \leq\frac{3\left\vert \chi\right\vert }{2}}}
G\left(  \left\vert \chi-z\right\vert \right)  \nabla Q^{2}\left(  z\right)
dz=-%
{\displaystyle\int_{\frac{\left\vert \chi\right\vert }{2}\leq\left\vert
\chi-z\right\vert \leq\frac{3\left\vert \chi\right\vert }{2}}}
G\left(  \left\vert z\right\vert \right)  \nabla Q^{2}\left(  \chi-z\right)
dz\\
=-%
{\displaystyle\int_{\left\vert z\right\vert \leq\frac{\left\vert
\chi\right\vert }{2}}}
G\left(  \left\vert z\right\vert \right)  \nabla Q^{2}\left(  \chi-z\right)
dz-r_{4},
\end{array}
\right.  \label{ap158}%
\end{equation}
with%
\[
r_{4}=%
{\displaystyle\int_{\frac{\left\vert \chi\right\vert }{2}\leq\left\vert
\chi-z\right\vert \leq\frac{3\left\vert \chi\right\vert }{2};\frac{\left\vert
\chi\right\vert }{2}\leq\left\vert z\right\vert }}
G\left(  \left\vert z\right\vert \right)  \nabla Q^{2}\left(  \chi-z\right)
dz.
\]
To estimate $r_{4}$ we write%
\begin{align*}
r_{4}  &  =%
{\displaystyle\int_{\frac{\left\vert \chi\right\vert }{2}\leq\left\vert
\chi-z\right\vert \leq\frac{3\left\vert \chi\right\vert }{2};\frac{\left\vert
\chi\right\vert }{2}\leq\left\vert z\right\vert \leq\frac{\left\vert
\chi\right\vert }{2}+A}}
G\left(  \left\vert z\right\vert \right)  \nabla Q^{2}\left(  \chi-z\right)
dz\\
&  +%
{\displaystyle\int_{\frac{\left\vert \chi\right\vert }{2}\leq\left\vert
\chi-z\right\vert \leq\frac{3\left\vert \chi\right\vert }{2};\frac{\left\vert
\chi\right\vert }{2}+A\leq\left\vert z\right\vert \leq\left\vert
\chi\right\vert }}
G\left(  \left\vert z\right\vert \right)  \nabla Q^{2}\left(  \chi-z\right)
dz\\
&  +%
{\displaystyle\int_{\frac{\left\vert \chi\right\vert }{2}\leq\left\vert
\chi-z\right\vert \leq\frac{3\left\vert \chi\right\vert }{2};\left\vert
\chi\right\vert \leq\left\vert z\right\vert }}
G\left(  \left\vert z\right\vert \right)  \nabla Q^{2}\left(  \chi-z\right)
dz.
\end{align*}
where $A=\max\left\{  \left\vert \chi\right\vert ^{\frac{1}{4}},H\left(
\left\vert \chi\right\vert \right)  \right\}  .$ Using (\ref{ap29}) and
(\ref{ap118}), as $H\left(  r\right)  =o\left(  r\right)  ,$ similarly to
(\ref{ap155}) we get
\begin{equation}
\left\vert r_{4}\right\vert \leq C\left\vert \chi\right\vert ^{-\left(
d-1\right)  }e^{-2\left\vert \chi\right\vert }%
{\displaystyle\int_{\frac{\left\vert \chi\right\vert }{2}}^{\frac{\left\vert
\chi\right\vert }{2}+A}}
r^{-\frac{\left(  d-1\right)  }{2}}e^{H\left(  r\right)  }dr\leq C\left\vert
\chi\right\vert ^{-\left(  d-1\right)  }e^{-2\left\vert \chi\right\vert
}o\left(  1\right)
{\displaystyle\int_{1}^{\frac{\left\vert \chi\right\vert }{2}}}
r^{-\frac{\left(  d-1\right)  }{2}}e^{H\left(  r\right)  }dr. \label{ap159}%
\end{equation}
Introduce the polar coordinate system as in (\ref{coord})$.$ By (\ref{ap2}),
(\ref{ap115}), (\ref{ap116}) and (\ref{ap156}) we have%
\[
\left.  \left\vert
{\displaystyle\int_{\left\vert z\right\vert \leq\frac{\left\vert
\chi\right\vert }{2}}}
G\left(  \left\vert z\right\vert \right)  \nabla Q^{2}\left(  \chi-z\right)
dz-\mathcal{A}^{2}\kappa\frac{\chi}{\left\vert \chi\right\vert }%
e^{-2\left\vert \chi\right\vert }%
{\displaystyle\int_{\left\vert z\right\vert \leq\frac{\left\vert
\chi\right\vert }{2}}}
e^{-\frac{2\left\vert \chi\right\vert \left\vert z\right\vert \left(
1-\cos\theta\right)  }{\left\vert \chi\right\vert -\left\vert z\right\vert }%
}e^{H\left(  \left\vert z\right\vert \right)  }\left\vert z\right\vert
^{-\left(  d-1\right)  }\left(  \left\vert \chi\right\vert -\left\vert
z\right\vert \right)  ^{-\left(  d-1\right)  }dz\right\vert \leq r_{5}\right.
\]
with%
\[
r_{5}=C\left\vert \chi\right\vert ^{-\left(  d-1\right)  }e^{-2\left\vert
\chi\right\vert }%
{\displaystyle\int_{\left\vert z\right\vert \leq\frac{\left\vert
\chi\right\vert }{2}}}
e^{-\frac{\left\vert z\right\vert }{2}\left(  1-\cos\theta\right)
}e^{H\left(  \left\vert z\right\vert \right)  }\left(  1+\left\vert
z\right\vert \right)  ^{-\frac{1}{2}}\left\vert z\right\vert ^{-\left(
d-1\right)  }dz.
\]
Proceeding similarly to (\ref{ap157}) and (\ref{ap136}) we obtain%
\[
\left.
\begin{array}
[c]{c}%
\left\vert
{\displaystyle\int_{\left\vert z\right\vert \leq\frac{\left\vert
\chi\right\vert }{2}}}
e^{-\frac{2\left\vert \chi\right\vert \left\vert z\right\vert \left(
1-\cos\theta\right)  }{\left\vert \chi\right\vert -\left\vert z\right\vert }%
}e^{H\left(  \left\vert z\right\vert \right)  }\left\vert z\right\vert
^{-\left(  d-1\right)  }\left(  \left\vert \chi\right\vert -\left\vert
z\right\vert \right)  ^{-\left(  d-1\right)  }dz-\upsilon\left(  d\right)
\left\vert \chi\right\vert ^{-\frac{d-1}{2}}%
{\displaystyle\int_{1}^{\frac{\left\vert \chi\right\vert }{2}}}
\left(  r\left(  \left\vert \chi\right\vert -r\right)  \right)  ^{-\frac
{d-1}{2}}e^{H\left(  r\right)  }dr\right\vert \\
\leq C\left\vert \chi\right\vert ^{-\left(  d-1\right)  }%
{\displaystyle\int_{1}^{\frac{\left\vert \chi\right\vert }{2}}}
r^{-\frac{d}{2}}e^{H\left(  r\right)  }dr\leq Co\left(  1\right)  \left\vert
\chi\right\vert ^{-\left(  d-1\right)  }%
{\displaystyle\int_{1}^{\frac{\left\vert \chi\right\vert }{2}}}
r^{-\frac{d-1}{2}}e^{H\left(  r\right)  }dr
\end{array}
\right.
\]
and%
\[
r_{5}\leq Co\left(  1\right)  \left\vert \chi\right\vert ^{-\left(
d-1\right)  }%
{\displaystyle\int_{1}^{\frac{\left\vert \chi\right\vert }{2}}}
r^{-\frac{d-1}{2}}e^{H\left(  r\right)  }dr.
\]
Hence%
\begin{equation}%
{\displaystyle\int_{\left\vert z\right\vert \leq\frac{\left\vert
\chi\right\vert }{2}}}
G\left(  \left\vert z\right\vert \right)  \nabla Q^{2}\left(  \chi-z\right)
dz=\upsilon\left(  d\right)  \mathcal{A}^{2}\kappa\frac{\chi}{\left\vert
\chi\right\vert }\left(  1+o\left(  1\right)  \right)  e^{-2\left\vert
\chi\right\vert }\left\vert \chi\right\vert ^{-\frac{d-1}{2}}%
{\displaystyle\int_{1}^{\frac{\left\vert \chi\right\vert }{2}}}
\left(  r\left(  \left\vert \chi\right\vert -r\right)  \right)  ^{-\frac
{d-1}{2}}e^{H\left(  r\right)  }dr \label{ap160}%
\end{equation}
and thus from (\ref{ap158}) and (\ref{ap159}) we deduce%
\begin{equation}
\left.
{\displaystyle\int_{\frac{\left\vert \chi\right\vert }{2}\leq\left\vert
z\right\vert \leq\frac{3\left\vert \chi\right\vert }{2}}}
G\left(  \left\vert \chi-z\right\vert \right)  \nabla Q^{2}\left(  z\right)
dz=-\upsilon\left(  d\right)  \mathcal{A}^{2}\frac{\chi}{\left\vert
\chi\right\vert }\left(  1+o\left(  1\right)  \right)  e^{-2\left\vert
\chi\right\vert }\left\vert \chi\right\vert ^{-\frac{d-1}{2}}%
{\displaystyle\int_{1}^{\frac{\left\vert \chi\right\vert }{2}}}
\left(  r\left(  \left\vert \chi\right\vert -r\right)  \right)  ^{-\frac
{d-1}{2}}e^{H\left(  r\right)  }dr.\right.  \label{ap162}%
\end{equation}
Note that%
\[
\left\vert H\left(  \left\vert \chi-z\right\vert \right)  -H\left(  \left\vert
\chi\right\vert -\left\vert z\right\vert \right)  \right\vert \leq CH^{\prime
}\left(  \left\vert \chi\right\vert -\left\vert z\right\vert \right)
\left\vert z\right\vert \left(  1-\cos\theta\right)  .
\]
Then, similarly to (\ref{ap160}) (see also the proof of (\ref{ap138})) we show
that%
\begin{equation}%
{\displaystyle\int_{\left\vert z\right\vert \leq\frac{\left\vert
\chi\right\vert }{2}}}
G\left(  \left\vert \chi-z\right\vert \right)  \nabla Q^{2}\left(  z\right)
dz=-\upsilon\left(  d\right)  \mathcal{A}^{2}\kappa\frac{\chi}{\left\vert
\chi\right\vert }\left(  1+o\left(  1\right)  \right)  e^{-2\left\vert
\chi\right\vert }\left\vert \chi\right\vert ^{-\frac{d-1}{2}}%
{\displaystyle\int_{\frac{\left\vert \chi\right\vert }{2}}^{\left\vert
\chi\right\vert -1}}
\left(  r\left(  \left\vert \chi\right\vert -r\right)  \right)  ^{-\frac
{d-1}{2}}e^{H\left(  r\right)  }dr. \label{ap163}%
\end{equation}
Therefore, as
\begin{equation}
\mathcal{J}\left(  \chi\right)  =-%
{\displaystyle\int}
G\left(  \left\vert \chi-z\right\vert \right)  \nabla Q^{2}\left(  z\right)
dz \label{ap170}%
\end{equation}
by (\ref{ap161}), (\ref{ap162}), (\ref{ap163}) we prove (\ref{ap168}).

Suppose now $0<cr\leq H\left(  r\right)  \leq2r+o\left(  r\right)  $ for all
$r\ $sufficiently large$.$ First we note that%
\begin{equation}
\left\vert
{\displaystyle\int_{\left\vert z\right\vert \geq\delta\left\vert
\chi\right\vert }}
G\left(  \left\vert \chi-z\right\vert \right)  \nabla Q^{2}\left(  z\right)
dz\right\vert \leq C\left\vert \chi\right\vert ^{-\left(  d-1\right)
}e^{-\delta\left\vert \chi\right\vert }%
{\displaystyle\int_{\left\vert z\right\vert \geq\delta\left\vert
\chi\right\vert }}
e^{-\left\vert z\right\vert }dz\leq C\left\vert \chi\right\vert ^{-\left(
d-1\right)  }e^{-\delta\left\vert \chi\right\vert }, \label{ap165}%
\end{equation}
for some $1-\frac{c}{2}<\delta<1$. We consider the following estimates%
\begin{equation}
\left\vert \left\vert \chi+y\right\vert -\left\vert \chi\right\vert
-\frac{\chi\cdot y}{\left\vert \chi\right\vert }\right\vert \leq
C\frac{\left\vert y\right\vert ^{2}}{\left\vert \chi\right\vert },
\label{ap249}%
\end{equation}
and%
\begin{equation}
\left\vert \left\vert \chi+y\right\vert ^{-\left(  d-1\right)  }-\left\vert
\chi\right\vert ^{-\left(  d-1\right)  }\right\vert \leq C\frac{\left\vert
y\right\vert }{\left\vert \chi\right\vert ^{d}}. \label{ap250}%
\end{equation}
Since $0<cr\leq H\left(  r\right)  <2r$ and $H^{\prime}$ is monotone, we have
$c_{1}\leq H^{\prime}\left(  r\right)  <2,$ $0<c_{1}<c,$ for all
$r\ $sufficiently large$.$ Then, using that $H^{\prime\prime}$ is bounded we
get%
\begin{equation}
\left\vert H\left(  \left\vert \chi-z\right\vert \right)  -H\left(  \left\vert
\chi\right\vert \right)  +H^{\prime}\left(  \left\vert \chi\right\vert
\right)  \left(  \frac{\chi\cdot z}{\left\vert \chi\right\vert }\right)
\right\vert \leq C\frac{1+\left\vert z\right\vert ^{4}}{\left\vert
\chi\right\vert } \label{ap251}%
\end{equation}
for $\left\vert z\right\vert \leq\delta\left\vert \chi\right\vert .$ In
particular, (\ref{ap249})-(\ref{ap251}) imply
\[
\left\vert G\left(  \left\vert \chi-z\right\vert \right)  -G\left(  \left\vert
\chi\right\vert \right)  e^{\left(  2-H^{\prime}\left(  \left\vert
\chi\right\vert \right)  \right)  \frac{\chi\cdot z}{\left\vert \chi
\right\vert }}\right\vert \leq\left\vert G\left(  \left\vert \chi\right\vert
\right)  \right\vert e^{\left(  2-H^{\prime}\left(  \left\vert \chi\right\vert
\right)  \right)  \frac{\chi\cdot z}{\left\vert \chi\right\vert }}%
\frac{1+\left\vert z\right\vert ^{4}}{\left\vert \chi\right\vert },\text{ as
}\left\vert \chi\right\vert \rightarrow\infty.
\]
Using the last estimate we deduce
\begin{equation}
\left.  \left\vert
{\displaystyle\int_{\left\vert z\right\vert \leq\delta\left\vert
\chi\right\vert }}
G\left(  \left\vert \chi-z\right\vert \right)  \nabla Q^{2}\left(  z\right)
dz-G\left(  \left\vert \chi\right\vert \right)
{\displaystyle\int_{\left\vert z\right\vert \leq\delta\left\vert
\chi\right\vert }}
e^{\left(  2-H^{\prime}\left(  \left\vert \chi\right\vert \right)  \right)
\frac{\chi\cdot z}{\left\vert \chi\right\vert }}\nabla Q^{2}\left(  z\right)
dz\right\vert \leq C\frac{\left\vert G\left(  \left\vert \chi\right\vert
\right)  \right\vert }{\left\vert \chi\right\vert }r_{6}\right.  \label{ap164}%
\end{equation}
where%
\[
r_{6}=%
{\displaystyle\int}
e^{\left(  2-H^{\prime}\left(  \left\vert \chi\right\vert \right)  \right)
\frac{\chi\cdot z}{\left\vert \chi\right\vert }}\left(  1+\left\vert
z\right\vert ^{4}\right)  \left\vert \nabla Q^{2}\left(  z\right)  \right\vert
dz.
\]
Integrating by parts we have%
\[%
{\displaystyle\int}
e^{\left(  2-H^{\prime}\left(  \left\vert \chi\right\vert \right)  \right)
\frac{\chi\cdot z}{\left\vert \chi\right\vert }}\nabla Q^{2}\left(  z\right)
dz=-\left(  2-H^{\prime}\left(  \left\vert \chi\right\vert \right)  \right)
\frac{\chi}{\left\vert \chi\right\vert }%
{\displaystyle\int}
e^{\left(  2-H^{\prime}\left(  \left\vert \chi\right\vert \right)  \right)
\frac{\chi\cdot z}{\left\vert \chi\right\vert }}Q^{2}\left(  z\right)  dz.
\]
$\ $Since $\left\vert 2-H^{\prime}\left(  \left\vert \chi\right\vert \right)
\right\vert \leq\max\{2-c_{1},1\}<2,$ for all $\left\vert \chi\right\vert $
sufficiently large, it follows from (\ref{ap29}) that $r_{6}<\infty$,
\[%
{\displaystyle\int_{\left\vert z\right\vert \geq\delta\left\vert
\chi\right\vert }}
e^{\left(  2-H^{\prime}\left(  \left\vert \chi\right\vert \right)  \right)
\frac{\chi\cdot z}{\left\vert \chi\right\vert }}\nabla Q^{2}\left(  z\right)
dz\leq Ce^{-c_{1}\delta^{\prime}\left\vert \chi\right\vert },\text{
\ }0<\delta^{\prime}<\delta,
\]
and
\[
\left\vert
{\displaystyle\int}
\left(  e^{\left(  2-H^{\prime}\left(  \left\vert \chi\right\vert \right)
\right)  \frac{\chi\cdot z}{\left\vert \chi\right\vert }}-e^{\left(
2-K^{\prime}\right)  \frac{\chi\cdot z}{\left\vert \chi\right\vert }}\right)
Q^{2}\left(  z\right)  dz\right\vert \leq C\left\vert K^{\prime}-H^{\prime
}\left(  \left\vert \chi\right\vert \right)  \right\vert .
\]
Then, from (\ref{ap170}), (\ref{ap165}) and (\ref{ap164}) we attain
(\ref{ap169}).

Finally, we consider the case $V=V^{\left(  2\right)  }.$ Recall that in this
case $V\left(  r\right)  =e^{-h_{1}\left(  r\right)  },$ with $h_{1}\left(
r\right)  \geq0$, $h_{1}\left(  r\right)  =o\left(  r\right)  ,$
$h_{1}^{\left(  k\right)  }$ monotone for all $k\geq0$ and satisfying
\[
\left\vert h_{1}^{\left(  k\right)  }\left(  \frac{r}{2}\right)  \right\vert
\leq C_{k}\left\vert h_{1}^{\left(  k\right)  }\left(  r\right)  \right\vert
,\text{ }k\geq0,
\]
for all $r.$ Then, expanding the potential $V^{\prime}$ we have%
\[
V^{\prime}\left(  \left\vert \chi-z\right\vert \right)  =V^{\prime}\left(
\left\vert \chi\right\vert \right)  +V_{1}\left(  z,\chi\right)
+\mathbf{r}\left(  \left\vert \chi\right\vert \right)  \left\vert z\right\vert
^{2},
\]
for $\left\vert z\right\vert \leq\frac{\left\vert \chi\right\vert }{2},$ where
$V_{1}\left(  z,\chi\right)  $ denotes the linear in $z$ polynomial in the
Taylor expansion of $V^{\prime}.$ Since $Q^{2}\left(  z\right)  $ is even, we
get $\int V_{1}\left(  z,\chi\right)  Q^{2}\left(  z\right)  dz=0.$ Then,\ we
have
\[%
{\displaystyle\int_{\left\vert z\right\vert \leq\frac{\left\vert
\chi\right\vert }{2}}}
\nabla V\left(  \left\vert \chi-z\right\vert \right)  Q^{2}\left(  z\right)
dz=\frac{\chi}{\left\vert \chi\right\vert }\left(  V^{\prime}\left(
\left\vert \chi\right\vert \right)  \int Q^{2}\left(  z\right)  dz+\mathbf{r}%
\left(  \left\vert \chi\right\vert \right)  \right)  .
\]
Hence, as $\left\vert
{\textstyle\int_{\left\vert z\right\vert \geq\frac{\left\vert \chi\right\vert
}{2}}}
V\left(  \left\vert \chi-z\right\vert \right)  \nabla Q^{2}\left(  z\right)
dz\right\vert \leq Ce^{-\frac{\left\vert \chi\right\vert }{2}},$ we attain
(\ref{ap306}).
\end{proof}

\section{Invertibility of $\mathcal{L}_{V}.$ Proof of Lemma \ref{Linv}%
\label{Invertibility}}

The second statement of Lema \ref{Linv} is a direct consequence of
(\ref{ineqpert}). Indeed, assuming (\ref{ineqpert}), the invertibility of the
operators $L_{\pm}-\lambda^{2}V\left(  \left\vert y+\tilde{\chi}\right\vert
\right)  $ follows from Fredholm alternative applied to $\left(
-\Delta+1\right)  ^{-1}\left(  L_{\pm}-\lambda^{2}V\left(  \left\vert
y+\tilde{\chi}\right\vert \right)  \right)  $. The regularity and exponential
decay of the solution follow from the properties of the elliptic operators
(\cite{Agmon}). See Lemma 2.4 of \cite{Krieger} for the proof in the case of
the Hartree equation.

Therefore, we need to prove (\ref{ineqpert}). We recall the positivity
estimates for $L_{\pm},$
\[
\ker L_{+}=\operatorname*{span}\{\nabla Q\},\left.  L_{+}\right\vert
_{\left\{  Q\right\}  ^{\perp}}\geq0\text{ and }\left.  L_{-}\right\vert
_{\left\{  Q\right\}  ^{\perp}}>0
\]
(see, for example, Lemmas 2.1 and 2.2 of \cite{Nakanishi}). Then, from
(\ref{tw78}) we get%
\begin{equation}
\left(  \mathcal{L}f,f\right)  \geq c\left\Vert f\right\Vert _{H^{1}}%
^{2},\text{ }c>0, \label{ineq1}%
\end{equation}
for all $f\in H^{1}$ such that $\left\vert \left(  f,Q\right)  \right\vert
+\left\vert \left(  f,iQ\right)  \right\vert +\left\vert \left(  f,\nabla
Q\right)  \right\vert =0.$ We claim that (\ref{ineq1}) and identity
\begin{equation}
L_{+}Q=-\left(  p-1\right)  \left(  -\Delta+1\right)  Q \label{idL+}%
\end{equation}
imply%
\begin{equation}
\left\Vert f\right\Vert _{H^{1}}\leq C\left(  \left\Vert \mathcal{L}%
f\right\Vert _{H^{-1}}+\left\vert \left(  f,iQ\right)  \right\vert +\left\vert
\left(  f,\nabla Q\right)  \right\vert \right)  , \label{ineq2}%
\end{equation}
for all $f\in H^{1}$. Indeed, let $f=h+ig,$ with real $h\ $and $g.$ We write
$h=\left(  h,Q\right)  \left\Vert Q\right\Vert _{L^{2}}^{-1}Q+\left(  h,\nabla
Q\right)  \left\Vert \nabla Q\right\Vert _{L^{2}}^{-1}\nabla Q+h^{\perp}$ and
$g=\left(  g,Q\right)  \left\Vert Q\right\Vert _{L^{2}}^{-1}+g^{\perp},$
$\left(  h^{\perp},Q\right)  =\left(  h^{\perp},\nabla Q\right)  =0$ and
$\left(  g^{\perp},Q\right)  =0.$ Then%
\[
L_{+}h=\left(  h,Q\right)  \left\Vert Q\right\Vert _{L^{2}}^{-1}L_{+}%
Q+L_{+}h^{\perp}\text{ and }L_{-}g=L_{-}g^{\perp}.
\]
Since
\begin{equation}
\left(  Q,\nabla Q\right)  =\left(  h^{\perp},\nabla Q\right)  =0\text{ and
}\ker L_{+}=\operatorname*{span}\{\nabla Q\}, \label{cond}%
\end{equation}
$L_{+}Q$ and $L_{+}h^{\perp}$ are linearly independent. Moreover, there is
$0<\delta<1,$ such that
\begin{equation}
\left\vert \left(  L_{+}Q,L_{+}h^{\perp}\right)  _{H^{-1}}\right\vert
\leq\left(  1-\delta\right)  \left\Vert L_{+}Q\right\Vert _{H^{-1}}\left\Vert
L_{+}h^{\perp}\right\Vert _{H^{-1}}, \label{cauchy}%
\end{equation}
uniformly on $h.$ Otherwise, there is a sequence $\{h_{n}^{\perp}%
\}_{n\in\mathbb{N}},$ $\left\Vert h_{n}^{\perp}\right\Vert _{H^{1}}=1,$
satisfying $\left(  h_{n}^{\perp},Q\right)  =\left(  h_{n}^{\perp},\nabla
Q\right)  =0$, such that%
\[
\left\vert \left(  L_{+}Q,L_{+}h_{n}^{\perp}\right)  _{H^{-1}}\right\vert
=\mu_{n}\left\Vert L_{+}Q\right\Vert _{H^{-1}}\left\Vert L_{+}h_{n}^{\perp
}\right\Vert _{H^{-1}},
\]
with $\mu_{n}\rightarrow1,$ as $n\rightarrow\infty.$ Since $h_{n}^{\perp}$
converges weakly to a function $h_{\infty}^{\perp}\in H^{1},$ we have
$\lim_{n\rightarrow\infty}\left(  L_{+}Q,L_{+}h_{n}^{\perp}\right)  _{H^{-1}%
}=\left(  L_{+}Q,L_{+}h_{\infty}^{\perp}\right)  _{H^{-1}}$ and $\left\Vert
L_{+}h_{\infty}^{\perp}\right\Vert _{H^{-1}}\leq\lim_{n\rightarrow\infty
}\left\Vert L_{+}h_{n}^{\perp}\right\Vert _{H^{-1}}.$ Then%
\[
\left\vert \left(  L_{+}Q,L_{+}h_{\infty}^{\perp}\right)  _{H^{-1}}\right\vert
=\left\Vert L_{+}Q\right\Vert _{H^{-1}}\left(  \lim_{n\rightarrow\infty
}\left\Vert L_{+}h_{n}^{\perp}\right\Vert _{H^{-1}}\right)  .
\]
Hence, as $\left\vert \left(  L_{+}Q,L_{+}h_{\infty}^{\perp}\right)  _{H^{-1}%
}\right\vert \leq\left\Vert L_{+}Q\right\Vert _{H^{-1}}\left\Vert
L_{+}h_{\infty}^{\perp}\right\Vert _{H^{-1}}$, we get
\[
\left\vert \left(  L_{+}Q,L_{+}h_{\infty}^{\perp}\right)  _{H^{-1}}\right\vert
=\left\Vert L_{+}Q\right\Vert _{H^{-1}}\left\Vert L_{+}h_{\infty}^{\perp
}\right\Vert _{H^{-1}},
\]
which means that $L_{+}\left(  \gamma Q-h_{\infty}^{\perp}\right)  =0,$ for
some $\gamma\neq0.$ Then by (\ref{cond}), $\gamma Q-h_{\infty}^{\perp}=0.$
Multiplying the last equality by $Q,$ we get $\gamma=0,$ a contradiction.
Hence, (\ref{cauchy}) holds. Then, by (\ref{ineq1}) and (\ref{idL+})%
\begin{align*}
\left\Vert L_{+}h\right\Vert _{H^{-1}}^{2}  &  =\left(  h,Q\right)
^{2}\left\Vert L_{+}Q\right\Vert _{H^{-1}}^{2}+\left\Vert L_{+}h^{\perp
}\right\Vert _{H^{-1}}^{2}+2\left(  h,Q\right)  \left(  L_{+}Q,L_{+}h^{\perp
}\right)  _{H^{-1}}\\
&  \geq c\left(  \left(  h,Q\right)  ^{2}\left\Vert L_{+}Q\right\Vert
_{H^{-1}}^{2}+\left\Vert L_{+}h^{\perp}\right\Vert _{H^{-1}}^{2}\right)  \geq
b\left\Vert h\right\Vert _{H^{1}}^{2}-\frac{1}{b}\left\vert \left(  h,\nabla
Q\right)  \right\vert ^{2},
\end{align*}
and%
\[
\left\Vert L_{-}g\right\Vert _{H^{-1}}^{2}=\left\Vert L_{-}g^{\perp
}\right\Vert _{H^{-1}}^{2}\geq\left\Vert g^{\perp}\right\Vert _{H^{1}}^{2}\geq
b\left\Vert g\right\Vert _{H^{1}}^{2}-\frac{1}{b}\left(  g,Q\right)  ^{2}%
\]
for some $b>0.$ Therefore, (\ref{ineq2}) follows.

Let us show that (\ref{ineq2}) remains true for the perturbed operator
\[
\mathcal{L}_{V}=\mathcal{L}-\lambda^{2}V\left(  \left\vert y+\tilde{\chi
}\right\vert \right)
\]
for all $\lambda\geq\lambda_{0}>0\ $such that $\lambda^{2}\sup_{r\in
\mathbb{R}}V\left(  r\right)  <1$ and $\left\vert \chi\right\vert $
sufficiently big. Let $\rho\in C^{\infty}\left(  \mathbb{R}^{d}\right)  $ be
such that $0\leq\rho\leq1,$ $\rho\left(  x\right)  =1$ for $\left\vert
x\right\vert \leq\frac{1}{4}$ and $\rho\left(  x\right)  =0$ for $\left\vert
x\right\vert \geq\frac{1}{2}.$ For $f\in H^{1}$ and $\sigma>0,$ we decompose
$f=f_{1}+f_{2},$ where $f_{1}\left(  y\right)  =\rho_{1}\left(  \frac
{y}{\sigma}\right)  f\left(  y\right)  $, $f_{2}\left(  y\right)  =\rho
_{2}\left(  \frac{y}{\sigma}\right)  f\left(  y\right)  ,$ $\rho_{1}\left(
y\right)  =1-\rho\left(  y\right)  $ and $\rho_{2}\left(  y\right)
=\rho\left(  y\right)  .$ We estimate%
\begin{equation}
\left.
\begin{array}
[c]{c}%
\left\Vert \mathcal{L}_{V}f\right\Vert _{H^{-1}}^{2}=\left\Vert \mathcal{L}%
_{V}f_{1}\right\Vert _{H^{-1}}^{2}+\left\Vert \mathcal{L}_{V}f_{2}\right\Vert
_{H^{-1}}^{2}+2\left(  \mathcal{L}_{V}f_{1},\mathcal{L}_{V}f_{2}\right)
_{H^{-1}}\\
\geq\left\Vert \left(  -\Delta+1-\lambda^{2}V\left(  \left\vert y+\tilde{\chi
}\right\vert \right)  \right)  f_{1}\right\Vert _{H^{-1}}^{2}+\left\Vert
\mathcal{L}f_{2}\right\Vert _{H^{-1}}^{2}+2\left(  \mathcal{L}_{V}%
f_{1},\mathcal{L}_{V}f_{2}\right)  _{H^{-1}}+r,
\end{array}
\right.  \label{ap230}%
\end{equation}
with%
\[
\left.  r=-2\left\vert \left(  \left(  -\Delta+1-\lambda^{2}V\left(
\left\vert y+\tilde{\chi}\right\vert \right)  \right)  f_{1},-\frac{p+1}%
{2}Q^{p-1}f_{1}-\frac{p-1}{2}Q^{p-1}{\overline{f_{1}}}\right)  _{H^{-1}%
}\right\vert -2\lambda^{2}\left\vert \left(  \mathcal{L}f_{2},V\left(
\left\vert y+\tilde{\chi}\right\vert \right)  f_{2}\right)  _{H^{-1}%
}\right\vert \right.
\]
Observe that%
\begin{equation}
\int Q^{p-1}\left\vert f_{1}\right\vert \left\vert g\right\vert \leq\int
Q^{p-1}\left(  y\right)  \rho_{1}\left(  \frac{y}{\sigma}\right)  \left\vert
f\left(  y\right)  \right\vert \left\vert g\left(  y\right)  \right\vert
dy\leq q^{p-1}\left(  \frac{\sigma}{4}\right)  \left\Vert f\right\Vert
\left\Vert g\right\Vert \label{es1}%
\end{equation}
and%
\begin{equation}
\int\left\vert V\left(  y+\tilde{\chi}\right)  \right\vert \left\vert
f_{2}\right\vert \left\vert g\right\vert \leq\int\left\vert V\left(
y+\tilde{\chi}\right)  \right\vert \rho_{2}\left(  \frac{y}{\sigma}\right)
\left\vert f\left(  y\right)  \right\vert \left\vert g\right\vert
dy\leq\left\Vert V\left(  y+\tilde{\chi}\right)  \rho_{2}\left(  \frac
{y}{\sigma}\right)  \right\Vert _{L^{\infty}}\left\Vert f\right\Vert
\left\Vert g\right\Vert . \label{es2}%
\end{equation}
Then%
\begin{equation}
\left\vert r\right\vert \leq K_{1}\left(  V\right)  \left(  \left\Vert
V\left(  \left\vert y+\tilde{\chi}\right\vert \right)  \rho_{2}\left(
\frac{y}{\sigma}\right)  \right\Vert _{L^{\infty}}+q^{p-1}\left(  \frac
{\sigma}{4}\right)  \right)  \left\Vert f\right\Vert _{H^{1}}^{2},\text{
}K_{1}\left(  V\right)  >0. \label{r0}%
\end{equation}
Using that $\nabla f_{j}\left(  y\right)  =\frac{y}{\sigma\left\vert
y\right\vert }\rho_{j}^{\prime}\left(  \frac{y}{\sigma}\right)  f\left(
y\right)  +\rho_{j}\left(  \frac{y}{\sigma}\right)  \nabla f\left(  y\right)
$, $j=1,2\ $we have%
\begin{equation}
\left(  \mathcal{L}_{V}f_{1},\mathcal{L}_{V}f_{2}\right)  _{H^{-1}}=\left(
\rho_{1}\left(  \frac{\cdot}{\sigma}\right)  \nabla f,\rho_{2}\left(
\frac{\cdot}{\sigma}\right)  \nabla f\right)  +\left(  f_{1},f_{2}\right)
+r_{1}+r_{2}+r_{3} \label{ap231}%
\end{equation}
where%
\[
r_{1}=\frac{1}{\sigma^{2}}\left(  \rho_{1}^{\prime}\left(  \frac{y}{\sigma
}\right)  f\left(  y\right)  ,\rho_{2}^{\prime}\left(  \frac{y}{\sigma
}\right)  f\left(  y\right)  \right)  +\frac{1}{\sigma}\left(  \frac
{y}{\left\vert y\right\vert }\rho_{1}^{\prime}\left(  \frac{y}{\sigma}\right)
f\left(  y\right)  ,\rho_{2}\left(  \frac{y}{\sigma}\right)  \nabla f\left(
y\right)  \right)  -\frac{1}{\sigma}\left(  \rho_{1}\left(  \frac{y}{\sigma
}\right)  \nabla f\left(  y\right)  ,\frac{y}{\left\vert y\right\vert }%
\rho_{2}^{\prime}\left(  y\right)  f\left(  y\right)  \right)
\]%
\[
\left.
\begin{array}
[c]{c}%
r_{2}=-\left(  \left(  -\Delta+1\right)  f_{1},\frac{p+1}{2}Q^{p-1}f_{2}%
+\frac{p-1}{2}Q^{p-1}{\overline{f_{2}}}+\lambda^{2}V\left(  \left\vert
y+\tilde{\chi}\right\vert \right)  f_{2}\right)  _{H^{-1}}\\
-\left(  \frac{p+1}{2}Q^{p-1}f_{1}+\frac{p-1}{2}Q^{p-1}{\overline{f_{1}}%
}+\lambda^{2}V\left(  \left\vert y+\tilde{\chi}\right\vert \right)
f_{1},\left(  -\Delta+1\right)  f_{2}\right)  _{H^{-1}}\\
+\left(  \frac{p+1}{2}Q^{p-1}f_{1}+\frac{p-1}{2}Q^{p-1}{\overline{f_{1}}%
},\frac{p+1}{2}Q^{p-1}f_{2}+\frac{p-1}{2}Q^{p-1}{\overline{f_{2}}}+\lambda
^{2}V\left(  \left\vert y+\tilde{\chi}\right\vert \right)  f_{2}\right)
_{H^{-1}}\\
+\left(  \lambda^{2}V\left(  \left\vert y+\tilde{\chi}\right\vert \right)
f_{1},\lambda^{2}V\left(  \left\vert y+\tilde{\chi}\right\vert \right)
f_{2}\right)  _{H^{-1}}%
\end{array}
\right.
\]%
\[
r_{3}=\left(  \lambda^{2}V\left(  \left\vert y+\tilde{\chi}\right\vert
\right)  f_{1},\frac{p+1}{2}Q^{p-1}f_{2}+\frac{p-1}{2}Q^{p-1}{\overline{f_{2}%
}}\right)  _{H^{-1}}.
\]
We estimate $r_{1}$ as
\begin{equation}
\left\vert r_{1}\right\vert \leq\frac{1}{\sigma}\left(  2+\sigma^{-1}\right)
\left(  1+\left\Vert \rho^{\prime}\right\Vert _{L^{\infty}}^{2}\right)
\left\Vert f\right\Vert _{H^{1}}^{2}. \label{r1}%
\end{equation}
Using (\ref{es1}) and (\ref{es2}) we control $r_{2}$ by%
\begin{equation}
\left.
\begin{array}
[c]{c}%
\left\vert r_{2}\right\vert \leq2pq^{p-1}\left(  \frac{\sigma}{4}\right)
\left\Vert f\right\Vert _{H^{1}}^{2}+2\lambda^{2}\left\Vert V\left(
\left\vert y+\tilde{\chi}\right\vert \right)  \rho_{2}\left(  \frac{y}{\sigma
}\right)  \right\Vert _{L^{\infty}}\left\Vert f\right\Vert _{H^{1}}^{2}\\
+\frac{p+1}{2}q^{p-1}\left(  \frac{\sigma}{4}\right)  \left(  p\left\Vert
Q\right\Vert _{L^{\infty}}^{p-1}+\lambda^{2}\left\Vert V\right\Vert
_{L^{\infty}}\right)  \left\Vert f\right\Vert _{H^{1}}^{2}\\
+\lambda^{2}\left\Vert V\left(  \left\vert y+\tilde{\chi}\right\vert \right)
\rho_{2}\left(  \frac{y}{\sigma}\right)  \right\Vert _{L^{\infty}}\left\Vert
V\right\Vert _{L^{\infty}}\left\Vert f\right\Vert _{H^{1}}^{2}.
\end{array}
\right.  \label{r2}%
\end{equation}
Finally, we estimate $r_{3}.$ We have
\[
\left\vert r_{3}\right\vert \leq\lambda^{2}\left\Vert V\left(  \left\vert
\cdot+\tilde{\chi}\right\vert \right)  \left\langle \cdot\right\rangle
^{-2}\right\Vert _{L^{\infty}}\left\Vert f\right\Vert _{H^{1}}\left\Vert
\frac{\left\langle \cdot\right\rangle ^{2}}{1-\Delta}\left(  \frac{p+1}%
{2}Q^{p-1}f_{2}+\frac{p-1}{2}Q^{p-1}{\overline{f_{2}}}\right)  \right\Vert .
\]
Since
\[
\left\Vert V\left(  \left\vert y+\tilde{\chi}\right\vert \right)  \left\langle
\cdot\right\rangle ^{-2}\right\Vert _{L^{\infty}}\leq\left\Vert V\left(
\left\vert y+\tilde{\chi}\right\vert \right)  \right\Vert _{L^{\infty}\left(
\left\vert y\right\vert \leq\frac{\left\vert \tilde{\chi}\right\vert }%
{2}\right)  }+\left(  1+\frac{\left\vert \tilde{\chi}\right\vert ^{2}}%
{4}\right)  ^{-1}\left\Vert V\right\Vert _{L^{\infty}}%
\]
and%
\[
\left\Vert \frac{\left\langle \cdot\right\rangle ^{2}}{1-\Delta}\left(
\frac{p+1}{2}Q^{p-1}f_{2}+\frac{p-1}{2}Q^{p-1}{\overline{f_{2}}}\right)
\right\Vert \leq K_{2}\left\Vert f\right\Vert _{H^{1}},\text{ }K_{2}>0,
\]
we deduce%
\begin{equation}
\left\vert r_{3}\right\vert \leq K_{2}\lambda^{2}\left(  \left\Vert V\left(
\left\vert y+\tilde{\chi}\right\vert \right)  \right\Vert _{L^{\infty}\left(
\left\vert y\right\vert \leq\frac{\left\vert \tilde{\chi}\right\vert }%
{2}\right)  }+\left(  1+\frac{\left\vert \chi\right\vert ^{2}}{4}\right)
^{-1}\left\Vert V\right\Vert _{L^{\infty}}\right)  \left\Vert f\right\Vert
_{H^{1}}^{2}. \label{r3}%
\end{equation}
As by assumption $\lambda^{2}\sup_{r\in\mathbb{R}}V\left(  r\right)  <1,$ we
have%
\[
\left(  \left(  -\Delta+1-\lambda^{2}V\left(  \left\vert \cdot+\tilde{\chi
}\right\vert \right)  \right)  f_{1},f_{1}\right)  \geq\left(  \left(
-\Delta+c_{1}\right)  f_{1},f_{1}\right)  \geq b_{1}\left\Vert f_{1}%
\right\Vert _{H^{1}}^{2},
\]
with some $b_{1},c_{1}>0.$ Then, using (\ref{ineq2}) and (\ref{ap231}) in
(\ref{ap230}) we have%
\begin{equation}
\left\Vert \mathcal{L}_{V}f\right\Vert _{H^{-1}}^{2}\geq b_{3}\left\Vert
f\right\Vert _{H^{1}}^{2}-\frac{2}{b_{3}}\left(  \left\vert \left(
f_{2},iQ\right)  \right\vert +\left\vert \left(  f_{2},\nabla Q\right)
\right\vert \right)  +r+2\left(  r_{1}+r_{2}+r_{3}\right)  ,\text{ }b_{3}>0.
\label{in2}%
\end{equation}
Since%
\[
\left\vert \left(  f_{1},iQ\right)  \right\vert ^{2}+\left\vert \left(
f_{1},\nabla Q\right)  \right\vert ^{2}\leq2\left(  q+\left\vert q^{\prime
}\right\vert \right)  \left(  \frac{\sigma}{4}\right)  \left(  \int
Q+\left\vert \nabla Q\right\vert \right)  \left\Vert f\right\Vert _{H^{1}}^{2}%
\]
we estimate
\begin{equation}
\left\vert \left(  f_{2},iQ\right)  \right\vert ^{2}+\left\vert \left(
f_{2},\nabla Q\right)  \right\vert ^{2}\leq2\left\vert \left(  f,iQ\right)
\right\vert ^{2}+2\left\vert \left(  f,\nabla Q\right)  \right\vert ^{2}%
+r_{4}. \label{ap227}%
\end{equation}
with%
\[
r_{4}=4\left(  q+\left\vert q^{\prime}\right\vert \right)  \left(
\frac{\sigma}{4}\right)  \left(  \int Q+\left\vert \nabla Q\right\vert
\right)  \left\Vert f\right\Vert _{H^{1}}^{2}.
\]
Then, (\ref{in2}) takes the form
\begin{equation}
\left\Vert \mathcal{L}_{V}f\right\Vert _{H^{-1}}^{2}\geq b_{3}\left\Vert
f\right\Vert _{H^{1}}^{2}-\frac{4}{b_{3}}\left(  \left\vert \left(
f,iQ\right)  \right\vert ^{2}+\left\vert \left(  f,\nabla Q\right)
\right\vert ^{2}\right)  +r+2\left(  r_{1}+r_{2}+r_{3}\right)  -\frac{2}%
{b_{3}}r_{4}. \label{in3}%
\end{equation}
We choose $\sigma>0$ and $C\left(  V\right)  >0$ in (\ref{r0}), (\ref{r1}),
(\ref{r2}), (\ref{r3}) such that $\left\vert r\right\vert +2\left(  \left\vert
r_{1}\right\vert +\left\vert r_{2}\right\vert +\left\vert r_{3}\right\vert
\right)  +\frac{2}{b_{3}}\left\vert r_{4}\right\vert \leq\frac{3b_{3}}%
{4}\left\Vert f\right\Vert _{H^{1}}^{2},$ for all $\left\vert \chi\right\vert
\geq C\left(  V\right)  .$ Therefore, from (\ref{in2}) we deduce
(\ref{ineqpert}).

In order to complete the proof, we need to show that (\ref{ineqpert1}) holds.
We use the coercivity property of the unperturbed operator $\mathcal{L}$ that
follows, for example, from Lemma 2.2 of \cite{Nakanishi} (see also
\cite{Martel})
\[
\left(  \mathcal{L}f,f\right)  \geq c\left\Vert f\right\Vert _{H^{1}}%
^{2}-\frac{1}{c}\left(  \left(  f,Q\right)  ^{2}+\left\vert \left(
f,xQ\right)  \right\vert ^{2}+\left(  f,i\Lambda Q\right)  ^{2}\right)
,\text{ }f\in H^{1},
\]
for some $c>0$ independent of $f.$ Then, decomposing $\left(  \mathcal{L}%
_{V}f,f\right)  $ similarly to (\ref{ap230}) and arguing as in the proof of
(\ref{ineqpert}), we deduce (\ref{ineqpert1}). This completes the proof of
Lemma \ref{Linv}.

\section{Appendix.}

\subsubsection*{Proof of Lemma \ref{L8}.}

Recall that the kernel $G\left(  x\right)  $ of the Bessel potential $\left(
1-\Delta\right)  ^{-1}$ behaves asymptotically as (see pages 416-417 of
\cite{Aronszajn})
\begin{equation}
\left.  G\left(  x\right)  =2^{\frac{d+1}{2}}\pi^{\frac{d-1}{2}}\left\vert
x\right\vert ^{-\frac{d-1}{2}}e^{-\left\vert x\right\vert }\left(  1+o\left(
1\right)  \right)  ,\text{ as }\left\vert x\right\vert \rightarrow
\infty,\right.  \label{Bessel}%
\end{equation}
and%
\begin{equation}
G\left(  x\right)  =\left\{
\begin{array}
[c]{c}%
\frac{1}{2\pi}\ln\frac{1}{\left\vert x\right\vert }\left(  1+o\left(
1\right)  \right)  ,\text{ }d=2,\\
\frac{\Gamma\left(  \frac{d-2}{2}\right)  }{4\pi\left\vert x\right\vert
^{d-2}}\left(  1+o\left(  1\right)  \right)  ,\text{ }d\geq3,
\end{array}
\right.  \text{ as }\left\vert x\right\vert \rightarrow0, \label{Bessel1}%
\end{equation}
where $\Gamma$ denotes the gamma function. For $0<\delta<1,$ let $G_{\delta
}\left(  x\right)  =e^{\delta\left\vert x\right\vert }G\left(  x\right)  .$
Using (\ref{Bessel}) and (\ref{Bessel1}) we estimate%
\begin{equation}
\left\vert G_{\delta}\left(  x\right)  \right\vert \leq Ce^{-\left(
1-\delta\right)  \left\vert x\right\vert }\left\langle x\right\rangle
^{-\frac{d-1}{2}}\left\langle \left\vert x\right\vert ^{-\left(  d-2\right)
-\nu}\right\rangle ,\text{ }\nu>0. \label{Bessel0}%
\end{equation}
We decompose%
\begin{equation}
\left\vert e^{-\delta\left\vert y\right\vert }T\left(  y\right)  \right\vert
\leq C_{0}\left(  I_{1}+I_{2}+I_{3}\right)  , \label{ap277}%
\end{equation}
where
\[
I_{1}=\int_{\mathbb{R}^{d}}G_{\delta}\left(  y-z\right)  e^{-\delta\left\vert
z\right\vert }Q^{p-1}\left(  z\right)  \left\vert T\left(  z\right)
\right\vert dz,
\]%
\[
I_{2}=\int_{\mathbb{R}^{d}}G_{\delta}\left(  y-z\right)  e^{-\delta\left\vert
z\right\vert }V\left(  \left\vert z+\frac{\chi}{\lambda}\right\vert \right)
\left\vert T\left(  z\right)  \right\vert dz
\]
and%
\[
I_{3}=\int_{\mathbb{R}^{d}}G_{\delta}\left(  y-z\right)  e^{-\delta\left\vert
z\right\vert }\left\vert f\left(  z\right)  \right\vert dz,
\]
for some $C_{0}>0.$ Let $\psi\in C^{\infty}$ be such that $\left\Vert
\psi\right\Vert _{L\infty}\leq1,$ $\psi=1$ for $\left\vert x\right\vert \leq1$
and $\psi=0$ for $\left\vert x\right\vert \geq2.$ For $a>0$ we decompose%
\[
I_{1}=I_{11}+I_{22},
\]
with%
\[
I_{11}=\int_{\mathbb{R}^{d}}G_{\delta}\left(  y-z\right)  e^{-\delta\left\vert
z\right\vert }Q^{p-1}\left(  z\right)  \psi\left(  \frac{z}{a}\right)
\left\vert T\left(  z\right)  \right\vert dz
\]
and%
\[
I_{12}=\int_{\mathbb{R}^{d}}G_{\delta}\left(  y-z\right)  e^{-\delta\left\vert
z\right\vert }Q^{p-1}\left(  z\right)  \left(  1-\psi\left(  \frac{z}%
{a}\right)  \right)  \left\vert T\left(  z\right)  \right\vert dz.
\]
By (\ref{ineqpert})%
\[
\left\Vert \psi T\right\Vert _{H^{1}}^{2}\leq C_{1}\left(  \left\Vert
\mathcal{L}_{V}\left(  \psi T\right)  \right\Vert _{H^{-1}}^{2}+\left\vert
\left(  \psi T,\nabla Q\right)  \right\vert ^{2}\right)  ,\text{ }C_{1}>0.
\]
Noting that $[\mathcal{L}_{V},\psi\left(  \frac{\cdot}{a}\right)
]=-a^{-2}\left(  \Delta\psi\right)  \left(  \frac{\cdot}{a}\right)
-a^{-1}\left(  \nabla\psi\right)  \left(  \frac{\cdot}{a}\right)  \nabla,$ as
$\left(  T,\nabla Q\right)  =0,$ we have%
\begin{align*}
\left\Vert \psi T\right\Vert _{H^{1}}^{2}  &  \leq C_{1}\left(  \left\Vert
\psi f\right\Vert ^{2}+a^{-1}\left\Vert \left(  \nabla\psi\right)  \nabla
T\right\Vert ^{2}+a^{-2}\left\Vert \Delta\psi T\right\Vert ^{2}+\left\vert
\left(  \left(  1-\psi\right)  T,\nabla Q\right)  \right\vert ^{2}\right) \\
&  \leq C_{1}\left(  \left\Vert \psi f\right\Vert ^{2}+C_{2}a^{-1}\left\Vert
T\right\Vert _{H^{1}}^{2}\right)  ,\text{ }C_{2}>0.
\end{align*}
Then, taking $a\geq A_{0}^{-2}\left(  \left\vert \chi\right\vert \right)  +1$
and using (\ref{ap290}) we get%
\[
\left\Vert \psi T\right\Vert _{H^{1}}^{2}\leq C_{1}\left(  \left\Vert \psi
e^{\delta\left\vert z\right\vert }e^{-\delta\left\vert z\right\vert
}f\right\Vert ^{2}+C_{2}A_{0}^{2}\left(  \left\vert \chi\right\vert \right)
\left\Vert T\right\Vert _{H^{1}}^{2}\right)  \leq C_{3}A_{0}^{2}\left(
\left\vert \chi\right\vert \right)
\]
with $C_{3}>0.$ Thus, by (\ref{Bessel0}) and Young's inequality we get%
\[
\left\Vert I_{11}\right\Vert _{L^{2}}\leq C_{0}\left\Vert G_{\delta}\left(
\cdot\right)  \right\Vert _{L^{1}}\left\Vert \psi T_{1}\right\Vert _{L^{2}%
}\leq C_{4}\left(  \delta\right)  A_{0}\left(  \left\vert \chi\right\vert
\right)  ,\text{ }C_{4}\left(  \delta\right)  >0.
\]
Moreover, we have
\[
\left\Vert I_{12}\right\Vert _{L^{2}}\leq Ca^{-1}\left\Vert G_{\delta}\left(
\cdot\right)  \right\Vert _{L^{1}}\left\Vert \left\vert z\right\vert
Q^{p-1}\left(  z\right)  \right\Vert _{L^{\infty}}\left\Vert T\right\Vert
_{L^{2}}\leq Ca^{-1}\leq CC_{4}\left(  \delta\right)  A_{0}^{2}\left(
\left\vert \chi\right\vert \right)  .
\]
Hence, $\left\Vert I_{1}\right\Vert _{L^{2}}\leq CC_{4}\left(  \delta\right)
A_{0}\left(  \left\vert \chi\right\vert \right)  .$ By (\ref{estp}) and
(\ref{ap235})\
\begin{align*}
\left\Vert I_{2}\right\Vert _{L^{2}}  &  \leq C\left\Vert T\right\Vert
_{L^{\infty}}\left\Vert e^{-\delta\left\vert \cdot\right\vert }V\left(
\left\vert \cdot+\frac{\chi}{\lambda}\right\vert \right)  \right\Vert _{L^{2}%
}\left\Vert \left\vert \cdot\right\vert ^{\frac{d-1}{2}}e^{-\left(
1-\delta\right)  \left\vert \cdot\right\vert }\right\Vert _{L^{2}}\\
&  \leq C\left\Vert T\right\Vert _{L^{\infty}}\left\Vert V\left(  \left\vert
\cdot+\frac{\chi}{\lambda}\right\vert \right)  Q\left(  \cdot\right)
\right\Vert _{L^{\infty}}^{\delta^{\prime}}\left\Vert \left\vert
\cdot\right\vert ^{\delta^{\prime}\frac{d-1}{2}}e^{-\left(  \delta
-\delta^{\prime}\right)  \left\vert \cdot\right\vert }\right\Vert _{L^{2}%
}\left\Vert \left\vert \cdot\right\vert ^{\frac{d-1}{2}}e^{-\left(
1-\delta\right)  \left\vert \cdot\right\vert }\right\Vert _{L^{2}}\\
&  \leq CC_{5}\left(  \delta,\delta^{\prime}\right)  \left\Vert T\right\Vert
_{L^{\infty}}\Theta\left(  \left\vert \tilde{\chi}\right\vert \right)
^{\delta^{\prime}},\text{ }C_{5}\left(  \delta,\delta^{\prime}\right)  >0,
\end{align*}
for any $\delta^{\prime}<\delta.$ By using (\ref{ap290}) we control
$\left\Vert I_{3}\right\Vert _{L^{2}}\leq C_{6}\left(  \delta\right)
A_{0}\left(  \left\vert \chi\right\vert \right)  ,$ $C_{6}\left(
\delta\right)  >0.$ Using the estimates for $I_{1},I_{2},I_{3}$ in
(\ref{ap277}) we deduce%
\begin{equation}
\left\Vert e^{-\delta\left\vert \cdot\right\vert }T\left(  \cdot\right)
\right\Vert _{L^{2}}\leq C\left(  \delta,\delta^{\prime}\right)  \left(
A_{0}\left(  \left\vert \chi\right\vert \right)  +\left\Vert T\right\Vert
_{L^{\infty}}\Theta\left(  \left\vert \tilde{\chi}\right\vert \right)
^{\delta^{\prime}}\right)  . \label{ap330}%
\end{equation}
Since $G_{\delta}\in L^{p},$ for any $1<p<\frac{d}{d-2},$ from the equation
(\ref{eq18}), via Young's inequality, we control the $L^{\frac{2p}{2-p}}$ norm
of $e^{-\delta\left\vert \cdot\right\vert }T\left(  \cdot\right)  $ by the
right-hand side of (\ref{ap330}). Note that $\frac{2p}{2-p}>2.$ Iterating the
last argument a finite number of times, we attain (\ref{ap291}). Lemma
\ref{L8} is proved.

\end{document}